\newtheorem{thm}{Theorem}[section]
\newtheorem{pro}[thm]{Proposition}
\newtheorem{lem}[thm]{Lemma}
\newtheorem{cor}[thm]{Corollary}
\newtheorem{thm&def}[thm]{Theorem \& Definition}
\theoremstyle{definition}
\newtheorem{defi}[thm]{Definition}
\newtheorem{deflem}[thm]{Definition \& Lemma}
\newtheorem{exa}[thm]{Example}
\theoremstyle{remark}
\newtheorem{rmk}[thm]{Remark}
\numberwithin{equation}{section}\newcommand{\ncm}{\newcommand}
\ncm{\Cat}{\mathsf{Cat}}
\ncm{\CAT}{\mathsf{CAT}}
\ncm{\ob}{\operatorname{ob}}
\ncm{\Nat}{\operatorname{Nat}}
\ncm{\Set}{\mathsf{Set}}
\ncm{\Ab}{\mathsf{Ab}}
\ncm{\Add}{\mathsf{Add}}
\ncm{\Fun}{\mathsf{Fun}}
\ncm{\Mnd}{\mathsf{Mnd}}
\ncm{\MonCat}{\mathsf{MonCat}}
\ncm{\MonFun}{\mathsf{MonFun}}
\ncm{\M}{\mathcal{M}}
\ncm{\Elt}{\mathsf{Elt}}
\ncm{\proj}{\mathsf{proj}}
\ncm{\T}{\mathsf{T}}
\ncm{\Q}{\mathsf{Q}}
\ncm{\GT}{\mathcal{T}}
\ncm{\A}{\mathcal{A}}
\ncm{\B}{\mathcal{B}}
\ncm{\C}{\mathcal{C}}
\ncm{\D}{\mathcal{D}}
\ncm{\E}{\mathcal{E}}
\ncm{\F}{\mathcal{F}}
\ncm{\G}{\mathcal{G}}
\ncm{\Ha}{\mathcal{H}}
\ncm{\I}{\mathcal{I}}
\ncm{\Ipre}{\hat I}
\ncm{\J}{\mathcal{J}}
\ncm{\K}{\mathcal{K}}
\ncm{\Ll}{\mathcal{L}}
\ncm{\Pee}{\mathcal{P}}
\ncm{\R}{\mathcal{R}}
\ncm{\U}{\mathcal{U}}
\ncm{\V}{\mathcal{V}}
\ncm{\Y}{\mathcal{Y}}
\ncm{\Z}{\mathcal{Z}}
\ncm{\dom}{\operatorname{dom}}
\ncm{\codom}{\operatorname{codom}}
\ncm{\End}{\operatorname{End}}
\ncm{\Aut}{\operatorname{Aut}}
\ncm{\Hom}{\operatorname{Hom}}
\ncm{\kernel}{\operatorname{ker}}
\ncm{\Ker}{\operatorname{Ker}}
\ncm{\Ima}{\operatorname{Im}}
\ncm{\coker}{\operatorname{coker}}
\ncm{\Coker}{\operatorname{Coker}}
\ncm{\im}{\operatorname{im}}
\ncm{\coim}{\operatorname{coim}}
\ncm{\id}{\operatorname{id}}
\ncm{\Center}{\operatorname{Center}}
\ncm{\colim}{\operatorname{colim}}
\ncm{\ci}{\circ}
\ncm{\bu}{\bullet}
\ncm{\bo}{\,\Box\,}
\ncm{\coten}[1]{\underset{\ssst #1}{\bo}}
\ncm{\ot}{\otimes}
\ncm{\x}{\times}
\ncm{\obar}[1]{\,\underset{\scriptscriptstyle #1}{\bar{\otimes}}\,}
\ncm{\xover}[1]{\underset{\scriptscriptstyle #1}{\times}}
\ncm{\am}[1]{\underset{\scriptscriptstyle #1}{\ot}}
\ncm{\amo}[1]{\underset{\scriptstyle #1}{\ot}}
\ncm{\mash}{\Pisymbol{psy}{35}}
\ncm{\mashed}[1]{\underset{\scriptscriptstyle #1}{\Pisymbol{psy}{35}}}
\ncm{\cross}[1]{\underset{\scriptstyle #1}{\rtimes}}
\ncm{\rarr}[1]{\stackrel{#1}{\longrightarrow}}
\ncm{\larr}[1]{\stackrel{#1}{\longleftarrow}}
\ncm{\mapsot}{\leftarrow\!\!\!\raisebox{1pt}{$\scriptscriptstyle |$}}
\ncm{\oR}{\am{R}}
\ncm{\oL}{\am{L}}
\ncm{\cop}{\Delta}
\ncm{\minT}{^{(-1)}}
\ncm{\nulT}{^{(0)}}
\ncm{\oneT}{^{(1)}}
\ncm{\twoT}{^{(2)}}
\ncm{\threeT}{^{(3)}}
\ncm{\oneB}{_{(1)}}
\ncm{\twoB}{_{(2)}}
\ncm{\threeB}{_{(3)}}
\ncm{\eps}{\varepsilon}
\ncm{\op}{\mathrm{op}}
\ncm{\coop}{\mathrm{coop}}
\ncm{\rev}{\mathrm{rev}}
\ncm{\bra}{\langle}
\ncm{\ket}{\rangle}
\ncm{\co}{\text{co-}}
\ncm{\asso}{\mathbf{a}}
\ncm{\luni}{\mathbf{l}}
\ncm{\runi}{\mathbf{r}}
\ncm{\iso}{\stackrel{\sim}{\rightarrow}}
\ncm{\iiso}{\rarr{\sim}}
\ncm{\ract}{\triangleleft}
\ncm{\lact}{\triangleright}
\ncm{\under}{\mbox{\,\rm\_}\,}
\ncm{\adjoint}{\dashv}
\ncm{\into}{\hookrightarrow}
\ncm{\fgp}{\mathrm{fgp}}
\ncm{\adj}{\dashv}
\ncm{\sst}{\scriptstyle}
\ncm{\ssst}{\scriptscriptstyle}
\ncm{\eqby}[1]{\stackrel{(\ref{#1})}{=}}
\ncm{\lef}{{\ssst <}}
\ncm{\righ}{{\ssst >}}
\ncm{\ZZ}{\mathbb{Z}}
\ncm{\GG}{\mathbf{G}}
\ncm{\FF}{\mathbb{F}}
\ncm{\NN}{\mathbb{N}}
\ncm{\philong}{\boldsymbol{\phi}}
\ncm{\longf}{\mathbf{F}}
\ncm{\longF}{\boldsymbol{\F}}
\ncm{\longg}{\mathbf{G}}
\ncm{\longG}{\boldsymbol{\G}}
\ncm{\longN}{\mathbf{N}}
\ncm{\bimf}{F}
\ncm{\bimg}{G}
\ncm{\bimF}{\F}
\ncm{\bimG}{\G}
\ncm{\bimQ}{Q}
\ncm{\bimK}{\K}
\ncm{\bimk}{K}
\ncm{\bimL}{\Ll}
\ncm{\bimeta}{\eta}
\ncm{\bimeps}{\eps}
\ncm{\bimcop}{\cop}
\ncm{\bimdelta}{\delta}
\ncm{\bimN}{N}
\ncm{\NQ}{N}
\ncm{\bimi}{i}
\ncm{\bimnu}{\nu}
\ncm{\bimtheta}{\theta}
\ncm{\bimT}{\T}
\ncm{\bimmu}{\mu}
\ncm{\bimt}{T}
\ncm{\GroT}{\mathcal{T}}
\ncm{\bimS}{S}
\ncm{\pisharp}{\sharp}
\ncm{\fsh}{F^\pisharp}
\ncm{\Fsh}{\F^\pisharp}
\ncm{\Gsh}{\G^\pisharp}
\ncm{\gsh}{G^\pisharp}
\ncm{\Qsh}{{Q^\pisharp}} 
\ncm{\Ks}{\K^\pisharp}
\ncm{\ksh}{K^\pisharp}
\ncm{\Ls}{\Ll^\pisharp}
\ncm{\epsh}{\eps^\pisharp}
\ncm{\etash}{\eta^\pisharp}
\ncm{\copsh}{\cop^\pisharp}
\ncm{\phish}{\phi}
\ncm{\psh}{\psi}
\ncm{\deltash}{\delta^\pisharp}
\ncm{\Nsh}{N^\pisharp}
\ncm{\ish}{i^\pisharp}
\ncm{\nush}{\nu^\pisharp}
\ncm{\thetash}{\theta^\pisharp}
\ncm{\Tsh}{\T^\pisharp}
\ncm{\mush}{\mu^\pisharp}
\ncm{\tsh}{T^\pisharp}
\ncm{\Ssh}{S^\pisharp}
\ncm{\Ush}{\bra U,\nu_U^{-1}\ket}
\ncm{\Vsh}{\bra V,\nu_V^{-1}\ket}
\ncm{\Wsh}{\bra W,\nu_W^{-1}\ket}
\ncm{\Zsh}{\bra Z,\nu_Z^{-1}\ket}
\ncm{\Pre}{\hat\C}
\ncm{\She}{\Pre_\T}
\ncm{\ellr}{\overrightarrow{\ell}}
\ncm{\elll}{\overleftarrow{\ell}}
\ncm{\eLr}{\overrightarrow{L}}
\ncm{\eLl}{\overleftarrow{L}}
\ncm{\Coend}{\operatorname{Coend}}
\ncm{\ev}{\operatorname{ev}}
\ncm{\db}{\operatorname{db}}
\ncm{\Zee}{\mathsf{Z}}
\ncm{\YI}{{\hat I}}
\ncm{\Phung}{Ph\`ung H{\^o} Hai\ }
\ncm{\parallelpair}{
\parbox{43pt}{
\begin{picture}(43,8)
\put(3,6){\vector(1,0){37}}
\put(3,2){\vector(1,0){37}}
\end{picture}
}}
\ncm{\pair}[2]{\overset{#1}{\underset{#2}{\parallelpair}}}
\ncm{\longrightarrowtail}{
\parbox{40pt}{
\begin{picture}(40,8)
\put(6,4){\line(-1,1){1.5}}
\put(6,4){\line(-1,-1){1.5}}
\put(6,4){\vector(1,0){31}}
\end{picture}
}}
\ncm{\longmono}[1]{\overset{#1}{\longrightarrowtail}}
\ncm{\longerrightarrowtail}{
\parbox{60pt}{
\begin{picture}(60,8)
\put(6,4){\line(-1,1){1.5}}
\put(6,4){\line(-1,-1){1.5}}
\put(6,4){\vector(1,0){51}}
\end{picture}
}}
\ncm{\longrarr}[1]{
\overset{#1}{
\parbox{40pt}{
\begin{picture}(40,8)
\put(3,4){\vector(1,0){34}}
\end{picture}
}}}
\ncm{\longerrarr}[1]{
\overset{#1}{
\parbox{70pt}{
\begin{picture}(70,8)
\put(3,3){\vector(1,0){64}}
\end{picture}
}}}
\ncm{\antiparallelpair}{
\parbox{23pt}{
\begin{picture}(23,4)
\put(3,3){\vector(1,0){17}}
\put(20,1){\vector(-1,0){17}}
\end{picture}
}}
\ncm{\dualpair}[2]{\overset{#1}{\underset{#2}{\antiparallelpair}}}
\ncm{\epi}[1]{\overset{#1}{\twoheadrightarrow}}
\ncm{\longepi}[1]{
\overset{#1}{
\parbox{40pt}{
\begin{picture}(40,8)
\put(2,4){\vector(1,0){32}} \put(37,4){\vector(1,0){0}}
\end{picture}
}}}
\begin{document}

\title[Fiber functors, sheaves and bialgebroids]{Fiber functors, monoidal sites and\\
Tannaka duality for bialgebroids$^{(*)}$}

\author{K. Szlach\'anyi}

\begin{abstract}
What are the fiber functors on small additive monoidal categories $\C$ which are not abelian? 
We give an answer which leads to a new Tannaka duality theorem for bialgebroids generalizing
earlier results by Ph\`ung H{\^o} Hai. The construction reveals a sheaf theoretic interpretation in so far as
the reconstructed bialgebroid $H$ has comodule category equivalent to the category of $\GroT$-sheaves
w.r.t. a monoidal Grothendieck topology on $\C$. We also prove an existence theorem for fiber functors
on small additive monoidal categories with bounded fusion and weak kernels. 
For certain autonomous categories a generalized Ulbrich Theorem can be formulated which relates fiber 
functors to Hopf algebroid Galois extensions. 

\end{abstract}

\thanks{$^{(*)}$Preliminary version. Comments are welcome: \texttt{szlach@rmki.kfki.hu}\\
\indent Supported by the Hungarian Scientific Research Fund, OTKA K-68195}
\maketitle

\tableofcontents

\section{Introduction}

Let $\C$ be a small monoidal $\Ab$-category and $F:\C\to\,_R\M_R$ an additive strong monoidal functor
to the category of bimodules over some ring $R$. Non-commutative Tannaka duality consists of constructing a 
quantum groupoid or, more generally, a monoidal comonad $\bimQ$ and a universal factorization of $F$ through the forgetful functor
$\M^\bimQ\to\,_R\M_R$ of the category of $\bimQ$-comodules. 
\begin{equation} \label{tria-1}
\parbox{130pt}{
\setlength{\unitlength}{0.3mm}
\begin{picture}(130,85)
\put(0,5){$\C$}
\put(16,8){\vector(1,0){60}} \put(45,13){$F$}
\put(82,5){$_R\M_R$}
\put(89,72){$\M^\bimQ$}
\put(15,17){\vector(4,3){65}} \put(40,47){$K$}
\put(95,65){\vector(0,-1){47}} \put(99,43){$\F^\bimQ$}
\end{picture}
}
\end{equation}
This `Tannaka construction' can be done for quite general $\C$ and $F$. 
The difficult part of the problem is to find conditions that 
guarantee that $K$ establishes a monoidal equivalence of $\C$ with a distinguished full subcategory 
of $\M^\bimQ$ which is usually a subcategory $\M^\bimQ_f$ of comodules obeying a finiteness condition.
This equivalence is usually referred to as the Representation Theorem. 

If we replace $_R\M_R$ with the category $\M_k$ of modules over the commutative ring $k$ we are in the framework
of `commutative' Tannaka dualities of Saavedra-Rivano \cite{Saavedra-Rivano}, Deligne and Milne \cite{Deligne-Milne}
and Ulbrich \cite{Ulbrich}, see also \cite{Schauenburg: Thesis} and \cite{Jo-St}.
In \cite{Day96} Day considers arbitrary closed symmetric monoidal categories as targets of the fiber functor. 
McCrudden in \cite{McCrudden} generalizes this further by considering braided target categories.
In their celebrated works Deligne \cite{Deligne} and Doplicher and Roberts \cite{DR} went beyond Tannaka duality in that they proved existence and uniqueness of the fiber functor, establishing in this way an 
`abstract duality' theorem. 

The need of a non-braided target category such as $_R\M_R$ emerged in the 90's when, motivated by several areas of mathematics and physics, various authors proposed groupoid-like generalizations of the notion of Hopf algebra
\cite{Maltsiniotis}, 
\cite{Hayashi}, \cite{BSz96}, \cite{Lu}, 
\cite{Schauenburg: ddqg}, \cite{Sz: EM}, \cite{Day-Street}, \cite{B-Sz}. The most fundamental among them is the
notion of bialgebroid which had been actually invented much earlier by Takeuchi under the name $\x_R$-bialgebras
\cite{Takeuchi}. These `quantum groupoids' $H$ all share in the property that their comodule categories $\M^H$ have strong monoidal forgetful functors to some bimodule category $_R\M_R$ where $R$, the non-commutative base ring of $H$, generalizes the algebra of functions on the objects of a groupoid.
Tannaka duality, resp. reconstruction theorems in this non-commutative setting has been worked out by Bruguières \cite{Brugi}, Hayashi \cite{Hayashi: Tannaka}, \Phung \cite{Phung}, and recently by Pfeiffer \cite{Pfeif} and McCurdy \cite{McCurdy}.

In all the above  mentioned Tannaka dualities, either over a commutative base or not, the hypotheses on $\C$ include abelianness. Moreover, the fiber functors $F$ are faithful exact strong monoidal with values in the subcategory $_R\M_R^\fgp$ of right dual objects in $_R\M_R$, i.e., $FC$ is finitely generated and projective as right $R$-module for all $C\in\ob\C$. This means that $\M^Q_f$ is chosen to be $\M^Q_\fgp$ which is the full subcategory of $\M^Q$ the underlying bimodules of which are in $_R\M_R^\fgp$. There can be arguments for choosing different $\M^Q_f$ but in the present paper we insist to this tradition. However, 
abelianness of $\C$ will be relaxed for the following reason. For generic $R$ the category $_R\M_R^\fgp$ is not abelian, not all morphisms have kernels or cokernels. Therefore if we once arrive to a Representation Theorem
stating the equivalence $\C\simeq\M^Q_\fgp$ then this will imply severe restrictions on the quantum groupoid
or comonad $Q$. Similar reason lead Bruguières to introduce `semitransitive corings' in his paper \cite{Brugi}.
The Representation Theorem of \Phung \cite[Corollary 2.2.5]{Phung} also uses `semitransitive bialgebroids'
for this reason. 

The motivation of the present paper was to derive Tannaka duality for as general bialgebroids as possible without assuming that $\C$ is abelian. This involved, unfortunately, that we did not know at the beginning what properties
to postulate for a fiber functor. This is why the definition of fiber functor awaits until Section \ref{sec: rep}.
It includes all the faithful exact functors if $\C$ is abelian. 

Diagram (\ref{tria-1}) is reminiscent to the Eilenberg-Moore situation for comonads. If we replace $\C$ with a `larger' category $\Pre$ and assume that $\F:\Pre\to\,_R\M_R$ is a strong monoidal and left adjoint functor then
there is monoidal comonad $\bimQ$ and a universal factorization
\begin{equation}
\parbox{130pt}{
\setlength{\unitlength}{0.3mm}
\begin{picture}(130,85)
\put(0,72){$\Pre$}
\put(15,66){\vector(4,-3){65}} \put(38,33){$\F$}
\put(82,5){$_R\M_R$}
\put(89,72){$\M^\bimQ$}
\put(17,75){\vector(1,0){60}} \put(42,79){$\K$}
\put(95,65){\vector(0,-1){47}} \put(99,43){$\F^\bimQ$}
\end{picture}
}
\end{equation}
where $\K$ is the comparison functor. The analogue of the Representation Theorem is the statement that if $\F$
is comonadic then $\bimK$ is an equivalence. 

If we choose $\Pre$ to be the category $\Add(\C^\op,\Ab)$ of additive presheaves over $\C$ then left Kan extension provides a  connection between the two approaches. 
\begin{equation}
\parbox{130pt}{
\setlength{\unitlength}{0.3mm}
\begin{picture}(130,85)
\put(0,5){$\C$}
\put(16,8){\vector(1,0){60}} \put(42,13){$F$}
\put(0,72){$\Pre$}
\put(15,66){\vector(4,-3){65}} \put(45,46){$\F$}
\put(82,5){$_R\M_R$}
\put(3,16){\vector(0,1){50}} \put(-7,35){$Y$}
\end{picture}
}
\end{equation}
where $\F$ is the left Kan extension of $F$ along the Yoneda embedding $Y:\C\to\Pre$. Using the left Kan extension functor in Tannaka duality appears in Brian Day's paper \cite{Day96}. For a  comparison with \cite{Day96}
one should consider the long forgetful functor $\longf:\C\rarr{F}\,_R\M_R\to\Ab$ instead of $F$. 
Then we can say that we consider only enrichment over $\Ab$ but allow for more general functors than strong monoidal ones: these are the essentially strong monoidal functors \cite{Sz: Brussels}.
(Observe that with non-commutative base the target category of the strong monoidal (part) of the fiber
functor is no longer the same as the category over which $\C$ is enriched. )

If $F$ is flat then the EIlenberg-Moore category $\M^Q$ of comodules 
becomes equivalent to a category $\She$ of sheaves over $\C$ w.r.t. some monoidal Grothendieck topology
$\GroT$. This topology is encoded in the structure of a special left exact monoidal idempotent monad $\T$ on $\Pre$ which arises from the fact that the comparison functor $\K:\Pre\to\M^H$ is the reflection of a monoidal
localization $\Ll$. 
If $F$ obeys also the finiteness condition, i.e., $FC$ is right dual in $_R\M_R$ for $C\in\ob\C$, then
the Eilenberg-Moore construction of the comonad $Q$ from the Kan extension $\F$ reduces to the familiar coend construction \cite{Jo-St} of the bialgebroid $H$ from $F$ \cite{Phung}. Therefore our bialgebroids are always such
that $_RH$ is flat and $\M^H$ is equivalent to a monoidal category of sheaves over $\C$.

\subsection{The outline of the paper}

In the explanatory Section 2 the reader can acquaint with the basic notions of tensor pproduct of additive functors, the Day convolution on the presheaf category $\Pre$ and how flatness and essential strong monoidality are inherited to the left Kan extension functor. 

The monoidal adjunction $\F\dashv\G$, which is discussed in Section 3, immediately yields the left exact monoidal comonad $\bimQ=\F\G$. We investigate also in this section the non-monoidal adjunction $\Fsh\dashv\Gsh$ which yields the left exact comonad $\Qsh$ on $\M_R$. We then construct a category equivalence $\psi:\M^Q\simeq \M^\Qsh$ which, in a `very strong sense', respects the forgetful functor $\phish:\,_R\M_R\to\M_R$. 

In Section 4 the comparison functor $\K:\Pre\to\M^Q$ and its right adjoint are studied. The comparison functor
provides the Tannaka factorization of the fiber functor through the comodule category $\M^Q$. We show that
the right adjoint $\Ll$ is a monoidal localization, i.e., fully faithful with a left exact left adjoint. Then we construct
a monad isomorphism between the left exact idempotent monads $\T$ and $\Tsh$ on $\Pre$ corresponding to the
adjunctions $\K\dashv\Ll$ and $\Ks\dashv\Ls$.

In Section 5 first we study left exact monoidal idempotent monads $\T$ in generality and find the condition that makes their categories of modules $\She$ monoidal. (In general only the Kleisli category is known to be monoidal.)
This motivates the definition of monoidal Grothendieck topologies for which the category of sheaves is precisely the Eilenberg-Moore category $\She$. In case of $\T$ is constructed from a flat essentially strong monoidal functor
$\longf$ we discuss the distingushed sheaf $\longg$ which has a monoid structure and
subgenerates all sheaves. It is, as a functor, the pointwise left dual of the fiber functor and also the preimage
of the bialgebroid $H=\F G$ under the (Kan extended) fiber functor.

Section 6 contains our main results on Tannaka duality. We define the notion of fiber functor in Definition 
\ref{def: fiber} and prove the Representation Theorem and the Reconstruction Theorem 
(see theorems  \ref{thm: rep} and \ref{thm: reco}).

In Section 7 imposing further restrictions on the domain category $\C$ we prove in Theorem \ref{thm: duality}
that \textit{coarse fiber functors} exist. These are the fiber functors for which the Grothendieck topology is the
coarsest on $\C$.

In Section 8 we try to relate different fiber functors and their bialgebroids on the same Cauchy complete
autonomous monoidal category $\C$. If $\C$ admits a coarse fiber functor then we can prove a theorem
inspired by Ulbrich's Theorem on the equivalence of fiber functors on $\M^H$ and faithfully flat Galois extensions 
of the base ring and by its generalizations by Schauenburg \cite{Schauenburg: hbg} and by B\"ohm and Brzezi\'nski 
\cite{BB}. 
Finally, for autonomous $\C$ equipped with a monoidal natural isomorphism between left and right dual
objects we construct an invertible antipode on the reconstructed bialgebroid $H$.

\subsection{On notation and terminology}

As a general principle 
we try to balance between the conventions used in category theory and in Hopf algebra theory.
Whenever possible we use general categorical notation \cite{CWM, MM,Borceux}, e.g. identify objects with their unit arrows. But for monads and comonads a deviation from the usual convention seems more appropriate, see 
\ref{sss: mon comon}. 
The monoidal product of $_R\M_R$, as well as of categories
monoidally comonadic over $_R\M_R$, is denoted by $\oR$. But in order to avoid some ambiguities we are forced
to use another symbol in the comonad $\under\obar{R}H$ associated to the bialgebroid 
(actually of the underlying coring). This leads to the unusual expression for the coproduct: $\cop:H\to H\obar{R}H$. 

Tensor product of additive presheaves, as an instance of Day convolution \cite{Day70}, will be denoted by the
symbol $\odot$. 

Boldface letters usually refer to $\Ab$-valued functors. $_R\M_R$ -valued functors are normal Roman while $\M_R$-valued functors are distinguished by a $\sharp$ sign. For example, the fiber functor will appear in three guises: $\longf$, $F$ and $\fsh$.

\subsubsection{Essentially strong monoidal functors}
A monoidal functor $\C\to\M$ is considered as a triple $\bra F,F_2,F_0\ket$ where $F:\C\to\M$ is a functor, $F_2$ is a natural transformation with components $F_{C,D}:FC\ot FD\to F(C\ot D)$ and $F_0$ is an arrow $R\to F I$ where $R$ and $I$ are the unit objects of $\M$ and $\C$, respectively. These data are subject to obey 3 coherence conditions: 1 hexagon for associativity and 2 squares for unitality. We use the terminology \textit{monoidal/strong monoidal/strict monoidal} functor according to whether $F_2$ and $F_0$ are just arrows or isomorphisms or identities, respectively.
A monoidal functor is called \textit{normal} if $F_0$ is invertible.

Every monoidal functor $\longf:\C\to\M$ maps monoids to monoids, in particular $R=\longf I$ has a monoid structure in $\M$. This leads to an essentially unique factorization of $\longf$ as $\C\rarr{F}\,_R\M_R\to\M$ with $F$ normal
monoidal \cite{Sz: Brussels}. The monoidal functor $\longf$ is called essentially strong monoidal if its normal part $F$ is strong monoidal. Therefore the essentially strong monoidal functors can simply be thought of as the strong monoidal functors to a bimodule category composed with the monoidal forgetful functor $_R\M_R\to\M$.

The normal factorization of the essentialy strong monoidal $\longf$ is to be considered as the zeroth step of
non-commutative Tannaka reconstruction: The reconstruction of the base ring $R$ from the a priori data $\bra\C,\longf\ket$.

\subsubsection{Monads and comonads}\label{sss: mon comon}

A monad $\T$ on a monoidal category $\M$ is denoted as a triple $\bra T,\mu,\eta\ket$ where $T$ is the underlying endofunctor, $\mu:T^2\to T$ and $\eta:\M\to T$ are the multiplication and unit of the monad. 
Dually, a comonad is a triple $\bra Q,\cop,\eps\ket$ where $\cop:Q\to Q^2$ is the comultiplication and $\eps:Q\to\M$
is the counit.
In contrast to the categorist notation the Eilenberg-Moore category of the monad $\T$ is denoted by $\M_\T$ and that of a comonad $\Q$ by $\M^\Q$, complying in this way with the notation used in ring, coring and Hopf algebra theory. 
Accordingly, the objects of $\M_\T$ are called $\T$-modules and the objects of $\M^\Q$ are called $\Q$-comodules.
E.g. the latter are pairs $\bra M,\alpha\ket$ where $M\in\ob\M$ and $\alpha:M\to QM$ is the coaction.

\subsubsection{Flatness}

The notion of flatness of a functor is a substitute for left exactness in the situation where finite limits may not exist in
the domain category. The general definition can be found in \cite{Borceux} which, for addive functors on additive categories, can be rephrased as follows.
At first we define the \textit{category of elements} $\Elt F$ of a functor $F:\C\to\Ab$. It has objects $\bra x,C\ket$ where $C\in\ob\C$ and $x\in FC$ and arrows $\bra x,C\ket\rarr{t}\bra y,D\ket$ those $t\in\C(C,D)$ for which
$Ftx=y$ holds. There is an obvious forgetful functor $\Elt F\to \C$.
Now for an additive category $\C$ an additive functor $F:\C\to \Ab$ is called flat if its category $\Elt F$
of elements is cofiltered, i.e., if
\renewcommand{\labelenumi}{(flat-\arabic{enumi})}
\begin{enumerate}
\item Given objects $A,B$ of $\C$ and elements $x\in FA$, $y\in FB$ there exist an object $C$, arrows
$s:C\to A$, $t:C\to B$ and a $z\in FC$ such that $Fsz=x$ and $Ftz=y$. \label{flat-1}
\item Given an arrow $t:B\to C$ in $\C$ and an element $y\in FB$ such that $Fty=0$ there exist
an arrow $s:A\to B$ and an $x\in FA$ such that $Fsx=y$ and $t\ci s=0$. \label{flat-2}
\end{enumerate}
\renewcommand{\labelenumi}{(\arabic{enumi})}

The first axiom could have been ommitted altogether since for
additive categories $\C$ (flat-1) is automatically satisfied by taking $C$ to be the direct sum of $A$ and
$B$. 

We need also flatness of functors $F:\C\to\M$ where $\M$ is an additive category equipped with a canonical forgetful functor to $\Ab$. In all such cases we shall say that $F$ is flat when the composite $\C\to\M\to\Ab$ is flat in the above sense. If the forgetful functor is left adjoint, as happens for categories of modules of a ring $\M_R\to\Ab$ for example or for categories of comodules of $R$-corings $\M^H\to\M_R\to\Ab$, then this functor preserves left Kan extension so it is practically indifferent whether we take the Kan extension of the $\Ab$-valued,  the $\M_R$-valued or the $\M^H$-valued functor.

If $\C$ has kernels, hence all finite limits, then $F$ is flat precisely when it preserves these limits, i.e., it is left exact.

For purposes of the present paper the most important property of flat functors is the following one
\cite[I. Proposition 6.3.8]{Borceux}: $F$ is flat precisely when its left Kan extension along the Yoneda embedding is
left exact (c.f. Lemma \ref{lem: FF left adj}).

\subsubsection{Bialgebroids}

A bialgebra over $k$ is both a monoid and comonoid in the symmetric monoidal category $\M_k$. A bialgebroid $H$ can be thought of as a `bialgebra over a non-commutative ring $R$'. In fact $H$ is a comonoid in $_R\M_R$ (so an $R$-coring)
and a monoid in $_{R^e}\M_{R^e}$ (so an $R^e$-ring) where $R^e=R^\op\ot R$. The compatibility conditions are rather delicate but as it has been shown in \cite{Sz: EM} they are equivalent to the requirement that the monad $\under\am{R^e}H$ on $\M_{R^e}\equiv\,_R\M_R$ associated to the $R^e$-ring has an opmonoidal structure.
Unfortunately this `bimonad' interpretation of bialgebroids will have no use for this paper since we are interested in comodule categories of bialgebroids for which a monoidal comonad description is the more appropriate \cite{Day-Street}.

Throughout this paper bialgebroid will always mean \textit{right bialgebroid}, as they are called in \cite{Ka-Sz}.
The category of comodules of an $R$-bialgebroid is defined as the category $\M^H$ of comodules of its underlying $R$-coring. The monoidal product on $\M^H$ is introduced by first noticing that a right $R$-module $N$ which is equipped with a right $H$-coaction $N\to N\obar{R}H$, $n\mapsto n\nulT\obar{R}n\oneT$, is automatically an $R$-$R$-bimodule \cite[1.4.]{Phung} in such a way that all $H$-comodule morphisms become $R$-$R$-bimodule morphisms. 
Then the monoidal product of $H$-comodules can be introduced by setting $M\oR N$ to be the right $R$-module $M\oR N$ 
equipped with the coaction $m\oR n\mapsto (m\nulT\oR n\nulT)\obar{R}m\oneT n\oneT$. 
This is a very fortunate interplay between the monad $R\ot\under$ and the comonad $\under\obar{R}H$ on $\M^H$ which  will be generalized from bialgebroids to general left exact monoidal comonads in Subsection \ref{ss: Fsh - Gsh}.

For more about bialgebroids and Hopf algebroids we refer to \cite{Bohm} and the references therein.


\section{Extension of functors to presheaves}

\subsection{Tensor product of functors}

For a small $\Ab$-category $\C$ and a pair of additive functors $U:\C^\op\to\Ab$ and $\longf:\C\to\Ab$ one  defines the abelian group $U\amo{\C}\longf$ as the coequalizer
\begin{equation} \label{tensor}
\coprod_{C,D\in\ob\C}UD\ot\C(C,D)\ot \longf C
\parbox{50pt}{
\begin{picture}(50,40)
\put(5,17){\vector(1,0){40}} \put(22,27){$\mathbf{L}$}
\put(5,23){\vector(1,0){40}} \put(22,8){$\mathbf{R}$}
\end{picture}
}
\coprod_{C\in\ob\C}UC\ot \longf C
\parbox{42pt}{
\begin{picture}(42,40)
\put(5,20){\vector(1,0){30}} \put(38,20){\vector(1,0){0}} 
\end{picture}
}
U\amo{\C} \longf
\end{equation}
in $\Ab$ where the maps $\mathbf{L}$, $\mathbf{R}$ are defined by 
\begin{align*}
\mathbf{L}\ci i_{C,D}(u\ot t\ot x)&=i_C(Ut(u)\ot x)\\
\mathbf{R}\ci i_{C,D}(u\ot t\ot x)&=i_D(u\ot \longf t(x))
\end{align*}
for $x\in \longf C$, $u\in UD$ and $t\in\C(C,D)$.
Equivalently, $U\amo{\C}\longf$ is the coend of the functor $U\ot \longf:\C^\op\x\C\to\Ab$, so we write
\[
U\amo{\C}\longf=\int^C UC\ot \longf C\,.
\]

For $u\ot x\in UC\ot \longf C$ we denote its image in the tensor product by $u\am{C}x$. An arbitrary element of
$U\amo{\C}\longf$ is a finite sum of such rank 1 tensors. The rank 1 tensors obey the relations
\begin{equation} \label{tensor rels}
u\cdot t\am{C}x=u\am{D}t\cdot x\,\qquad u\in UD,\ t\in\C(C,D),\ x\in \longf C
\end{equation}
where we introduced the shorthand notation $u\cdot t:=Ut(u)$ and $t\cdot x:=\longf t(x)$.
\begin{exa}
If $\C$ is a 1 object $\Ab$-category, i.e., a ring $S$, then $U$ is a right $S$-module, $\longf$ is a left $S$-module
and $U\amo{\C}\longf$ is the tensor product of $S$-modules $U\am{S}\longf$.
\end{exa}

For natural transformations $\sigma:U\to U'$ and $\tau:\longf\to \longf'$ one can easily see that
\[
U\amo{\C}\longf\to U'\amo{\C}\longf',\quad u\am{A}x\mapsto \sigma_A(u)\am{A}\tau_A(x)
\]
is a well-defined map of abelian groups and this extends the definition of the tensor product over $\C$ to
a bifunctor
\[
\under\amo{\C}\under\,:\Add(\C^\op,\Ab)\x\Add(\C,\Ab)\to\Ab\,.
\]
Fixing $\longf$ and letting $U$ to vary over the presheaf category $\hat\C:=\Add(\C^\op,\Ab)$ we get a functor
\[
\longF:=\,\under\amo{\C}\longf:\Pre\to\Ab\,.
\]
Composing $\longF$ with the Yoneda embedding $Y:\C\to\hat\C$, $A\mapsto\C(\under,A)$ the relations (\ref{tensor rels}) imply that there is a natural isomorphism
\begin{equation} \label{N}
\longN_A:\longF YA\to \longf A,\qquad s\am{C} x\mapsto \longf s(x)\,.
\end{equation}
As it is shown in \cite[X.4.]{CWM} $\longF$ is the left Kan extension of $\longf$ along $Y$.

By \cite[I. Proposition 6.3.8]{Borceux} $\longf$ is flat precisely when $\longF$ is left exact which is part of the next Lemma.

\begin{lem} \label{lem: FF left adj}
For an additive functor $\longF:\Pre\to\Ab$ consider the conditions:
\begin{enumerate}
\item There is an additive (and flat) functor $\longf:\C\to\Ab$
and a natural isomorphism $\under\amo{\C}\longf\cong\longF$.
\item $\longF$ is left adjoint (and left exact).
\end{enumerate}
Then (1)$\Rightarrow$(2) for any small $\Ab$-category $\C$. If $\C$ is additive then also (2)$\Rightarrow$(1).
\end{lem}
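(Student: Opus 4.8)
The plan is to prove the two implications separately, using the tensor product description and properties of coends.

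\medskip

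\noindent\textbf{Proof of (1)$\Rightarrow$(2).} Suppose $\longF \cong \under\amo{\C}\longf$ for some additive $\longf:\C\to\Ab$. First I would exhibit a right adjoint $\bimG:\Ab\to\Pre$ explicitly: for an abelian group $A$ set $(\bimG A)(C) := \Ab(\longf C, A)$, which is additive in $C^\op$ and hence a presheaf. The required natural bijection
\[
\Ab\!\left(\textstyle\int^{C} UC\ot\longf C,\;A\right)\;\cong\;\Nat\!\big(U,\,\Ab(\longf\under, A)\big)
\]
follows from the universal property of the coend together with the standard tensor-hom adjunction in $\Ab$ applied termwise: a map out of the coend is a dinatural family $UC\ot\longf C\to A$, equivalently a dinatural family $UC\to\Ab(\longf C,A)$, equivalently (dinaturality in the second, contravariant, slot becoming naturality) a natural transformation $U\to\Ab(\longf\under,A)$. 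This proves $\longF$ is left adjoint. If moreover $\longf$ is flat, then by the cited result \cite[I.\ Proposition 6.3.8]{Borceux} the left Kan extension of $\longf$ along $Y$ is left exact; since (\ref{N}) identifies $\longF$ with exactly that left Kan extension, $\longF$ is left exact.

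\medskip

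\noindent\textbf{Proof of (2)$\Rightarrow$(1) when $\C$ is additive.} Assume $\longF:\Pre\to\Ab$ is left adjoint. I would simply set $\longf := \longF\ci Y:\C\to\Ab$, which is additive since $Y$ and $\longF$ are. The point is to produce a natural isomorphism $\under\amo{\C}\longf \cong \longF$. The key fact is that every additive presheaf $U\in\Pre$ is canonically a colimit of representables — indeed the coend formula $U \cong \int^{C} UC\ot\C(\under,C)$ exhibits this — and $\Pre$ is the free cocompletion of $\C$ under ($\Ab$-enriched) colimits. Since $\longF$ is left adjoint it preserves all colimits, so
\[
\longF U \;\cong\; \longF\!\left(\textstyle\int^{C} UC\ot YC\right)\;\cong\;\textstyle\int^{C} UC\ot\longF YC\;=\;\textstyle\int^{C} UC\ot\longf C\;=\;U\amo{\C}\longf,
\]
naturally in $U$; this uses that $\longF$ commutes with the tensoring $UC\ot\under$ by abelian groups and with the coend, both being colimit constructions. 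For the left-exactness clause: if $\longF$ is also left exact, then by (\ref{N}) it is a left-exact left Kan extension of $\longf$ along $Y$, so again \cite[I.\ Proposition 6.3.8]{Borceux} gives that $\longf$ is flat. Here additivity of $\C$ is what guarantees (flat-1) is automatic and, more importantly, that $\Pre$ really is the $\Ab$-enriched cocompletion so that the colimit-preservation argument applies.

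\medskip

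\noindent\textbf{Main obstacle.} The delicate step is (2)$\Rightarrow$(1): making precise that a left adjoint out of $\Pre$ is determined by its restriction along $Y$ via the coend formula, i.e.\ that $\longF U\cong U\amo{\C}\longF Y$ naturally. This is where one must be careful that the relevant colimits (tensoring by abelian groups, coends over $\C$) are genuine $\Ab$-enriched colimits preserved by left adjoints, and that the comparison map $U\amo{\C}(\longF Y)\to\longF U$ built from the canonical maps $UC\ot\longF YC\to\longF U$ (adjoint to $UC\to\Pre(YC,\,\under)\to\ldots$, or more directly induced by functoriality) is an isomorphism — which reduces, by the colimit-preservation, to the case $U=YC$ where it is (\ref{N}). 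I expect the rest (the adjunction bijection in (1), and the two flatness equivalences via \cite{Borceux}) to be routine.
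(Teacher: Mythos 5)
Your proposal is correct, but it takes a genuinely different route from the paper in both halves. For (1)$\Rightarrow$(2) the adjunction part coincides with the paper's (the explicit right adjoint $X\mapsto\Ab(\longf\under,X)$ via termwise tensor--hom), but for left exactness the paper does not invoke the Borceux citation: it presents $\longf$ canonically as a colimit of representables over $(\Elt\longf)^\op$, deduces $\longF U\cong\colim\left((\Elt \longf)^\op\rarr{}\C^\op\rarr{U}\Ab\right)$, and concludes because flatness makes this a filtered colimit, and filtered colimits in $\Ab$ commute with finite limits. For (2)$\Rightarrow$(1) you identify $\longF\cong\under\amo{\C}\longF Y$ by co-Yoneda density plus preservation of colimits; the paper instead compares right adjoints through the chain $\Ab(U\amo{\C}\longf,X)\cong\Pre(U,\Ab(\longF Y\under,X))\cong\Pre(U,\Pre(Y\under,\longG X))\cong\Pre(U,\longG X)\cong\Ab(\longF U,X)$, using Yoneda, which sidesteps exactly the delicate step you flag (checking that tensors by abelian groups and coends are ordinary colimits preserved by $\longF$); your argument is fine once that reduction to $U=YC$ is spelled out, but the hom-chain is shorter. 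For the flatness clause of (2)$\Rightarrow$(1) the paper again argues by hand: given $t:C\to D$ and $x\in\Ker\longf t$, it takes the kernel sieve $V\rightarrowtail YC$ of $Yt$, applies the left exact $\longF$, and uses additivity of $\C$ to produce the required $s$ with $t\ci s=0$. Your outsourcing of both flatness clauses to \cite[I. Proposition 6.3.8]{Borceux} is mathematically defensible but slightly at odds with the paper's intent: that proposition is formulated for $\Set$-valued functors, and the paper explicitly presents this Lemma as the place where the additive version is (partly) established, so a self-contained argument is what the Lemma is for. Finally, your claim that additivity of $\C$ is what makes $\Pre$ the free $\Ab$-cocompletion is off -- that holds for every small $\Ab$-category; additivity is needed only for the flatness conclusion, namely for (flat-1) and in the verification of (flat-2).
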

\begin{proof}
$(1)\Rightarrow(2)$ The right adjoint of $\under\amo{\C}\longf$ is the functor
\begin{align*}
\G:\Ab&\to\Pre,\\
X&\mapsto\Ab(\longf\under,X)
\end{align*}
Taking the canonical presentation of $\longf$ as the colimit of representable functors,
\[
\longf=\colim\left((\Elt \longf)^\op\rarr{}\C^\op\rarr{}\Add(\C,\Ab)\right)
\]
and using the fact that for each presheaf $U$ the functor $U\amo{\C}\under:\Add(\C,\Ab)\to\Ab$ is left adjoint, too,
we obtain
\[
\longF U=\colim\left((\Elt \longf)^\op\rarr{}\C^\op\rarr{U}\Ab\right)\,.
\]
Therefore if $\longf$ is flat then $\longF$ is (pointwise) a filtered colimit of abelian groups and therefore commutes with
finite limits.

$(2)\Rightarrow(1)$. Let $\longf:=\longF Y$ and $\longF\dashv\longG$. Then
\begin{align*}
\Ab(U\amo{\C}\longf,X)&\cong\hat\C(U,\Ab(\longf\under,X))=\hat\C(U,\Ab(\longF Y\under,X))\cong\\
&\cong\hat\C(U,\hat\C(Y\under,\longG X))\cong\hat\C(U,\longG X)\cong\\
&\cong\Ab(\longF U,X)
\end{align*}
implying that $\longF\cong\,\under\amo{\C}\longf$.
Assume $\longF$ is left exact. Axiom (flat-1) for flatness of $\longf$  holds by additivity of $\C$. In order to verify
(flat -2) let $t:C\to D$ and $x\in\Ker \longf t$. The kernel $V\rarr{\alpha} YC$ of $Yt$ is the presheaf the elements
$\bra v,B\ket$ of which are the arrows $B\rarr{v}C$ such that $t\ci v=0$. 
Composing the kernel $\longF \alpha$ of $\longf t$ with the canonical epimorphism $\coprod_B VB\ot FD\epi V\am{\C}\longf$ 
and using additivity of $\C$ we see that there is an object $B\in\C$ and a $v\am{B}y\in\longF V$ such that 
$\longf vy=x$.
\end{proof}

\subsection{Monoidal structure on the presheaf category}

As usual $\C$ is called a small monoidal $\Ab$-category if it is a small $\Ab$-category equipped
with a monoidal structure in which the monoidal product $\ot: \C\x\C\to\C$ is additive in both arguments.
The unit object is denoted by $I$ and the coherence natural isomorphisms by $\asso_{A,B,C}$, $\luni_C$ and
$\runi_C$. From now on $\C$ always denotes such a category. Presheaves on $\C$ are always $\Ab$-valued
and additive.

Let $U$ and $V$ be presheaves on $\C$ and define the presheaf $U\odot V$ as follows.
For $C\in\ob\C$ let $(U\odot V)C$ be a coequalizer
\begin{gather*}
\coprod_{C',C'',D',D''\in\ob\C}UD'\ot\C(C',D')\ot VD''\ot \C(C'',D'')\ot \C(C,C'\ot C'')\\
\parbox{50pt}{
\begin{picture}(50,40)
\put(5,17){\vector(1,0){40}} \put(22,27){$\mathbf{L}$}
\put(5,23){\vector(1,0){40}} \put(22,8){$\mathbf{R}$}
\end{picture}
}
\coprod_{C',C''\in\ob\C} UC'\ot VC''\ot \C(C,C'\ot C'')
\parbox{50pt}{
\begin{picture}(50,40)
\put(5,20){\vector(1,0){30}} \put(38,20){\vector(1,0){0}} 
\end{picture}
}
(U\odot V)C
\end{gather*}
where the maps $\mathbf{L}$, $\mathbf{R}$ are defined by 
\begin{align*}
\mathbf{L}\ci i_{C',C'',D',D''}(x\ot t'\ot y\ot t''\ot t)&=i_{C',C''}(Ut'x\ot Vt''y\ot t)\\
\mathbf{R}\ci i_{C',C'',D',D''}(x\ot t'\ot y\ot t''\ot t)&=i_{D',D''}(x\ot y\ot ((t'\ot t'')\ci t)
\end{align*}
In other words, the abelian group $(U\odot V)C$ consists of $\ZZ$-linear combinations of words
\begin{equation} \label{Z-generators of conv.prod.}
[x,y,t]^C_{C',C''}\qquad\text{where}\ x\in UC',\ y\in VC'',\ t\in\C(C,C'\ot C'')
\end{equation}
subject to the relations
\[
[Ut'x,Vt''y,t]^C_{C',C''}=[x,y,(t'\ot t'')\ci t]^C_{D',D''}
\]
where $x\in UD'$, $y\in VD''$, $t\in\C(C,C'\ot C'')$, $t'\in\C(C',D')$, $t''\in\C(C'',D'')$ 
and to the obvious $\ZZ$-linearity relations in all the three arguments.

For an arrow $s:C\to D$ in $C$ let
\[
(U\odot V)s:(U\odot V)D\to(U\odot V)C,\quad [x,y,t]^D_{D',D''}\mapsto[x,y,t\ci s]^C_{D',D''}\,.
\]
This defines the object map of $\odot:\hat\C\x\hat\C\to\hat\C$. The arrow map is
\begin{align*}
(\mu\odot\nu)_C&:(U\odot V)C\to(U'\odot V')C\\
&[x,y,t]^C_{C',C''}\mapsto [\mu_{C'}(x),\nu_{C''}(y),t]^C_{C',C''}
\end{align*}
where $\mu:U\to U'$ and $\nu:V\to V'$ are natural transformations.

This definition of the monoidal product of presheaves is nothing but the expansion, in terms of coproducts and coequalizer, of the coend
\[
U\odot V=\int^{C'}\int^{C''} UC'\ot VC''\ot\C(\under,C'\ot C'')\,.
\]

As for the monoidal unit we set
\[
\YI:=YI=\C(\under,I)
\]
where $I$ is the monoidal unit of $\C$.

The natural isomorphisms for associativity, left and right unitalness of $\odot$ can be given in terms of the
corresponding data of $\ot$ in $\C$ as follows:
\begin{align*}
(\asso_{U,V,W})_A&:(U\odot(V\odot W))A\iso((U\odot V)\odot W)A\\
[x,[y,z,s]^E_{C,D},t]^A_{B,E}&\mapsto[[x,y,1]^{B\ot C}_{B,C},z,\asso_{B,C,D}\ci(B\ot s)\ci t]^A_{B\ot C,D}
\end{align*}
\begin{align*}
\luni_U&:\YI\odot U\iso U\\
(\luni_U)_A&:[s,x,t]^A_{B,C}\mapsto U(\luni_C\ci(s\ot C)\ci t)x\\
\runi_U&:U\odot \YI\iso U\\
(\runi_U)_A&:[x,s,t]^A_{B,C}\mapsto U(\runi_B\ci(B\ot s)\ci t)x
\end{align*}
It is left to the reader to verify that the triple $\bra\hat\C,\odot,\hat I\ket$ together with $\asso$, $\luni$ and $\runi$
as above satisfy the axioms for a monoidal category.

\begin{pro} \label{pro: odot}
The category $\hat\C=\Add(\C^\op,\Ab)$ of presheaves over the monoidal $\Ab$-category $\C$ has a monoidal structure $\odot$, unique up to isomorphism,
such that the Yoneda embedding $Y:\C\to\hat\C$ is strong monoidal and such that $\odot$ preserves colimits in both arguments.
\end{pro}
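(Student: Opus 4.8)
The plan is to read Proposition \ref{pro: odot} as the $\Ab$-enriched Day convolution theorem. The explicit description of $\odot$, of $\YI$ and of the constraints $\asso,\luni,\runi$ has already been exhibited above, so the existence part of the statement amounts to the verification---deferred to the reader there---that these data satisfy the pentagon and triangle identities. I would either carry this out directly from the coend presentation, or, more economically, observe that the $\Ab$-valued family $P(A,B;C)=\C(C,A\ot C')$ --- more precisely $P(A,B;C)=\C(C,A\ot B)$ --- together with $J=\C(\under,I)$ forms an $\Ab$-promonoidal structure on $\C$, its associativity and unit cells being induced by those of $(\C,\ot)$, and then quote Day's theorem \cite{Day70} that convolution against a promonoidal structure yields a monoidal category cocontinuous in each variable.

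Granting the monoidal structure, the next step is that $Y$ is strong monoidal. Substituting representables into the defining coend gives
\[
YA\odot YB=\int^{C'}\!\!\int^{C''}\C(C',A)\ot\C(C'',B)\ot\C(\under,C'\ot C''),
\]
and evaluating the two coends by the co-Yoneda (density) lemma---first in $C''$, then in $C'$---collapses this to $\C(\under,A\ot B)=Y(A\ot B)$, naturally in $A$ and $B$; concretely the class of a generator $[s,t,u]^C_{C',C''}$ is sent to $(s\ot t)\ci u$. Together with $\YI=YI$ this provides the monoidal-functor structure on $Y$, and that it is coherent (compatible with $\asso,\luni,\runi$ on both sides) is immediate from the displayed formulas for the constraints of $\odot$, which were set up precisely so that $Y$'s associativity hexagon and unit squares reduce to the corresponding coherences in $\C$.

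For cocontinuity, fix a presheaf $V$ and consider $\under\odot V:\Pre\to\Pre$. Evaluated at an object $C$ it is assembled from the functors $U\mapsto UC'$ (evaluation at an object, cocontinuous since colimits in $\Pre$ are computed pointwise), from tensoring in $\Ab$ by the fixed groups $VC''$ and $\C(C,C'\ot C'')$, from coproducts, and from a coequalizer; hence it is a colimit of cocontinuous functors of $U$ and therefore itself cocontinuous. (Equivalently, since $\Pre$ is locally presentable, $\under\odot V$ then has a right adjoint, the Day internal hom.) The same argument applies to $U\odot\under$.

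Finally, uniqueness. Let $\odot'$ be any monoidal product on $\Pre$ which preserves colimits in each variable and for which $Y$ is strong monoidal. Every presheaf is canonically a colimit of representables, $U\cong\colim_{\bra x,A\ket\in\Elt U}YA$; applying cocontinuity of $\odot'$ in each slot together with the isomorphisms $YA\odot'YB\cong Y(A\ot B)$ yields
\[
U\odot'V\cong\colim_{\bra x,A\ket\in\Elt U}\,\colim_{\bra y,B\ket\in\Elt V}Y(A\ot B),
\]
which is exactly the iterated coend defining $U\odot V$; naturality in $U$ and $V$ upgrades this to a natural isomorphism $\odot\cong\odot'$, and it is monoidal because both the constraints of $\odot$ and those of $\odot'$ are determined by restriction along $Y\x Y\x Y$, where they agree since each is forced by the strong-monoidal coherence of $Y$ and the constraints of $\C$. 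I expect the one genuine obstacle to be the first step---checking that the explicit $\odot$ really is associative and unital (or, on the promonoidal route, unwinding the coherence of $(\C,\ot)$ into that of the promonoidal datum); the Yoneda-strong-monoidality, cocontinuity and uniqueness statements are then formal consequences of the co-Yoneda lemma.
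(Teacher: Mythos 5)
Your proposal is correct and follows essentially the same route as the paper: uniqueness from density of representables, the explicit structure maps $[s,s',t]^B_{C',D'}\mapsto (s\ot s')\ci t$ for $Y_{C,D}$, and colimit preservation via pointwise colimits in $\Ab$ and interchange of colimits (your pointwise-cocontinuity argument is just a rephrasing of the paper's). The only divergence is cosmetic: the paper, like you, leaves the pentagon/triangle verification for $\odot$ to the reader, and your suggested shortcut via Day's promonoidal convolution theorem \cite{Day70} is exactly the general result the paper acknowledges this construction to be an instance of.
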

\begin{proof}
Uniqueness is provided by the fact that every presheaf is the colimit of representables.
Let $\bra \hat\C,\odot,\YI\ket$ be the monoidal structure defined above.
Then the strong monoidal structure for $Y$ is
\begin{align}
\label{Y_2}
Y_{C,D}&:YC\odot YD\iso Y(C\ot D)\\
(Y_{C,D})_B&:[s,s',t]^B_{C',D'}\mapsto (s\ot s')\ci t\notag\\
\label{Y_0}
Y_0&:\YI\rarr{=}YI\,.
\end{align}
If $U=\colim U_i$ and $V=\colim V_j$ then these are pointwise colimits and the $\ot$ of $\Ab$
preserves colimits therefore
\[
UC'\ot VC''\ot\C(\under,C'\ot C'')=\colim_{i,j}U_iC'\ot V_jC''\ot\C(\under,C'\ot C'')\,.
\]
Taking the coend of both hand sides and using the fact that colimits can be interchanged
we arrive to
\[
U\odot V=\colim_{i,j}U_i\odot V_j\,.
\]
\end{proof}

How are the monoidal presheaves related to this monoidal structure?
\begin{lem}
For presheaves $U$, $V$ and $W$ there are natural isomorphisms of abelian groups
\begin{align*}
\hat\C(U\odot V,W)&\cong\Nat(\ot(U\x V),W\ot)\\
\hat\C(\hat I, U)&\cong\Ab(\ZZ,UI)
\end{align*}
where, here, $\Nat$ stands for the hom group in the functor category $[\C^\op\x\C^\op,\Ab]$.
\end{lem}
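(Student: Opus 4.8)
The plan is to establish the two natural isomorphisms separately, the first by exploiting the universal property of the coend defining $U\odot V$ together with the universal property of the coequalizer defining each $(U\odot V)C$, and the second by an easy Yoneda-type computation. For the first isomorphism, recall that a morphism $\phi:U\odot V\to W$ in $\hat\C$ is a natural family of abelian group homomorphisms $\phi_C:(U\odot V)C\to WC$. Since $(U\odot V)C$ is presented by generators $[x,y,t]^C_{C',C''}$ with $x\in UC'$, $y\in VC''$, $t\in\C(C,C'\ot C'')$, subject to the relations displayed just before Proposition~\ref{pro: odot}, giving $\phi_C$ amounts to specifying, for each $C',C''$ and each $t\in\C(C,C'\ot C'')$, a homomorphism $UC'\ot VC''\to WC$, linear in the $\C(C,C'\ot C'')$-argument, compatible with the relations $[Ut'x,Vt''y,t]=[x,y,(t'\ot t'')\ci t]$. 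Reorganizing: the part of the data not involving $t$ is a family $\psi_{C',C''}:UC'\ot VC''\to W(C'\ot C'')$, and the $t$-dependence is then forced by $\phi_C([x,y,t]^C_{C',C''}) = W(t)\,\psi_{C',C''}(x\ot y)$, since $t=(t'\ot t'')\ci\mathrm{id}$ recovers a generic $t$ from $[x,y,\mathrm{id}_{C'\ot C''}]$. Naturality of $\phi$ in $C$ becomes automatic (it just says $\phi_C([x,y,t\ci s]) = W(s)\phi_D([x,y,t])$, which holds by functoriality of $W$), while the relation $[Ut'x,Vt''y,\mathrm{id}] = [x,y,t'\ot t'']$ translates precisely into naturality of $\psi$ as a transformation $\ot(U\x V)\Rightarrow W\ot$ of functors $\C^\op\x\C^\op\to\Ab$, i.e.\ the square $U D'\ot VD'' \to W(D'\ot D'')$, $UC'\ot VC''\to W(C'\ot C'')$ commutes against $Ut'\ot Vt''$ and $W(t'\ot t'')$. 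This gives a bijection $\hat\C(U\odot V,W)\cong\Nat(\ot(U\x V),W\ot)$, and one checks it is a group homomorphism (both sides inherit their abelian group structure pointwise from $W$) and natural in all three variables.

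For the second isomorphism, $\hat I = YI = \C(\under,I)$ is representable, so by the (additive) Yoneda lemma $\hat\C(YI,U)\cong UI$ naturally in $U$, and $UI\cong\Ab(\ZZ,UI)$ canonically since $\ZZ$ is the unit of $\ot$ in $\Ab$. I would simply spell this out: a natural transformation $\hat I\Rightarrow U$ is determined by the image of $\mathrm{id}_I\in\C(I,I)=\hat I(I)$, which may be any element of $UI$, and conversely every element of $UI$ arises this way.

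The main obstacle, and the step deserving the most care, is the bookkeeping in the first part: verifying that the relations cutting down the free abelian group on the $[x,y,t]^C_{C',C''}$ correspond exactly — no more, no less — to the naturality condition defining $\Nat(\ot(U\x V),W\ot)$, and that passing from the family $\psi_{C',C''}$ back to a well-defined, natural $\phi$ respects the coequalizer presentation at every $C$. This is really the assertion, in coend language, that $\hat\C\bigl(\int^{C',C''}UC'\ot VC''\ot\C(\under,C'\ot C''),\,W\bigr)\cong\int_{C',C''}\Ab(UC'\ot VC'',W(C'\ot C''))$, i.e.\ a Fubini/continuity computation for the hom-functor together with the co-Yoneda isomorphism $\hat\C(\C(\under,E),W)\cong WE$ applied with $E=C'\ot C''$; but since the excerpt has chosen to present $\odot$ concretely via generators and relations rather than abstractly, I would carry out the translation in those explicit terms so the reader can see the relations $[Ut'x,Vt''y,t]=[x,y,(t'\ot t'')\ci t]$ matching the naturality squares directly. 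Everything else — linearity, group-homomorphism property, naturality in $U$, $V$, $W$ — is routine.
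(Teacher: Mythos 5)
Your proposal is correct and follows essentially the same route as the paper: the bijection is obtained exactly as there, by restricting $\phi$ to the generators $[x,y,1]^{C'\ot C''}_{C',C''}$ to get $\psi$ and recovering $\phi_C([x,y,t]^C_{C',C''})=Wt\ci\psi_{C',C''}(x\ot y)$, with the coequalizer relations matching the naturality squares; the second isomorphism is the same Yoneda observation that $\varphi:\hat I\to U$ is determined by a unique element of $UI$. You merely spell out the compatibility checks that the paper leaves as ``check'', which is fine.
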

\begin{proof}
To the arrow $\mu:U\odot V\to W$ associate the natural transformation $\nu:\ot(U\x V)\to W\ot$ by 
\[
\nu_{C',C''}(x\ot y):=\mu_{C'\ot C''}([x,y,1]^{C'\ot C''}_{C',C''})
\]
and check that its inverse associates to $\nu$ the arrow
\[
\mu_C([x,y,t]^C_{C',C''})=Wt\ci\nu_{C',C''}(x\ot y)\,.
\]
As for the second isomorphism notice that any $\varphi:\hat I\to U$ has the form $\varphi_C(s)=Usr$ for a unique
$r\in UI$.
\end{proof}
\begin{cor} \label{cor: mon pre= mon in Pre}
The monoids in $\hat C$ are precisely the monoidal presheaves on $\C$.
\end{cor}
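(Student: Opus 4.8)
The plan is to read off both structures from the two natural isomorphisms of the preceding Lemma and then match the axioms. Recall that a monoid in $\bra\hat\C,\odot,\YI\ket$ is a presheaf $U$ together with arrows $m:U\odot U\to U$ and $e:\YI\to U$ satisfying the associativity law $m\ci(m\odot U)=m\ci(U\odot m)\ci\asso_{U,U,U}$ and the unit laws $m\ci(e\odot U)=\luni_U$, $m\ci(U\odot e)=\runi_U$; while a monoidal presheaf on $\C$ is a (lax) monoidal functor $\bra U,U_2,U_0\ket:\C^\op\to\Ab$, i.e. a presheaf $U$ equipped with a natural transformation $U_2$ with components $(U_2)_{C,D}:UC\ot UD\to U(C\ot D)$ (natural in $\C^\op\x\C^\op$) and an arrow $U_0:\ZZ\to UI$, subject to one associativity hexagon and two unit squares written in terms of the coherence isomorphisms $\asso,\luni,\runi$ of $\C$. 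So in both cases the data are extra structure on a fixed presheaf $U$, and it suffices to exhibit a bijection between the two kinds of structure.

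First I would invoke the Lemma above: the isomorphism $\hat\C(U\odot U,U)\cong\Nat(\ot(U\x U),U\ot)$ sends a multiplication $m$ to the family $(U_2)_{C,D}(x\ot y):=m_{C\ot D}([x,y,1]^{C\ot D}_{C,D})$, with inverse $m_A([x,y,t]^A_{C,C'})=Ut\ci(U_2)_{C,C'}(x\ot y)$; and $\hat\C(\YI,U)\cong\Ab(\ZZ,UI)$ sends a unit $e$ to the element $r:=e_I(1)$, equivalently to $U_0:\ZZ\to UI$, $1\mapsto r$, with $e_C(s)=Us\ci r$. This already gives a bijective correspondence $(m,e)\leftrightarrow(U_2,U_0)$ between the underlying data; it remains to see that $m,e$ obey the monoid axioms exactly when $U_2,U_0$ obey the monoidal-functor coherences. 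For associativity, I would evaluate $m\ci(m\odot U)=m\ci(U\odot m)\ci\asso_{U,U,U}$ on a typical generator $[x,[y,z,s]^E_{C,D},t]^A_{B,E}$ of $\bigl(U\odot(U\odot U)\bigr)A$, push it through the explicit formula for $\asso_{U,V,W}$ recalled in the text and through the two composites via the inverse-of-Lemma formula; using functoriality of $U$ and the coend relations (\ref{tensor rels}) to strip off the freely varying arrows $s,t$, both sides reduce to an identity between $(U_2)_{-,-}$'s mediated by the associator of $\C$, which is precisely the hexagon for $\bra U,U_2,U_0\ket$. Likewise, evaluating $m\ci(e\odot U)=\luni_U$ on $[s,x,t]^A_{B,C}$ and comparing with $(\luni_U)_A:[s,x,t]^A_{B,C}\mapsto U(\luni_C\ci(s\ot C)\ci t)x$ yields, after removing $s$ and $t$, the left unit square for $U_0$, and symmetrically for $\runi$. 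Every step being reversible, this establishes the desired bijection and hence the Corollary.

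The main obstacle is the associativity diagram chase: it is purely mechanical but delicate, because (i) we are working over $\C^\op$, so the associator that appears in the hexagon is the inverse of $\asso$ of $\C$ and the variance of $U$ must be tracked, and (ii) the associator $\asso_{U,V,W}$ of $\odot$ is itself given on generators by the somewhat asymmetric formula involving $[x,y,1]^{B\ot C}_{B,C}$, so one must be careful about which instance of (\ref{tensor rels}) and which naturality square of $U_2$ is used at each point. Alternatively, the computation can be avoided entirely: by the Lemma above together with Proposition \ref{pro: odot}, $\odot$ is the Day convolution on $\Add(\C^\op,\Ab)$, characterised by $\hat\C(U\odot V,W)\cong\Nat(\ot(U\x V),W\ot)$, and it is a standard fact of Day's theory that the monoids for the convolution product on $[\C^\op,\Ab]$ are exactly the lax monoidal functors $\C^\op\to\Ab$, i.e. the monoidal presheaves; the Corollary then follows at once.
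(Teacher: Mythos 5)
Your proposal is correct and follows essentially the same route as the paper: it uses the preceding Lemma to put the data $(m,e)$ and $\bra U,U_2,U_0\ket$ in bijection, and then checks on the rank-one generators $[x,[y,z,s]^E_{C,D},t]^A_{B,E}$ (resp.\ $[s,x,t]^A_{B,C}$) that associativity and unitality of $m,e$ translate into the coherence conditions for $U_2,U_0$, which is exactly the computation carried out in the paper's proof. The only difference is presentational: the paper writes out the associativity chase explicitly, while you outline it (and note the Day-convolution shortcut), but the argument is the same.
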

\begin{proof}
Let $U$ be a presheaf. Then data $\bra U,\mu,\eta\ket$ where $\mu: U\odot U\to U$ and $\eta:\YI\to U$ are in bijection with data $\bra U,U_2,U_0\ket$ where $U_2:\ot(U\x U)\to U\ot$ and
$U_0:\ZZ\to UI$ by the previous Lemma. Computing the two hand sides of the associativity
condition for $\mu$ on generic (rank 1) elements of $U\odot(U\odot U)$ we obtain
\begin{gather*}
\left(\mu\ci(\mu\odot U)\ci\asso_{U,U,U}\right)_E\left([u_1,[u_2,u_3,s]^B_{C,D},t]^E_{A,B}
\right)=\\
=\mu_E\ci(\mu\odot U)_E\left([[u_1,u_2,1]^{A\ot C}_{A,C},u_3,\asso_{A,C,D}\ci(A\ot s)\ci t
]^E_{A\ot C,D}\right)=\\
=\mu_E\left([\nu_{A,C}(u_1\ot u_2),u_3,\asso_{A,C,D}\ci(A\ot s)\ci t]^E_{A\ot C,D}\right)=\\
=Ut\ci U(A\ot s)\ci U\asso_{A,C,D}\ci \nu_{A\ot C,D}(\nu_{A,C}(u_1\ot u_2)\ot u_3)
\end{gather*}
and
\begin{gather*}
\left(\mu\ci(U\odot\mu)\right)_E\left([u_1,[u_2,u_3,s]^B_{C,D},t]^E_{A,B}\right)=\\
\mu_E\left([u_1,Us\ci\nu_{C.D}(u_2\ot u_3),t]^E_{A,B}\right)=\\
=Ut\ci\nu_{A,B}(u_1\ot(Us\ci\nu_{C,D}(u_2\ot u_3)))=\\
=Ut\ci U(A\ot s)\ci \nu_{A,C\ot D}(u_1\ot\nu_{C,D}(u_2\ot u_3))
\end{gather*}
from which one deduces that $\mu$ is associative iff $\nu$ is associative.
Similarly, one can easily see that $\mu$ is unital iff $\nu$ is unital.
\end{proof}


Recall that a monoidal category $\C$ is called \textit{left closed} if for all object $A$ there is a right adjoint $[A,\under]$ of the endofunctor $\under\ot A:\C\to\C$ and it is called \textit{right closed} if if for all object $A$
there is a right adjoint $\{A,\under\}$ of $A\ot\under$.
Applying the general results of \cite{Day70} to our $\Ab$-enriched situation we obtain 
\begin{lem}
If $\C$ is left (right) closed then so is $\Pre$ with left and right internal homs given by
\begin{align*}
[U,V]A&=\int_B \Ab(UB,V(A\ot B))=\Pre(U,V(A\ot\under))\\
\{U,V\}B&=\int_A\Ab(UA,V(A\ot B)) =\Pre(U,V(\under\ot B))
\end{align*}
respectively.
\end{lem}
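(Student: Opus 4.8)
The plan is to quote Day's general theorem on closedness of convolution monoidal structures (\cite{Day70}) in the $\Ab$-enriched setting, and then identify the internal homs explicitly via the coend/end adjunction calculus. First I would recall that for a small monoidal $\V$-category $\C$ ($\V=\Ab$ here) the Day convolution $\odot$ on $\Pre=[\C^\op,\Ab]$ is given by the coend $U\odot V=\int^{C',C''}UC'\ot VC''\ot\C(\under,C'\ot C'')$, and that $\odot$ preserves colimits in each variable by Proposition \ref{pro: odot}. The key point is that when $\C$ is left closed, the functor $\under\ot A$ on $\C$ has a right adjoint $[A,\under]$, and this lifts pointwise to $\Pre$; Day's result guarantees the existence of the internal hom abstractly, so the real content is to verify the stated formulas.

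The main step is the adjunction computation. Fix presheaves $U,V$ and an object $A$; define $[U,V]$ by $[U,V]A=\Pre(U,V(A\ot\under))$, which by the Yoneda-style end formula equals $\int_B\Ab(UB,V(A\ot B))$. I would then check $\Pre(W,[U,V])\cong\Pre(W\odot U,V)$ for all $W$. Expanding the right side using the coend presentation of $\odot$ and the fact that $\Pre(\under,V)$ turns colimits into limits, one gets
\[
\Pre(W\odot U,V)\cong\int_{B',B''}\Ab\bigl(WB'\ot UB'',V(B'\ot B'')\bigr)\,,
\]
while the left side, by the end formula for $[U,V]$ and the enriched Yoneda lemma, unwinds to the same double end. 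The two are matched by the evident reindexing, using that $V$ applied to $B'\ot B''$ can be read either as $V((\under)\ot B'')$ evaluated at $B'$ or symmetrically. The right closed case with $\{U,V\}B=\int_A\Ab(UA,V(A\ot B))=\Pre(U,V(\under\ot B))$ is entirely parallel, using instead that $A\ot\under$ has a right adjoint $\{A,\under\}$ in $\C$.

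The part requiring the most care is not the formal adjunction but checking that the pointwise-defined end $[U,V]A$ is actually functorial in $A$ and that one genuinely lands back in $\Pre$ (i.e. that $[U,V]$ is additive and that $B\mapsto\Ab(UB,V(A\ot B))$ has the requisite (co)continuity for the ends to exist) — but all of this is subsumed by invoking \cite{Day70}, since the hypothesis there is exactly that $\V$ is complete and cocomplete symmetric monoidal closed (true for $\Ab$) and that the internal homs $[A,\under]$, $\{A,\under\}$ exist in $\C$. So the proof reduces to: (i) cite Day for existence, (ii) exhibit the candidate formulas as right adjoints by the end/coend manipulation above, (iii) conclude by uniqueness of adjoints. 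I expect step (ii) to be the only place with real bookkeeping, and it is routine coend calculus.
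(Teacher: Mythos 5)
Your proposal is correct and takes essentially the same route as the paper, which proves this lemma simply by citing Day \cite{Day70}; your step (ii) is exactly the standard end/coend adjunction verification $\Pre(W\odot U,V)\cong\int_{B',B''}\Ab(WB'\ot UB'',V(B'\ot B''))\cong\Pre(W,[U,V])$ that underlies Day's result. One small remark: that computation nowhere uses closedness of $\C$ (Day convolution on $\Pre$ is biclosed for any small monoidal $\Ab$-category), so the heuristic that $[A,\under]$ ``lifts pointwise'' from $\C$ is not what is actually happening, but nothing in your argument depends on it.
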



\subsection{The monoidal extension $\longF$}

Let $\longf:\C\to\Ab$ be an additive functor. For any monoidal structure $\longf_2:\ot(\longf\x \longf)\to \longf\ot$, $\longf_0:\ZZ\to \longf I$ on $\longf$
there is a monoidal structure on $\longF=\,\under\amo{\C}\longf$ defined as follows.
\begin{align}
\label{FF_2}
\longF_{U,V}:\longF U\ot\longF V &\to \longF(U\odot V)\\
(u\am{C}x)\ot(v\am{D}y)&\mapsto [u,v,1]^{C\ot D}_{C,D}\am{C\ot D} \longf_{C,D}(x\ot y)\notag\\
\label{FF_0}
\longF_0:\ZZ&\to \longF \YI\\
1&\mapsto I\am{I}\longf_0(1)\notag
\end{align}
%
\begin{lem}
The monoidal functor $\longF$ is an extension of $\longf$ in the sense of the natural isomorphism (\ref{N}) being a monoidal natural isomorphism $\longN_A:\longF YA\iso \longf A$, i.e., the equations
\begin{align}
\label{N is multiplicative}
\longN_{A\ot B}\ci\longF Y_{A,B}\ci\longF_{YA,YB}&=\longf_{A,B}\ci(\longN_A\ot \longN_B)\\
\label{N is unital}
\longN_0\ci \longF Y_0\ci \longF_0&= \longf_0
\end{align}
hold for all $A,B\in\ob\C$.
\end{lem}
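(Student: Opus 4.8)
The plan is a direct diagram chase through the definitions. The two equations to be established are identities between additive maps: the first between maps $\longF YA\ot\longF YB\to\longf(A\ot B)$, the second between maps $\ZZ\to\longf I$. Since $\longF YA\ot\longF YB$ is generated as an abelian group by the rank-one tensors $(s\am{C}x)\ot(s'\am{D}y)$ with $s\in\C(C,A)$, $x\in\longf C$, $s'\in\C(D,B)$, $y\in\longf D$, and since every arrow occurring in the two equations has already been recognized as a well-defined additive map, it suffices to verify the equations on such generators (respectively on $1\in\ZZ$).

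For the multiplicativity equation I would chase the generator $(s\am{C}x)\ot(s'\am{D}y)$ along the left-hand side. Applying $\longF_{YA,YB}$ as in \eqref{FF_2} produces $[s,s',1]^{C\ot D}_{C,D}\am{C\ot D}\longf_{C,D}(x\ot y)$. Next, $\longF Y_{A,B}$ is the value of the bifunctor $\under\am{\C}\longf$ on the natural transformation $Y_{A,B}$, hence acts by $u\am{E}z\mapsto (Y_{A,B})_E(u)\am{E}z$; by \eqref{Y_2} it therefore sends the previous element to $(s\ot s')\am{C\ot D}\longf_{C,D}(x\ot y)$. Finally $\longN_{A\ot B}$, by \eqref{N}, yields $\longf(s\ot s')\bigl(\longf_{C,D}(x\ot y)\bigr)$. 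On the right-hand side $\longN_A\ot\longN_B$ sends the same generator to $\longf s(x)\ot\longf s'(y)$ and then $\longf_{A,B}$ to $\longf_{A,B}\bigl(\longf s(x)\ot\longf s'(y)\bigr)$. The two expressions coincide precisely because $(s,s')$ is a morphism $(C,D)\to(A,B)$ in $\C\x\C$ and $\longf_2$ is a natural transformation: its naturality square at $(s,s')$ reads $\longf(s\ot s')\ci\longf_{C,D}=\longf_{A,B}\ci(\longf s\ot\longf s')$, and evaluating at $x\ot y$ is exactly the wanted identity.

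For the unitality equation I would note that $\longF_0(1)=I\am{I}\longf_0(1)$ by \eqref{FF_0}, where $I$ denotes the identity arrow in $\YI(I)=\C(I,I)$; that $\longF Y_0$ is the identity map, since $Y_0$ is the equality $\YI=YI$; and that $\longN_I$ then sends $I\am{I}\longf_0(1)$ to $\longf(\id_I)\bigl(\longf_0(1)\bigr)=\longf_0(1)$ by \eqref{N} and functoriality of $\longf$. Hence the composite equals $\longf_0$, as required.

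I do not expect a genuine obstacle here: once the generators are written out the computation is purely mechanical, and its only real content is the naturality of $\longf_2$ together with the fact that $\longf$ preserves identities. The single point demanding care is the bookkeeping --- keeping the three different ``tensor'' operations, namely $\ot$ (in $\Ab$ and in $\C$), $\odot$ (in $\Pre$), and the coend $\am{\C}$, cleanly apart throughout the chase.
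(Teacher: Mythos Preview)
Your proof is correct and follows essentially the same approach as the paper's: both verify the multiplicativity equation on rank-one generators and reduce it to the naturality of $\longf_2$, while the unitality equation is immediate from the definitions. Your ordering is slightly more streamlined---you apply $\longF Y_{A,B}$ directly to collapse $[s,s',1]^{C\ot D}_{C,D}$ to $s\ot s'$ and then invoke naturality of $\longf_2$, whereas the paper first shuffles through the defining relations of the convolution product and the coend to reach the form $[1_A,1_B,1]^{A\ot B}_{A,B}\am{A\ot B}\longf_{A,B}(\longf(s)x\ot\longf(s')y)$---but the substance is identical.
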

\begin{proof}
Evaluated on rank 1 tensors $\longF_2$ on representable presheaves can be written as
\begin{gather*}
\longF_{YA,YB}((u\am{C}x)\ot(v\am{D}y))=[u,v,1]^{C\ot D}_{C,D}\am{C\ot D} \longf_{C,D}(x\ot y)=\\
=[1_A,1_B,u\ot v]^{C\ot D}_{A,B}\am{C\ot D} \longf_{C,D}(x\ot y)=\\
=(YA\odot YB)(u\ot v)([1_A,1_B,1]^{A\ot B}_{A,B})\am{C\ot D} \longf_{C,D}(x\ot y)=\\
=[1_A,1_B,1]^{A\ot B}_{A,B})\am{A\ot B} \longf_{A,B}(\longf(u)x\ot \longf(v)y)=\\
=(Y^{-1}_{A,B})_{A\ot B}(1_{A\ot B})\am{A\ot B} \longf_{A,B}(\longf(u)x\ot \longf(v)y)
\end{gather*}
therefore
\[
\longF Y_{A,B}\ci\longF_{YA,YB}((u\am{C}x)\ot(v\am{D}y))=1_{A\ot B}\am{A\ot B}\longf_{A,B}(\longf(u)x\ot \longf(v)y)
\]
from which (\ref{N is multiplicative}) follows. Equation (\ref{N is unital}) is obvious from the definitions (\ref{FF_0}),
(\ref{Y_0}) and (\ref{N}).
\end{proof}

\begin{pro} \label{pro: esss}
Let $\longF:\hat \C\to\Ab$ be the monoidal functor extending the monoidal functor $\longf:\C\to\Ab$ as defined above. Then
\begin{enumerate}
\item $\longN_I:\longF YI\iso \longf I$ is the underlying map of a ring isomorphism from $\R:=$ \newline
$\bra \longF \YI,\longF_{\YI,\YI},\longF_0\ket$ to
$R=\bra \longf I,\longf_{I,I},\longf_0\ket$ and
\item $\longF$ is essentially strong monoidal  iff $\longf$ is essentially strong monoidal.
\end{enumerate}
\end{pro}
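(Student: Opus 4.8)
The plan is to prove the two parts in order, using the explicit formulas (\ref{FF_2}), (\ref{FF_0}) for the monoidal structure on $\longF$ together with the compatibility equations (\ref{N is multiplicative}) and (\ref{N is unital}) already established.

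For part (1), the strategy is to observe that the two rings $\R$ and $R$ have the same underlying multiplication and unit up to the isomorphism $\longN_I$, which is exactly what equations (\ref{N is multiplicative}) and (\ref{N is unital}) say when specialised to $A=B=I$. More precisely, setting $A=B=I$ in (\ref{N is multiplicative}) gives $\longN_I\ci\longF Y_{I,I}\ci\longF_{\YI,\YI}=\longf_{I,I}\ci(\longN_I\ot\longN_I)$; since $Y_0=\id$ and hence (up to the canonical identifications coming from the unit coherences, which are strict enough here) $\longF Y_{I,I}$ is invertible, this exhibits $\longN_I$ as carrying the multiplication $\longF_{\YI,\YI}$ of $\R$ to the multiplication $\longf_{I,I}$ of $R$. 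Equation (\ref{N is unital}) does the same for the units $\longF_0$ and $\longf_0$. So $\longN_I$ is a ring isomorphism. I would write this out carefully since one must keep track of $\longF Y_{I,I}$ and the unitors, but it is essentially a restatement of the previous Lemma at the object $I$.

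For part (2), recall that a monoidal functor $\longf$ is essentially strong monoidal precisely when its normal factorisation $\C\rarr{F}{}_R\M_R\to\M$ has $F$ strong monoidal, i.e. when the components $\longf_{A,B}:\longf A\ot\longf B\to\longf(A\ot B)$ become isomorphisms after passing to the tensor product over $R=\longf I$ (and $\longf_0$ is invertible, which is automatic here since $\longN_I$ is an isomorphism, so both $\longf$ and $\longF$ are normal). The key point is that the Kan extension $\under\amo{\C}\under$ is compatible with this: every presheaf is a colimit of representables, $\odot$ and $\longF$ both preserve colimits (Proposition \ref{pro: odot} and the fact that $\longF$ is a left adjoint, Lemma \ref{lem: FF left adj}), and on representables $\longF_{YA,YB}$ is conjugate to $\longf_{A,B}$ via the isomorphisms $\longN$ and $Y_2$ by equation (\ref{N is multiplicative}). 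Hence $\longF_{YA,YB}$ is an isomorphism (equivalently, becomes one over $\R$) for all $A,B$ iff $\longf_{A,B}$ is, and then a colimit argument over the canonical presentations $U=\colim YA_i$, $V=\colim YB_j$ promotes this to all presheaves $U,V$: $\longF_{U,V}=\colim_{i,j}\longF_{YA_i,YB_j}$ up to coherent isomorphism, and a colimit of isomorphisms is an isomorphism. The converse direction is immediate by restricting along $Y$ and using $\longN$.

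The main obstacle I anticipate is the colimit-interchange bookkeeping in part (2): one must check that under the identifications $U\odot V=\colim_{i,j}YA_i\odot YB_j$ (from Proposition \ref{pro: odot}) and $\longF(U\odot V)=\colim_{i,j}\longF(YA_i\odot YB_j)$ (from $\longF$ preserving colimits), the transformation $\longF_{U,V}$ really is the colimit of the $\longF_{YA_i,YB_j}$ — i.e. that the structure maps $\longF_2$ are natural with respect to the colimit cocones. This is the kind of naturality verification that is conceptually routine but needs care; everything else reduces to the already-proved Lemma and Proposition. A secondary, milder point is making the unit-coherence identifications in part (1) precise enough that ``$\longF Y_{I,I}$ invertible'' is a clean statement rather than an abuse of notation.
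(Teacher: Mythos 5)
Your proposal is correct and takes essentially the same route as the paper: part (1) is read off from (\ref{N is multiplicative}) and (\ref{N is unital}), and part (2) transfers the strongness condition between $\longf_{A,B}$ and $\longF_{YA,YB}$ via $\longN$ and $Y_2$ and then extends it to arbitrary presheaves by writing them as colimits of representables and using that $\odot$, $\ot$ and $\longF$ preserve colimits --- the paper encodes the ``iso over $R$'' condition as the coequalizer $\longf A\ot\longf I\ot\longf B\rightrightarrows\longf A\ot\longf B\to\longf(A\ot B)$ and takes colimits of these diagrams, which is exactly your interchange step, and it handles the converse by restriction along the strong monoidal $Y$ just as you do. One small correction of wording: $\longF_{YA,YB}$ being an isomorphism in $\Ab$ is \emph{not} equivalent to the induced map over $\R$ being one; only the latter is the relevant condition, and your colimit argument should be (and implicitly is) carried out at that level, where it works because $\am{\R}$ preserves colimits in each variable.
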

\begin{proof}
$(1)$ This is clear from (\ref{N is multiplicative}), (\ref{N is unital}).

$(2)$ Equation (\ref{N is multiplicative}) extends to the serially commuting diagram
\[
\parbox{300pt}{
\begin{picture}(300,110)

\put(10,10){$\longf A\ot \longf I\ot \longf B$}
\put(0,90){$\longF YA\ot \longF \YI\ot \longF YB$}
\put (45,80){\vector(0,-1){55}} \put(50,50){$\sst \longN_A\ot \longN_I\ot \longN_B$}

\put(100,11){\vector(1,0){30}}
\put(100,14){\vector(1,0){30}}
\put(100,91){\vector(1,0){30}}
\put(100,94){\vector(1,0){30}}

\put(150,10){$\longf A\ot \longf B$}
\put(140,90){$\longF YA\ot \longF YB$}
\put(170,80){\vector(0,-1){55}} \put(175,50){$\sst \longN_A\ot \longN_B$}

\put(215,12){\vector(1,0){40}} \put(225,20){$\sst \longf_{A,B}$}
\put(215,92){\vector(1,0){40}} \put(220,100){$\sst \longF_{YA,YB}$}

\put(270,10){$\longf(A\ot B)$}
\put(290,43){\vector(0,-1){20}} \put(300,32){$\sst \longN_{A\ot B}$}
\put(270,50){$\longF Y(A\ot B)$}
\put(290,83){\vector(0,-1){20}} \put(300,75){$\sst \longF Y_{A,B}$}
\put(265,90){$\longF(YA\odot YB)$}

\end{picture}
}
\]
with all vertical arrows being isomorphisms. From this we see that $\longf$ is essentially strong iff
the first row is a coequalizer for all $A$ and $B$. Since every presheaf is a colimit of representables and $\odot$
preserves these colimits by Proposition \ref{pro: odot} the next diagram with vertical arrows being the colimiting
cones
\[
\parbox{311pt}{
\begin{picture}(311,95)

\put(10,10){$\longF U\ot \longF \YI\ot \longF V$}
\put(0,75){$\longF YA_i\ot \longF \YI\ot \longF YB_j$}
\put (45,65){\vector(0,-1){40}} 

\put(100,11){\vector(1,0){30}}
\put(100,14){\vector(1,0){30}}
\put(100,76){\vector(1,0){30}}
\put(100,79){\vector(1,0){30}}

\put(150,10){$\longF U\ot \longF V$}
\put(140,75){$\longF YA_i\ot \longF YB_j$}
\put(175,65){\vector(0,-1){40}} 

\put(215,12){\vector(1,0){40}} \put(225,20){$\sst \longF_{U,V}$}
\put(215,77){\vector(1,0){40}} \put(220,85){$\sst \longF_{YA_i,YB_j}$}

\put(270,10){$\longF(U\odot V)$}
\put(293,65){\vector(0,-1){40}} 
\put(263,75){$\longF(YA_i\odot YB_j)$}

\end{picture}
}
\]
implies that $\longF$ is essentially strong whenever the $\longF_{YA,YB}$ are all coequalizers.
Vice versa, if $\longF$ is essentially strong then so is its composition $\longF Y$ with the strong monoidal
$Y$ and this composite is isomorphic to $\longf$ by $\longN$.
\end{proof}

\section{The adjunction associated to a fiber functor} \label{s: FG}

\subsection{The strong part of $\longF$}

Assuming $\longf$ is an essentially strong monoidal functor we have the essentially strong monoidal extension $\longF$
to the presheaf category. The canonical decompositions of $\longf$ and $\longF$ in the sense of \cite{Sz: Brussels}
yield strong monoidal functors $\bimf:\C\to\,_R\M_R$ and $\bimF:\Pre\to\,_\R\M_\R$.

Not willing to use both $\R$ and $R$, however, we shall redefine the strong part $\bimF$ of $\longF$ by composing the canonical strong part $\Pre\to\,_\R\M_\R$ with the isomorphism of categories $_\R\M_\R\to\,_R\M_R$ induced by the isomorphism $\R\iso R$ of Proposition \ref{pro: esss} (1) . Then we can write the 2-cell $\longN$ as
\begin{equation}\label{diag: N}
\longN\quad =\qquad
\parbox{200pt}{
\begin{picture}(200,90)
\put(0,10){$\C$}
\put(0,77){$\hat\C$}
\put(2,25){\vector(0,1){45}} \put(5,42){$Y$}
\put(12,16){\vector(3,2){35}} \put(35,22){$\bimf$}
\put(12,77){\vector(3,-2){35}} \put(35,64){$\bimF$}
\put(40,44){$_R\M_R$}
\put(23,42){$\bimN$}
\put(70,45){\vector(1,0){65}} \put(93,48){$\philong$}
\put(150,43){$\Ab$}
\put(20,83){\vector(4,-1){120}} \put(70,75){$\longF$}
\put(20,11){\vector(4,1){120}} \put(70,15){$\longf$}
\end{picture}
}
\end{equation}
the composite of an invertible $\bimN:\bimF Y\iso\bimf$ with two identity 2-cells.
In this way both $\bimf$ and $\bimF$ are strong monoidal functors to the same bimodule category.
Inserting $\longN=\philong \bimN$ into (\ref{N is multiplicative}), (\ref{N is unital}) we see that $\bimN$ is a monoidal natural isomorphism.


\subsection{The monoidal adjunction $\bimF\dashv\bimG$} \label{ss: bimF - bimG}

\begin{lem} \label{lem: left adjoint}
Let $\longF:\hat\C\to\Ab$ be an essentially strong monoidal functor and $\Pre\rarr{\bimF}\,_R\M_R\to\Ab$ be a monoidal
factorization of $\longF$ with $\bimF$ strong monoidal. Then the following are equivalent:
\begin{enumerate}
\item There is an essentially strong monoidal $\longf:\C\to\Ab$ and a monoidal natural isomorphism
$\longF\cong\,\under\amo{\C}\longf$.
\item The underlying functor of $\longF$ is left adjoint.
\item The underlying functor of $\bimF$ is left adjoint.
\item $\bimF$ is left adjoint in the 2-category $\MonCat$.
\end{enumerate}
\end{lem}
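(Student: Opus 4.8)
The plan is to prove the chain of implications $(1)\Rightarrow(2)\Rightarrow(3)\Rightarrow(4)\Rightarrow(1)$, exploiting the fact that most of the hard analytic work is already contained in Lemma \ref{lem: FF left adj} and that $\philong:\,_R\M_R\to\Ab$ creates the relevant adjunctions.

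First, $(1)\Rightarrow(2)$ is immediate from Lemma \ref{lem: FF left adj}, $(1)\Rightarrow(2)$: if $\longF\cong\,\under\amo{\C}\longf$ then the underlying functor of $\longF$ is left adjoint (with right adjoint $X\mapsto\Ab(\longf\under,X)$). Next, for $(2)\Rightarrow(3)$ I would use that the forgetful functor $\philong:\,_R\M_R\to\Ab$ is monadic — it is the forgetful functor of the monad $R^e\ot\under$ on $\Ab$ — hence in particular it creates limits, reflects isomorphisms, and its left adjoint $R^e\ot\under$ is (trivially) left adjoint. The cleanest route is: suppose $\philong\bimF=\longF$ has a right adjoint. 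Since $\philong$ is monadic it has a left adjoint $\philong_!$ and the composite $\philong_!\dashv\philong$; but to get a right adjoint of $\bimF$ itself I would instead invoke the Adjoint Functor Theorem or, more in the spirit of the paper, note that $\Pre=\Add(\C^\op,\Ab)$ is a locally presentable (Grothendieck $\Ab$-) category and $\bimF$ preserves colimits because $\longF=\philong\bimF$ does and $\philong$ creates them; a cocontinuous functor out of a locally presentable category into $_R\M_R$ (again locally presentable) has a right adjoint. Alternatively and more elementarily: the right adjoint $\longG$ of $\longF$ lands in $\Pre$, and for $M\in\,_R\M_R$ one checks directly that $\bimG M:=\Pre(Y\under\odot(\text{—}), \text{internal hom data})$, or more simply that the end formula $\bimG M=\int_{C}\,_R\M_R(\bimF YC,M)$ (a presheaf valued in $_R\M_R$-morphism groups) provides a right adjoint to $\bimF$; the bimodule structure on $\bimG M$ comes from that of $M$. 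This is where I expect the one genuine verification.

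For $(3)\Rightarrow(4)$ the point is that a strong monoidal functor which is left adjoint on underlying functors is automatically left adjoint in $\MonCat$: this is the classical result (Kelly) that if $\bimF$ is strong monoidal and $\bimG$ is its ordinary right adjoint, then $\bimG$ acquires a canonical \emph{lax} monoidal (= monoidal, in the paper's terminology) structure via the mates of $\bimF_2$, $\bimF_0$, and the unit and counit become monoidal natural transformations; hence the adjunction lifts to $\MonCat$. I would cite \cite{Day-Street} or the folklore "doctrinal adjunction" for this and only indicate the mate construction $\bimG_2: \bimG M\odot \bimG N\to \bimG(M\oR N)$ obtained as the adjunct of $\bimF(\bimG M\odot\bimG N)\xrightarrow{\bimF_2^{-1}}\bimF\bimG M\oR\bimF\bimG N\xrightarrow{\eps\oR\eps}M\oR N$, using that $\bimF_2$ is invertible.

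Finally, $(4)\Rightarrow(1)$: if $\bimF\dashv\bimG$ in $\MonCat$ then a fortiori the underlying functor $\longF=\philong\bimF$ is left adjoint (compose right adjoints: $\philong$ has right adjoint $\,_R\M_R(R^e,\under)\cong$ the appropriate cofree construction — actually $\philong$ being monadic is right adjoint, so here I instead use that $\philong$ itself has a right adjoint as the forgetful functor of a ring, namely $X\mapsto \Hom_\ZZ(R^e,X)$), hence by Lemma \ref{lem: FF left adj} $(2)\Rightarrow(1)$ (applicable since $\C$ is additive) there is a flat additive $\longf$ with $\longF\cong\,\under\amo{\C}\longf$, namely $\longf=\longF Y$. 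It remains to upgrade this to a \emph{monoidal} natural isomorphism and to see $\longf$ is essentially strong monoidal: but $\longf=\longF Y$ is the composite of monoidal functors, and Proposition \ref{pro: esss}(2) together with the fact that $Y$ is strong monoidal (Proposition \ref{pro: odot}) gives that $\longf$ is essentially strong monoidal iff $\longF$ is; and $\longF$ is essentially strong monoidal by hypothesis (it factors as $\philong\bimF$ with $\bimF$ strong monoidal). The monoidality of the isomorphism $\longF\cong\,\under\amo{\C}\longf$ is then forced because it is the canonical comparison $2$-cell and, $\odot$ and $\longF$ both being determined on representables where it restricts to $\longN$, it agrees with a monoidal isomorphism there; I would invoke the uniqueness clause of Proposition \ref{pro: odot} to conclude. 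The main obstacle, as noted, is the honest construction of the right adjoint $\bimG$ in step $(2)\Rightarrow(3)$ respecting the bimodule structure; everything else is formal manipulation of mates and citation of doctrinal adjunction.
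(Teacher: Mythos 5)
Your argument is correct, but it runs the equivalence around the cycle $(1)\Rightarrow(2)\Rightarrow(3)\Rightarrow(4)\Rightarrow(1)$, whereas the paper goes the other way, $(4)\Rightarrow(3)\Rightarrow(2)\Rightarrow(1)\Rightarrow(4)$, and this changes where the work sits. The paper never needs your $(2)\Rightarrow(3)$: its only substantial step is $(1)\Rightarrow(4)$, where the right adjoint is exhibited explicitly as $\bimG M=\,_R\M_R(\bimf\under,M)$, the adjunction is checked by hom--tensor, and then Kelly's doctrinal adjunction is applied -- exactly the tool you use for $(3)\Rightarrow(4)$, with the same mate formula for $\bimG_{M,N}$; its $(3)\Rightarrow(2)$ is your composition with the coinduction functor $\Ab(R^\op\ot R,\under)$, and its $(2)\Rightarrow(1)$ is your Yoneda computation with $\longf=\longF Y$. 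Your genuinely new ingredient is $(2)\Rightarrow(3)$: transferring cocontinuity from $\longF$ to $\bimF$ through the conservative, colimit-preserving $\philong$ and invoking the adjoint functor theorem for the locally presentable (Grothendieck) categories $\Pre$ and $_R\M_R$. This is valid and arguably slicker, but it buys abstraction at the price of explicitness: the concrete $\bimG$ and the unit/counit formulas (\ref{bimeta}), (\ref{bimeps}) are used throughout Sections 3--5, so on your route they would still have to be computed afterwards (also, your alternative formula should be read as the presheaf $C\mapsto\,_R\M_R(\bimF YC,M)$, not an end over $C$, and verifying it is right adjoint again reduces to the cocontinuity argument). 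Two small repairs: additivity of $\C$ is not a hypothesis of this lemma, so you cannot invoke the additive clause of Lemma \ref{lem: FF left adj} in $(4)\Rightarrow(1)$ -- but you do not need it, since the adjunction-plus-Yoneda computation giving $\longF\cong\,\under\amo{\C}\longf$ works over any small $\Ab$-category (additivity enters only for flatness, which is not asserted here); and the uniqueness clause of Proposition \ref{pro: odot} concerns the monoidal structure on $\Pre$, not the monoidality of your comparison isomorphism -- the correct (routine) argument is that both $\longF$ and $\under\amo{\C}\longf$ preserve colimits and their monoidal structures agree on representables via $\longN$, a point the paper's own terse ``we are done'' likewise leaves implicit.
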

\begin{proof}
$(4)\Rightarrow(3)$ Forgetting the monoidal structure this is obvious.
$(3)\Rightarrow(2)$ Since the forgetful functor $_R\M_R\to\Ab$ has a right adjoint,
the coinduction functor $\Ab(R^\op\ot R,\under)$, $\longF$ is the composite of left adjoint functors.

$(2)\Rightarrow(1)$ Choose an adjunction $\longF\adj\longG$
and define $\longf:=\longF Y$ where $Y:\C\to\hat\C$ is the Yoneda embedding. Then
\begin{align*}
\Ab(U\amo{\C}\longf,X)&\cong\hat\C(U,\Ab(\longf\under,X))=\hat\C(U,\Ab(\longF Y\under,X))\cong\\
&\cong\hat\C(U,\hat\C(Y,\longG X))\cong\hat\C(U,\longG X)\cong\\
&\cong\Ab(\longF U,X)
\end{align*}
implying that $\longF\cong\,\under\amo{\C}\longf$, as additive functors. Giving monoidal structure on $\longf$ by requiring $\longf=\longF Y$ to be a composite of monoidal functors we are done.

$(1)\Rightarrow(4)$ Let $\bimf$ be the strong part of $\longf$ and define 
\[
\bimG:\,_R\M_R\to\hat\C,\quad \bimG M:=\,_R\M_R(\bimf\under,M)\,.
\]
Then using the hom-tensor relation $_R\M_R(X\ot N,M)\cong\Ab(X,\,_R\M_R(N,M))$
elements $\mu=\{\mu_C:UC\ot \bimf C\to M\}_C$ of the hom-group
$_R\M_R(U\amo{\C}\bimF,M)$ are in bijection with families $\nu$ of group homomorphisms
$\nu_C:UC\to\,_R\M_R(\bimf C,M)$ satisfying $\nu_D\ci Ut=\,_R\M_R(\bimf t,M)\ci\nu_C$ for
$t\in\C(D,C)$, i.e., with elements $\nu$ of the hom-group $\hat\C(U,\,_R\M_R(\bimf\under,M))$.
This proves the adjunction
\[
\under\amo{\C}\bimf\quad\dashv\quad_R\M_R(\bimf,\under)
\]
i.e., $\bimF\dashv\bimG$ as ordinary functors. Since $\bimF$ is strong, we may consider it
as a (strong) opmonoidal functor. Then its right adjoint $\bimG$ has a canonical monoidal structure such that the unit $\bimeta:\Pre\to\bimG\bimF$ and counit $\bimeps:\bimF\bimG\to\,_R\M_R$ of the adjunction are monoidal natural transformations \cite{Kelly}. According to this, the monoidal structure of $\bimG$ is
\begin{align*}
\bimG_{M,N}&=\bimG(\bimeps_M\oR\bimeps_N)\ci\bimG\bimF^{-1}_{\bimG M,\bimG N}\ci 
\bimeta_{\bimG M\odot\bimG N}\\
\bimG_0&=\bimG\bimF^{-1}_0\ci\bimeta_\YI\,.
\end{align*}
Computing them explicitly one obtains
\begin{align*}
\bimG M\odot\bimG N&={\sst\int^{C'}\int^{C''}\,_R\M_R(\bimf C',M)\ot\,_R\M_R(\bimf C'',N)\ot
\C(\under,C'\ot C'')}\rarr{}\\
&\rarr{\bimG_{M,N}}\bimG(M\oR N)\\
[g',g'',t]^A_{C',C''}&\mapsto (g'\oR g'')\ci \bimf^{-1}_{C',C''}\ci\bimf t\\
\bimG_0:\YI&\to\bimG R\\
(C\rarr{s}I)&\mapsto (\bimf C\rarr{\bimf(s)}R)
\end{align*}
where $R$ denotes also the bimodule $_RR_R$, the monoidal unit of $_R\M_R$.
\end{proof}

Explicit formulas for the unit and counit of the adjunction $\bimF\dashv\bimG$ are
\begin{alignat}{2}
\bimeta&:\Pre\to\bimG\bimF,&\qquad \bimeta_UC&:UC\to\,_R\M_R(\bimf C,\bimF U)\label{bimeta}\\
&&&\qquad u\mapsto\{x\mapsto u\am{C}x\},\notag\\
\bimeps&:\bimF\bimG\to\,_R\M_R,&\qquad \bimeps_M&:\,_R\M_R(\bimf\under,M)\amo{\C}\bimf\to M\label{bimeps}\\
&&&\qquad h\am{C}x\mapsto h(x).\notag
\end{alignat}

\begin{cor} \label{cor: bimQ}
For a small monoidal $\Ab$-category $\C$ and for an essentially strong monoidal functor $\longf:\C\to\Ab$ the
functor $\bimF\bimG$ is underlying a monoidal comonad $\bimQ$ on the category $_R\M_R$ of bimodules over $R=\longf I$.
If furthermore $\longf$ is flat then the comonad is left exact.
\end{cor}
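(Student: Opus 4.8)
The plan is to read the comonad straight off the monoidal adjunction $\bimF\dashv\bimG$ that is already in hand, and then to propagate flatness of $\longf$ to left exactness of the composite $\bimF\bimG$.

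First, since $\longF=\,\under\amo{\C}\longf$ by construction, condition~(1) of Lemma~\ref{lem: left adjoint} is satisfied, hence so is condition~(4): the strong monoidal functor $\bimF:\Pre\to\,_R\M_R$ is a left adjoint in the $2$-category $\MonCat$, via a monoidal adjunction $\bimF\dashv\bimG$ whose unit $\bimeta$ and counit $\bimeps$ (given explicitly in (\ref{bimeta})--(\ref{bimeps})) are monoidal natural transformations and whose $\bimG$ carries the canonical lax monoidal structure of \cite{Kelly}.

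Second, I invoke the standard fact that an adjunction in any $2$-category generates a comonad in the same $2$-category; instantiated at $\MonCat$, this yields a comonad object in $\MonCat$ on the codomain $\,_R\M_R$ of the left adjoint, i.e.\ a monoidal comonad. Concretely, let $\bimQ$ have underlying functor $\bimF\bimG$ with the composite lax monoidal structure, comultiplication $\bimcop:=\bimF\bimeta\bimG:\bimF\bimG\to\bimF\bimG\bimF\bimG$ and counit $\bimeps:\bimF\bimG\to\id$. The (co)associativity and counit laws for $\bimcop$ and $\bimeps$ follow from the triangle identities for $\bimeta,\bimeps$ exactly as in the non-monoidal Eilenberg--Moore construction, and $\bimcop$, $\bimeps$ are monoidal natural transformations because whiskering a monoidal natural transformation on either side by a (lax) monoidal functor again produces a monoidal natural transformation. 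Hence $\bimQ=\bra\bimF\bimG,\bimcop,\bimeps\ket$ is a monoidal comonad on $\,_R\M_R$ with $R=\longf I$.

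Third, suppose $\longf$ is flat. By Lemma~\ref{lem: FF left adj} the functor $\longF=\,\under\amo{\C}\longf$ is then left exact. The forgetful functor $\philong:\,_R\M_R\to\Ab$ has a left adjoint and, module categories being monadic over $\Ab$, creates and hence reflects limits; since $\Pre$ is complete and $\philong\bimF=\longF$ preserves finite limits, $\bimF$ itself preserves finite limits. On the other hand $\bimG$, being a right adjoint between the complete categories $\,_R\M_R$ and $\Pre$, preserves all limits, in particular finite ones. Therefore $\bimQ=\bimF\bimG$ preserves finite limits, i.e.\ the monoidal comonad $\bimQ$ is left exact. The whole argument is essentially formal once Lemma~\ref{lem: left adjoint} is available; the only points calling for a little care are that left exactness of $\bimF$ is verified after composing with $\philong$ rather than checked on bimodules directly, and that the comonad data are genuine $\MonCat$-cells --- which is precisely what the monoidality of $\bimeta$ and $\bimeps$ provides.
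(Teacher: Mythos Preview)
Your proof is correct and follows the same route as the paper: the monoidal comonad is read off the $\MonCat$-adjunction of Lemma~\ref{lem: left adjoint}, and left exactness is obtained by propagating flatness of $\longf$ through Lemma~\ref{lem: FF left adj}. The paper gives no explicit proof of this corollary, merely unpacking the comonad data afterwards; your argument that left exactness of $\bimF$ follows from left exactness of $\longF=\philong\bimF$ via the limit-creating forgetful functor $\philong$ makes explicit a step the paper leaves implicit.
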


Explicitly, $\bimQ=\bra\bimF\bimG,\bimcop,\bimeps\ket$, where $\bimcop=\bimF\bimeta\bimG$. The monoidal structure of $\bimQ$ is given by the composition of the monoidal functors $\bimF$ and $\bimG$, i.e.,
\begin{align*}
\bimQ_{M,M'}&=\bimF\bimG_{M,M'}\ci\bimF_{\bimG M,\bimG M'}\ :\ \bimQ M\oR\bimQ M'\to\bimQ(M\oR M'),\\
\bimQ_0&=\bimF\bimG_0\ci\bimF_0\ :\ R\to\bimQ R\,.
\end{align*}

Let $\M^\bimQ$ denote the Eilenberg-Moore category of $\bimQ$-comodules. Since $\bimQ$ is left exact comonad on an abelian category, its Eilenberg-Moore category $\M^\bimQ$ is cocomplete and abelian.
It inherits a monoidal structure $\bra\M^\bimQ,\oR,\bra R,\bimQ_0\ket\ket$ where
the monoidal product of $\bimQ$-comodules is
\begin{equation*}
\bra M,\alpha\ket\oR\bra M',\alpha'\ket:=\bra M\oR M',\bimQ_{M,M'}\ci(\alpha\oR\alpha')\ket
\end{equation*}
and the monoidal unit is $R$ as an $R$-$R$-bimodule equipped with coaction $\bimQ_0:R\to\bimQ R$.
The forgetful functor $\M^\bimQ\to\,_R\M_R$ is then automatically strong monoidal which is a rationale for denoting the monoidal product in both categories by the same symbol.






\subsection{The adjunction $\Fsh\dashv\Gsh$} \label{ss: Fsh - Gsh}

In this subsection we 
study another adjunction associated to our fiber functor which
yields a comonad on $\M_R$ and therefore cannot be monoidal. Still it is, in a sense, equivalent to the monoidal
comonad $\bimQ$ on $_R\M_R$. The situation is similar to corings and bialgebroids. Comodules of corings
are defined on one sided modules and so are the comodules of bialgebroids although the latter ones
have a monoidal product. Thus the present Subsection can be considered as a comonadic version of
\cite[1.4.]{Phung}.

In the process of forgetting $_R\M_R\to\Ab$ there is an intermediate step when we forget only the left $R$-module structures: $\phish:\,_R\M_R\to\M_R$.  This defines half lifts of the long forgetful functors denoted by $\fsh$ and $\Fsh$, respectively.
\begin{align*}
F&=\underset{\fsh}{\underbrace{\C\ \rarr{\bimf}\ _R\M_R\ \rarr{\phish}\ \M_R}}\ \rarr{}\ \Ab\\
\F&=\underset{\Fsh}{\underbrace{\Pre\ \rarr{\bimF}\ _R\M_R\ \rarr{\phish}\ \M_R}}\ \rarr{}\ \Ab
\end{align*}
Combining faithfulness of the forgetful functor $\phish$ with the adjunction $\bimF\dashv\bimG$,
\begin{equation}
\begin{CD}
_R\M_R(\bimF U,M)@>\phish_{\bimF U,M}>>\M_R(\Fsh U,\phish M)\\
@V\cong VV @VV\cong V\\
\Pre(U,\bimG M)@>\Pre(U,\iota_M)>>\Pre(U,\Gsh\phish M)
\end{CD}
\end{equation}
we obtain, on the one hand, a right adjoint $\Gsh=\{N\mapsto\M_R(\fsh\under,N)\}$ of $\Fsh$ and on the other hand,
by the Yoneda Lemma, a monic arrow $\iota:\bimG\into\Gsh\phish$. Explicitly, $(\iota_M)_C$ maps $f\in\,_R\M_R(\bimf C,M)$ to $f$ considered merely as an element of $\M_R(\fsh C,\phish M)$. The unit and counit of
$\Fsh\dashv\Gsh$ are given by
\begin{alignat}{2}
\etash&:\Pre\to\Gsh\Fsh,&\qquad \etash_UC&:UC\to\M_R(\fsh C,\Fsh U)\label{etash}\\
&&&\qquad u\mapsto\{x\mapsto u\am{C}x\},\notag\\
\epsh&:\Fsh\Gsh\to\M_R,&\qquad \epsh_N&:\M_R(\fsh\under,N)\amo{\C}\fsh\to N\label{epsh}\\
&&&\qquad f\am{C}x\mapsto f(x).\notag
\end{alignat}
The comonad $\bra \Fsh\Gsh,\copsh,\epsh\ket$ where $\copsh:=\Fsh\etash\Gsh$ will be denoted by $\Qsh$ and its Eilenberg-Moore category $(\M_R)^\Qsh$ of comodules simply by $\M^\Qsh$. This category has no apparent monoidal structure.

Later it will be useful to describe $\iota_M$ as an equalizer, hence a kernel, in $\Pre$. For this purpose we introduce some notations.
Let us consider $_R\M_R$ as the Eilenberg-Moore category for the monad $R\ot\under$ on $\M_R$. Then a bimodule $M$ can be identified with the pair $\bra \phish M,\lambda_M\ket$ where $\lambda_M$ denotes  the ring
homomorphism $R\to\End\phish M$ (which is more familiar than, but equivalent to, an action map
$R\ot\phish M\to\phish M$).
Since $\bimf C$ are bimodules for all $C\in\ob\C$, we can define $\ell(r)_C:=\lambda_{\bimf C}(r)\in\End \fsh C$
for any $r\in R$ and obtain the self natural transformation $\ell(r)\in\End\fsh$ of left action by $r\in R$ on the functor $\fsh$. This induces two more natural transformations, $\overrightarrow{\ell}(r):=\under\amo{\C}\ell(r)\in\End\Fsh$ and $\overleftarrow{\ell}(r):=\M_R(\ell(r),\under)\in\End\Gsh$.
Since an $R$-module map $f\in\M_R(\fsh C,\phish M)$ is an $R$-$R$-bimodule map precisely when
$\lambda_M(r)\ci f=f\ci\ell(r)_C$, $\forall r\in R$, and since limits in $\Pre$ are taken pointwise,
\begin{equation} \label{eq: iota}
\bimG M\overset{\iota_M}{\longrightarrowtail} \Gsh\phish M\pair{\Gsh\Lambda_M}{\eLl_{\phish M}}
\Gsh\prod_{r\in R}\phish M
\end{equation}
is an equalizer in $\Pre$ where $\Lambda_M$, natural in $M\in\,_R\M_R$, and $\eLl_N$, natural in $N\in\M_R$, are uniquely defined by
\begin{align}
\label{def Lambda}
p_r\ci\Lambda_M&=\lambda_M(r),\qquad\forall r\in R\\
\label{def eLl}
\Gsh p_r\ci\eLl_N&=\elll(r)_N,\qquad\forall r\in R
\end{align}
where $p_r$ denote the projections of the product $\prod_r N$ in which case the $\Gsh p_r$ are also projections of a product since $\Gsh$ is right adjoint.

Since $\Fsh=\phish\bimF$ where $\phish$ is a right adjoint, left exactness of $\bimF$ implies left exactness of $\Fsh$. Therefore
\begin{equation} \label{eq: j}
\phish\bimF\bimG M\overset{\Fsh\iota_M}{\longrightarrowtail} \Fsh\Gsh\phish M\pair{\Fsh\Gsh\Lambda_M}{\Fsh\,\eLl_{\phish M}}
\Fsh\Gsh\prod_{r\in R}\phish M
\end{equation}
is an equalizer in $\M_R$.
\begin{lem}\label{lem: j}
The pair $\bra\phish,j\ket$ consisting of  the forgetful functor $\phish:\,_R\M_R\to\M_R$ and the
natural transformation $j:=\Fsh\iota:\phish\bimQ\to\Qsh\phish$ of (\ref{eq: j}) is a morphism of comonads $\bimQ\to\Qsh$, i.e.,
\begin{align}
\label{j1}
\copsh\phish\ci j&=\Qsh j\ci j\bimQ\ci\phish\bimcop\,,\\
\label{j2}
\epsh\phish\ci j&=\phish\bimeps\,.
\end{align}
\end{lem}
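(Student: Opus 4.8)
The plan is to verify the two comonad-morphism axioms \eqref{j1} and \eqref{j2} by exploiting the fact that $j=\Fsh\iota$ and that $\iota:\bimG\into\Gsh\phish$ is the \emph{mate} (under the adjunctions $\bimF\dashv\bimG$ and $\Fsh\dashv\Gsh$) of the identity $2$-cell expressing $\phish\bimF=\Fsh$. Concretely, $\iota$ is characterised by
\[
\Gsh\phish\bimeps_M\ci j\bimG_M\ci\Fsh\bimeta_{?}\;=\;(\text{compatible unit/counit expression}),
\]
but rather than manipulating mates abstractly I would use the explicit formulas already available in the excerpt: $\bimeta,\bimeps$ from \eqref{bimeta}, \eqref{bimeps}; $\etash,\epsh$ from \eqref{etash}, \eqref{epsh}; $\copsh=\Fsh\etash\Gsh$, $\bimcop=\bimF\bimeta\bimG$; and the description of $(\iota_M)_C$ as the map sending $f\in{}_R\M_R(\bimf C,M)$ to the same $f$ viewed in $\M_R(\fsh C,\phish M)$.

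For \eqref{j2}: both sides are natural transformations $\phish\bimF\bimG\to\M_R$, i.e.\ between $\M_R$-valued functors, so it suffices to evaluate on a bimodule $M$ and chase a rank-$1$ generator $h\am{C}x$ of $\Fsh\bimG M=\M_R(\fsh\under,\phish M)\amo{\C}\fsh$ coming from $\phish\bimF\bimG M$. Tracing through, $\epsh\phish_M\ci j_M$ applied to (the image of) $h\am{C}x$ computes as $\epsh_{\phish M}$ of $(\iota_M)_C(h)\am{C}x$, which by \eqref{epsh} is $(\iota_M)_C(h)(x)=h(x)$; and $\phish\bimeps_M$ applied to $h\am{C}x$ is $\phish$ of $h(x)$, i.e.\ again $h(x)$. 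So \eqref{j2} is immediate from the two coend-counit formulas together with the definition of $\iota$ as ``forget the left action.''

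For \eqref{j1}: this is the more delicate of the two, being an equation of natural transformations $\phish\bimF\bimG\to\Qsh\Qsh\phish=\Fsh\Gsh\Fsh\Gsh\phish$. I would again evaluate at a bimodule $M$ and on a generator $h\am{C}x$. The left side $\copsh\phish_M\ci j_M$ first applies $\iota_M$ to get $(\iota_M)_C(h)\am{C}x\in\Fsh\Gsh\phish M$, then applies $\copsh_{\phish M}=\Fsh\etash_{\Gsh\phish M}\phish M$, producing (by \eqref{etash}) something like $\{x'\mapsto (\iota_M)_C(h)\am{C}x'\}\am{C}x$. The right side $\Qsh j_M\ci j_{\bimF\bimG M}\ci\phish\bimcop_M$ first applies $\bimcop_M=\bimF\bimeta_{\bimG M}\bimG M$ to get, via \eqref{bimeta}, $\{x'\mapsto h\am{C}x'\}\am{C}x\in\bimF\bimG\bimF\bimG M$; then $j_{\bimF\bimG M}=\Fsh\iota_{\bimF\bimG M}$ and $\Qsh j_M=\Fsh\Gsh\Fsh\iota_M$ reinterpret the various bimodule maps as $R$-module maps. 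One checks that $(\iota_{\bimF\bimG M})_C$ of $\{x'\mapsto h\am{C}x'\}$ is exactly the $R$-linear map $x'\mapsto h\am{C}x'$ viewed without left-$R$-linearity, and then applying $\Fsh\Gsh\Fsh\iota_M$ composes each output $h\am{C}x'\in\bimF\bimG M$ — here thought of inside $\Fsh\bimG M$ — with $\iota_M$, i.e.\ replaces $h$ by $(\iota_M)_C(h)$. Both sides thus land on $\{x'\mapsto(\iota_M)_C(h)\am{C}x'\}\am{C}x$ and agree.

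The main obstacle is purely bookkeeping: one must be careful about the multiple nested instances of $\Fsh\Gsh$ and track which ``$f\mapsto$ forget left action'' is being applied at which stage, and to confirm the naturality squares used (in particular that $\iota$ is natural in $M$, so that $\Gsh\phish\bimeps$ and $\Fsh\iota$ interact correctly, and that the coend generators behave well under the functors $\bimG$, $\Fsh$ applied pointwise). A cleaner, calculation-free alternative I would also present: note that \eqref{j1}--\eqref{j2} say exactly that $\iota:\bimG\to\Gsh\phish$ together with $\phish\bimF=\Fsh$ form a morphism of \emph{adjunctions} in the sense that the square of left adjoints $\bimF,\Fsh$ and right adjoints $\bimG,\Gsh\phish$ commutes up to the canonical $2$-cells; it is a standard fact that any such morphism of adjunctions induces a morphism of the generated (co)monads, with comultiplication-compatibility \eqref{j1} being the mate condition and counit-compatibility \eqref{j2} being the triangle condition. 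Since $\iota$ was \emph{defined} as the mate of $\mathrm{id}:\phish\bimF\Rightarrow\Fsh$ via $\bimF\dashv\bimG$ and $\Fsh\dashv\Gsh$, both identities follow formally. I would give the explicit generator chase as the proof body and mention this conceptual reason as a remark.
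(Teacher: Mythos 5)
Your proposal is correct and takes essentially the same route as the paper: (\ref{j2}) by comparing the explicit formulas (\ref{bimeps}) and (\ref{epsh}), and (\ref{j1}) by a verification whose key step --- that $(\iota_{\bimF\bimG M})_C$ merely re-reads $\{x'\mapsto h\am{C}x'\}$ as a right $R$-module map --- is exactly the paper's auxiliary identity $\iota\bimF\ci\bimeta=\etash$, which the paper combines with naturality of $\etash$ where you instead chase coend generators. Your closing remark that both identities follow formally because $\iota$ is the mate of the identity $\phish\bimF=\Fsh$ is also sound and matches the standard comonad-morphism fact the paper itself invokes (citing Street) immediately afterwards.
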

\begin{proof}
Comparing (\ref{bimeps}) with (\ref{epsh}) equation (\ref{j2}) immediately follows taking into account that $j$ maps
the generic element $h\am{C}x$ to $f\am{C}x$ where $f$ is $h$ considered as a map in $\M_R$ thus $f(x)=h(x)$.
In order to prove (\ref{j1}) we need an analogue of (\ref{j2}) which compares the two units. We claim that
\begin{equation}\label{j3}
\iota\bimF\ci\bimeta=\etash\,.
\end{equation}
Indeed, equations (\ref{bimeta}), (\ref{etash}) are adjusted together by $\iota_{\bimF U}C:\bimG\bimF UC\to\Gsh\Fsh UC$ which is the map $_R\M_R(\bimf C,\bimF U)\to\M_R(\fsh C,\Fsh U)$ sending $h$ to its underlying right $R$-module map. Now the proof of (\ref{j1}) is given by the calculation
\begin{align*}
\copsh\phish\ci j&=\Fsh(\etash\Gsh\phish\ci\iota)=\\
&=\Fsh(\Gsh\Fsh\iota\ci\etash\bimG)=\\
&=\Fsh\Gsh\Fsh\iota\ci\Fsh(\iota\bimF\ci\bimeta)\bimG=\\
&=\Qsh j\ci j\bimQ\ci \phish\bimcop\,.
\end{align*}
\end{proof}
\begin{cor} \label{cor: psh}
The comonad morphism $\bra\phish,j\ket$ induces a functor $\psh:\M^\bimQ\to\M^\Qsh$ which sends the $\bimQ$-comodule $\bra M, \alpha\ket$ to the $\Qsh$-comodule $\bra\phish M, j_M\ci\phish\alpha\ket$ and the arrow $\bra M,\alpha\ket\rarr{t}\bra M',\alpha'\ket$ to the arrow $\phish t$. Therefore the diagram
\begin{equation*}
\begin{CD}
\M^\bimQ@>\psh>>\M^\Qsh\\
@V{\F^\bimQ}VV @VV{\F^\Qsh}V\\
_R\M_R@>\phish>>\M_R
\end{CD}
\end{equation*}
with the vertical arrows denoting the obvious forgetful functors is an identity 2-cell in $\CAT$.
\end{cor}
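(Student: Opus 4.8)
The plan is to treat the statement as an instance of the general principle that a morphism of comonads induces a functor between the associated Eilenberg--Moore categories of comodules, and simply to verify the two comodule axioms directly from the identities (\ref{j1}) and (\ref{j2}) of Lemma \ref{lem: j} together with naturality of $j$. Fix a $\bimQ$-comodule $\bra M,\alpha\ket$ and set $\beta:=j_M\ci\phish\alpha:\phish M\to\Qsh\phish M$ for the candidate coaction on $\phish M$. The counit axiom is immediate: by (\ref{j2}) we have $\epsh_{\phish M}\ci j_M=\phish\bimeps_M$, so $\epsh_{\phish M}\ci\beta=\phish(\bimeps_M\ci\alpha)=\phish(\id_M)=\id_{\phish M}$, using the counit axiom for $\bra M,\alpha\ket$.

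For coassociativity I would compute $\copsh_{\phish M}\ci\beta$ and reduce it to $\Qsh\beta\ci\beta$ in three substitutions. First, (\ref{j1}) evaluated at $M$ gives $\copsh_{\phish M}\ci j_M=\Qsh j_M\ci j_{\bimQ M}\ci\phish\bimcop_M$, whence $\copsh_{\phish M}\ci\beta=\Qsh j_M\ci j_{\bimQ M}\ci\phish(\bimcop_M\ci\alpha)$. Second, coassociativity of $\alpha$ rewrites $\bimcop_M\ci\alpha=\bimQ\alpha\ci\alpha$. Third, naturality of $j$ at the arrow $\alpha:M\to\bimQ M$ gives $j_{\bimQ M}\ci\phish\bimQ\alpha=\Qsh\phish\alpha\ci j_M$. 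Putting these together, $\copsh_{\phish M}\ci\beta=\Qsh j_M\ci\Qsh\phish\alpha\ci j_M\ci\phish\alpha=\Qsh(j_M\ci\phish\alpha)\ci(j_M\ci\phish\alpha)=\Qsh\beta\ci\beta$, as required. For a comodule morphism $t:\bra M,\alpha\ket\to\bra M',\alpha'\ket$, i.e. $\bimQ t\ci\alpha=\alpha'\ci t$, the analogous short chase (naturality of $j$ at $t$, then the intertwining identity) shows $\Qsh\phish t\ci\beta=j_{M'}\ci\phish(\bimQ t\ci\alpha)=j_{M'}\ci\phish(\alpha'\ci t)=\beta'\ci\phish t$, so $\phish t$ is a $\Qsh$-comodule morphism; this defines $\psh$ on arrows, and functoriality is clear because $\psh$ acts as the functor $\phish$ on underlying arrows.

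Finally, for the square of forgetful functors: both $\F^\Qsh\psh$ and $\phish\F^\bimQ$ send $\bra M,\alpha\ket$ to the object $\phish M$ and a morphism $t$ to $\phish t$ on the nose, so the two composites are literally the same functor and the square is an identity $2$-cell in $\CAT$, not merely a commuting one. I do not anticipate any genuine obstacle: the argument is a diagram chase that reduces entirely to (\ref{j1}), (\ref{j2}) and naturality of $j$, and the only point demanding a little care is bookkeeping which whiskered naturality square of $j$ is being invoked at each step.
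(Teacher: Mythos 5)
Your proof is correct and follows essentially the same route as the paper: both verify the two comodule axioms and the morphism condition by a diagram chase resting on the comonad-morphism identities of Lemma \ref{lem: j} together with naturality. The only cosmetic difference is that you invoke (\ref{j1}) and naturality of $j$ as black boxes, whereas the paper re-expands $j=\Fsh\iota$ and chases through $\iota$, $\etash$ and (\ref{j3}); the content is identical.
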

\begin{proof}
Although this is well-known, see \cite{Street: Monads I.}, we give the explicit calculations:
\begin{align*}
\copsh\phish M\ci jM\ci\phish\alpha&=\Fsh\etash\Gsh\phish M\ci\Fsh\iota M\ci\phish\alpha=\\
&=\Fsh\Gsh\Fsh\iota M\ci\Fsh\etash\bimG M\ci\phish\alpha=\\
&=\Fsh\Gsh\Fsh\iota M\ci\Fsh\iota\bimF\bimG M\ci\phish\bimcop M\ci\phish\alpha=\\
&=\Fsh\Gsh\Fsh\iota M\ci\Fsh\iota\bimF\bimG M\ci\phish\bimF\bimG\alpha\ci\phish\alpha=\\
&=\Fsh\Gsh\Fsh\iota M\ci\Fsh\Gsh\phish\alpha\ci\Fsh\iota M\ci\phish\alpha=\\
&=\Qsh(jM\ci\phish\alpha)\ci(j M\ci\phish\alpha)
\end{align*}
and
\[
\epsh\phish M\ci \Fsh\iota M\ci\phish\alpha=\phish\bimeps M\ci\phish\alpha=\phish M\,.
\]
For a morphism $\bra M,\alpha\ket\rarr{h}\bra N,\beta\ket$ of $\bimQ$-comodules
$\psi^\pisharp\bra M,\alpha\ket\rarr{\phish h}\psi^\pisharp\bra N,\beta\ket$ is a morphism of $\Qsh$-comodules. Indeed,
\begin{align*}
\Fsh\Gsh\phish h\ci\Fsh\iota M\ci\phish\alpha&=\Fsh\iota N\ci\phish\bimF\bimG h\ci\phish\alpha=\\
&=\Fsh\iota N\ci\phish\beta\ci\phish h\,.
\end{align*}
\end{proof}
The surprising fact is that the functor $\psh$ is an isomorphism of categories as we shall see soon.
At first we construct an $R$-bimodule structure on $\Qsh$-comodules due to the fact that $\Qsh$
carries a left action of $R$ since the $\Fsh$ does. For a $\Qsh$-comodule $\bra N,\beta\ket$ let
$\hat N=\bra N,\lambda_{\hat N}\ket$ be the $R\ot\under$-module defined by
\begin{equation} \label{lambda Nhat}
r\in R\mapsto \lambda_{\hat N}(r):=\ N\rarr{\beta}\Qsh N\longrarr{\lambda_{\bimF\Gsh N}(r)}\Qsh N\rarr{\epsh N}N
\end{equation}
where $\lambda_{\bimF\Gsh N}(r)$ is, of course, the same thing as $\overrightarrow{\ell(r)}_{\Gsh N}$.
\begin{lem}
For all $r\in R$ we have the identities
\begin{align}
\label{Take for epsh}
\epsh\ci\overrightarrow{\ell}(r)\Gsh&=\epsh\ci\Fsh\overleftarrow{\ell}(r)\\
\label{Take for etash}
\Gsh\overrightarrow{\ell}(r)\ci\etash&=\overleftarrow{\ell}(r)\Fsh\ci\etash\\
\label{Take for copsh}
\Fsh\Gsh\overrightarrow{\ell}(r)\Gsh\ci\copsh&=\Fsh\overleftarrow{\ell}(r)\Fsh\Gsh\ci\copsh
\end{align}
\end{lem}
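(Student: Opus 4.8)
The plan is to prove the three identities \eqref{Take for epsh}--\eqref{Take for copsh} by unwinding the definitions of the various natural transformations in terms of the tensor-product presentation and the $R$-action $\ell(r)$, exploiting throughout that $\overrightarrow{\ell}(r)=\under\amo{\C}\ell(r)$ acts on $\bimF U=U\amo{\C}\bimf$ by multiplying the right-hand tensor factor by $r$ on the left, while $\overleftarrow{\ell}(r)=\M_R(\ell(r),\under)$ acts on $\bimG M=\M_R(\fsh\under,M)$ by precomposing an $R$-module map with left multiplication by $r$. In other words, on rank-one tensors $f\am{C}x$ (with $f\in\M_R(\fsh C,N)$, $x\in\fsh C$) both sides of \eqref{Take for epsh} should reduce to $f(r\cdot x)$, and this is essentially a restatement of the defining relation $f\cdot(\ell(r)_C)\am{C}x=f\am{C}\ell(r)_C(x)$ of the tensor product together with the formula \eqref{epsh} for $\epsh_N$.

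First I would establish \eqref{Take for epsh}: evaluate at an object $N\in\M_R$ and chase a generic element $f\am{C}x\in\Fsh\Gsh N=\M_R(\fsh\under,N)\amo{\C}\fsh$. The left-hand side $\epsh_N\ci\overrightarrow{\ell}(r)_{\Gsh N}$ sends $f\am{C}x$ first to $(\ell(r)_N\ci f)\am{C}x$ — here using that $\overrightarrow{\ell}(r)$ multiplies the $\Gsh N=\M_R(\fsh\under,N)$-valued factor, i.e.\ postcomposes $f$ with left multiplication $\ell(r)_N$ on $N$ — wait, more precisely it acts via $\overleftarrow{\ell}(r)_N=\M_R(\ell(r),N)$ is not what appears; rather $\overrightarrow{\ell}(r)_{\Gsh N}=\under\amo{\C}\ell(r)$ evaluated at the presheaf $\Gsh N$ multiplies the $\fsh$-factor. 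So the left side is $f\am{C}(\ell(r)_C x)\mapsto f(r\cdot x)$ under $\epsh_N$. The right-hand side $\epsh_N\ci\Fsh\,\overleftarrow{\ell}(r)_N$ sends $f\am{C}x$ to $(f\ci\ell(r)_C)\am{C}x\mapsto f(\ell(r)_C x)=f(r\cdot x)$. The two agree, so \eqref{Take for epsh} holds. Then \eqref{Take for etash} is the formally dual computation: evaluate at a presheaf $U$ and chase $u\in UC$ through \eqref{etash}; both composites send $u$ to the $R$-module map $x\mapsto u\am{C}(\ell(r)_{C}x)=(\ell(r)_{\Fsh U})(u\am{C}x)$, using once more the defining relation of $\amo{\C}$ to move the left $R$-action across. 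Here the key point is that $\overrightarrow{\ell}(r)_{\Fsh U}$ is precisely the left multiplication by $r$ on the right $R$-module $\Fsh U=U\amo{\C}\fsh$, so both sides describe the same element of $\M_R(\fsh C,\Fsh U)$.

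Finally, \eqref{Take for copsh} should follow from the first two by a purely formal whiskering argument, since $\copsh=\Fsh\etash\Gsh$ and $\overrightarrow{\ell}(r)$, $\overleftarrow{\ell}(r)$ interact with $\Fsh$, $\Gsh$ in a controlled way. Concretely, I would compute
\begin{align*}
\Fsh\Gsh\overrightarrow{\ell}(r)\Gsh\ci\Fsh\etash\Gsh
&=\Fsh\bigl(\Gsh\overrightarrow{\ell}(r)\ci\etash\bigr)\Gsh\\
&=\Fsh\bigl(\overleftarrow{\ell}(r)\Fsh\ci\etash\bigr)\Gsh\\
&=\Fsh\overleftarrow{\ell}(r)\Fsh\Gsh\ci\Fsh\etash\Gsh\,,
\end{align*}
where the middle equality is exactly \eqref{Take for etash} whiskered by $\Fsh$ on the left and $\Gsh$ on the right, and the outer equalities are just interchange of whiskering with vertical composition. (Identity \eqref{Take for epsh} is not needed for \eqref{Take for copsh}, but it is the analogous statement for the counit and belongs in the same lemma because both are used later to transport the $R$-bimodule structure.) The main obstacle — really the only thing requiring care — is keeping straight which of $\overrightarrow{\ell}(r)$ and $\overleftarrow{\ell}(r)$ appears where, and verifying that $\overrightarrow{\ell}(r)_{\Gsh N}$ indeed acts on the $\fsh$-factor of $\Fsh\Gsh N$ rather than on the $\Gsh N$-factor; once the bookkeeping of the two tensor factors is nailed down, each identity collapses to the single coend relation \eqref{tensor rels} applied to the arrow $\ell(r)_C\in\C'(\,\cdot\,)$ — more precisely to the self-natural-transformation $\ell(r)$ of $\fsh$, which is legitimate because the tensor product $\under\amo{\C}\fsh$ is functorial in $\fsh$.
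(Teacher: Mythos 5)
Your proof is correct and follows essentially the same route as the paper: evaluating \eqref{Take for epsh} and \eqref{Take for etash} on generic elements $h\am{C}x$ resp.\ $u\in UC$, where both sides collapse via the relation $h\ci\ell(r)_C\am{C}x=h\am{C}\ell(r)_C(x)$, and then obtaining \eqref{Take for copsh} by whiskering \eqref{Take for etash} with $\Fsh$ on the left and $\Gsh$ on the right, using $\copsh=\Fsh\etash\Gsh$. Your mid-paragraph self-correction about $\overrightarrow{\ell}(r)_{\Gsh N}$ acting on the $\fsh$-factor (not on the $\Gsh N$-factor) resolves the only delicate bookkeeping point, exactly as in the paper's computation.
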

\begin{proof}
To prove (\ref{Take for epsh}) evaluate its $N$-component on $h\am{C}x\in\Gsh N\amo{\C}\fsh$:
\begin{align*}
\epsh_N(\ellr(r)\Gsh N(h\am{C}x))&=\epsh_N(h\am{C}\ell(r)_C(x))=h(\ell(r)_C(x))=\\
&=\epsh_N(h\ci\ell(r)_C\am{C}x)=\epsh_N(\Fsh\elll(r)_N(h\am{C}x))\,.
\end{align*}
To prove (\ref{Take for etash}) evaluate the $C$-component of its $U$-component on $u\in UC$:
\[
(\Gsh\ellr(r)_U)_C\ci(\etash_U)_C(u)=\{x\mapsto u\am{C}\ell(r)_C(x)\}=
(\elll(r)\Fsh U)_C\ci(\etash_U)_C(u)\,.
\]
Applying $\Fsh$ from the left and $\Gsh$ from the right (\ref{Take for etash}) implies (\ref{Take for copsh}).
\end{proof}
\begin{lem} \label{lem: bimodule lift of beta}
For each $\Qsh$-comodule $\bra N,\beta\ket$ the $\lambda_{\hat N}:R\to \End N$ defined in (\ref{lambda Nhat})
is a ring homomorphism such that $\beta=\phish\hat\beta$ for a unique $R$-bimodule map $\hat\beta:\hat N\to\bimF\Gsh N$. Moreover \, the identities
\begin{align}
\label{Take for beta}
\Fsh\Gsh\lambda_{\hat N}(r)\ci\beta&=\Fsh\elll(r)N\ci\beta\,\quad r\in R\\
\label{bimodule lifted coaction}
\bimF\Gsh\beta\ci\hat\beta&=\bimF\etash\Gsh N\ci\hat\beta
\end{align}
hold true.
\end{lem}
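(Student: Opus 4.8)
The plan is to deduce both parts of the statement from a single intertwining identity for $\beta$, and then to read off the two displayed equations from the comonad axioms, using faithfulness of $\phish$ to transport the bimodule assertions down to $\M_R$. The first and crucial step is to prove
\[
\ellr(r)_{\Gsh N}\ci\beta=\beta\ci\lambda_{\hat N}(r),\qquad r\in R,
\]
i.e.\ that $\beta$ exchanges left multiplication by $r$ on $\hat N$ with the left action $\lambda_{\bimF\Gsh N}(r)=\ellr(r)_{\Gsh N}$ on $\Qsh N$. Starting from the definition (\ref{lambda Nhat}) one has $\beta\ci\lambda_{\hat N}(r)=\beta\ci\epsh N\ci\ellr(r)_{\Gsh N}\ci\beta$; naturality of $\epsh$ replaces $\beta\ci\epsh N$ by $\epsh\Qsh N\ci\Qsh\beta$, naturality of the self-transformation $\ellr(r)\Gsh$ of $\Qsh$ slides $\ellr(r)_{\Gsh N}$ past $\Qsh\beta$, and coassociativity of $\bra N,\beta\ket$ turns $\Qsh\beta\ci\beta$ into $\copsh N\ci\beta$, leaving $\epsh\Qsh N\ci\ellr(r)_{\Gsh\Qsh N}\ci\copsh N\ci\beta$. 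Applying (\ref{Take for epsh}) at the object $\Qsh N$, then $\copsh=\Fsh\etash\Gsh$ together with (\ref{Take for etash}) at $\Gsh N$, rewrites $\ellr(r)_{\Gsh\Qsh N}\ci\copsh N$ as $\Qsh(\ellr(r)_{\Gsh N})\ci\copsh N$; finally naturality of $\epsh$ at $\ellr(r)_{\Gsh N}$ and the counit law $\epsh\Qsh N\ci\copsh N=\Qsh N$ collapse the composite to $\ellr(r)_{\Gsh N}\ci\beta$.

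Granting this identity, $\lambda_{\hat N}$ is a ring homomorphism: $\lambda_{\hat N}(1)=\id_N$ because $\ell(1)=\id$ (so $\ellr(1)=\id$) and $\epsh N\ci\beta=\id_N$, while $\lambda_{\hat N}(rs)=\lambda_{\hat N}(r)\ci\lambda_{\hat N}(s)$ follows by writing $\lambda_{\hat N}(rs)=\epsh N\ci\ellr(r)_{\Gsh N}\ci\ellr(s)_{\Gsh N}\ci\beta$, using multiplicativity of $r\mapsto\ellr(r)$, and replacing $\ellr(s)_{\Gsh N}\ci\beta$ by $\beta\ci\lambda_{\hat N}(s)$. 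Thus $\hat N=\bra N,\lambda_{\hat N}\ket$ is an $R\ot\under$-module, and the intertwining identity says exactly that $\beta$ — which is already a right $R$-module map, being a morphism of $\M_R$ — also respects the left $R$-actions on $\hat N$ and on $\bimF\Gsh N$; hence it underlies a morphism of $\,_R\M_R$, which by faithfulness of $\phish$ is the unique $\hat\beta\colon\hat N\to\bimF\Gsh N$ with $\phish\hat\beta=\beta$.

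For (\ref{Take for beta}) I would expand $\Fsh\Gsh(\lambda_{\hat N}(r))\ci\beta=\Qsh(\epsh N)\ci\Qsh(\ellr(r)_{\Gsh N})\ci\copsh N\ci\beta$ by coassociativity, rewrite $\Qsh(\ellr(r)_{\Gsh N})\ci\copsh N$ as $\Fsh\elll(r)_{\Qsh N}\ci\copsh N$ by the same use of $\copsh=\Fsh\etash\Gsh$ and (\ref{Take for etash}) as in Step~1, pull $\Fsh\elll(r)$ past $\Qsh(\epsh N)$ by naturality, and conclude with the other counit law $\Qsh\epsh N\ci\copsh N=\Qsh N$. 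For (\ref{bimodule lifted coaction}), both sides are $R$-bimodule maps $\hat N\to\bimF\Gsh\Qsh N$, so by faithfulness of $\phish$ it is enough to check the equation after applying $\phish$; there it becomes $\Qsh\beta\ci\beta=\copsh N\ci\beta$, which is precisely the coassociativity axiom for $\bra N,\beta\ket$.

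The main obstacle is Step~1. One must keep careful track of which of the identities (\ref{Take for epsh})–(\ref{Take for copsh}), and which instance of naturality or of the comonad/adjunction laws, is invoked at each juncture, since $\ellr(r)$ and $\elll(r)$ live on $\Fsh$ and on $\Gsh$ respectively and have to be shuttled through the composite $\Qsh=\Fsh\Gsh$ in the correct order. Once the intertwining identity is established, Steps~2 and~3 are routine bookkeeping.
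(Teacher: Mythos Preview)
Your proof is correct and follows essentially the same route as the paper: first establish the intertwining identity $\ellr(r)_{\Gsh N}\ci\beta=\beta\ci\lambda_{\hat N}(r)$, deduce the ring homomorphism property and the existence of $\hat\beta$, then verify (\ref{Take for beta}) via coassociativity and the $\ell$-identities, and (\ref{bimodule lifted coaction}) via faithfulness of $\phish$. The only cosmetic differences are that the paper invokes (\ref{Take for copsh}) where you unfold it to (\ref{Take for etash}), and that in Step~1 the paper applies (\ref{Take for epsh}) before rather than after the naturality-and-coassociativity moves; neither changes the substance of the argument.
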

\begin{proof}
Let us see at first if $\beta$ can be lifted to a bimodule map:
\begin{align*}
\beta\ci\lambda_{\hat N}(r)&=\beta\ci \epsh N\ci \ellr(r)\Gsh N\ci\beta\eqby{Take for epsh}
=\beta\ci \epsh N\ci \Fsh\elll(r) N\ci\beta=\\
&=\epsh\Fsh\Gsh N\ci\Fsh\elll(r)\Fsh\Gsh N\ci\Fsh\Gsh\beta\ci\beta=\\
&=\epsh\Fsh\Gsh N\ci\Fsh\elll(r)\Fsh\Gsh N\ci\copsh N\ci\beta\eqby{Take for copsh}\\
&=\epsh\Fsh\Gsh N\ci\Fsh\Gsh\ellr(r)\Gsh N\ci\copsh N\ci\beta=\\
&=\ellr(r)\Gsh N\ci\epsh\Fsh\Gsh N\ci\copsh N\ci\beta=\ellr(r)\Gsh N\ci\beta=\\
&=\lambda_{\bimF\Gsh N}(r)\ci\beta\,.
\end{align*}
This implies that for all $r,r'\in R$
\begin{align*}
\lambda_{\hat N}(r)\lambda_{\hat N}(r')&=\epsh N\ci\ellr(r)\Gsh N\ci\beta \ci \lambda_{\hat N}(r')=\\
&=\epsh N\ci\ellr(r)\Gsh N\ci \ellr(r')\Gsh N\ci\beta=\epsh N\ci\ellr(rr')\Gsh N\ci\beta=\\
&=\lambda_{\hat N}(rr')
\end{align*}
and $\lambda_{\hat N}(1_R)=\epsh N\ci\beta=N$, obviously.
Next we show (\ref{Take for beta}).
\begin{align*}
\Fsh\Gsh\lambda_{\hat N}(r)\ci \beta&=\Fsh\Gsh\epsh N\ci\Fsh\Gsh\ellr(r)\Gsh N\ci\copsh N\ci\beta
\eqby{Take for copsh}\\
&=\Fsh\Gsh\epsh N\ci\Fsh\elll(r)\Fsh\Gsh N\ci\copsh N\ci\beta=\\
&=\Fsh\elll(r)N\ci\Fsh\Gsh\epsh N\ci\copsh\ci\beta=\Fsh\elll(r)N\ci\beta.
\end{align*}
Finally, applying the faithful $\phish$ to (\ref{bimodule lifted coaction}) we get the associativity property of the
$\Qsh$-coaction $\beta$. Hence (\ref{bimodule lifted coaction}) is an identity, too.
\end{proof}
\begin{lem} \label{lem: Xi}
The correspondence $\bra N,\beta\ket\mapsto \hat N$ defined by (\ref{lambda Nhat}) is the object map of a unique functor $\Xi:\M^\Qsh\to\,_R\M_R$ such that $\phish\Xi=\F^\Qsh$.
\end{lem}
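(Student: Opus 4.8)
The plan is to set $\Xi\langle N,\beta\rangle:=\hat N$ on objects and $\Xi h:=h$ on arrows, and then to check the three points: that $\hat N$ is a well-formed object of $_R\M_R$, that $h$ is indeed a bimodule morphism between the relevant $\hat N$'s, and that the resulting assignment is the \emph{unique} functor with $\phish\Xi=\F^\Qsh$. The first point is already in hand: by Lemma~\ref{lem: bimodule lift of beta} the map $\lambda_{\hat N}\colon R\to\End N$ of (\ref{lambda Nhat}) is a unital ring homomorphism, so $\hat N=\langle N,\lambda_{\hat N}\rangle$ is an $R\ot\under$-module, i.e. an $R$-$R$-bimodule whose underlying right $R$-module is exactly $\F^\Qsh\langle N,\beta\rangle=N$. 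Hence the only step with real content is to verify that for a $\Qsh$-comodule morphism $h\colon\langle N,\beta\rangle\to\langle N',\beta'\rangle$ the right $R$-linear map $h$ commutes with the left actions, $h\ci\lambda_{\hat N}(r)=\lambda_{\hat N'}(r)\ci h$ for all $r\in R$.

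This I would obtain by a short naturality chase. Expanding $\lambda_{\hat N}(r)=\epsh N\ci\ellr(r)\Gsh N\ci\beta$ (using $\lambda_{\bimF\Gsh N}(r)=\ellr(r)\Gsh N$), the chain runs
\begin{align*}
h\ci\lambda_{\hat N}(r)
&=h\ci\epsh N\ci\ellr(r)\Gsh N\ci\beta
 =\epsh N'\ci\Qsh h\ci\ellr(r)\Gsh N\ci\beta\\
&=\epsh N'\ci\ellr(r)\Gsh N'\ci\Qsh h\ci\beta
 =\epsh N'\ci\ellr(r)\Gsh N'\ci\beta'\ci h
 =\lambda_{\hat N'}(r)\ci h,
\end{align*}
where the second equality is naturality of $\epsh$, the third is naturality of the endotransformation $\ellr(r)$ of $\Fsh$ evaluated at the arrow $\Gsh h$ (so that $\Qsh h\ci\ellr(r)\Gsh N=\ellr(r)\Gsh N'\ci\Qsh h$, since $\Qsh h=\Fsh\Gsh h$), and the fourth is the comodule-morphism axiom $\Qsh h\ci\beta=\beta'\ci h$.

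Granting this, the rest is purely formal. $\Xi$ preserves identities and composites because it acts as the identity on underlying maps, so it is a functor; and $\phish\Xi=\F^\Qsh$ holds on the nose, both sides returning $N$ on objects and $h$ on arrows. For uniqueness, let $\Xi'$ be any functor whose object map is $\langle N,\beta\rangle\mapsto\hat N$ and with $\phish\Xi'=\F^\Qsh$; then for each comodule morphism $h$ the arrow $\Xi'h$ lies in $_R\M_R(\hat N,\hat N')$ and satisfies $\phish(\Xi'h)=\F^\Qsh h=h$, and since $\phish$ is faithful (a bimodule morphism is determined by its underlying right-module map) we get $\Xi'h=h=\Xi h$, so $\Xi'=\Xi$.

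I expect no essential obstacle: the whole lemma collapses to the displayed computation, whose only delicate point is bookkeeping --- keeping straight that $\ellr(r)$ is a natural endotransformation of $\Fsh$ (not of $\Qsh$ or of $\Gsh$) and invoking the identification $\lambda_{\bimF\Gsh N}(r)=\ellr(r)\Gsh N$ at the right moment.
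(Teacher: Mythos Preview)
Your proof is correct and follows essentially the same approach as the paper: the paper's argument is exactly the same naturality chase (starting from $\lambda_{\hat N'}(r)\ci f$ rather than $h\ci\lambda_{\hat N}(r)$, but with the identical four steps), and it leaves the uniqueness part implicit where you spell it out via faithfulness of $\phish$.
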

\begin{proof}
Since $\F^\Qsh$ maps an arrow $\bra N,\beta\ket\rarr{f}\bra N',\beta'\ket$ to its underlying $R$-module map
$f:N\to N'$, it suffices to show that all such $f$-s are actually left $R$-module maps, too. Indeed,
\begin{align*}
\lambda_{\hat N'}(r)\ci f&=\epsh_{N'}\ci \ellr(r)\Gsh N'\ci\beta'\ci f
=\epsh_{N'}\ci \ellr(r)\Gsh N'\ci\Fsh\Gsh f\ci\beta=\\
&=\epsh_{N'}\ci\Fsh\Gsh f\ci \ellr(r)\Gsh N\ci\beta=f\ci \epsh_{N}\ci \ellr(r)\Gsh N\ci\beta=\\
&=f\ci\lambda_{\hat N}(r)
\end{align*}
for all $r\in R$.
\end{proof}
Next we want to show that the bimodule $\hat N$ constructed by $\Xi\bra N,\beta\ket$ is underlying a
$\bimQ$-comodule $\bra \hat N,\alpha\ket$ in such a way that $\bra N,\beta\ket\mapsto\bra\hat N,\alpha\ket$ provides an inverse functor of $\psh$.
For this purpose we define $\alpha$ by the universal property of the equalizer (\ref{eq: j}), i.e.,
by the diagram
\begin{equation}\label{diag: alpha beta}
\parbox{100pt}{
\begin{picture}(100,80)
\put(0,10){$\phish\bimF\bimG\hat N$}
\put(38,10){$\longerrightarrowtail$}
\put(63,18){$j\hat N$}
\put(100,10){$\Qsh N$}
\put(7,70){$N$} \put(-12,40){$\phish\alpha$}
\put(20,70){\vector(2,-1){85}}
\put(57,55){$\beta$}
\dashline{3}(12,65)(12,25) \put(12,25){\vector(0,-1){0}}
\end{picture}
}
\end{equation}
To this end we need to show that $\beta$ equalizes the pair
$\pair{\Fsh\Gsh\Lambda_{\hat N}}{\Fsh\,\eLl_N}$ which seems to follow easily from (\ref{Take for beta})
since the latter can be obtained from the pair by applying the `projections' $\Fsh\Gsh p_r$. However, this is a wrong
argument because in passing from (\ref{eq: iota}) to (\ref{eq: j}) the $\Fsh$, although preserved the equalizer,  destroyed the product structure of $\Gsh\prod_r N$. Still the expectation holds true by the next
\begin{lem} \label{lem: psiinverse object map}
For any $\Qsh$-comodule $\bra N,\beta\ket$ let $\hat N$ be the $R$-bimodule constructed in (\ref{lambda Nhat}).
Then $\beta$ equalizes the pair $\pair{\Fsh\Gsh\Lambda_{\hat N}}{\Fsh\,\eLl_N}$ and the
unique arrow denoted by $\phish\alpha$ in (\ref{diag: alpha beta}) can be lifted to $_R\M_R$
as an arrow $\alpha$ that makes the pair $\bra\hat N,\alpha\ket$ a $\bimQ$-comodule.
\end{lem}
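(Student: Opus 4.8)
The plan is to verify the three assertions of the Lemma in the order listed: (a) that $\beta$ equalizes the pair $\bimF\Gsh\Lambda_{\hat N}^{\pisharp}$... more precisely the pair $\Fsh\Gsh\Lambda_{\hat N},\Fsh\,\eLl_N$; (b) that the resulting arrow $\phish\alpha$ of (\ref{diag: alpha beta}) lifts to an arrow $\alpha:\hat N\to\bimF\bimG\hat N$ in ${}_R\M_R$; and (c) that $\bra\hat N,\alpha\ket$ is a $\bimQ$-comodule. Part (a) is the substantial one, and the hard point is precisely the one flagged above: the naive argument through the ``projections'' $\Fsh\Gsh p_r$, using (\ref{Take for beta}) componentwise, is invalid because the left adjoint $\bimF$ (hence $\Fsh=\phish\bimF$) does not preserve the product $\prod_{r\in R}N$, so the $\Fsh\Gsh p_r$ are not jointly monic. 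I would therefore prove the ${}_R\M_R$-version --- that $\hat\beta:\hat N\to\bimF\Gsh N$ equalizes $\bimF\Gsh\Lambda_{\hat N}$ and $\bimF\eLl_N$, whence the statement about $\beta=\phish\hat\beta$ follows by applying the faithful $\phish$ --- and I would obtain it by \emph{transporting} an automatic equalizing identity at the object $\bimF\Gsh N$ down to $\hat N$ along a split monomorphism.

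Concretely: combining (\ref{bimodule lifted coaction}) with (\ref{j3}) gives $\bimF\Gsh\beta\ci\hat\beta=\bimF\iota_{\bimF\Gsh N}\ci\bimF\bimeta_{\Gsh N}\ci\hat\beta$, so $\bimF\Gsh\beta\ci\hat\beta$ factors through $\bimF\iota_{\bimF\Gsh N}$; and applying the left exact $\bimF$ to (\ref{eq: iota}) at $M=\bimF\Gsh N$ exhibits $\bimF\iota_{\bimF\Gsh N}$ as the equalizer of $\bimF\Gsh\Lambda_{\bimF\Gsh N}$ and $\bimF\eLl_{\Fsh\Gsh N}$. Hence $\bimF\Gsh\Lambda_{\bimF\Gsh N}\ci\bimF\Gsh\beta\ci\hat\beta=\bimF\eLl_{\Fsh\Gsh N}\ci\bimF\Gsh\beta\ci\hat\beta$. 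Now invoking naturality of $\Lambda$ at the ${}_R\M_R$-arrow $\hat\beta$, naturality of $\eLl$ at the $\M_R$-arrow $\beta$, and the fact that the right adjoint $\Gsh$ carries $\prod_r\beta$ to $\prod_r\Gsh\beta$, both sides of the last identity rewrite as $\bimF(\prod_r\Gsh\beta)$ composed, respectively, with $\bimF\Gsh\Lambda_{\hat N}\ci\hat\beta$ and with $\bimF\eLl_N\ci\hat\beta$. Since the counit law $\epsh_N\ci\beta=\id_N$ makes $\beta$ a split mono, so are $\Gsh\beta$, $\prod_r\Gsh\beta$, and $\bimF(\prod_r\Gsh\beta)$; cancelling the last yields $\bimF\Gsh\Lambda_{\hat N}\ci\hat\beta=\bimF\eLl_N\ci\hat\beta$, which is (a) in its ${}_R\M_R$-form. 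Part (b) is then immediate: $\hat\beta$ equalizes a pair whose equalizer, by $\bimF$ applied to (\ref{eq: iota}) at $M=\hat N$, is $\bimF\iota_{\hat N}$, so there is a unique ${}_R\M_R$-arrow $\alpha$ with $\bimF\iota_{\hat N}\ci\alpha=\hat\beta$; applying $\phish$ and using $\phish\bimF\iota_{\hat N}=\Fsh\iota_{\hat N}=j\hat N$ and $\phish\hat\beta=\beta$ gives $j\hat N\ci\phish\alpha=\beta$, and since $j\hat N$ is monic ($\Fsh$ left exact) this $\phish\alpha$ is the arrow of (\ref{diag: alpha beta}), which therefore lifts to $\alpha$.

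For (c): the counit axiom $\bimeps_{\hat N}\ci\alpha=\id_{\hat N}$ follows, by faithfulness of $\phish$, from $\phish\bimeps_{\hat N}\ci\phish\alpha\eqby{j2}\epsh_N\ci j\hat N\ci\phish\alpha=\epsh_N\ci\beta=\id_N$. For coassociativity, $\bimF\bimG\alpha\ci\alpha=\bimF\bimeta_{\bimG\hat N}\ci\alpha$, it suffices (again by faithfulness of $\phish$) to prove $\Fsh\bimG\alpha\ci\phish\alpha=\Fsh\bimeta_{\bimG\hat N}\ci\phish\alpha$ in $\M_R$, and here I would reprise the mono-cancellation device: prepend the monomorphism $m:=\Fsh\Gsh\Fsh\iota_{\hat N}\ci\Fsh\iota_{\bimF\bimG\hat N}$ (a composite of monos since $\iota$ is monic, $\bimF$ and $\phish$ are left exact and $\Gsh$ is a right adjoint), and simplify using naturality of $\iota$ (at $\alpha$ and at $\hat\beta$), naturality of $\etash$ (at $\iota_{\hat N}$), (\ref{j3}) and $\Fsh\iota_{\hat N}\ci\phish\alpha=\beta$; one finds $m\ci(\Fsh\bimG\alpha\ci\phish\alpha)=\Fsh\Gsh\beta\ci\beta$ and $m\ci(\Fsh\bimeta_{\bimG\hat N}\ci\phish\alpha)=\Fsh\etash_{\Gsh N}\ci\beta$, which are equal by the coassociativity of the $\Qsh$-comodule $\bra N,\beta\ket$, so cancelling $m$ finishes the proof.

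The step I expect to be the real obstacle is part (a), and specifically the failure of $\bimF$ to preserve $\prod_{r\in R}N$, which blocks the naive ``projection'' argument; the remedy is to not test the identity on the destroyed projections but to transport it from $\bimF\Gsh N$ (where (\ref{bimodule lifted coaction}) makes it automatic) down to $\hat N$ along the split mono $\bimF(\prod_r\Gsh\beta)$, using only naturality of $\Lambda$ and $\eLl$ and the fact that $\beta$ is a section. Once this is in hand, (b) is formal and (c) is a routine, if lengthy, diagram chase, the coassociativity half being settled by the same mono-cancellation idea applied to the coassociativity of $\beta$.
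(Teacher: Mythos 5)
Your proof is correct, but the decisive step (that $\beta$ equalizes the pair) is handled by a genuinely different device than the paper's. The paper re-expresses $\Lambda_{\hat N}$ and $\eLl_N$ through auxiliary maps $\eLr_N$ and $E_N$ built from $\beta$, uses coassociativity of $\beta$ to replace $\Qsh\beta\ci\beta$ by $\copsh_N\ci\beta=\Fsh\etash_{\Gsh N}\ci\beta$, and then observes that the required identity is the $\Fsh$-image of an identity in $\Pre$, where it can be checked on the projections $\Gsh p_r$ because $\Gsh$, unlike $\Fsh$, still preserves the product (this is where (\ref{Take for etash}) enters). You instead note that at the ``cofree'' level $M=\bimF\Gsh N$ the equalizing is automatic — by (\ref{bimodule lifted coaction}) and (\ref{j3}) the arrow $\bimF\Gsh\beta\ci\hat\beta$ factors through $\bimF\iota_{\bimF\Gsh N}$, which is the relevant equalizer since the left exact $\bimF$ preserves (\ref{eq: iota}) — and then descend to $\hat N$ using naturality of $\Lambda$ (legitimate because $\hat\beta$ is a bimodule map by Lemma \ref{lem: bimodule lift of beta}) and of $\eLl$, cancelling the split monomorphism $\bimF\Gsh(\prod_r\beta)$ whose retraction comes from the counit law $\epsh_N\ci\beta=N$. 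This buys you a shorter, more structural argument that dispenses with $\eLr_N$, $E_N$ and (\ref{Take for etash}) altogether, at the modest price of invoking counitality already in this step (the paper needs it only for the counit axiom of $\alpha$); conversely the paper's componentwise route makes explicit exactly where the product structure survives after $\Gsh$ but not after $\Fsh$. Your parts (b) and (c) coincide in substance with the paper: (b) produces $\alpha$ from the equalizer $\bimF\iota_{\hat N}$, i.e.\ the relation (\ref{j alpha hatbeta}), and your coassociativity chase after prepending the mono $\Fsh\Gsh\Fsh\iota_{\hat N}\ci\Fsh\iota_{\bimF\bimG\hat N}$ is the $\phish$-image of the paper's computation, using the coassociativity of $\beta$ directly in place of (\ref{bimodule lifted coaction}).
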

\begin{proof}
Let $p_r$ and $p'_r$ for $r\in R$ be the projections of the product $\prod_r N$ and $\prod_r \Qsh N$, respectively.
Define 
\begin{alignat}{2}
\eLr_N&:\Qsh N\to\prod_{r\in R}\Qsh N\quad&\text{by}\quad\ p'_r\ci\eLr_N&=\ellr_{\Gsh N}(r),\ \forall r\\
E_N&:\prod_{r\in R}\Qsh N\to\prod_{r\in R} N\quad&\text{by}\quad\ p_r\ci E_N&=\epsh_N\ci p'_r\,,\ \forall r\,.
\end{alignat}
Then
\begin{align*}
p_r\ci\Lambda_{\hat N}&\eqby{def Lambda}\epsh_N\ci\ellr_{\Gsh N}(r)\ci\beta=\\
&=\epsh_N\ci p'_r\ci\eLr_N\ci\beta=p_r\ci E_N\ci\eLr_N\ci\beta
\end{align*}
implies
\begin{equation} \label{expr for Lambda}
\Lambda_{\hat N}=E_N\ci\eLr_N\ci\beta\,.
\end{equation}
On the other hand,
\begin{align*}
\Gsh p_r\ci\eLl_N&\eqby{def eLl}\elll(r)_N=\elll(r)_N\ci\Gsh\epsh_N\ci\Gsh\beta=\Gsh\epsh_N\ci\elll(r)_{\Qsh N}\ci
\Gsh\beta=\\
&=\Gsh\epsh_N\ci\Gsh p'_r\ci\eLl_{\Qsh N}\ci\Gsh\beta=\\
&=\Gsh p_r\ci\Gsh E_N\ci\eLl_{\Qsh N}\ci\Gsh \beta
\end{align*}
implies
\begin{equation} \label{expr for eLl}
\eLl_N=\Gsh E_N\ci\eLl_{\Qsh N}\ci\Gsh\beta\,.
\end{equation}
Taking the $\Qsh$ of (\ref{expr for Lambda}) and the $\Fsh$ of (\ref{expr for eLl}) we see that $\beta$ should equalize
the two composites in the diagram (not a commutative one)
\begin{equation}
\parbox{220pt}{
\begin{picture}(220,60)
\put(-10,25){$\Qsh N$}
\put(10,22){\vector(2,-1){30}} \put(13,5){$\Qsh\beta$}
\put(10,34){\vector(2,1){30}} \put(15,45){$\Qsh\beta$}
\put(40,0){$\Qsh^2 N$}
\put(40,50){$\Qsh^2 N$}
\put(65,2){\vector(1,0){50}} \put(70,8){$\Fsh\eLl_{\Qsh N}$}
\put(65,52){\vector(1,0){50}} \put(75,55){$\Qsh\eLr_N$}
\put(120,0){$\Qsh\prod_r\Qsh N$}
\put(120,50){$\Qsh\prod_r\Qsh N$}
\put(165,6){\vector(2,1){30}} \put(180,5){$\Qsh E_N$}
\put(165,48){\vector(2,-1){30}} \put(177,45){$\Qsh E_N$}
\put(200,25){$\Qsh\prod_r N$}
\end{picture}
}
\end{equation}
When composed with $\beta$ the two $\Qsh\beta$ arrows can be replaced with $\copsh_N$. The resulting
diagram is now the $\Fsh$ of a commutative diagram. As a matter of fact, 
\begin{align*}
\Gsh p_r\ci\Gsh E_N\ci\Gsh\eLr_N\ci\etash_{\Gsh N}&=\Gsh\epsh_N\ci\Gsh p'_r\ci\Gsh\eLr_N\ci\etash_{\Gsh N}=\\
&=\Gsh\epsh_N\ci\Gsh\ellr(r)_{\Gsh N}\ci\etash_{\Gsh N}=\\
&\eqby{Take for etash}\Gsh\epsh_N\ci\elll(r)_{\Qsh N}\ci\etash_{\Gsh N}=\\
&=\Gsh\epsh_N\ci\Gsh p'_r\ci\eLl_{\Qsh N}\ci\etash_{\Gsh N}=\\
&=\Gsh p_r\ci\Gsh E_N\ci\eLl_{\Qsh N}\ci\etash_{\Gsh N}
\end{align*}
holds for all $r\in R$ and the $\Gsh p_r$ are projections of a product. This finishes the proof of
$\Qsh\Lambda_{\hat N}\ci\beta=\Fsh\eLl_N\ci\beta$ and therefore the existence of a unique arrow in $\M_R$ making (\ref{diag: alpha beta}) commutative. That this arrow can be lifted to $_R\M_R$ follows from that both $\beta$ and $\Fsh\iota_{\hat N}$ are in the image of $\phish$, see Lemma \ref{lem: bimodule lift of beta}, and from the fact that $\Fsh\iota{\hat N}$ is monic. This $\alpha:\hat N\to \bimQ \hat N$ therefore satisfies
\begin{equation}\label{j alpha hatbeta}
\bimF\iota_{\hat N}\ci\alpha=\hat\beta
\end{equation}
where $\hat\beta$ was defined in Lemma \ref{lem: bimodule lift of beta}.

Finally, 
\begin{gather*}
\bimF\iota\bimF\Gsh N\ci\bimF\bimG\bimF\iota\hat N\ci(\bimQ\alpha\ci\alpha)\eqby{j alpha hatbeta}\\ =\bimF\iota\bimF\Gsh N\ci\bimF\bimG\hat\beta\ci\alpha=[\text{interchange}]\\
=\bimF\Gsh\beta\ci\bimF\iota\hat N\ci\alpha\eqby{j alpha hatbeta}\\
=\bimF\Gsh\beta\ci\hat\beta\eqby{bimodule lifted coaction}\\
=\bimF\etash\Gsh N\ci\hat\beta\eqby{j alpha hatbeta}\\
=\bimF\etash\Gsh N\ci\bimF\iota\hat N\ci\alpha\eqby{j3}\\
=\bimF\iota\bimF\Gsh N\ci\bimF\bimeta\Gsh N\ci\bimF\iota\hat N\ci\alpha=[\text{interchange}]\\
=\bimF\iota\bimF\Gsh N\ci\bimF\bimG\bimF\iota\hat N\ci(\bimcop\hat N\ci\alpha)
\end{gather*}
proves associativity of the coaction $\alpha$ and
\begin{align*}
\phish(\bimeps\hat N\ci\alpha)&\eqby{j2}\epsh N\ci\Fsh\iota\hat N\ci\phish\alpha=\\
&\eqby{diag: alpha beta} \epsh N\ci\beta=N
\end{align*}
proves its counitality. 
\end{proof}

\begin{pro}\label{pro: M^Qhat=M^Q}
The functor $\psh:\M^\bimQ\to\M^\Qsh$ defined in Corollary \ref{cor: psh} is an isomorphism of categories
and the functor $\Xi:\M^\Qsh\to\,_R\M_R$ defined in Lemma \ref{lem: Xi} is comonadic.
\end{pro}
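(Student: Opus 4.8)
The plan is to show that the functor $\Xi$ of Lemma \ref{lem: Xi} is (up to the isomorphism $\psh$) just the forgetful functor $\F^\bimQ:\M^\bimQ\to\,_R\M_R$, and then invoke the standard comonadicity of Eilenberg--Moore forgetful functors. So the first and main task is to prove that $\psh$ is an isomorphism of categories.

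\textbf{Step 1: construct the inverse of $\psh$.} Lemmas \ref{lem: Xi} and \ref{lem: psiinverse object map} have already done the hard work on objects: to a $\Qsh$-comodule $\bra N,\beta\ket$ they associate the $R$-bimodule $\hat N=\Xi\bra N,\beta\ket$ together with a coaction $\alpha:\hat N\to\bimQ\hat N$ making $\bra\hat N,\alpha\ket$ a $\bimQ$-comodule. I would define $\Phi:\M^\Qsh\to\M^\bimQ$ on objects by $\Phi\bra N,\beta\ket:=\bra\hat N,\alpha\ket$, and on a morphism $f:\bra N,\beta\ket\to\bra N',\beta'\ket$ by $\Phi f:=f$ (which is a left $R$-module map by the computation already performed in the proof of Lemma \ref{lem: Xi}, and a $\bimQ$-comodule morphism because $\bimF\iota$ is monic and $\phish$ is faithful, reducing the $\bimQ$-comodule square to the $\Qsh$-comodule square for $f$ after applying $\Fsh\iota$ and using \eqref{j alpha hatbeta}). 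Functoriality of $\Phi$ is then immediate since it is the identity on underlying maps.

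\textbf{Step 2: $\Phi$ and $\psh$ are mutually inverse.} One composite: given $\bra N,\beta\ket$, form $\Phi\bra N,\beta\ket=\bra\hat N,\alpha\ket$ and then $\psh\bra\hat N,\alpha\ket=\bra\phish\hat N,\,j_{\hat N}\ci\phish\alpha\ket=\bra N,\,\Fsh\iota_{\hat N}\ci\phish\alpha\ket$. By the defining diagram \eqref{diag: alpha beta} we have $\Fsh\iota_{\hat N}\ci\phish\alpha=\beta$, so $\psh\Phi=\id$. The other composite: starting from a $\bimQ$-comodule $\bra M,\alpha\ket$, we get $\psh\bra M,\alpha\ket=\bra\phish M,j_M\ci\phish\alpha\ket$; I must check that the bimodule $\widehat{\phish M}$ reconstructed by \eqref{lambda Nhat} is $M$ itself, i.e. that $\lambda_{\widehat{\phish M}}=\lambda_M$. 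Unwinding \eqref{lambda Nhat} with $\beta=j_M\ci\phish\alpha=\Fsh\iota_M\ci\phish\alpha$ and using $\epsh N\ci\Fsh\iota_M=\phish\bimeps_M$ (from \eqref{j2}) together with counitality of $\alpha$, one finds $\lambda_{\widehat{\phish M}}(r)=\phish\bimeps_M\circ\ellr(r)_{\bimG M}\circ\phish\alpha = \phish(\bimeps_M\circ\bimQ$-left-action$)\circ\phish\alpha$, and since the left $R$-action on $\bimQ M=\bimF\bimG M$ restricts along $\alpha$ and $\bimeps$ to the original action $\lambda_M$, this equals $\lambda_M(r)$. Hence $\widehat{\phish M}=M$ as bimodules; and the reconstructed coaction $\alpha'$ is forced to equal $\alpha$ because $\bimF\iota_{\widehat{\phish M}}\ci\alpha'=\hat\beta$ by \eqref{j alpha hatbeta}, $\phish\hat\beta=\beta=\Fsh\iota_M\ci\phish\alpha$, and $\bimF\iota$ is monic while $\phish$ is faithful. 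Thus $\Phi\psh=\id$, and on morphisms both composites are the identity since everything is the identity on underlying maps. Therefore $\psh$ is an isomorphism of categories.

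\textbf{Step 3: comonadicity of $\Xi$.} By Corollary \ref{cor: psh} the square $\phish\ci\F^\bimQ=\F^\Qsh\ci\psh$ commutes, and by Lemma \ref{lem: Xi} we have $\phish\ci\Xi=\F^\Qsh$; combining these gives $\phish\ci\Xi\ci\psh=\F^\Qsh\ci\psh=\phish\ci\F^\bimQ$, and since $\phish$ is faithful and $\Xi\ci\psh$, $\F^\bimQ$ agree on underlying maps, in fact $\Xi\ci\psh=\F^\bimQ$ as functors. Now $\F^\bimQ:\M^\bimQ\to\,_R\M_R$ is the forgetful functor of the Eilenberg--Moore category of the comonad $\bimQ$, hence is comonadic by Beck's (comonadicity) theorem. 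Since $\psh$ is an isomorphism of categories, $\Xi=\F^\bimQ\ci\psh^{-1}$ is comonadic as well. The main obstacle in all of this is Step 2 — specifically verifying that the reconstruction $\bra N,\beta\ket\mapsto\bra\hat N,\alpha\ket$ is strictly inverse (not merely inverse up to isomorphism) to $\psh$, which amounts to checking the bookkeeping identities $\lambda_{\widehat{\phish M}}=\lambda_M$ and $\alpha'=\alpha$ carefully; the diagrammatic identities \eqref{j2}, \eqref{j3}, \eqref{j alpha hatbeta} and the monicity of $\bimF\iota$ are exactly the tools that make this go through.
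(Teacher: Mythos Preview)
Your proof is correct and follows essentially the same route as the paper's: construct the inverse $\Phi={\psh}^{-1}$ on objects via Lemma~\ref{lem: psiinverse object map}, check it is inverse to $\psh$ using the defining diagram~\eqref{diag: alpha beta} and the monicity of $j=\Fsh\iota$, and deduce comonadicity of $\Xi$ from $\Xi=\F^\bimQ\circ{\psh}^{-1}$. Your explicit verification that $\lambda_{\widehat{\phish M}}=\lambda_M$ (via \eqref{j2}, naturality of $\ellr(r)$, and counitality of $\alpha$) spells out a step the paper leaves implicit when it simply invokes uniqueness in~\eqref{diag: alpha beta}.
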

\begin{proof}
The object map of a functor ${\psh}^{-1}:\M^\Qsh\to\M^\bimQ$ has been given by $\bra N,\beta\ket\mapsto\bra\hat N,\alpha\ket$ in Lemma \ref{lem: psiinverse object map}. For arrows $\bra N,\beta\ket\rarr{f}\bra N',\beta'\ket$
we can define ${\psh}^{-1}f$ as $\hat f$ which is the bimodule map $\hat f:\hat N\to\hat N'$ of Lemma
\ref{lem: Xi}. Indeed, $\hat f$ satisfies $\bimQ \hat f\ci\alpha=\alpha'\ci \hat f$ because
\begin{align*}
j_{\hat N'}\ci\phish\bimQ \hat f\ci\phish\alpha&=\Qsh f\ci j_{\hat N}\ci\phish\alpha=\Qsh f\ci\beta=\\
&=\beta'\ci f=j_{\hat N'}\ci\phish\alpha'\ci\phish\hat f
\end{align*}
and $j$ is monic, $\phish$ is faithful.

The composite functor $\psh{\psh}^{-1}$ maps the object $\bra N,\beta\ket$ at first to $\bra \hat N,\alpha\ket$
and then to $\bra N, j_{\hat N}\ci\phish\alpha\ket$, see Corollary \ref{cor: psh}. But this object is just $\bra N,\beta\ket$ by (\ref{diag: alpha beta}). Therefore $\psh{\psh}^{-1}$ is the identity functor.
The composite functor ${\psh}^{-1}\psh$ maps the object $\bra M,\alpha\ket$ at first to $\bra \phish M,j_M\ci\phish\alpha\ket$ and then back to $\bra M,\alpha\ket$ since $\alpha$ is the unique solution in diagram
(\ref{diag: alpha beta}). Therefore ${\psh}^{-1}\psh$ is the identity functor, too.

As for the comonadicity of $\Xi$ notice that $\Xi=\F^\bimQ{\psh}^{-1}$ and the canonical forgetful functor $\F^\bimQ$ is comonadic.
\end{proof}

\subsection{Digression: The actegory picture}

The relation of the comonad $\Qsh$ on $\M_R$ to the monoidal comonad $\bimQ$ on $_R\M_R$ suggests
an actegory interpretation. 

Consider $\M_R$ as a right $_R\M_R$-actegory, i.e., a category on which the monoidal category $_R\M_R$ acts
on the right. The forgetful functor $\phish:\,_R\M_R\to\M_R$ then becomes a morphism from the right regular 
$_R\M_R$-actegory. This is manifested in the natural isomorphism 
\[
\phish_{L,M}:\phish L\oR M\iso\phish(L\oR M),\qquad L,M\in\,_R\M_R\,.
\]
Then we can also define
\[
\Fsh_{U,V}:=\phish\bimF_{U,V}\ci\phish_{\bimF U,\bimF V}:\ \Fsh U\oR\bimF V\iso\Fsh(U\odot V)
\]
for $U,V\in\Pre$,
\[
\Gsh_{N,M}:\Gsh N\odot\bimG M\to\Gsh(N\oR M),\qquad N\in\M_R,\ M\in\,_R\M_R
\]
by
\[
\Gsh_{N,M}:=\Gsh(\epsh_N\oR\bimeps_M)\ci\Gsh{\Fsh}^{-1}_{\Gsh N,\bimG M}\ci\etash_{\Gsh N\odot\bimG M}
\]
and finally
\[
\Qsh_{N,M}:=\Fsh\Gsh_{N,M}\ci\Fsh_{\Gsh N,\bimG M}:\ \Qsh N\oR\bimQ M\to\Qsh(N\oR M),\ \ N\in\M_R,\ M\in\,_R\M_R\,.
\]
All these natural transformations obey coherence conditions which look like the relations of monoidal functors 
except that the left most object is `smashed' to the actegory. Also the monoidality relations of $\bimeta$, $\bimeps$ have analogous actegorical counterparts involving one $\bimeta$ (resp. $\bimeps$) and two
$\etash$ (resp. $\epsh$). 

\begin{lem}
The monic natural transformation $\iota:\bimG\to\Gsh\phish$ defined in (\ref{eq: iota}) is such that
for all bimodules $L,M\in\,_R\M_R$ the diagram
\begin{equation*}
\parbox{220pt}{
\begin{picture}(220,100)
\put(-5,0){$\Gsh(\phish L\odot\bimG M)$}
\put(10,80){$\bimG L\odot\bimG M$}

\put(30,70){\vector(0,-1){50}} \put(-15,40){$\iota_L\odot\bimG M$}

\put(70,2){\vector(1,0){50}} \put(77,12){$\Gsh_{\phish L,M}$}

\put(70,82){\vector(1,0){50}} \put(82,90){$\bimG_{L,M}$}

\put(125,0){$\Gsh(\phish L\oR M)$}
\put(135,80){$\bimG(L\oR M)$}

\put(170,13){\vector(2,1){35}} \put(200,16){$\Gsh\phish_{L,M}$}
\put(170,72){\vector(2,-1){35}} \put(190,68){$\iota_{L\oR M}$}

\put(180,40){$\Gsh\phish(L\oR M)$}
\end{picture}
}
\end{equation*}
 is commutative.
\end{lem}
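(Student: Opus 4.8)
The plan is to prove the identity by transposing both composites along the two adjunctions $\bimF\dashv\bimG$ and $\Fsh\dashv\Gsh$ and checking that they agree after transposition; since transposition is a bijection of hom-sets this loses nothing, and the resulting computation is in essence an instance of the compatibility of mates with (op)lax monoidal structure. Both sides of the square are arrows $\bimG L\odot\bimG M\to\Gsh\phish(L\oR M)$ in $\Pre$, and transposing along $\Fsh\dashv\Gsh$ turns them into arrows $\Fsh(\bimG L\odot\bimG M)=\phish\bimF(\bimG L\odot\bimG M)\to\phish(L\oR M)$ in $\M_R$. The key preliminary observation is that, because of (\ref{j2}) together with $\Fsh=\phish\bimF$, transposing $\iota_{L\oR M}\ci g$ along $\Fsh\dashv\Gsh$, for any $g\colon P\to\bimG(L\oR M)$, simply applies $\phish$ to the $\bimF\dashv\bimG$-transpose of $g$; this is exactly the content of the square defining $\iota$ (with $U=P$ there). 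Consequently the right hand side $\iota_{L\oR M}\ci\bimG_{L,M}$ transposes to $\phish\bigl(\bimeps_{L\oR M}\ci\bimF\bimG_{L,M}\bigr)$.

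I would then compute the two relevant transposes by the doctrinal-adjunction bookkeeping. Plugging the formula for $\bimG_{M,N}$ from the proof of Lemma \ref{lem: left adjoint} into $\bimeps_{L\oR M}\ci\bimF\bimG_{L,M}$ and applying naturality of $\bimeps$ and a triangle identity, the units cancel and one is left with $(\bimeps_L\oR\bimeps_M)\ci\bimF^{-1}_{\bimG L,\bimG M}$. The identical argument applied to the explicit formula for $\Gsh_{N,M}$ given above shows that the $\Fsh\dashv\Gsh$-transpose of $\Gsh_{\phish L,M}$ is $(\epsh_{\phish L}\oR\bimeps_M)\ci\Fsh^{-1}_{\Gsh\phish L,\bimG M}$. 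In both cases only the triangle identities and the naturality of the counits enter, so nothing beyond the stated definitions is needed here.

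It then remains to chase the transpose of the left hand side, namely $\epsh_{\phish(L\oR M)}\ci\Fsh\bigl(\Gsh\phish_{L,M}\ci\Gsh_{\phish L,M}\ci(\iota_L\odot\bimG M)\bigr)$, down to $\phish\bigl((\bimeps_L\oR\bimeps_M)\ci\bimF^{-1}_{\bimG L,\bimG M}\bigr)$, which by the first paragraph is precisely the transposed right hand side. The steps are: pull $\Gsh\phish_{L,M}$ out by naturality of $\epsh$, replacing it with $\phish_{L,M}$; substitute the transpose of $\Gsh_{\phish L,M}$ computed above; slide $\iota_L$ past the structure iso using naturality of $\Fsh_{-,\bimG M}$ in its first argument (which is automatic from $\Fsh_{U,V}=\phish\bimF_{U,V}\ci\phish_{\bimF U,\bimF V}$), turning $(\iota_L\odot\bimG M)$ into $(\Fsh\iota_L\oR\bimF\bimG M)$; apply (\ref{j2}) in the form $\epsh_{\phish L}\ci\Fsh\iota_L=\phish\bimeps_L$; move $\phish\bimeps_L\oR\bimeps_M$ through $\phish_{L,M}$ by naturality of the actegory-morphism structure $\phish_{-,-}$ of the forgetful functor; and finally cancel the factor $\phish_{\bimF\bimG L,\bimF\bimG M}$ against the matching factor of $\Fsh^{-1}_{\bimG L,\bimG M}$, again using $\Fsh_{U,V}=\phish\bimF_{U,V}\ci\phish_{\bimF U,\bimF V}$. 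I expect the only real difficulty to be this bookkeeping: keeping the strong-versus-opmonoidal conventions for $\bimF$ and $\Fsh$ consistent and making sure the naturality of the actegorical structure map $\Fsh_{-,-}$ in its \emph{first} variable is what is invoked when $\iota_L$ passes through; the existence of the argument is guaranteed by general mate calculus. One may also observe that, $\phish$ being a \emph{strong} morphism of actegories, $\phish_{L,M}$ and hence $\Gsh\phish_{L,M}$ are invertible, so the assertion is equivalently that $\iota\colon\bimG\to\Gsh\phish$ is a morphism of lax ${}_R\M_R$-actegory morphisms.
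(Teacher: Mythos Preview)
Your proposal is correct, and the approach is genuinely different from the paper's. The paper argues by direct evaluation on generic elements: it takes a generator $[f,g,t]^C_{A,B}\in(\bimG L\odot\bimG M)C$, pushes it through the lower composite $\Gsh\phish_{L,M}\ci\Gsh_{\phish L,M}\ci(\iota_L\odot\bimG M)$ using the explicit formulas for $\iota$, $\Gsh_{N,M}$, and $\phish_{L,M}$, and observes that the result is $\phish\bigl((f\oR g)\ci\bimf^{-1}_{A,B}\ci\bimf t\bigr)=\phish\bimG_{L,M}([f,g,t]^C_{A,B})$, which is exactly $(\iota_{L\oR M}\ci\bimG_{L,M})_C([f,g,t]^C_{A,B})$. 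The whole proof is four lines of element manipulation.

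Your argument instead works at the level of adjunctions and mates: you transpose both composites along $\Fsh\dashv\Gsh$, use the key identity $\epsh\phish\ci\Fsh\iota=\phish\bimeps$ from (\ref{j2}) to identify the transposed upper path with $\phish\bigl((\bimeps_L\oR\bimeps_M)\ci\bimF^{-1}_{\bimG L,\bimG M}\bigr)$, and then reduce the transposed lower path to the same thing via naturality of $\Fsh_{-,-}$ and of $\phish_{-,-}$. This is entirely element-free and makes transparent that the lemma is nothing but the statement that $\iota$ is a $2$-cell of lax $_R\M_R$-actegory morphisms, i.e.\ a mate-compatibility statement. The paper's approach buys brevity and concreteness, leaning on the coend presentations already set up; yours buys conceptual clarity and portability to contexts where such element formulas are unavailable. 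Your anticipated difficulties (the bookkeeping of strong/opmonoidal conventions and first-variable naturality of $\Fsh_{-,-}$) are real but minor, and your outline handles them correctly.
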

\begin{proof}
The $A$ component $(\iota_L)_A$ of $\iota_L$ maps $f:\bimf A\to L$ to $\phish f:\fsh A\to\phish L$.
Therefore on a generic element $[f,g,t]_{A,B}^C$ the lower threefold composite performs
\begin{align*}
[f,g,t]_{A,B}^C&\mapsto[\phish f,g,t]_{A,B}^C\ \mapsto\ (\phish f\oR g)\ci(\fsh)^{-1}_{A,B}\ci\fsh t\\
&\mapsto\phish(f\oR g)\ci\phish_{\bimf A,\bimf B}\ci(\fsh)^{-1}_{A,B}\ci\fsh t=\\
&=\phish((f\oR g)\ci\bimf^{-1}_{A,B}\ci \bimf t)
=\phish\bimG_{L,M}([f,g,t]_{A,B}^C)=\\
&=\iota_{L\oR M}\ci\bimG_{L,M}([f,g,t]_{A,B}^C)
\end{align*}
which is the same as what the upper composite does.
\end{proof}

\begin{cor}
$\M^\Qsh$ is a right $\M^\bimQ$-actegory by defining 
\[
\bra N,\beta\ket\oR\bra M,\alpha\ket:=\bra N\oR M,\Qsh_{N,M}\ci(\beta\oR\alpha)\ket
\]
and the functor $\psh:\M^\bimQ\to\M^\Qsh$ of Corollary \ref{cor: psh} can be endowed with a natural isomorphism
\[
\psh_{\bra M,\alpha\ket,\bra M',\alpha'\ket}:\psh\bra M,\alpha\ket\oR\bra M',\alpha'\ket
\to\psh(\bra M,\alpha\ket\oR\bra M',\alpha'\ket)
\]
lifting $\phish_2$
so that $\psh$ becomes an isomorphism of $\M^\bimQ$-actegories from the regular actegory.
\end{cor}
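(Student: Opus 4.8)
The plan is to verify, in order: (i) that the displayed formula sends a pair of comodules to a genuine $\Qsh$-comodule, functorially; (ii) that the resulting operation satisfies the right-actegory axioms; and (iii) that $\psh$, equipped with the $2$-cell whose underlying arrow is $\phish_{M,M'}$, is an isomorphism of $\M^\bimQ$-actegories from the regular one. For (i), put $\gamma:=\Qsh_{N,M}\ci(\beta\oR\alpha):N\oR M\to\Qsh(N\oR M)$; as $\Qsh_{N,M}$, $\beta$ and $\alpha$ are right $R$-module maps and $\oR$ is functorial, $\gamma$ is a morphism in $\M_R$. Counitality of $\gamma$ follows by composing with $\epsh(N\oR M)$ and using the coherence tying $\Qsh_{N,M}$ to $\epsh$ and $\bimeps$ --- the actegorical analogue of the fact, recorded in Corollary \ref{cor: bimQ}, that $\bimeps$ is monoidal --- together with counitality of $\beta$ and of $\alpha$. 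Coassociativity of $\gamma$ follows by expanding $\Qsh\gamma\ci\gamma$ and $\copsh(N\oR M)\ci\gamma$ with the hexagon coherence relating $\Qsh_{N,M}$, $\copsh$ and $\bimcop$ and then invoking coassociativity of $\beta$ and of $\alpha$; this is the same computation that shows $\M^\bimQ$ is monoidal. Since the construction is plainly functorial in both variables, we obtain a functor $\oR:\M^\Qsh\x\M^\bimQ\to\M^\Qsh$.

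For (ii), take the associator $(\bra N,\beta\ket\oR\bra M,\alpha\ket)\oR\bra M',\alpha'\ket\iso\bra N,\beta\ket\oR(\bra M,\alpha\ket\oR\bra M',\alpha'\ket)$ and the right unitor $\bra N,\beta\ket\oR\bra R,\bimQ_0\ket\iso\bra N,\beta\ket$ to be, as underlying arrows, the associativity and unit isomorphisms exhibiting $\M_R$ as a right ${}_R\M_R$-actegory. That these underlying arrows are $\Qsh$-comodule morphisms is a diagram chase in $\M_R$ assembled from the coherence identities among $\Qsh_{N,M}$, $\bimQ_{M,M'}$, $\bimQ_0$ and the associator/unitors --- precisely the data making $\bimQ$ a monoidal comonad acting on the actegory $\M_R$ --- and the pentagon and triangle for $\M^\Qsh$ then follow from the ones in $\M_R$ because $\F^\Qsh:\M^\Qsh\to\M_R$ is faithful. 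For (iii), the underlying functor of $\psh$ is already an isomorphism of categories by Proposition \ref{pro: M^Qhat=M^Q}, so it remains to supply the structure $2$-cell and check its coherence. Set $\psh_{\bra M,\alpha\ket,\bra M',\alpha'\ket}:=\phish_{M,M'}$, and verify it is an arrow $\psh\bra M,\alpha\ket\oR\bra M',\alpha'\ket\to\psh(\bra M,\alpha\ket\oR\bra M',\alpha'\ket)$ in $\M^\Qsh$, i.e. that the $R$-module isomorphism $\phish_{M,M'}$ intertwines the coaction $\Qsh_{\phish M,M'}\ci((j_M\ci\phish\alpha)\oR\alpha')$ with $j_{M\oR M'}\ci\phish\bimQ_{M,M'}\ci\phish(\alpha\oR\alpha')$. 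This is where the preceding Lemma on $\iota$ is used: applying $\Fsh=\phish\bimF$ (which is left exact and a strong morphism of actegories via the $\Fsh_{U,V}$) to the commuting hexagon for $\iota$, and recalling $j=\Fsh\iota$ together with $\Qsh_{N,M}=\Fsh\Gsh_{N,M}\ci\Fsh_{\Gsh N,\bimG M}$ and $\bimQ_{M,M'}=\bimF\bimG_{M,M'}\ci\bimF_{\bimG M,\bimG M'}$, shows that the comonad morphism $\bra\phish,j\ket$ of Lemma \ref{lem: j} is compatible with the monoidal structure maps of $\bimQ$ and the actegorical structure maps $\Qsh_{N,M}$; with naturality of $\phish_2$ this yields the required intertwining.

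It then remains to observe that the morphism-of-actegories coherence axioms for $\psh$ with this structure $2$-cell reduce, under the faithful forgetful functors $\F^\Qsh$, $\F^\bimQ$ and $\phish$, to the corresponding axioms for $\phish$ regarded as a morphism from the regular right ${}_R\M_R$-actegory --- which hold by the very definition of the isomorphisms $\phish_{L,M}$. As both the underlying functor $\psh$ and the structure $2$-cell $\phish_2$ are invertible, $\psh$ is an isomorphism of $\M^\bimQ$-actegories from the regular one, as claimed. The main obstacle in making all of this rigorous is bookkeeping rather than genuine difficulty: one must first write out explicitly the coherence diagrams for the natural isomorphisms $\Fsh_{U,V}$, $\Gsh_{N,M}$, $\Qsh_{N,M}$ and the actegorical counterparts of the monoidality equations for $\bimeta$ and $\bimeps$ --- which the Digression only indicates --- and then confirm that $j=\Fsh\iota$ respects each of them; once that is in place, every step above is a routine diagram chase that descends along the faithful forgetful functors.
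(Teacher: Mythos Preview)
Your proposal is correct and follows essentially the same approach as the paper. The paper's proof focuses only on the key step you single out in (iii): showing that $\phish_{M,M'}$ is a $\Qsh$-comodule morphism by reducing to a hexagon identity and then deriving that hexagon from the pentagon for $\iota$ in the preceding Lemma together with $j=\Fsh\iota$; your parts (i) and (ii) are treated by the paper as routine background from the Digression, and your remark that the remaining coherence axioms descend along the faithful forgetful functors matches what the paper leaves implicit.
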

\begin{proof}
Showing that $\phish_2$ lifts to $\psh_2$ means that the components $\phish_{M,M'}$ are morphisms of 
comodules. Apart from a naturality square for $\phish_2$ this  is commutativity of
\begin{equation*}
\begin{CD}
\phish\bimQ M\oR\bimQ M'@>j_M\oR\bimQ M'>>\Qsh\phish M\oR\bimQ M'@>\Qsh_{\phish M,M'}>>\Qsh(\phish M\oR M')\\
@V{\phish_{\bimQ M,\bimQ M'}}VV @. @VV{\Qsh\phish_{M,M'}}V\\
\phish(\bimQ M\oR\bimQ M')@>\phish\bimQ_{M,M'}>>\phish\bimQ(M\oR M')@>j_{M\oR M'}>>\Qsh\phish(M\oR M')
\end{CD}
\end{equation*}
Since $j=\Fsh\iota$, commutativity of this hexagon can be shown using the pentagon diagram for $\iota$ in the above Lemma:
\begin{gather*}
\Qsh\phish_{M,M'}\ci\Qsh_{\phish M,M'}\ci(j_M\oR \bimQ M')=\\
\Qsh\phish_{M,M'}\ci\Fsh\Gsh_{\phish M,M'}\ci\Fsh_{\Gsh\phish M,\bimG M'}\ci(\Fsh\iota_M\oR\bimF\bimG M')=\\
\Fsh(\Gsh\phish_{M.,M'}\ci\Gsh_{\phish M,M'}\ci(\iota_M\odot\bimG M'))\ci\Fsh_{\bimG M,\bimG M'}=\\
\Fsh(\iota_{M\oR M'}\ci\bimG_{M,M'})\ci\Fsh_{\bimG M,\bimG M'}=\\
j_{M\oR M'}\ci\phish\bimF\bimG_{M,M'}\ci\phish\bimF_{\bimG M,\bimG M'}\ci\phish_{\bimF\bimG M,\bimF\bimG M'}=\\
j_{M\oR M'}\ci\phish\bimQ_{M,M'}\ci\phish_{\bimQ M,\bimQ M'}\,.
\end{gather*}
\end{proof}

\section{The comparison functor} \label{s: comp}

\subsection{The adjunction $\bimK\dashv\bimL$}

The comparison functor $\bimK$ associated to the left adjoint functor $\bimF$ is the functor
\begin{equation}
\bimK:\Pre\to\M^\bimQ,\quad \bimK U=\bra\bimF U,\bimF\bimeta_U\ket.
\end{equation}
The monoidal structure of $\bimF$ induces the following monoidal structure on $\bimK$:
\begin{align*}
 \bimK_{U,V}&=\bra\bimF U\oR\bimF V,\bimQ_{\bimF U,\bimF V}\ci(\bimF\bimeta_U\oR\bimF\bimeta_V)\ket
\rarr{\bimF_{U,V}}\bra\bimF(U\odot V),\bimF\bimeta_{U\odot V}\ket\\
\bimK_0&=\bra R,\bimQ_0\ket\rarr{\bimF_0}\bra\bimF \YI,\bimF\bimeta_\YI\ket.
\end{align*}
\begin{lem} \label{lem: bimK}
The comparison functor $\bimK:\Pre\to\M^\bimQ$ is a left exact strong monoidal functor which is uniquely determined
by the factorizations
\begin{equation*}
\bimF=\F^\bimQ\bimK,\qquad \G^\bimQ=\bimK\bimG
\end{equation*}
of monoidal functors. In particular, we obtain $\bimf$ as the composite
\begin{equation*}
\begin{CD}
\Pre@>\bimK>>\M^\bimQ\\
@AYAA @VV{\F^\bimQ}V\\
\C@>\bimf>>_R\M_R
\end{CD}
\end{equation*}
of three strong monoidal functors.
\end{lem}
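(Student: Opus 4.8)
The plan is to treat $\bimK$ as the Eilenberg--Moore comparison functor of the comonadic adjunction $\bimF\dashv\bimG$ of Subsection~\ref{ss: bimF - bimG} and to read each assertion off a corresponding property of $\bimF$, $\bimG$, $\F^\bimQ$ and $\G^\bimQ$; the workhorse throughout is that the forgetful functor $\F^\bimQ:\M^\bimQ\to\,_R\M_R$ is faithful, strong monoidal (in fact strict: the $\oR$-product of $\bimQ$-comodules has the $\oR$-product of the underlying bimodules as underlying object) and creates finite limits. First I would record that $\bimK U=\bra\bimF U,\bimF\bimeta_U\ket$ really is a $\bimQ$-comodule and that $U\mapsto\bimK U$ is functorial --- this is the usual comparison functor for a comonad (dual to \cite[VI.3]{CWM}, cf.\ \cite{Street: Monads I.}), so nothing new is needed. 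For the monoidal data displayed just before the statement I would check that each $\bimF_{U,V}$ is a morphism of $\bimQ$-comodules $\bimK U\oR\bimK V\to\bimK(U\odot V)$ and that $\bimF_0$ is a morphism $\bra R,\bimQ_0\ket\to\bimK\YI$; then, $\F^\bimQ$ being faithful and strict monoidal, the hexagon and the two unit squares for $\bra\bimK,\bimK_2,\bimK_0\ket$ reduce to those of $\bra\bimF,\bimF_2,\bimF_0\ket$, and $\bimK$ is strong because $\bimF_{U,V}$, $\bimF_0$ are invertible while $\F^\bimQ$ is conservative. (Conceptually: $\bimF\dashv\bimG$ is an adjunction in $\MonCat$ by Lemma~\ref{lem: left adjoint}(4), and $\bimK$ is its comparison functor into the Eilenberg--Moore object of the monoidal comonad $\bimQ$; the hands-on check merely unwinds this.)

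Next, left exactness. Since $\longf$ is flat, $\bimF$ is left exact by Lemma~\ref{lem: FF left adj} (cf.\ Corollary~\ref{cor: bimQ}), hence so is $\bimQ=\bimF\bimG$ because $\bimG$ is a right adjoint; consequently $\F^\bimQ$ creates finite limits, so $\bimK$ preserves them as $\F^\bimQ\bimK=\bimF$ does. The factorizations come next: $\bimF=\F^\bimQ\bimK$ holds by the definition of $\bimK$, and as an identity of monoidal functors because $\F^\bimQ$ is strict monoidal; and $\G^\bimQ=\bimK\bimG$ holds on underlying functors because $\bimcop=\bimF\bimeta\bimG$ gives $\G^\bimQ M=\bra\bimQ M,\bimcop_M\ket=\bra\bimF\bimG M,\bimF\bimeta_{\bimG M}\ket=\bimK\bimG M$, while the two monoidal structures coincide because the faithful $\F^\bimQ$ sends each to $\bimQ_{M,M'}$ --- on the left via $\F^\bimQ\bimK\bimG=\bimF\bimG=\bimQ$, on the right via $\F^\bimQ\G^\bimQ=\bimQ$ once $\G^\bimQ$ carries the monoidal structure induced by doctrinal adjunction \cite{Kelly} from the strong monoidal $\F^\bimQ$, exactly as $\bimG$ was obtained from $\bimF$.

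For uniqueness I would invoke the universal property of the Eilenberg--Moore category: $\bimK$ is the only functor $\Pre\to\M^\bimQ$ with $\F^\bimQ\bimK=\bimF$ and $\bimK\bimG=\G^\bimQ$ (the dual of \cite[VI.3]{CWM}), and since $\F^\bimQ$ is faithful and strict monoidal the constraint $\F^\bimQ\bimK=\bimF$ at the level of monoidal functors pins down $\bimK_2$ and $\bimK_0$, so the monoidal functor $\bimK$ is unique as well. The closing ``in particular'' is then immediate: $\F^\bimQ\bimK Y=\bimF Y$, which the monoidal natural isomorphism $\bimN$ of (\ref{diag: N}) identifies with $\bimf$, and $Y$ (Proposition~\ref{pro: odot}), $\bimK$ (this lemma) and $\F^\bimQ$ (Subsection~\ref{ss: bimF - bimG}) are all strong monoidal. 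I expect no genuine obstacle here --- everything is standard (co)monad theory --- only the bookkeeping that each factorization and the uniqueness are asserted for \emph{monoidal} functors rather than merely for underlying functors; every such point, however, is settled by faithfulness plus strict monoidality of $\F^\bimQ$, so that is the step I would carry out most carefully.
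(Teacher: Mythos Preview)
Your proposal is correct and follows essentially the same approach as the paper: both treat $\bimK$ as the comparison functor of a comonad in $\MonCat$, derive strong monoidality from the factorization $\bimF=\F^\bimQ\bimK$ together with $\F^\bimQ$ being faithful, strict monoidal and conservative, and read off uniqueness and the diagram from standard comonad theory. The one minor difference is in the left-exactness step: you use that $\F^\bimQ$ creates finite limits (since $\bimQ$ is left exact), whereas the paper argues that $\F^\bimQ$ reflects monics, so $\bimK$ preserves monics, and then invokes abelianness of $\Pre$ and $\M^\bimQ$ to upgrade this to preservation of kernels---your route is a bit cleaner but both are standard.
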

\begin{proof}
The characterization of the comparison functor by these factorizations is a well known fact of (co)monad theory.
Here we only need to consider comonads in the 2-category $\MonCat$. The diagram is then obvious.
Strong monoidality of $\bimK$ follows either from the above explicit formulas for $\bimK_2$ and $\bimK_0$ or,
formally, from the factorization $\bimF=\F^\bimQ\bimK$ using that both$\bimF$ and $\F^\bimQ$ are strong
monoidal and $\F^\bimQ$ reflects isomorphisms. Left exactness of $\bimK$ also follows from $\bimF=\F^\bimQ\bimK$. As a matter of fact, $\bimF$ is left exact and $\F^\bimQ$, being faithful, reflects monics
therefore $\bimK$ preserves monics. But both $\Pre$ and $\M^\bimQ$ are abelian categories, hence by standard
arguments one can see that $\bimK$ preserves kernels. Since $\bimK$ is additive, it preserves finite limits, too.
\end{proof}

The functor $\bimK$ can be equivalently formulated as the pair $\bra\bimF,\alpha\ket$ where $\alpha=\bimF\bimeta:\bimF\to\bimQ\bimF$
is a monoidal natural transformation satisfying the coaction conditions: $\bimQ\alpha\ci\alpha=\bimcop\bimF\ci\alpha$
and $\bimeps\bimF\ci\alpha=\F$. 
Similarly, if $O$ is a monoidal comonad on $_R\M_R$ and $\E:\Pre\to\M^O$ is a strong monoidal functor such that
$\F^O\E=\bimF$ then $\E U=\bra\bimF U,\omega_U\ket$ with $\omega:\bimF\to O\bimF$ a monoidal natural transformation satisfying the coaction conditions for $O$. However, the pair $\bra\bimF,\alpha\ket$ is universal among them.
\begin{lem}
For a strong monoidal left adjoint $\F:\Pre\to\,_R\M_R$ consider the category of factorizations $\Pre\rarr{\E}\M^O\to\,_R\M_R$ of $\F$
through the forgetful functor $\F^O:\M^O\to\,_R\M_R$ of a monoidal comonad $O$ on $_R\M_R$. 
Then the comparison functor $\bimK$ is an initial object in this category.
\end{lem}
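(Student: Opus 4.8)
The plan is to realize the claim as the $\MonCat$-enriched form of the classical universal property of the Eilenberg--Moore comparison functor: for each factorization I will produce the required arrow by the usual ``transpose under the adjunction $\bimF\dashv\bimG$'' recipe and then check that it is unique. Let me first spell out the category involved. An object is a monoidal comonad $O=\bra O,\delta^O,\eps^O\ket$ on $_R\M_R$ together with a strong monoidal $\E:\Pre\to\M^O$ with $\F^O\E=\bimF$; a morphism $(O,\E)\to(O',\E')$ is a monoidal comonad morphism $\theta:O\to O'$ such that the induced strict monoidal functor $\M^\theta:\M^O\to\M^{O'}$ over $_R\M_R$ (sending $\bra M,\rho\ket$ to $\bra M,\theta_M\ci\rho\ket$) satisfies $\M^\theta\E=\E'$. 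Thus ``$\bimK$ is initial'' unwinds to: for every $(O,\E)$ there is a unique monoidal comonad morphism $\phi:\bimQ\to O$ with $\M^\phi\bimK=\E$.

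Next I would use the observation recorded just before this lemma, that $\E$ is the same datum as a monoidal natural transformation $\omega:\bimF\to O\bimF$ obeying the coaction conditions $O\omega\ci\omega=\delta^O\bimF\ci\omega$ and $\eps^O\bimF\ci\omega=\bimF$, with $\E U=\bra\bimF U,\omega_U\ket$; recall also $\bimK U=\bra\bimF U,\bimF\bimeta_U\ket$, so $\M^\phi\bimK=\E$ is equivalent to $\phi_{\bimF U}\ci\bimF\bimeta_U=\omega_U$ for all $U$. I would then set
\[
\phi:=O\bimeps\ci\omega\bimG:\ \bimF\bimG\longrightarrow O,
\]
the transpose of $\omega$ under $\bimF\dashv\bimG$. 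Since $\bimeps$ (and $\bimeta$) are monoidal natural transformations (Lemma \ref{lem: left adjoint}, via \cite{Kelly}) and $\bimG$, $O$ are monoidal functors, whiskering and composing keeps $\phi$ monoidal, so in particular the relations $\phi_{M\oR N}\ci\bimQ_{M,N}=O_{M,N}\ci(\phi_M\oR\phi_N)$ and $\phi_R\ci\bimQ_0=O_0$ hold automatically. For $\M^\phi\bimK=\E$, naturality of $\omega$ at $\bimeta_U$ and the triangle identity $\bimeps\bimF\ci\bimF\bimeta=\bimF$ give $\phi_{\bimF U}\ci\bimF\bimeta_U=O\bimeps_{\bimF U}\ci\omega_{\bimG\bimF U}\ci\bimF\bimeta_U=O\bimeps_{\bimF U}\ci O\bimF\bimeta_U\ci\omega_U=\omega_U$. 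Counitality $\eps^O\phi=\bimeps$ follows from naturality of $\eps^O$ at $\bimeps$ together with $\eps^O\bimF\ci\omega=\bimF$.

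The one step that calls for an honest (but purely formal) $2$-cell computation is comultiplicativity, $\delta^O\phi=O\phi\ci\phi\bimQ\ci\bimcop$: here one expands $\bimcop=\bimF\bimeta\bimG$ and uses the coassociativity axiom $O\omega\ci\omega=\delta^O\bimF\ci\omega$, naturality, and the two triangle identities; this is where I expect to spend the most bookkeeping effort. Finally, for uniqueness, suppose $\psi:\bimQ\to O$ is any monoidal comonad morphism with $\M^\psi\bimK=\E$. Then $\psi_{\bimF V}\ci\bimF\bimeta_V=\omega_V$ for all $V$, and taking $V=\bimG M$ and invoking naturality of $\psi$ at $\bimeps_M$ together with the triangle identity $\bimG\bimeps\ci\bimeta\bimG=\bimG$ one obtains $\psi_M=O\bimeps_M\ci\psi_{\bimF\bimG M}\ci\bimF\bimeta_{\bimG M}=O\bimeps_M\ci\omega_{\bimG M}=\phi_M$. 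Hence $\phi$ is the unique morphism $(\bimQ,\bimK)\to(O,\E)$, so $(\bimQ,\bimK)$ is initial. Because every $2$-cell appearing in the argument is monoidal, there is never a separate verification needed at the monoidal level, which is precisely why the familiar argument carries over verbatim to $\MonCat$.
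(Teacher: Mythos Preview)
Your proposal is correct and follows essentially the same route as the paper's own argument: the paper's proof is only a hint, pointing out that the adjunction bijection $\Nat(\bimF,O\bimF)\cong\Nat(\bimF\bimG,O)$, $\omega\mapsto O\bimeps\ci\omega\bimG$, restricts to a bijection between monoidal coactions and monoidal comonad morphisms, with inverse $\varphi\mapsto\varphi\bimF\ci\bimF\bimeta$. You have simply unpacked this hint in full, defining $\phi=O\bimeps\ci\omega\bimG$, verifying $\phi_{\bimF U}\ci\bimF\bimeta_U=\omega_U$, and deducing uniqueness from the same pair of mutually inverse transposes; the extra detail you supply (counitality, comultiplicativity, monoidality of $\phi$) is exactly what the paper's hint omits.
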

\begin{proof}
Hint: By adjunction from behind $\Nat(\F,O\F)\cong\Nat(\F\G,O)$, $\omega\mapsto\varphi$, there is a bijection between the set of monoidal natural transformations $\omega:\F\to O\F$ satisfying the coaction conditions and the set of monoidal comonad morphisms $\bra \,_R\M_R,\varphi\ket:\bimQ\to O$. Therefore all factorizations of $\F$ have the form
\[
\E U=\bra\F U,\varphi_{\F U}\ci\F\eta_U\ket,\quad U\in\Pre.
\]
This $\varphi$ is unique since $\omega\mapsto O\eps\ci\omega\G$ inverts $\varphi\mapsto \varphi\F\ci\F\eta$.
\end{proof}

For later convenience we introduce a functor $\bimk:\C\to\M^\bimQ$ isomorphic to the composite functor $\bimK Y$
as follows:
\begin{equation} \label{bimk}
\bimk C:=\bra \bimf C,\delta_C\ket,\quad\text{where}\quad \delta_C :=\bimF\bimG \hat N_C\ci\bimF\bimeta_{YC}
\ci{\hat N}^{-1}_C\,.
\end{equation}
Then a natural isomorphism $\bimK Y\iso\bimk$ is given by $\bra \bimF YC,\bimF\bimeta_{YC}\ket
\rarr{\hat N_C}\bra \bimf C,\delta_C\ket$ with $\hat N$ defined in (\ref{diag: N}).







It is well known in comonad theory that the comparison functor has a right adjoint precisely if its domain category
has certain equalizers. The category of presheaves $\Pre$ has all equalizers, in fact it is complete, cocomplete and even a Grothendieck category, therefore a right adjoint $\bimL$ of $\bimK$ exists. It can be defined by choosing
equalizers
\begin{equation} \label{eq: L}
\bimL\bra M,\alpha\ket\overset{i_{\bra M,\alpha\ket}}{\longrightarrowtail}\bimG M
\pair{\bimeta\bimG M}{\bimG\alpha}\bimG\bimF\bimG M
\end{equation}
for all objects $\bra M,\alpha\ket\in\M^\bimQ$. Then the action of $\bimL$ on arrows is uniquely determined
and makes $i$ a natural transformation $\bimL\to\bimG\F^\bimQ$.

Computing this presheaf on an object $C$ one obtains, up to isomorphism, the abelian group
\begin{equation}\label{L}
\bimL\bra M,\alpha\ket C=\M^\bimQ(\bimk C,\bra M,\alpha\ket)
\end{equation}
which consists of bimodule maps $f:\bimf C\to M$ satisfying $\alpha\ci f=\bimQ f\ci\delta_C$.
More precisely, setting (\ref{L}) to be the equality for all $C$ defines a choice of the equalizer in (\ref{eq: L}).
With this definition the natural transformation $i$ just embeds this abelian group into $_R\M_R(\bimf C,M)=\bimG MC$.

The counit $\theta$ and the unit $\nu$ of the adjunction $\bimK\dashv\bimL$ can be obtained as the unique arrows making the diagrams
\begin{equation}\label{diag: theta}
\parbox{250pt}{
\begin{picture}(200,80)(-30,0)
\put(7,10){$M$}
\put(22,10){$\longerrightarrowtail$} \put(48,18){$\alpha$}
\put(90,10){$\bimF\bimG M$}
\put(-10,70){$\bimF\bimL\bra M,\alpha\ket$} \put(-30,40){$\F^\bimQ\theta_{\bra M,\alpha\ket}$}
\put(25,63){\vector(2,-1){70}}
\put(57,50){$\bimF i_{\bra M,\alpha\ket}$}
\dashline{3}(12,63)(12,23) \put(12,23){\vector(0,-1){0}}
\put(123,11){\vector(1,0){50}} \put(136,-3){$\bimF\bimG\alpha$}
\put(123,14){\vector(1,0){50}} \put(140,20){$\bimcop_M$}
\put(180,10){$\bimF\bimG\bimF\bimG M$}
\end{picture}
}
\end{equation}
for all $\bimQ$ comodule $\bra M,\alpha\ket$ and
\begin{equation}\label{diag: nu}
\parbox{250pt}{
\begin{picture}(200,80)(-30,0)
\put(0,10){$\bimL\bimK U$}
\put(24,10){$\longerrightarrowtail$} \put(48,18){$i_{\bimK U}$}
\put(90,10){$\bimG\bimF U$}
\put(7,70){$U$} \put(-3,43){$\nu_U$}
\put(25,63){\vector(2,-1){70}}
\put(57,50){$\bimeta_U$}
\dashline{3}(12,63)(12,25) \put(12,25){\vector(0,-1){0}}
\put(120,11){\vector(1,0){50}} \put(130,-3){$\bimG\bimF\bimeta U$}
\put(120,14){\vector(1,0){50}} \put(130,20){$\bimeta\bimG\bimF U$}
\put(180,10){$\bimG\bimF\bimG\bimF U $}
\end{picture}
}
\end{equation}
for all presheaves $U$, respectively, commutative.
\begin{pro} \label{pro: bimL}
The adjunction $\nu,\theta:\bimK\dashv\bimL:\M^\bimQ\to\Pre$ is a monoidal adjunction in which
$\theta$ is invertible and $\K$ is left exact strong opmonoidal. Therefore $\bimL$ is a fully faithful monoidal functor with a left exact left adjoint, i.e., a monoidal localization.
\end{pro}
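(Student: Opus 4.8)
The plan is to reduce the whole proposition to a single point: invertibility of the counit $\theta$ of $\bimK\dashv\bimL$. First I would observe that the concluding sentence is then a formality. A monoidal adjunction makes $\bimL$ a (lax) monoidal functor; an invertible counit makes $\bimL$ fully faithful; and left exactness of $\bimK$ is exactly the extra ingredient needed to call $\bimL$ a monoidal localization. Left exactness and strong monoidality of $\bimK$ are already in hand from Lemma~\ref{lem: bimK} (a strong monoidal functor is in particular strong opmonoidal). For the monoidality of the adjunction I would invoke the same doctrinal-adjunction argument of \cite{Kelly} that was used for $\bimF\dashv\bimG$ in Lemma~\ref{lem: left adjoint}: since $\bimK$ is a strong (op)monoidal left adjoint, its right adjoint $\bimL$ carries a canonical monoidal structure --- given by the formula displayed for $\bimG$ in Lemma~\ref{lem: left adjoint}, with $\bimeps,\bimeta$ replaced by $\theta,\nu$, which is especially transparent once $\theta$ is known to be invertible --- for which $\nu$ and $\theta$ are monoidal natural transformations.

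So the real work is showing $\theta$ invertible. The plan here is to identify two equalizer presentations of the same object and read off that $\F^\bimQ\theta$ is the comparison between them. Fix a $\bimQ$-comodule $\bra M,\alpha\ket$. By definition~(\ref{eq: L}), $i_{\bra M,\alpha\ket}\colon\bimL\bra M,\alpha\ket\to\bimG M$ is the equalizer in $\Pre$ of the pair $\bimeta\bimG M,\ \bimG\alpha$. Applying the left exact functor $\bimF$ and using the identities $\bimcop=\bimF\bimeta\bimG$ and $\bimF\bimG=\bimQ$, I would identify $\bimF i_{\bra M,\alpha\ket}\colon\bimF\bimL\bra M,\alpha\ket\to\bimQ M$ with the equalizer in $_R\M_R$ of the parallel pair $\bimcop_M,\ \bimQ\alpha\colon\bimQ M\rightrightarrows\bimQ^2M$. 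On the other side, the comodule axioms for $\bra M,\alpha\ket$ --- coassociativity $\bimcop_M\ci\alpha=\bimQ\alpha\ci\alpha$ together with counitality $\bimeps_M\ci\alpha=M$ --- say precisely that $\alpha\colon M\to\bimQ M$ is itself an equalizer of the same pair, split by $\bimeps$ (the standard Eilenberg--Moore splitting, dual to the presentation of an algebra as a split coequalizer). Diagram~(\ref{diag: theta}) exhibits $\F^\bimQ\theta_{\bra M,\alpha\ket}$ as the unique arrow with $\alpha\ci\F^\bimQ\theta_{\bra M,\alpha\ket}=\bimF i_{\bra M,\alpha\ket}$, i.e.\ as the canonical comparison between these two equalizers of one and the same parallel pair; hence it is an isomorphism in $_R\M_R$. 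Since the forgetful functor $\F^\bimQ$ is conservative --- a comodule morphism with invertible underlying bimodule map has a comodule inverse --- $\theta_{\bra M,\alpha\ket}$ is an isomorphism in $\M^\bimQ$; running this over all objects and using naturality of $\theta$ gives that $\theta$ is a natural isomorphism, completing the proof.

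I expect the only genuinely delicate point to be bookkeeping rather than computation: checking that the arrow labelled $\F^\bimQ\theta$ in diagram~(\ref{diag: theta}) really is the comparison map of the two equalizers --- that both are pinned down by the single equation $\alpha\ci(-)=\bimF i_{\bra M,\alpha\ket}$ --- and confirming that $\bimF$ does preserve the relevant equalizer (this rests on left exactness of $\bimF$, already used in the proof of Lemma~\ref{lem: bimK}) and that the split-equalizer identity is nothing but coassociativity plus counitality of the comodule. Once these are unwound, invertibility of $\theta$, and with it the full statement, drops out.
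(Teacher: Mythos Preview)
Your argument is correct and essentially identical to the paper's: doctrinal adjunction for the monoidal structure, then left exactness of $\bimF$ preserves the equalizer defining $\bimL$, and conservativity of $\F^\bimQ$ lifts invertibility of $\F^\bimQ\theta$ to $\theta$. You are merely more explicit than the paper about why $\alpha$ is itself an equalizer of the pair $(\bimcop_M,\bimQ\alpha)$---the split-equalizer observation from the comodule axioms---which the paper leaves implicit in the phrase ``therefore the dashed arrow \dots is an isomorphism.''
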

\begin{proof}
Consider the strong monoidal functor $\bimK$ as an opmonoidal functor. Then its right adjoint carries a unique
monoidal structure such that the given adjunction data $\nu$ and $\theta$ become monoidal natural transformations.
That makes the adjunction $\nu,\theta:\bimK\dashv\bimL:\M^\bimQ\to\Pre$ automatically an adjunction in $\MonCat$.

Since $\bimF$ is left exact, it preserves the equalizer (\ref{eq: L}). Therefore the dashed arrow in (\ref{diag: theta})
is an isomorphism. But $\F^\bimQ$ is comonadic hence reflects isomorphisms. Thus $\theta$ is an isomorphism, i.e.,
$\bimL$ is full and faithful.
\end{proof}

\subsection{The adjunction $\Ks\dashv\Ls$}

The construction of $\bimK\dashv\bimL$ in the previous subsection can be repeated without much change
by starting with the functor $\Fsh$ instead of $\bimF$. The only difference is that we will be lacking of any monoidal
structure of these functors.

The comparison functor $\Ks:\Pre\to\M^\Qsh$, defined by the factorizations $\Fsh=\F^\Qsh\Ks$, $\G^\Qsh=\Gsh\Ks$,
is given by
\[
\Ks (U\rarr{\chi}V)\ =\ \left(\bra\Fsh U,\Fsh\etash_U\ket\ \rarr{\Fsh\chi}\ \bra\Fsh V,\Fsh\etash_V\ket\right)\,.
\]
A right adjoint for $\Ks$ can be defined by the equalizers
\begin{equation}
\Ls\bra N,\beta\ket\overset{\ish_{\bra N,\beta\ket}}{\longrightarrowtail}\Gsh N
\pair{\etash\Gsh N}{\Gsh\beta}\Gsh\Fsh\Gsh N
\end{equation}
for $\bra N,\beta\ket\in\M^\Qsh$.
The analogue of Proposition \ref{pro: bimL} can now be stated without proof.
\begin{pro} \label{pro: Ls}
The comparison functor $\Ks$ is the reflection of a localization$ \Ls:\M^\Qsh\to\Pre$ which can be given on objects
$\bra N,\beta\ket\in\M^\Qsh$ as the subfunctor
\[
\Ls\bra N,\beta\ket C=\M^\Qsh(\ksh C,\bra N,\beta\ket)\quad\subset\quad \M_R(\fsh C,N)=\Gsh N C
\]
where $\ksh:=\psh\bimk:\C\to\M^\Qsh$ with $\psh$ denoting the category equivalence defined in
Corollary \ref{cor: psh}.
\end{pro}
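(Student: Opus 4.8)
The plan is to deduce Proposition~\ref{pro: Ls} from Proposition~\ref{pro: bimL} by transporting the adjunction $\bimK\dashv\bimL$ across the isomorphism of categories $\psh:\M^\bimQ\iso\M^\Qsh$ of Corollary~\ref{cor: psh} (shown invertible in Proposition~\ref{pro: M^Qhat=M^Q}). The first step is to check the strict identity $\Ks=\psh\bimK$. On objects $\psh$ sends $\bimK U=\bra\bimF U,\bimF\bimeta_U\ket$ to $\bra\phish\bimF U,\,j_{\bimF U}\ci\phish\bimF\bimeta_U\ket=\bra\Fsh U,\,\Fsh\iota_{\bimF U}\ci\Fsh\bimeta_U\ket$, which by the identity $\iota\bimF\ci\bimeta=\etash$ of (\ref{j3}) equals $\bra\Fsh U,\Fsh\etash_U\ket=\Ks U$; on arrows both functors act by $\chi\mapsto\Fsh\chi$. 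Since $\psh$ is invertible it follows formally that $\Ks$ has a right adjoint, namely $\Ls:=\bimL\,\psh^{-1}$, with unit and counit obtained from $\nu,\theta$ by whiskering with $\psh^{\pm1}$; as the right adjoint of $\Ks$ is unique up to canonical isomorphism, this $\Ls$ agrees with the one cut out by the equalizers displayed above.

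From this description the localization properties are immediate. The counit $\theta$ of $\bimK\dashv\bimL$ is invertible by Proposition~\ref{pro: bimL}, hence so is the counit of $\Ks\dashv\Ls$, so $\Ls=\bimL\,\psh^{-1}$ is fully faithful, being a composite of a fully faithful functor with an isomorphism of categories. For left exactness of the reflection: $\psh$ is an isomorphism of categories, hence exact, and $\bimK$ is left exact by Lemma~\ref{lem: bimK}, so $\Ks=\psh\bimK$ is left exact. (Alternatively one copies the proof of Lemma~\ref{lem: bimK} verbatim: $\Fsh=\F^\Qsh\Ks$ is left exact since $\Fsh=\phish\bimF$ with $\phish$ a right adjoint, $\F^\Qsh$ is comonadic hence reflects monics, and $\Pre$, $\M^\Qsh$ are abelian.) Thus $\Ls$ is fully faithful with left exact left adjoint, i.e.\ a localization.

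It then remains to read off the explicit formula. Fix $\bra N,\beta\ket\in\M^\Qsh$ and set $\bra M,\alpha\ket:=\psh^{-1}\bra N,\beta\ket$, so that $\phish M=N$. By (\ref{L}) one has $\bimL\bra M,\alpha\ket C=\M^\bimQ(\bimk C,\bra M,\alpha\ket)$, embedded via $i$ into $\bimG MC={}_R\M_R(\bimf C,M)$. Applying the hom-set bijection induced by the isomorphism $\psh$ and invoking the definition $\ksh:=\psh\bimk$ of the statement gives
\[
\Ls\bra N,\beta\ket C=\bimL\bra M,\alpha\ket C\ \cong\ \M^\Qsh(\ksh C,\bra N,\beta\ket)\,,
\]
and post-composing with $(\iota_M)_C:{}_R\M_R(\bimf C,M)\into\M_R(\fsh C,\phish M)=\Gsh NC$, which merely forgets left $R$-linearity, identifies this with the subgroup of $\Gsh NC$ consisting of the $R$-module maps $\fsh C\to N$ underlying a $\Qsh$-comodule morphism, as claimed. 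One may also note $\ksh C=\psh\bimk C\cong\psh\bimK YC=\Ks YC$ via the natural isomorphism $\bimK Y\iso\bimk$ of (\ref{bimk}).

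The only point that is not purely formal is the coaction bookkeeping behind $\Ks=\psh\bimK$, namely that the $\Qsh$-coaction $\psh$ puts on $\bimF U$ is literally $\Fsh\etash_U$; this is exactly the content of (\ref{j3}) (equivalently of the comonad-morphism identities of Lemma~\ref{lem: j}). Once that is in hand nothing further is needed, since an isomorphism of categories transports adjunctions, full faithfulness, exactness and hom-set descriptions across without change.
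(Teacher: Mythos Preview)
Your proof is correct, but it follows a genuinely different route from the paper's. The paper treats Proposition~\ref{pro: Ls} as a verbatim rerun of Proposition~\ref{pro: bimL} in the non-monoidal setting: it defines $\Ls$ directly by the equalizers of the pair $\etash\Gsh N$, $\Gsh\beta$, and then argues (exactly as for $\bimL$) that $\Fsh$ preserves these equalizers by left exactness, so $\thetash$ is invertible and $\Ls$ is fully faithful. Only \emph{afterwards} does the paper establish the identity $\Ks=\psh\bimK$ and the isomorphism $m:\bimL\iso\Ls\psh$ (Lemmas~\ref{lem: m} and~\ref{lem: m iso}) as separate results feeding into Proposition~\ref{pro: T}.

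You instead front-load the comparison: you verify $\Ks=\psh\bimK$ first via (\ref{j3}), then transport the adjunction $\bimK\dashv\bimL$ across the isomorphism $\psh$ to obtain $\Ks\dashv\bimL\psh^{-1}$, and read off full faithfulness and left exactness from Proposition~\ref{pro: bimL}. This is cleaner in that it reuses Proposition~\ref{pro: bimL} rather than repeating its proof, and it makes Lemmas~\ref{lem: m}--\ref{lem: m iso} essentially automatic. The trade-off is that you are invoking the full strength of Proposition~\ref{pro: M^Qhat=M^Q} (that $\psh$ is an isomorphism of categories), which is a more substantial input than the paper's direct argument requires; the paper's route needs only left exactness of $\Fsh=\phish\bimF$, which is immediate from $\phish$ being a right adjoint.
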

Explicit formula for the functor $\ksh$ is
$\ksh C:=\bra \fsh C,\deltash_C\ket$ where $\deltash:=j\bimf\ci\phish\delta$ with $\delta_C$ defined in (\ref{bimk}).

The unit and counit of $\Ks\dashv\Ls$ are denoted by $\nush$ and $\thetash$, respectively. They are uniquely
determined by the equations
\begin{align}
\label{nush}
\ish_{\Ks U}\ci\nush_U&=\etash_U\\
\label{thetash}
\beta\ci\F^\Qsh\thetash_{\bra N,\beta\ket}&=\Fsh\ish_{\bra N,\beta\ket}
\end{align}
for all objects $U\in\Pre$, $\bra N,\beta\ket\in\M^\Qsh$, in complete analogy with (\ref{diag: nu}) and 
(\ref{diag: theta}).

In the rest of this subsection we wish to compare the two adjunctions $\bimK\dashv\bimL$ and $\Ks\dashv\Ls$ in order to see the relation of $\bimL\bimK$ to $\Ls\Ks$. At first we compare the comparison functors.
Computing
\begin{align*}
\psh\bimK U&=\psh\bra\bimF U,\bimF\bimeta U\ket=\bra\Fsh U,\Fsh\iota\bimF U\ci\Fsh\bimeta U\ket\\
&\eqby{Take for etash}\bra\Fsh U,\Fsh\etash U\ket=\Ks U
\end{align*}
we get an equality of functors:
\begin{equation}
\psh\bimK\ =\ \Ks\,.
\end{equation}
This complies with the definition of $\ksh$ as $\psh\bimk$ in the above Proposition and leads to another variant of the natural isomorphism $N:\F Y\iso F$ of (\ref{diag: N}). As a matter of fact, since
\begin{align*}
\deltash\ci \phish\bimN&=\Fsh\iota\bimf\ci\phish\delta\ci\phish\bimN\eqby{bimk}
\Fsh\iota\bimf\ci\phish\bimQ\bimN\ci \Fsh\bimeta Y=\\
&=\Qsh\phish\bimN\ci\Fsh\iota\bimF Y\ci\Fsh\bimeta Y\eqby{Take for etash}\Qsh\phish\bimN\ci\Fsh\etash Y\,,
\end{align*}
the arrow
\[
\bra \Fsh YC, \Fsh\etash YC\ket\rarr{N_C}\bra \fsh C,\deltash C\ket
\]
is a morphism of $\Qsh$-comodules and defines the $C$-component of a natural isomorphism $\NQ:\Ks Y\iso \ksh$
such that $\F^\Qsh\NQ=\Nsh:=\phish\bimN$.

Next we compare the equalizers defining $\bimL$ and $\Ls$.
\begin{lem} \label{lem: m}
There is a unique natural transformation $m:\bimL\to\Ls\psh$ such that $\ish\psh\ci m=\iota\F^\bimQ\ci\bimi$.
\end{lem}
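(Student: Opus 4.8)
The plan is to construct $m$ fibrewise by exploiting that both $\bimL\bra M,\alpha\ket$ and $\Ls\psh\bra M,\alpha\ket$ are defined as equalizers sitting inside $\bimG M$ and $\Gsh\phish M$ respectively, and that $\iota:\bimG\to\Gsh\phish$ is the comparison map between the outer objects. Concretely, for a $\bimQ$-comodule $\bra M,\alpha\ket$ the defining equalizer (\ref{eq: L}) exhibits $\bimi_{\bra M,\alpha\ket}:\bimL\bra M,\alpha\ket\into\bimG M$ as the equalizer of the pair $\bimeta\bimG M$ and $\bimG\alpha$ into $\bimG\bimF\bimG M$, while $\ish_{\psh\bra M,\alpha\ket}:\Ls\psh\bra M,\alpha\ket\into\Gsh\phish M$ is the equalizer of $\etash\Gsh\phish M$ and $\Gsh\beta$ into $\Gsh\Fsh\Gsh\phish M$, where $\beta=j_M\ci\phish\alpha$ is the coaction of $\psh\bra M,\alpha\ket$. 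So to get $m$ from the universal property of the latter equalizer it suffices to check that the composite $\iota_M\ci\bimi_{\bra M,\alpha\ket}$ equalizes that pair.

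First I would write out what needs to be equalized. Using the comonad morphism relation $j\bimQ\ci\phish\bimcop=\copsh\phish\ci j$ from Lemma \ref{lem: j}, together with (\ref{j3}) $\iota\bimF\ci\bimeta=\etash$, one translates $\Gsh\beta\ci\iota_M=\Gsh j_M\ci\Gsh\phish\alpha\ci\iota_M$ into something involving $\Gsh\Fsh\iota\ci\etash\bimG$ applied after $\bimG\alpha$; the key point is that $\iota$ being natural gives $\Gsh\phish\alpha\ci\iota_M=\iota_{\bimF\bimG M}\ci\bimG\alpha$ (up to the appropriate identification), and combined with $\Gsh j=\Gsh\Fsh\iota$ and the naturality of $\etash$ versus $\bimeta$ this reduces the claim that $\iota_M\ci\bimi$ equalizes $\etash\Gsh\phish M,\Gsh\beta$ to the claim that $\bimi$ equalizes $\bimeta\bimG M,\bimG\alpha$, which is exactly (\ref{eq: L}). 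The bookkeeping here is the standard `comparison functors commute with a comonad morphism' lemma, so I would either cite \cite{Street: Monads I.} for the abstract fact that a comonad morphism $\bra\phish,j\ket$ induces a natural transformation between the right adjoints of the comparison functors, or reproduce the three-line diagram chase.

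Then uniqueness of $m$ is immediate: $\ish\psh$ is a monomorphism (it is an equalizer inclusion, and equalizers in $\Pre$ are computed pointwise as kernels of abelian group maps, hence monic), so any $m$ with $\ish\psh\ci m=\iota\F^\bimQ\ci\bimi$ is determined. Naturality of $m$ in $\bra M,\alpha\ket$ also follows by the monomorphism property: for an arrow $h:\bra M,\alpha\ket\to\bra M',\alpha'\ket$, both $\ish\psh\ci m\ci\bimL h$ and $\ish\psh\ci\Ls\psh h\ci m$ equal $\iota\F^\bimQ\ci\bimi\ci\bimL h=\iota_{M'}\ci\bimG h\ci\bimi$ by naturality of $\bimi$ and $\iota$, so cancelling the monic $\ish\psh$ gives $m\ci\bimL h=\Ls\psh h\ci m$.

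The main obstacle is purely the verification that $\iota\F^\bimQ\ci\bimi$ equalizes the pair $\pair{\etash\Gsh\phish}{\Gsh(j\ci\phish\alpha)}$ — i.e.\ getting the identifications $\Gsh j=\Gsh\Fsh\iota$, $\iota\bimF\ci\bimeta=\etash$, and the two naturality squares for $\iota$ to line up correctly, keeping track of which $\bimG\bimF\bimG M$ versus $\Gsh\Fsh\Gsh\phish M$ each arrow lands in. I do not expect any genuine difficulty beyond diagram-chasing, and I would present it as a short chase rather than grinding through elementwise formulas, since everything in sight (the equalizers, $\iota$, $j$) has already been given explicitly in the preceding subsections.
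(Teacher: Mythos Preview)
Your approach is correct and essentially identical to the paper's: both show that the defining equalizers of $\bimL$ and $\Ls\psh$ fit into a serially commuting ladder with vertical maps $\iota_M$ and $\Gsh\Fsh\iota_M\ci\iota_{\bimF\bimG M}$, using precisely the ingredients you list (naturality of $\iota$ at $\alpha$, the identity $j=\Fsh\iota$, and (\ref{j3})), then invoke the universal property of the lower equalizer. The paper omits the explicit naturality check for $m$ that you include, but otherwise the arguments coincide.
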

\begin{proof}
The equalizers defining $\bimL$ and $\Ls$ are connected by the following serially commuting diagram:
\begin{equation}
\parbox{230pt}{
\begin{picture}(230,100)
\put(-10,0){$\Ls\psh\bra M,\alpha\ket$}
\dashline{3}(22,76)(22,12)  \put(22,12){\vector(0,-1){0}}    \put(-6,40){$\sst m_{\bra M,\alpha\ket}$}
\put(4,80){$\bimL\bra M,\alpha\ket$}
\put(48,0){$\longerrightarrowtail$}  \put(58,8){$\sst \ish\psh\bra M,\alpha\ket$}
\put(48,80){$\longerrightarrowtail$}  \put(62,87){$\sst \bimi\bra M,\alpha\ket$}
\put(110,0){$\Gsh\phish M$}
\put(125,76){\vector(0,-1){64}} \put(110,40){$\sst\iota_M$}
\put(118,80){$\bimG M$}
\put(150,1){\vector(1,0){60}} \put(155,-8){$\sst \Gsh(i_M\ci\phish\alpha)$}
\put(150,4){\vector(1,0){60}} \put(157,8){$\sst \etash\Gsh\phish M$}
\put(150,81){\vector(1,0){60}} \put(175,71){$\sst\bimG\alpha$}
\put(150,84){\vector(1,0){60}} \put(170,88){$\sst\bimeta\bimG M$}
\put(215,0){$\Gsh\Fsh\Gsh\phish M$}
\put(240,76){\vector(0,-1){64}} \put(215,40){$\sst\iota\bimF\iota_M$}
\put(225,80){$\bimG\bimF\bimG M$}
\end{picture}
}
\end{equation}
Indeed,
\begin{align*}
\iota\bimF\iota_M\ci\bimeta\bimG M&=(\iota\bimF\ci\bimeta)\Gsh\phish M\ci\iota_M=\etash\Gsh\phish M\ci\iota_M\\
\text{and}\qquad \iota\bimF\iota_M\ci\bimG\alpha&=\Gsh\Fsh\iota_M\ci\iota\bimF\bimG M\ci\bimG\alpha
=\Gsh\Fsh\iota_M\ci\Gsh\phish\alpha\ci\iota_M\,.
\end{align*}
Therefore $\iota_M\ci\bimi_{\bra M,\alpha\ket}$ factors uniquely through $\ish_{\psh\bra M,\alpha\ket}$.
\end{proof}
\begin{lem} \label{lem: m iso}
$m:\bimL\to\Ls\psh$ is a natural isomorphism satisfying $m\bimK\ci\bimnu=\nush$.
\end{lem}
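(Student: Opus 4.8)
I will prove the two claims in Lemma \ref{lem: m iso} separately. For the isomorphism claim, the strategy is to exhibit a natural transformation in the opposite direction and show it is a two-sided inverse of $m$; but a cleaner route is available: $m$ is a comparison map between two equalizers, so it suffices to produce an inverse on the level of the ambient presheaves and check it restricts appropriately. For the compatibility $m\bimK\ci\bimnu=\nush$, the strategy is to use the defining (universal) properties of $m$, $\bimnu$ and $\nush$ together with the already-established equality $\psh\bimK=\Ks$.

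First I would address the isomorphism. Recall that $\bimL\bra M,\alpha\ket$ sits inside $\bimG M=\,_R\M_R(\bimf\under,M)$ as those $f:\bimf C\to M$ with $\alpha\ci f=\bimQ f\ci\delta_C$, while $\Ls\psh\bra M,\alpha\ket$ sits inside $\Gsh\phish M=\M_R(\fsh\under,\phish M)$ as those right-$R$-module maps $g:\fsh C\to\phish M$ satisfying the $\Qsh$-coaction condition $j_M\ci\phish\alpha\ci g=\Qsh g\ci\deltash_C$. The map $m$ is the obvious inclusion-induced map: it sends $f$ to its underlying right-$R$-module map $\phish f$ (this is what $\ish\psh\ci m=\iota\F^\bimQ\ci\bimi$ records, since $\iota$ is precisely ``forget the left action''). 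So injectivity of $m$ is immediate from faithfulness of $\phish$ (equivalently, $\iota$ is monic). For surjectivity I must show: if $g:\fsh C\to\phish M$ is a right-$R$-module map satisfying the $\Qsh$-coaction condition for $\deltash_C=j\bimf\ci\phish\delta$, then $g$ is automatically a \emph{left} $R$-module map (hence of the form $\phish f$) and $f$ satisfies the $\bimQ$-coaction condition. The second point is formal: $\iota\bimF$ is monic and $\iota_M$ is monic (by the equalizer \eqref{eq: iota}), so $\bimF\iota_M\ci\alpha=\hat\beta$-type uniqueness arguments — exactly as in the proof of Lemma \ref{lem: psiinverse object map} — lift the $\Qsh$-condition back to a $\bimQ$-condition once we know $g$ is bilinear. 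The first point — $g$ is left $R$-linear — is the crux, and it should follow by repeating for a single representable $\bimk C$ the bimodule-lift argument of Lemma \ref{lem: bimodule lift of beta} applied to the $\Qsh$-comodule $\psh\bra M,\alpha\ket$: the left action on $\phish M$ reconstructed via \eqref{lambda Nhat} is the original one (because $\psh^{-1}\psh=\id$), and the coaction condition on $g$ forces $g$ to intertwine the left actions on $\fsh C$ and $\phish M$. Alternatively, and perhaps more efficiently, one observes that $\Ls$ is a localization (Proposition \ref{pro: Ls}), $\bimL$ is a localization (Proposition \ref{pro: bimL}), and $\psh$ is an isomorphism of categories (Proposition \ref{pro: M^Qhat=M^Q}) — so $\Ls\psh$ and $\bimL$ are both the reflection of the \emph{same} reflective subcategory sitting over $\M^\bimQ\cong\M^\Qsh$, whence $m$, being a map of reflections compatible with the comparison functors, must be invertible by uniqueness of reflections.

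For the identity $m\bimK\ci\bimnu=\nush$, I would argue by the universal property of the equalizer defining $\Ls\Ks=\Ls\psh\bimK$. By \eqref{nush}, $\nush_U$ is the unique arrow with $\ish_{\Ks U}\ci\nush_U=\etash_U$. So it suffices to check $\ish_{\psh\bimK U}\ci(m_{\bimK U}\ci\bimnu_U)=\etash_U$. Now compute: $\ish_{\psh\bimK U}\ci m_{\bimK U}=\iota_{\bimF U}\ci\bimi_{\bimK U}$ by the defining property of $m$ in Lemma \ref{lem: m}; and $\bimi_{\bimK U}\ci\bimnu_U=\bimeta_U$ by \eqref{diag: nu}; finally $\iota_{\bimF U}\ci\bimeta_U=\etash_U$ is exactly \eqref{j3}. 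Chaining these three equalities gives $\ish_{\psh\bimK U}\ci m_{\bimK U}\ci\bimnu_U=\iota_{\bimF U}\ci\bimeta_U=\etash_U$, which is what the universal property demands, so $m\bimK\ci\bimnu=\nush$.

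**Where the difficulty lies.** The compatibility identity is pure diagram-chasing with the equations already in hand and presents no real obstacle. The genuine content is surjectivity of $m$ — i.e. that the $\Qsh$-coaction condition on a right-$R$-linear $g:\fsh C\to\phish M$ already forces left-$R$-linearity. I expect the shortest honest proof is the abstract one via uniqueness of reflections (using that $\psh$ is an \emph{isomorphism} of categories and both $\bimL$, $\Ls$ are localizations with the comparison functors as reflectors), reducing everything to the already-proved Proposition \ref{pro: M^Qhat=M^Q}; the hands-on alternative essentially re-runs Lemmas \ref{lem: bimodule lift of beta}–\ref{lem: psiinverse object map} in the ``localized'' setting and is correspondingly longer.
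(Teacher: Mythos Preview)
Your proof is correct and your preferred abstract route for the isomorphism is exactly what the paper does: it first proves the compatibility $m\bimK\ci\bimnu=\nush$ by the same chain $\ish\Ks\ci m\bimK\ci\bimnu=\iota\bimF\ci\bimi\bimK\ci\bimnu=\iota\bimF\ci\bimeta=\etash$, and then observes that this identity forces $m=\Ls\psh\bimtheta\ci\nush\bimL$, which is precisely the canonical isomorphism between the two right adjoints $\bimL$ and $\Ls\psh$ of $\bimK$ (the second adjunction being $\nush,\ {\psh}^{-1}\thetash\psh:\bimK\dashv\Ls\psh$, using $\psh\bimK=\Ks$). The only cosmetic difference is the order: the paper establishes the compatibility first and \emph{derives} the iso from it, whereas you present them as parallel tasks; since your abstract argument tacitly relies on $m$ being ``compatible with the comparison functors''---which is exactly the compatibility identity---the paper's ordering is the cleaner packaging of the same argument, and your hands-on alternative (a) is unnecessary.
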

\begin{proof}
Since $\ish$ is monic, the calculation
\begin{align*}
\ish\Ks\ci m\bimK\ci\bimnu&=(\ish\psh\ci m)\bimK\ci\bimnu=\iota\bimK\ci\bimi\bimK\ci\bimnu=\\
&\eqby{diag: nu}\iota\bimK\ci\bimeta\eqby{Take for etash}\etash=\\
&\eqby{nush}\ish\Ks\ci\nush
\end{align*}
proves the relation and therefore $m$ can be expressed as
\begin{align} \label{m by adj}
\Ls\psh\bimtheta\ci\nush\bimL&=\Ls\psh\bimtheta\ci m\bimK\bimL\ci\bimnu\bimL=
m\ci\bimL\bimtheta\ci\bimnu\bimL=\\
&=m\,. \notag
\end{align}
It follows that $m$ is the standard isomorphism connecting two right adjoints of a functor. Namely,
$\bimnu,\bimtheta:\bimK\dashv \bimL$ and $\nush,{\psh}^{-1}\thetash\psh:\bimK\dashv \Ls\psh$ are two
adjunctions, therefore $m^{-1}=\bimL{\psh}^{-1}\thetash\psh\ci\bimnu\Ls\psh$ is the inverse of $m$.
\end{proof}

The next Proposition summarizes the content of this section and serves also as input for the next section.
\begin{pro} \label{pro: T}
Let $\C$ be a small monoidal $\Ab$-category and $F:\C\to\Ab$ a flat, essentially strong monoidal functor.
Let $\bimF:\Pre\to\,_R\M_R$ be the strong part of the left Kan extension of $F$ and let $\Fsh:=\phish\bimF:\Pre\to\M_R$.
Then there is a construction of
\begin{enumerate}
\item a monoidal localization $\bimL:\M^\bimQ\to\Pre$ with reflection being the comparison functor $\bimK$
associated to the left adjoint $\bimF$
and with adjunction data $\bimnu,\bimtheta:\bimK\dashv\bimL$,
\item a localization $\Ls:\M^\Qsh\to\Pre$ with reflection being the comparison functor $\Ks$ associated to the
left adjoint $\Fsh$ and with adjunction data $\nush,\thetash:\Ks\dashv\Ls$,
\item a left exact, monoidal idempotent monad $\bimT=\bra \bimt,\bimmu,\bimnu\ket$ on $\Pre$, where $\bimt=\bimL\bimK$ and $\bimmu:=\bimL\bimtheta\bimK$, satisfying the property
\begin{equation} \label{eq: special}
\bimt(U\odot\bimnu_V),\quad \bimt(\bimnu_U\odot V)\quad\text{are invertible}\quad\forall U,V\in\ob\Pre,
\end{equation}
\item a left exact idempotent monad $\Tsh=\bra\tsh,\mush,\nush\ket$ on $\Pre$, where $\tsh=\Ls\Ks$ and
$\mush:=\Ls\thetash\Ks$
\item a monad isomorphism $\bimT\iso\Tsh$.
\end{enumerate}
\end{pro}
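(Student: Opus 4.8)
The plan is to assemble the results of this section; only a handful of verifications remain.

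Parts (1) and (2) are exactly Propositions~\ref{pro: bimL} and~\ref{pro: Ls}. For parts (3) and (4) I would invoke the standard fact that whenever $L_0\dashv R_0$ is an adjunction with $R_0$ fully faithful, the induced monad $R_0L_0$ on the domain of $L_0$ is idempotent: its multiplication is $R_0(\text{counit})L_0$, and the counit is invertible precisely because $R_0$ is fully faithful. Applied to $\bimK\dashv\bimL$, where $\bimL$ is fully faithful and $\bimtheta$ invertible by Proposition~\ref{pro: bimL}, this gives the idempotent monad $\bimT=\bra\bimt,\bimmu,\bimnu\ket$ with $\bimt=\bimL\bimK$ and $\bimmu=\bimL\bimtheta\bimK$; applied to $\Ks\dashv\Ls$ (Proposition~\ref{pro: Ls}) it gives $\Tsh=\bra\tsh,\mush,\nush\ket$. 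Left exactness I would obtain as follows: $\bimL$ and $\Ls$ are right adjoints, hence preserve all limits; $\bimK$ is left exact by Lemma~\ref{lem: bimK}; and $\Ks$ is left exact by the same argument as in that lemma, since $\Fsh$ is left exact, $\F^\Qsh$ is faithful and so reflects monics, and both $\Pre$ and $\M^\Qsh$ are abelian ($\M^\bimQ\cong\M^\Qsh$ by Proposition~\ref{pro: M^Qhat=M^Q}), whence $\Ks$ preserves kernels and therefore all finite limits. The monoidal structure on $\bimT$ I would read off from $\bimK\dashv\bimL$ being an adjunction in $\MonCat$ (Proposition~\ref{pro: bimL}): a monad in $\MonCat$ is in particular a monoidal monad on $\Pre$.

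The one assertion in (3) that needs a fresh argument is property~(\ref{eq: special}). From the triangle identity $\bimtheta\bimK\ci\bimK\bimnu=\id_{\bimK}$ and invertibility of $\bimtheta$ it follows that $\bimK\bimnu:\bimK\to\bimK\bimL\bimK=\bimK\bimt$ is invertible. Since $\bimK$ is strong monoidal and $\odot$ is functorial in each variable, naturality of the coherence isomorphism $\bimK_{U,V}:\bimK U\oR\bimK V\iso\bimK(U\odot V)$ in the second variable along $\bimnu_V$ writes $\bimK(U\odot\bimnu_V)$ as the composite of $\id_{\bimK U}\oR\bimK\bimnu_V$ with the coherence isomorphisms $\bimK_{U,V}$ and $\bimK_{U,\bimt V}^{-1}$; this composite is invertible because $\bimK\bimnu_V$ is. Applying $\bimL$ then shows $\bimt(U\odot\bimnu_V)=\bimL\bimK(U\odot\bimnu_V)$ is invertible, and the statement for $\bimt(\bimnu_U\odot V)$ follows by naturality of $\bimK_{U,V}$ in the first variable.

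For part (5) I would start from $\psh\bimK=\Ks$ (established just before Lemma~\ref{lem: m}), so that $\bimt=\bimL\bimK$ and $\tsh=\Ls\Ks=\Ls\psh\bimK$ are the monads induced by the two adjunctions $\bimnu,\bimtheta:\bimK\dashv\bimL$ and $\nush,\psh^{-1}\thetash\psh:\bimK\dashv\Ls\psh$, which share the left adjoint $\bimK$. By Lemma~\ref{lem: m iso} the comparison $m:\bimL\iso\Ls\psh$ of the two right adjoints is a natural isomorphism with $m\bimK\ci\bimnu=\nush$, and by formula~(\ref{m by adj}) it is the canonical one; hence $n:=m\bimK:\bimt\iso\tsh$ is an isomorphism of monads $\bimT\to\Tsh$, because isomorphic right adjoints of a common functor induce isomorphic monads. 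I do not expect any real obstacle here: essentially all the work has been done in Lemmas~\ref{lem: m} and~\ref{lem: m iso}, and the single genuinely new computation, property~(\ref{eq: special}), is immediate once one notices that $\bimK\bimnu$ is invertible.
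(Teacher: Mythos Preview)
Your proposal is correct and follows essentially the same route as the paper. The only stylistic difference is in part~(5): the paper writes out the explicit verification that $m\bimK$ intertwines the multiplications (showing first ${\psh}^{-1}\thetash\psh\ci\bimK m=\bimtheta$ and then $m\bimK\ci\bimmu=\mush\ci\tsh m\bimK\ci m\bimK\bimt$), whereas you invoke the general fact that the canonical isomorphism between two right adjoints of the same functor is automatically a monad isomorphism; both are valid and amount to the same thing.
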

\begin{proof}
$(1)$ and $(2)$ have been shown in Propositions \ref{pro: bimL} and \ref{pro: Ls}, respectively.

In $(3)$ the only nontrivial fact is property (\ref{eq: special}). It suffices to prove that $\bimK(U\odot\bimnu_V)$
is invertible for all presheaves $U$ and $V$. This follows from that $\bimK$ is strong monoidal and $\bimL$ is fully faithful. Indeed, $\bimtheta\bimK\ci\bimK\bimnu=\bimK$ by adjunction, hence
$\bimK(U\odot\bimnu_V)=\bimK_{U,TV}\ci(\bimK U\oR\bimK\bimnu_V)\ci\bimK^{-1}_{U,V}$ is invertible.

$(4)$ is obvious and the monad isomorphism of $(5)$ is $m\bimK$ provided by Lemma \ref{lem: m iso} since
we not only have $m\bimK\ci\bimnu=\nush$ but
\begin{align*}
{\psh}^{-1}\thetash\psh\ci\bimK m&\eqby{m by adj}{\psh}^{-1}\thetash\psh\ci{\psh}^{-1}\Ks\Ls\psh\bimtheta
\ci\bimK\nush\bimL=\\
&=\bimtheta\ci{\psh}^{-1}\thetash\psh\bimK\bimL\ci\bimK\nush\bimL
=\bimtheta\ci{\psh}^{-1}\left(\thetash\Ks\ci\Ks\nush\right)\bimL=\\
&=\bimtheta
\end{align*}
as well, therefore 
\begin{align*}
m\bimK\ci\bimmu&=m\bimK\ci\bimL\bimtheta\bimK=\Ls\psh\bimtheta\bimK\ci m\bimK\bimL\bimK=\\
&=\Ls\thetash\psh\bimK\ci\Ls\Ks m\bimK\ci m\bimK\bimL\bimK=\\
&=\mush\ci \tsh m\bimK\ci m\bimK \bimt\,.
\end{align*}
\end{proof}

\section{The monoidal idempotent monad $\T$ and its sheaves} \label{s: T}

In the previous Section we have constructed a left exact monoidal idempotent monad $\bimT$ on the presheaf category $\Pre$ satisfying a special property (\ref{eq: special}). While left exact idempotent monads on $\Pre$ are known to correspond to Grothendieck topologies on $\C$ \cite{Borceux} and in this way to sheaf categories that are the EIlenberg-Moore categories $\Pre_\bimT$, monoidal monads in general do not have monoidal Eilenberg-Moore categories. Only the Kleisli category carries monoidal structure. We shall see that property (\ref{eq: special}) solves
this problem.

\subsection{The monoidal structure of $\T$-modules}

In this subsection $\T$ denotes a monoidal idempotent monad $\bra T,\mu,\nu\ket$ on a monoidal category $\bra\Pre,\odot,\YI\ket$. This means that $\bra T,\mu,\nu\ket$ is a monad with invertible multiplication $\mu:T^2\iso T$, $T$ is a monoidal functor with structure maps $T_{U,V}:TU\odot TV\to T(U\odot V)$, $T_0:\YI\to T\YI$ and
both $\mu$ and $\nu$ are monoidal natural transformations. The latter means that
\begin{align}
\label{mu mul}
T_{U,V}\ci(\mu_U\odot\mu_V)&=\mu_{U\odot V}\ci TT_{U,V}\ci T_{TU,TV}\\
\label{mu uni}
T_0&=\mu_\YI\ci TT_0\ci T_0\\
\label{nu mul}
T_{U,V}\ci(\nu_U\odot\nu_V)&=\nu_{U\odot V}\\
\label{nu uni}
T_0&=\nu_\YI\,.
\end{align}

Due to idempotency, i.e., invertibility of $\mu$, a pair $\bra U,\alpha\ket$ is an object of the Eilenberg-Moore category $\She$ associated to the monad $\T$ iff $\nu_U$ is invertible and in this case the action $\alpha:TU\to U$ is unique: $\alpha=\nu_U^{-1}$. The canonical adjunction $\nu,\tau:\F_\T\dashv\G_\T:\She\to\Pre$ associated to the monad $\T$ consists of a left adjoint $\F_\T:U\mapsto\bra TU,\mu_U\ket$ and a right adjoint $\G_\T:\bra U,\nu_U^{-1}\ket\mapsto U$, the forgetful functor, which, due to naturality of $\nu$, is fully faithful.
The unit of the adjunction $\F_\T\dashv\G_\T$ is the unit of the monad, i.e., $\nu$, and the counit is
\begin{equation} \label{eq: tau}
\tau:\F_\T\G_\T\to \She,\quad\tau_{\bra U,\nu_U^{-1}\ket}=\bra TU,\mu_U\ket\rarr{\nu_U^{-1}}\bra U,\nu_U^{-1}\ket.
\end{equation}

\begin{defi}\label{def: spec}
The monoidal idempotent monad $\T$ is called special if it satisfies the invertibility condition of (\ref{eq: special}).
\end{defi}
\begin{pro} \label{pro: She mon}
For a special monoidal idempotent monad $\T$ on a monoidal category $\Pre$ let $\She$ be its Eilenberg-Moore
category of $\T$-modules. Then $\She$ has a monoidal structure with
\begin{itemize}
\item monoidal product: $\Ush \ot\Vsh:=\bra T(U\odot V),\mu_{U\odot V}\ket$
\item monoidal unit: $\bra T\YI,\mu_\YI\ket$
\item and coherence isomorphisms:
\begin{trivlist}
\item $\asso_{\Ush,\Vsh,\Wsh}:=T(\nu_{U\odot V}\odot W)\ci T\asso_{U,V,W}\ci[T(U\odot\nu_{V\odot W})]^{-1}$
\item $\luni_{\Ush}:=\luni_U\ci\nu^{-1}_{\YI\odot U}\ci[T(\nu_\YI\odot U)]^{-1}$
\item $\runi_{\Ush}:=\runi_U\ci\nu^{-1}_{U\odot \YI}\ci[T(U\odot\nu_\YI)]^{-1}$
\end{trivlist}
\end{itemize}
where $\asso_{U,V,W}$, $\luni_U$, $\runi_U$ denote the coherence isomorphisms of $\Pre$.
\end{pro}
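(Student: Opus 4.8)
The plan is to recognise this as, in essence, a form of Day's reflection theorem (cf.\ \cite{Day70}) and to establish it directly. Since $\mu$ is invertible the forgetful functor $\G_\T:\She\to\Pre$ is fully faithful and exhibits $\She$ as a reflective subcategory with reflector $\F_\T$, and the specialness condition \eqref{eq: special} is exactly the extra hypothesis one needs. Note first that the proposed monoidal data are \emph{a priori} well defined as functors: the product is the composite $\She\x\She\overset{\G_\T\x\G_\T}{\longrightarrow}\Pre\x\Pre\overset{\odot}{\longrightarrow}\Pre\overset{\F_\T}{\longrightarrow}\She$, and the unit is the object $\F_\T\YI=\bra T\YI,\mu_\YI\ket$, so functoriality of $\ot:\She\x\She\to\She$ requires no verification.

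Next I would check that the three displayed coherence morphisms are isomorphisms. Each is a threefold composite. In $\asso_{\Ush,\Vsh,\Wsh}$ the two outer factors $T(\nu_{U\odot V}\odot W)$ and $T(U\odot\nu_{V\odot W})$ are invertible directly by \eqref{eq: special} (with $\nu_{U\odot V}$, resp.\ $\nu_{V\odot W}$, in the role of the unit map $\nu$ there), while $T\asso_{U,V,W}$ is $T$ applied to an isomorphism of $\Pre$; likewise $T(\nu_\YI\odot U)$ and $T(U\odot\nu_\YI)$ are invertible by \eqref{eq: special}. The remaining factors $\nu_{\YI\odot U}$ and $\nu_{U\odot\YI}$ occurring in $\luni_{\Ush}$, $\runi_{\Ush}$ are invertible too: naturality of $\nu$ against $\luni_U:\YI\odot U\iso U$ yields $T\luni_U\ci\nu_{\YI\odot U}=\nu_U\ci\luni_U$, and $\nu_U$ is invertible since $\Ush\in\ob\She$, whence $\nu_{\YI\odot U}$ is invertible; dually for $\nu_{U\odot\YI}$. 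Naturality of $\asso_{\Ush,\Vsh,\Wsh}$, $\luni_{\Ush}$ and $\runi_{\Ush}$ in each argument then follows from naturality of $\nu$ and of $\asso,\luni,\runi$ in $\Pre$, together with bifunctoriality of $\odot$.

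The heart of the matter --- and the step I expect to be the main obstacle --- is the pentagon and triangle identities. Here the plan is to first equip the reflector $\F_\T:\Pre\to\She$ with the strong monoidal structure whose components $\F_\T U\ot\F_\T V=\bra T(TU\odot TV),\mu_{TU\odot TV}\ket\iso\bra T(U\odot V),\mu_{U\odot V}\ket=\F_\T(U\odot V)$ are again assembled from \eqref{eq: special} (the relevant maps $T(U\odot\nu_V)$ and $T(\nu_U\odot TV)$ being invertible by that hypothesis), and then to verify the strong monoidal functor axioms relating these components to $\asso^{\Pre}$ and to the coherence isomorphisms of $\She$ introduced above. Granting this, the pentagon and triangle for $\She$ follow from those of $\Pre$ by the standard transport of coherence along a strong monoidal functor which is, moreover, essentially surjective on objects --- every object of $\She$ being isomorphic to $\F_\T U$ via $\nu_U$ --- using the naturality established in the previous step. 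Alternatively one may avoid naming $\F_\T$ as strong monoidal and verify the two identities after applying the faithful $\G_\T$, i.e.\ directly in $\Pre$, as equalities between composites of $T$ applied to coherence isomorphisms of $\Pre$ and to maps of the shape $(\text{identity})\odot\nu_{-}$ and $\nu_{-}\odot(\text{identity})$ and their inverses, sliding the $\nu$'s past the associators by naturality of $\asso$ in each variable and reducing to the coherence of $\Pre$. Either way this is a long but completely mechanical bookkeeping using only the coherence of $\Pre$, the naturality of $\nu$ and of the coherence isomorphisms, bifunctoriality of $\odot$, and the monad axioms.
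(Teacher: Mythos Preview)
Your proposal is correct. Your second alternative --- verifying the pentagon and triangle directly in $\Pre$ after applying the fully faithful $\G_\T$, sliding the $\nu$'s past associators by naturality and reducing to the coherence of $\Pre$ --- is precisely the paper's route, carried out there as an explicit chain of equalities (the paper also separately checks $\luni=\runi$ at the unit object, which you may or may not list as an axiom depending on your definition of monoidal category). A word of caution on your first alternative: calling $\F_\T$ ``strong monoidal'' before $\She$ is known to be monoidal is circular as stated. What actually works is to verify directly the hexagon relating your proposed $\asso^{\She}$ to $\asso^{\Pre}$ via the structure isomorphisms $\F_\T U\ot\F_\T V\iso\F_\T(U\odot V)$, and only then deduce the pentagon for $\She$ from that of $\Pre$ using essential surjectivity of $\F_\T$ together with the naturality you established. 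This is legitimate, but in practice it amounts to the same bookkeeping as the direct route.
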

\begin{proof}
Note that by naturality of $\nu$, $\nu_U\ci\luni_U=T\luni_U\ci\nu_{\YI\odot U}$, therefore $\nu_{\YI\odot U}$ is invertible whenever $\nu_U$ is. This shows that $\luni_{\Ush}$ is well defined. By a similar argument $\nu_{U\odot \YI}$ is also invertible in the definition of $\runi_{\Ush}$.

Note also that the formulas for $\asso$, $\luni$ and $\runi$ in fact give the $\G_T$ of these arrows and our first task is to show that the given composite arrows can be lifted to yield arrows of $\C_\T$. This means that they have to
satisfy
\begin{align*}
\mu_{T(U\odot V)\odot W}\ci T\asso_{\Ush,\Vsh,\Wsh}&=\asso_{\Ush,\Vsh,\Wsh}\ci\mu_{U\odot T(V\odot W)}\\
\nu_U^{-1}\ci T\luni_{\Ush}&=\luni_{\Ush}\ci \mu_{T\YI\odot U}\\
\nu_U^{-1}\ci T\runi_{\Ush}&=\runi_{\Ush}\ci \mu_{U\odot T\YI}
\end{align*}
These relations follow easily from naturality of $\mu$, $\nu$ and from the monad axioms. The details are omitted.

As for the naturality of the resulting associator in $\Ush$, \dots, etc notice that the objects of $\She$ that enter are all tensor products and therefore their $\T$-actions are components of $\mu$. Thus naturality of $\mu$ guaranties
naturality of $\asso$. For $\luni$ this argument does not work but we can check it explicitely: For all
$\alpha:\Ush\to\Vsh$
\begin{align*}
\luni_{\Vsh}\ci T(T\YI\odot\alpha)&=\luni_V\ci\nu^{-1}_{\YI\odot V}\ci T(\YI\odot\alpha)\ci[T(\nu_\YI\odot U)]^{-1}=\\
&=\luni_V\ci(\YI\odot\alpha)\ci\nu^{-1}_{\YI\odot U}\ci[T(\nu_\YI\odot U)]^{-1}=\\
&=\alpha\ci\luni_{\Ush}\,.
\end{align*}

We turn to the proof of the coherence constraints. In the calculations below it is important to note that
for arrows $\alpha,\beta\in\She$ the ($\G_\T$ of their) monoidal product is $T(\alpha\odot \beta)$.

The pentagon relation:
\begin{gather*}
(\asso_{\Ush,\Vsh,\Wsh}\ot \Zsh)\ci\asso_{\Ush,\Vsh\ot\Wsh,\Zsh}\ci\\
\ci(\Ush\ot\asso_{\Vsh,\Wsh,\Zsh})=\\
=T(T(\nu_{U\odot V}\odot W)\odot Z)\ci T(T\asso_{U,V,W}\odot Z)\ci T(T(U\odot\nu_{V\odot W})\odot Z)^{-1}\ci\\
\ci T(\nu_{U\odot T(V\odot W)}\odot Z)\ci T\asso_{U,T(V\odot W),Z}\ci [T(U\odot\nu_{T(V\odot W)\odot Z})]^{-1}\ci\\
\ci T(U\odot T(\nu_{V\odot W}\odot Z))\ci T(U\odot T\asso_{V,W,Z})\ci T(U\odot T(V\odot\nu_{W\odot Z}))^{-1}=\\
=T(T(\nu_{U\odot V}\odot W)\odot Z)\ci T(T\asso_{U,V,W}\odot Z)\ci T(\nu_{U\odot(V\odot W)}\odot Z)\ci\\
   \ci[T((U\odot\nu_{V\odot W})\odot Z)]^{-1}
\ci T\asso_{U,T(V\odot W),Z} \ci T(U\odot(\nu_{V\odot W}\odot Z))\ci\\
\ci[T(U\odot\nu_{(V\odot W)\odot Z})]^{-1}
\ci T(U\odot T\asso_{V,W,Z})\ci T(U\odot T(V\odot\nu_{W\odot Z}))^{-1}=
\end{gather*}
\begin{gather*}
=T(T(\nu_{U\odot V}\odot W)\odot Z)\ci T(\nu_{(U\odot V)\odot W}\odot Z)\ci T(\asso_{U,V,W}\odot Z)
   \ci T\asso_{U,V\odot W,Z}\ci\\
\ci T(U\odot\asso_{V,W,Z})\ci[T(U\odot\nu_{V\odot(W\odot Z)})]^{-1}\ci T(U\odot T(V\odot\nu_{W\odot Z}))^{-1}=\\
=T(\nu_{T(U\odot V)\odot W}\odot Z)\ci T((\nu_{U\odot V}\odot W)\odot Z)\ci T\asso_{U\odot V,W,Z}
   \ci T\asso_{U,V,W\odot Z}\ci\\
\ci [T(U\odot(V\odot\nu_{W\odot Z}))]^{-1}\ci[T(U\odot\nu_{V\odot T(W\odot Z)})]^{-1}=\\
=T(\nu_{T(U\odot V)\odot W}\odot Z)\ci T\asso_{T(U\odot V),W,Z}\ci T(\nu_{U\odot V}\odot (W\odot Z))\ci\\
\ci[T((U\odot V)\odot\nu_{W\odot Z})]^{-1}\ci T\asso_{U,V,T(W\odot Z)}\ci[T(U\odot\nu_{V\odot T(W\odot Z)})]^{-1}=\\
=T(\nu_{T(U\odot V)\odot W}\odot Z)\ci T\asso_{T(U\odot V),W,Z}\ci [T(T(U\odot V)\odot\nu_{W\odot Z})]^{-1}\ci\\
\ci T(\nu_{U\odot V}\odot T(W\odot Z))\ci T\asso_{U,V,T(W\odot Z)}\ci[T(U\odot\nu_{V\odot T(W\odot Z)})]^{-1}=\\
=\asso_{\Ush\ot\Vsh,\Wsh,\Zsh}\ci\asso_{\Ush,\Vsh,\Wsh\ot\Zsh}\,.
\end{gather*}

The triangle relation:
\begin{gather*}
(\runi_{\Ush}\ot\Vsh)\ci\asso_{\Ush,\bra T\YI,\mu_\YI\ket,\Vsh}=\\
=T(\runi_U\odot V)\ci T(\nu^{-1}_{U\odot \YI}\odot V)\ci[T(T(U\odot \nu_\YI)\odot V)]^{-1}\ci T(\nu_{U\odot T\YI}\odot V)
\ci\\
\ci T\asso_{U,T\YI,V}\ci[T(U\odot\nu_{T\YI\odot V})]^{-1}=\\
=T(\runi_U\odot V)\ci T\asso_{U,\YI,V}\ci[T(U\odot(\nu_\YI\odot V))]^{-1}\ci [T(U\odot\nu_{T\YI\odot V})]^{-1}=\\
=T(U\odot\luni_V)\ci[T(U\odot\nu_{\YI\odot V})]^{-1}\ci[T(U\odot T(\nu_\YI\odot V))]^{-1}=T(U\odot \luni_{\Vsh})=\\
=\Ush\ot\luni_{\Vsh}\,.
\end{gather*}

Coincidence on the unit object:

\begin{gather*}
\luni_{\bra T\YI,\mu_\YI\ket}=\luni_{T\YI}\ci\nu^{-1}_{\YI\odot T\YI}\ci[T(\nu_\YI\odot T\YI)]^{-1}
=\nu_{T\YI}^{-1}\ci T\luni_{T\YI}\ci[T(\nu_\YI\odot T\YI)]^{-1}=\\
=[T(\luni^{-1}_{T\YI}\ci\nu_\YI)]^{-1}\ci[T(\nu_\YI\odot T\YI)]^{-1}=[T((\YI\odot\nu_\YI)\ci\luni^{-1}_\YI)]^{-1}
\ci[T(\nu_\YI\odot T\YI)]^{-1}=\\
=T\luni_\YI\ci[T(\nu_\YI\odot T\YI)\ci T(\YI\odot\nu_\YI)]^{-1}=T\runi_\YI\ci[T(T\YI\odot\nu_\YI)\ci T(\nu_\YI\odot \YI)]^{-1}=\\
=[T((\nu_\YI\odot \YI)\ci\runi_\YI^{-1})]^{-1}\ci[T(T\YI\odot\nu_\YI)]^{-1}=[T(\runi_{T\YI}^{-1}\ci\nu_\YI)]^{-1}
\ci[T(T\YI\odot\nu_\YI)]^{-1}=\\
=\nu_{T\YI}^{-1}\ci T\runi_{T\YI}\ci[T(T\YI\odot\nu_\YI)]^{-1}=\runi_{T\YI}\ci\nu^{-1}_{T\YI\odot \YI}\ci[T(T\YI\odot\nu_\YI)]^{-1}=\\
=\runi_{\bra T\YI,\mu_\YI\ket}\,.
\end{gather*}
\end{proof}

\begin{pro}
The monoidal structure defined above on the sheaf category $\She$ is such that the canonical adjunction $\F_\T\dashv\G_\T$ is a monoidal adjunction and $\T=\G_\T\F_\T$ as monoidal functors.
\end{pro}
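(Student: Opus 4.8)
The plan is to install an explicit lax monoidal structure on the forgetful functor $\G_\T$, use doctrinal adjunction to transport it to a strong opmonoidal structure on the reflector $\F_\T$, and then match the induced composite structure on $\G_\T\F_\T$ with the monoidal structure of $T$.

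First I would equip $\G_\T$ with the lax monoidal constraints $(\G_\T)_{\Ush,\Vsh}:=\nu_{U\odot V}:U\odot V\to T(U\odot V)=\G_\T(\Ush\ot\Vsh)$ and $(\G_\T)_0:=\nu_\YI:\YI\to T\YI$; these make sense because the monoidal unit of $\She$ is $\bra T\YI,\mu_\YI\ket$ and, as recorded in the proof of Proposition~\ref{pro: She mon}, $\nu_{\YI\odot U}$ and $\nu_{U\odot\YI}$ are invertible whenever $\nu_U$ is. Checking that this data obeys the associativity hexagon and the two unit triangles is a routine diagram chase: after inserting the explicit formulas for $\asso$, $\luni$, $\runi$ from Proposition~\ref{pro: She mon} and using naturality of $\nu$, the isomorphisms of the form $T(\nu_U\odot V)$ and $T(U\odot\nu_V)$ occurring in those formulas cancel the copies of $\nu$ that naturality produces, and each identity reduces to the corresponding coherence axiom of $\bra\Pre,\odot,\YI\ket$.

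Granting that $\G_\T$ is lax monoidal, doctrinal adjunction \cite{Kelly} supplies a unique opmonoidal structure on $\F_\T$, the mate of the one on $\G_\T$, for which the unit $\nu$ and the counit $\tau$ of (\ref{eq: tau}) are monoidal natural transformations. Unwinding the mate and using the monad identity $\mu\ci T\nu=\id$ one finds that, on underlying objects, $(\F_\T)_{U,V}=T(\nu_U\odot\nu_V):T(U\odot V)\to T(TU\odot TV)$ and $(\F_\T)_0=\id$. This is exactly where the special property enters: by Definition~\ref{def: spec}, i.e. by (\ref{eq: special}), the factorization $T(\nu_U\odot\nu_V)=T(\nu_U\odot TV)\ci T(U\odot\nu_V)$ displays $(\F_\T)_{U,V}$ as a composite of two isomorphisms, so $\F_\T$ is strong opmonoidal, equivalently strong monoidal. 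Consequently $\F_\T\dashv\G_\T$ is an adjunction in $\MonCat$, that is, a monoidal adjunction.

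Finally, $\G_\T\F_\T$ inherits the lax monoidal structure whose unit constraint is $(\G_\T)_0=\nu_\YI$, which equals $T_0$ by (\ref{nu uni}), and whose product constraint is $\G_\T$ of the monoidal constraint $[T(\nu_U\odot\nu_V)]^{-1}$ of $\F_\T$ composed with $(\G_\T)_{\F_\T U,\F_\T V}=\nu_{TU\odot TV}$, namely $[T(\nu_U\odot\nu_V)]^{-1}\ci\nu_{TU\odot TV}$; I would check that this equals $T_{U,V}$ by a short computation using naturality of the constraints $T_{U,V}$ against $\nu$ in each variable, the idempotency identities $T\nu=\nu T=\mu^{-1}$, and (\ref{nu mul}) for the pair $TU,TV$. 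The step I expect to be the main obstacle is the coherence verification of the second paragraph (and the mirror check for $\F_\T$, should one prefer to run the doctrinal-adjunction argument the other way round): one must keep track of the paper's convention $\asso_{A,B,C}:A\ot(B\ot C)\to(A\ot B)\ot C$ and of precisely which copies of $\nu$ cancel. The rest is bookkeeping, once one recognizes that the special property is exactly the assertion that the opmonoidal constraints of $\F_\T$ are invertible.
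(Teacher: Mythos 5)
Your proof is correct and reaches the paper's conclusion by a slightly different mechanism, so let me compare. The paper writes down \emph{both} lax structures explicitly: $(\G_\T)_2=\nu$, $(\G_\T)_0=\nu_\YI$ exactly as you propose, but also $(\F_\T)_{U,V}=\mu_{U\odot V}\ci TT_{U,V}$ and $(\F_\T)_0=\mathrm{id}$; it then proves strongness of $\F_\T$ from the identity $(\F_\T)_{U,V}\ci T(\nu_U\odot TV)\ci T(U\odot\nu_V)=T(U\odot V)$, computes $(\G_\T\F_\T)_{U,V}=\mu_{U\odot V}\ci TT_{U,V}\ci\nu_{TU\odot TV}=T_{U,V}$ by naturality of $\nu$ and the unit law, and verifies monoidality of the counit $\tau$ by a direct computation with $(\F_\T\G_\T)_2=TT_2$. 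You instead install only the structure on $\G_\T$ and let doctrinal adjunction produce the mate colax constraint $T(\nu_U\odot\nu_V)$ on $\F_\T$; its invertibility, obtained from exactly the same factorization $T(\nu_U\odot\nu_V)=T(\nu_U\odot TV)\ci T(U\odot\nu_V)$ licensed by (\ref{eq: special}), then yields the monoidal adjunction with no separate unit/counit check. The two descriptions of the strong structure on $\F_\T$ agree, since the paper's identity above exhibits $\mu_{U\odot V}\ci TT_{U,V}$ as the inverse of $T(\nu_U\odot\nu_V)$, and your final computation $[T(\nu_U\odot\nu_V)]^{-1}\ci\nu_{TU\odot TV}=T_{U,V}$ does go through as you indicate: naturality of $T_2$ gives $T(\nu_U\odot\nu_V)\ci T_{U,V}=T_{TU,TV}\ci(T\nu_U\odot T\nu_V)$, idempotency gives $T\nu=\nu T$, and (\ref{nu mul}) at the pair $TU,TV$ finishes it. Both you and the paper leave the coherence of $\G_\T$'s constraints as a routine verification (the paper merely asserts it is ``easy to check''), and your description of how the $[T(\nu_\YI\odot U)]^{-1}$-type factors in $\asso$, $\luni$, $\runi$ of Proposition \ref{pro: She mon} cancel against naturality squares for $\nu$ is exactly how that check goes. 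What the paper's explicit route buys is concrete formulas for $(\F_\T)_2$, $(\G_\T)_2$ and $(\F_\T\G_\T)_2$ that are reused later (e.g.\ in Lemma \ref{lem: *} and Lemma \ref{lem: G_T reflects monoids}); what your route buys is economy and a transparent reading of the special property: it is precisely invertibility of the mate of $\G_\T$'s lax structure, which by Kelly's criterion is what it takes for the adjunction to live in $\MonCat$.
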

\begin{proof}
It easy to check that
\begin{equation}
\begin{aligned}
(\F_\T)_{U,V} &: \begin{CD}\bra T(TU\odot TV),\mu_{TU\odot TV}\ket@>\mu_{U\odot V}\ci TT_{U,V}>>
\bra T(U\odot V),\mu_{U\odot V}\ket \end{CD}\\
(\F_\T)_0 &: \begin{CD}\bra T\YI,\mu_\YI\ket@=\F_\T \YI\end{CD}
\end{aligned}
\end{equation}
is a monoidal structure on $\F_\T$ and
\begin{equation} \label{eq: G_T mon}
\begin{aligned}
(\G_\T)_{\bra U,\nu_U^{-}\ket,\bra V,\nu_V^{-1}\ket}&:
\begin{CD}U\odot V@>\nu_{U\odot V}>> T(U\odot V)\end{CD}\\
(\G_\T)_0&:\begin{CD}\YI@>\nu_\YI>> T\YI\end{CD}
\end{aligned}
\end{equation}
is a monoidal structure on $\G_\T$. The $\F_\T$ is strong monoidal because
\begin{align*}
T(U\odot V)&=\mu_{U\odot V}\ci T\nu_{U\odot V}=\mu_{U\odot V}\ci TT_{U,V}\ci T(\nu_U\odot \nu_V)=\\
&=(\F_\T)_{U,V}\ci T(\nu_U\odot TV)\ci T(U\odot \nu_V)
\end{align*}
Moreover, the unit of the composite $\G_\T\F_\T$ is $\G_\T(\F_\T)_0$ which
is just $\nu_\YI=T_0$ and
\begin{align*}
(\G_\T\F_\T)_{U,V}&=\G_\T(\F_\T)_{U,V}\ci(\G_\T)_{\F_\T U,\F_\T V}=
\mu_{U\odot V}\ci TT_{U,V}\ci\nu_{TU\odot TV}=\\
&=\mu_{U\odot V}\ci \nu_{T(U\odot V)}\ci T_{U,V}=T_{U,V}.
\end{align*}
The unit of the adjunction $\F_\T\dashv\G_\T$ is $\nu$, therefore monoidal. In order to see monoidality of the
counit (\ref{eq: tau}) we compute
\begin{align*}
(\F_\T\G_\T)_{\bra U,\nu_U^{-1}\ket,\bra V,\nu_V^{-1}\ket}&=T\nu_{U\odot V}\ci \mu_{U\odot V}\ci TT_{U,V}
=TT_{U,V}\\
(\F_\T\G_\T)_0&=T\nu_\YI=TT_0
\end{align*}
and check up on the monoidality relations
\begin{align*}
\tau_{\bra U,\nu_U^{-}\ket\ot\bra V,\nu_V^{-1}\ket}\ci(\F_\T\G_\T)_{\bra U,\nu_U^{-}\ket,\bra V,\nu_V^{-1}\ket}&=
\nu_{T(U\odot V)}^{-1}\ci TT_{U,V}=\\
&=[T(\nu_U\odot \nu_V)]^{-1}=T(\nu_U^{-1}\odot\nu_V^{-1})=\\
&=\tau_{\bra U,\nu_U^{-1}\ket}\ot\tau_{\bra V,\nu_V^{-1}\ket}\\
\tau_{\bra T\YI,\mu_\YI\ket}\ci(\F_\T\G_\T)_0&=\nu_{T\YI}^{-1}\ci T\nu_\YI=\mu_\YI\ci T\nu_\YI=T\YI=(\She)_0\,.
\end{align*}
\end{proof}

\subsection{Monoidal Grothendieck topologies}

Recall that, in the additive setting, a \textit{Grothendieck topology} on $\C$ consists of families $\GroT(A)$ of (additive) sieves $S\into YA$ for each $A\in\ob\C$ such that the following
axioms hold:
\begin{trivlist}
\item (i) $YA$ belongs to $\GroT(A)$ for all $A\in\ob\C$.
\item (ii) If $S\in\GroT(B)$ and $f\in\C(A,B)$ then $f^{-1}(S)\in\GroT(A)$, where $f^{-1}(S)$ denotes the pullback
\[
\begin{CD}
f^{-1}(S)@>>> S\\
@VV{\ \ \ \ \ \text{p.b.}}V @VVV\\
YA@>Yf>>YB
\end{CD}\ .
\]
\item (iii) If $S\in\GroT(C)$ and $R\into YC$ is any sieve satisfying $s^{-1}(R)\in\GroT(\dom s)$ for all
$s\in S$ then $R\in\GroT(C)$.
\end{trivlist}

\begin{thm} \label{thm: GroT}
Let $\T$ be a left exact idempotent monad and define for each $A\in\ob\C$ the family $\GroT(A)$ as the family of subfunctors $i:S\into YA$ for which $Ti$ is invertible. Then $\GroT$ is a Grothendieck topology on $\C$ and
a presheaf $U$ is a $\GroT$-sheaf precisely when $\nu_U$ is invertible. Therefore the Eilenberg-Moore category
$\She$ of $\T$-modules can be identified with the category of $\GroT$-sheaves by restricting the codomain
of the forgetful functor $\G_\T:\She\to\Pre$.
\end{thm}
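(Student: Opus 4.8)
The plan is to run the classical dictionary between left exact idempotent monads on a presheaf category and Grothendieck topologies, in the present additive setting. By the remarks preceding Definition~\ref{def: spec}, the $\T$-modules are exactly the presheaves $U$ for which $\nu_U$ is invertible — the \emph{$\T$-local} presheaves — and $\G_\T$ exhibits $\She$ as the reflective full subcategory they span, with reflector $T$. The one piece of idempotent-monad formalism I would establish first (and which does not even use left exactness) is the orthogonality description of $\Sigma:=\{f\mid Tf\text{ invertible}\}$: a map $f\colon X\to Y$ lies in $\Sigma$ iff $\Pre(f,U)$ is bijective for every $\T$-local $U$. One direction is the naturality square for $\nu$ together with the fact that $-\ci\nu_X\colon\Pre(TX,U)\iso\Pre(X,U)$ for $U$ local; the converse is the usual Yoneda-style trick, testing against $U=TX$ and $U=TY$ (both local, since $\nu_{TX}=\mu_X^{-1}$). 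Specializing to a subobject $i\colon S\into YA$, this says $S\in\GroT(A)$ iff $\Pre(i,U)\colon UA\to\Pre(S,U)$ is bijective for every $\T$-local $U$, i.e.\ iff the sieve $S$ satisfies the sheaf condition against every $\T$-local $U$. Hence, once $\GroT$ is a topology, the implication ``$\nu_U$ invertible $\Rightarrow$ $U$ is a $\GroT$-sheaf'' is immediate.

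Next I would check the three axioms. Axiom (i) holds because $T\id_{YA}=\id$. For axiom (ii), left exactness makes $T$ preserve the defining pullback square of $f^{-1}(S)$; as $T(S\into YB)$ is invertible and a pullback of an isomorphism is an isomorphism, $T(f^{-1}(S)\into YA)$ is invertible. For axiom (iii) I would introduce the closure operator: for a monomorphism $m\colon R\into X$ in $\Pre$ set $\bar R:=X\times_{TX}TR$, the pullback of the monomorphism $Tm\colon TR\into TX$ (mono since $T$ is left exact) along $\nu_X$; then $R\le\bar R\le X$. Using left exactness and the invertibility of $\mu$ and $T\nu$ one verifies the standard properties of this operator: it is monotone and idempotent ($\bar{\bar R}=\bar R$), it commutes with pullback ($f^{-1}(\bar R)=\overline{f^{-1}(R)}$), and $m$ is $T$-inverted iff $\bar R=X$ — so for $X=YA$ representable, $R\in\GroT(A)$ iff $\bar R=YA$. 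Axiom (iii) now follows: if $S\in\GroT(C)$ and $s^{-1}(R)\in\GroT(\dom s)$ for every $s\in S$, then by pullback-compatibility $s^{-1}(\bar R)=\overline{s^{-1}(R)}=Y(\dom s)$ for all $s\in S$, i.e.\ every element of the sieve $S$ factors through $\bar R\into YC$; hence $S\le\bar R$ as subobjects of $YC$, and therefore $YC=\bar S\le\bar{\bar R}=\bar R$, so $R\in\GroT(C)$.

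It remains to prove the converse, that a $\GroT$-sheaf $U$ has $\nu_U$ invertible; I expect this to be the main obstacle, being the substantive half of the presheaf-localization theorem (for which one may alternatively just cite \cite{Borceux}). I would do it in two steps. \emph{Gluing lemma:} a $\GroT$-sheaf $U$ is orthogonal to every $T$-inverted monomorphism $m\colon A'\into A$ with $A$ an arbitrary presheaf — indeed, along each element $a\colon YB\to A$ the pullback $a^{-1}(A')\into YB$ is a covering sieve (again by left exactness), so any $A'\to U$ restricts over the category of elements $A=\colim YB$ to a compatible family of sections that glue, by the sheaf condition, to a unique $A\to U$. \emph{Conclusion:} given a $\GroT$-sheaf $U$, image-factor $\nu_U=m\ci e$ in the Grothendieck category $\Pre$, with $e$ epi and $m$ mono; since $T\nu_U=\mu_U^{-1}$ is invertible, $Tm$ is simultaneously a monomorphism (left exactness) and a split epimorphism, hence an isomorphism, whence so is $Te$, so both $e$ and $m$ lie in $\Sigma$. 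The gluing lemma gives $U\perp m$. For $U\perp e$: let $p_1,p_2\colon W\rightrightarrows V$ be the kernel pair of $e$; since $Te$ is invertible, left exactness forces $T(\Delta_V\into W)$ to be an isomorphism, so $\Delta_V\into W$ is a $T$-inverted monomorphism, and for any $g\colon V\to U$ the maps $g\ci p_1$ and $g\ci p_2$ both extend $g\colon\Delta_V\cong V\to U$ along it, hence coincide by the uniqueness in the gluing lemma; thus $g$ coequalizes the kernel pair and factors through $e$. Combining, $U\perp\nu_V$ for every presheaf $V$; taking $V=U$ and using that $TU$ is $\T$-local then forces $\nu_U$ to be invertible, i.e.\ $U$ is $\T$-local. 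The asserted identification of $\She$ with the category of $\GroT$-sheaves, compatibly with the forgetful functors into $\Pre$, is then purely formal.
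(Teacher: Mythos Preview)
Your argument is correct and follows the same classical orthogonality route that the paper invokes: both pivot on the chain ``$\GroT$-sheaf $\Leftrightarrow$ orthogonal to $\T$-inverted sieves $\Leftrightarrow$ orthogonal to $\T$-inverted monos $\Leftrightarrow$ orthogonal to all $\T$-inverted arrows $\Leftrightarrow$ $\nu_U$ invertible''. The difference is purely one of completeness: the paper merely \emph{states} this equivalence as standard and cites \cite{Borceux,MM}, omitting even the verification that $\GroT$ satisfies the topology axioms, whereas you supply a self-contained proof --- the closure-operator treatment of axiom~(iii), the gluing lemma for $(1)\Rightarrow(2)$, and the kernel-pair trick for extending orthogonality from monos to epis. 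Your write-up is thus a genuine proof where the paper offers only a pointer; nothing in your approach diverges from the intended one.
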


\begin{proof}
This is a standard result in (Grothendieck) topos theory \cite{Borceux,MM} so we only sketch the proof.
For $\iota\in\Pre$ and $U\in\ob\Pre$ let $\iota\perp U$ denote the situation that every natural transformation $\lambda:\dom\iota\to U$ has a unique extension $\bar\lambda$ along $\iota$, i.e., such that $\bar\lambda\ci\iota=\lambda$. Let $\E$ be the set of arrows in $\Pre$ inverted by the monad $\T$.
Then for a presheaf  $U$ the following conditions are equivalent:
\begin{enumerate}
\item $\iota\perp U$ for all $\iota\in\E$ which is a sieve, i.e., for all $\iota\in\GroT$.
\item $\iota\perp U$ for all $\iota\in\E$ which is monic.
\item $\iota\perp U$ for all $\iota\in\E$.
\item $\nu_U$ is invertible.
\end{enumerate}
Since condition (1) means precisely that $U$ is a $\GroT$-sheaf, the Theorem is proven.
\end{proof}

It is also well-known that left exact idempotent monads on the presheaf category $\Pre$ of a small category $\C$
are in bijection with certain factorization systems on $\Pre$ which in turn are in bijection with
Grothendieck topologies on $\C$. When $\C$ has a monoidal (and additive) structure we may ask for the conditions either on the factorization system or on the Grothendieck topology that correspond to the idempotent monad being
special monoidal in the sense of Definition \ref{def: spec}. Before elevating this to a definition it is worth observing
that the property of being "special" already implies special monoidality.
\begin{lem}
Let $\T=\bra T,\mu,\nu\ket$ be an idempotent monad on $\Pre$ such that $T(U\odot\nu_V)$ and $T(\nu_U\odot V)$ are invertible for all $U,V\in\ob\Pre$. Then there is a unique monoidal structure on the functor $T$ such that
$\T$ is a special monoidal idempotent monad.
\end{lem}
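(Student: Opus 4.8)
The plan is to realise $\T$ as the monad of a reflective localisation and to read off the monoidal structure on $T$ from that picture; the invertibility hypothesis is exactly what makes the localisation monoidal, a form of Day's reflection principle. Write $\She$ for the full reflective subcategory of $\Pre$ on the presheaves $U$ with $\nu_U$ invertible, with reflector $\F_\T\colon\Pre\to\She$, inclusion $\G_\T$, unit $\nu$ and $T=\G_\T\F_\T$; recall from Section~\ref{s: T} that for an idempotent monad $\nu_{TU}=T\nu_U$ is invertible with $\mu_U=\nu_{TU}^{-1}$. Since $\Pre$ is a Grothendieck category on which $\odot$ preserves colimits in each variable (Proposition~\ref{pro: odot}), $\bra\Pre,\odot,\YI\ket$ is biclosed, so we are in the setting of Day's reflection theorem, and the two hypotheses say precisely that $\F_\T$ inverts every $\nu_U\odot V$ and every $U\odot\nu_V$.

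I would then write down the structure maps explicitly, so that no appeal to closedness is really needed. From the interchange identity $\nu_U\odot\nu_V=(\nu_U\odot TV)\circ(U\odot\nu_V)$ the hypothesis forces $T(\nu_U\odot\nu_V)$ to be a composite of isomorphisms, so one may put
\[
T_{U,V}:=\bigl(T(\nu_U\odot\nu_V)\bigr)^{-1}\circ\nu_{TU\odot TV}\colon TU\odot TV\longrightarrow T(U\odot V),\qquad T_0:=\nu_{\YI}\colon\YI\to T\YI .
\]
Naturality of $T_{U,V}$ in both arguments is immediate from naturality of $\nu$, and the two unit triangles collapse, after substituting $\mu_U=\nu_{TU}^{-1}$, to the monad unit laws. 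The associativity hexagon is the one genuinely computational point: one expands the two legs, repeatedly rewrites each $\nu_{W\odot W'}$ through $\nu_W\odot\nu_{W'}$ (legitimate since all the $T$-images occurring are invertible by hypothesis), and reduces to the pentagon for $\odot$ in $\Pre$ together with naturality of $\asso$ --- the same bookkeeping that appears, in its $\She$-incarnation, in the proof of Proposition~\ref{pro: She mon}. Equivalently, one may simply quote the reflection theorem: $\She$ acquires a monoidal structure with $\F_\T$ strong monoidal and $\G_\T$ lax monoidal (structure maps $\nu_{U\odot V}$ on local $U,V$ and unit $\nu_{\YI}$), and $T=\G_\T\F_\T$ is the resulting composite monoidal functor, whose structure maps coincide with the $T_{U,V}$, $T_0$ above.

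It then remains to check that $\mu$ and $\nu$ are monoidal and that the structure is unique. Monoidality of both is the statement that $\F_\T\dashv\G_\T$ is a monoidal adjunction: for $\nu$ it is built into the construction of the monoidal product on $\She$, and for the monad multiplication $\mu=\G_\T\tau\F_\T$ it follows by the mate calculus of monoidal adjunctions --- concretely, (\ref{mu mul}) and (\ref{mu uni}) fall out of the displayed formula for $T_{U,V}$ using $\mu_U=\nu_{TU}^{-1}$ and naturality of $\nu$. The \emph{special} property (\ref{eq: special}) is then nothing but the hypothesis. For uniqueness, let $\bra T,T'_{U,V},T'_0\ket$ be any monoidal structure making $\T$ a special monoidal idempotent monad. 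Naturality of $T'_{-,-}$ at the pair $(\nu_U,\nu_V)$ gives $T(\nu_U\odot\nu_V)\circ T'_{U,V}=T'_{TU,TV}\circ(T\nu_U\odot T\nu_V)$, and since $T\nu_U=\nu_{TU}$ while $\nu$ is monoidal the right-hand side is $\nu_{TU\odot TV}$; as $T(\nu_U\odot\nu_V)$ is invertible this yields $T'_{U,V}=T_{U,V}$, and monoidality of $\nu$ on the unit forces $T'_0=\nu_{\YI}=T_0$. The main obstacle is thus entirely the associativity-hexagon verification for the explicit $T_{U,V}$, which is the functor-level counterpart of the coherence computation already performed in Proposition~\ref{pro: She mon}.
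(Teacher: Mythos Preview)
Your argument is correct and follows essentially the same path as the paper: both observe that $\nu_U\odot\nu_V$ is inverted by $T$, so the monoidality constraint $T_{U,V}\ci(\nu_U\odot\nu_V)=\nu_{U\odot V}$ has a unique solution (your explicit formula $(T(\nu_U\odot\nu_V))^{-1}\ci\nu_{TU\odot TV}$ is exactly that solution, by naturality of $\nu$), and the hexagon is then checked by precomposing with $\nu_U\odot\nu_V\odot\nu_W$. Your invocation of Day's reflection theorem is a pleasant conceptual alternative the paper does not mention, and you are also more explicit than the paper about verifying that $\mu$ is monoidal; one small slip is the remark that the unit triangles for $T$ reduce via $\mu_U=\nu_{TU}^{-1}$ to the monad unit laws---the unit triangles do not involve $\mu$ at all, and the paper verifies them instead by precomposing with $\YI\odot\nu_V$ (resp.\ $\nu_U\odot\YI$).
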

\begin{proof}
Since $\nu_U\odot\nu_V=(\nu_U\odot TV)\ci(U\odot\nu_V)$, it is inverted by $T$. Therefore
every natural transformation from $U\odot V$ to some $TW$ extends uniquely along $\nu_U\odot\nu_V$,
i.e., $\nu_U\odot\nu_V\perp TW$. Therefore the equations (\ref{nu mul}), (\ref{nu uni}) expressing monoidality
of $\nu$ have unique solutions for $T_{U,V}$ and $T_0$. This proves uniqueness and also constructs candidates for the monoidal structure. It remains to prove that the so defined triple $\bra T, T_2,T_0\ket$ is indeed a monoidal functor. Since $\nu_U\odot\nu_V\odot\nu_W$ is also inverted by $T$, the hexagon condition (associativity of $T_2$) follows from the calculations
\begin{align*}
T_{U,V\odot W}\ci(TU\odot T_{V,W})\ci(\nu_U\odot(\nu_V\odot\nu_W))&=
T_{U,V\odot W}\ci(\nu_U\odot\nu_{V\odot W})=\\
&=\nu_{U\odot(V\odot W)}\\
T_{U\odot V,W}\ci(T_{U,V}\odot TW)\ci((\nu_U\odot\nu_V)\odot\nu_W)&=
T_{U\odot V,W}\ci(\nu_{U\odot\nu_V}\odot\nu_W)=\\
&=\nu_{(U\odot V)\odot W}\ .
\end{align*}
The left unitality square follows from that $\YI\odot\nu_V $ is inverted by $T$. Indeed,
\begin{align*}
T\luni_V\ci T_{\YI,V}\ci(T_0\odot TV)\ci (\YI\odot\nu_V)&=T\luni_V\ci T_{\YI,V}\ci(\nu_\YI\odot\nu_V)=\\
&=T\luni_V\ci\nu_{\YI\odot V}=\nu_V\ci \luni_V=\\
&=\luni_{TV}\ci(\YI\odot\nu_V)\,.
\end{align*}
Right unitality can be shown similarly, using invertibility of $T(\nu_U\odot \YI)$.
\end{proof}

\begin{defi}
Let $\GroT$ be a Grothendieck topology on the underlying $\Ab$-category of the small monoidal $\Ab$-category $\C$ and let $\T=\bra T,\mu,\nu\ket$ be its left exact idempotent monad. Then we say that $\GroT$ is monoidal if
$T(U\odot\nu_V)$ and $T(\nu_U\odot V)$ are invertible for all $U,V\in\ob\Pre$. In this case the pair $\bra\C,\GroT\ket$ is called a monoidal site.
\end{defi}

\begin{lem}
Let $\GroT$ be a Grothendieck topology on the small monoidal $\Ab$-category $\C$ and let $\bra\E,\M\ket$
be the associated factorization system. Then $\GroT$ is monoidal if and only if $\E$ is closed under the monoidal product, i.e.,
\[
\alpha,\beta\in\E\quad\Rightarrow\quad\alpha\odot\beta\in\E\,.
\]
\end{lem}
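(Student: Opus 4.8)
The plan is to prove the two implications by direct manipulation of the unit $\nu$, using the description of $\E$ coming from the proof of Theorem~\ref{thm: GroT}: the left class $\E$ of the factorization system attached to the left exact idempotent monad $\T=\bra T,\mu,\nu\ket$ is exactly the class of arrows of $\Pre$ inverted by $T$. In particular $\E$ contains all isomorphisms, is closed under composition, and contains every unit component $\nu_U$ (idempotency makes $\mu_U$ invertible, and the monad law $\mu_U\ci T\nu_U=\id_{TU}$ then forces $T\nu_U=\mu_U^{-1}$). The implication ``$\E$ closed under $\odot$ $\Rightarrow$ $\GroT$ monoidal'' is then immediate: since $\id_U$ and $\nu_V$ lie in $\E$, closure under $\odot$ gives $\id_U\odot\nu_V\in\E$, i.e.\ $T(U\odot\nu_V)$ is invertible, and symmetrically $T(\nu_U\odot V)$ is invertible, for all $U,V$.

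For the converse I would first establish the one-variable statement: if $\alpha\in\E$ then $\alpha\odot W\in\E$ and $W\odot\alpha\in\E$ for every presheaf $W$. To prove $\alpha\odot W\in\E$ for $\alpha:U\to U'$, apply the functor $(-)\odot W$ to the naturality square $T\alpha\ci\nu_U=\nu_{U'}\ci\alpha$ and then apply $T$, obtaining the commutative square
\[
T(\nu_{U'}\odot W)\ci T(\alpha\odot W)=T(T\alpha\odot W)\ci T(\nu_U\odot W).
\]
Here $T(\nu_U\odot W)$ and $T(\nu_{U'}\odot W)$ are invertible because $\GroT$ is monoidal (the hypothesis applied in the left tensor slot), and $T(T\alpha\odot W)$ is invertible because $T\alpha$ is an isomorphism; hence $T(\alpha\odot W)$ is invertible, i.e.\ $\alpha\odot W\in\E$, and the mirror argument (using the hypothesis in the right tensor slot) gives $W\odot\alpha\in\E$. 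Finally, for $\alpha:U\to U'$ and $\beta:V\to V'$ in $\E$ I would factor $\alpha\odot\beta=(\alpha\odot V')\ci(U\odot\beta)$; both factors lie in $\E$ by the one-variable statement, so $\alpha\odot\beta\in\E$ since $\E$ is closed under composition.

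I do not expect a serious obstacle. The two points requiring care are the identification of $\E$ with the class of $T$-inverted maps --- which is what legitimizes the ``two out of three for isomorphisms'' step performed inside $\E$ --- and keeping track of the fact that \emph{both} forms $T(U\odot\nu_V)$ and $T(\nu_U\odot V)$ of the monoidality hypothesis are genuinely used, one for each tensor slot.
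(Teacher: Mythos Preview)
Your proof is correct and follows essentially the same approach as the paper: identify $\E$ with the arrows inverted by $T$, use the naturality square for $\nu$ together with the monoidality hypothesis to show $\alpha\odot W\in\E$ and $W\odot\alpha\in\E$ whenever $\alpha\in\E$, and then factor $\alpha\odot\beta$ as a composite. You also supply the small justification (that $\nu_U\in\E$ because $T\nu_U=\mu_U^{-1}$) which the paper leaves implicit.
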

\begin{proof}
Clearly, if $\E$ is closed under monoidal product then $\nu_U\odot V$ and $U\odot\nu_V$ belong to $\E$ whatever presheaves the $U$ and $V$ are since $\E$ contains all the identity arrows. Therefore $\GroT$ is monoidal. Assuming $\GroT$ is monoidal we have for all $U\rarr{\alpha}V$ in $\E$ and for all objects $W$ in $\Pre$ the commutative diagram
\[
\begin{CD}
T(U\odot W)@>T(\alpha\odot W)>>T(V\odot W)\\
@V{T(\nu_U\odot W)}VV @VV{T(\nu_V\odot W)}V\\
T(TU\odot W)@>T(T\alpha\odot W)>>T(TV\odot W)
\end{CD}\]
which contains 3 invertible arrows, hence $T(\alpha\odot W)$ is invertible, too. Similarly, one obtains also  $W\odot\alpha\in\E$. Since $\E$ is closed under composition, this implies that it is closed under monoidal product, too.
\end{proof}

It is easy to see that any flat additive functor $F:\C\to\Ab$ determines a Grothendieck topology $\GroT_F$ by
\begin{align} \label{F-topology}
\GroT_F(C)&:=\{S\text{ sieve on $C$}|\text{$S$ is a jointly $F$-epimorphic family of arrows}\}\\
&=\{S\text{ sieve on $C$}|\forall x\in FC\ \exists s\in S,y\in F(\dom s), Fsy=x\}\,.\notag
\end{align}

\begin{lem}\label{exa: F-topology}
If $F:\C\to\Ab$ is an essentially strong monoidal flat functor then the monoidal Grothendieck topology on $\C$
determined by the idempotent monad $\T$ of Proposition \ref{pro: T} is precisely the $F$-topology $\GroT_F$.
The sheaves for this topology are those presheaves $U$ for which $\etash_U$ is an equalizer in
\begin{equation}\label{diag: etash equalizer}
\parbox{250pt}{
\begin{picture}(200,35)
\put(0,10){$U$}
\put(8,10){$\longerrightarrowtail$} \put(34,18){$\etash_U$}
\put(70,10){$\Gsh\Fsh U$}
\put(112,11){\vector(1,0){60}} \put(120,-3){$\Gsh\Fsh\etash_U$}
\put(112,14){\vector(1,0){60}} \put(116,20){$\etash\Gsh\Fsh U$}
\put(180,10){$\Gsh\Fsh\Gsh\Fsh U $}
\end{picture}
}
\end{equation}
or, equivalently, $\bimeta_U:U\to\bimG\bimF U$ is an equalizer in the analogous diagram. 
\end{lem}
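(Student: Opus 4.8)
The statement has two parts, the identification of the topology and the description of the sheaves, and I would treat them in that order. For the first part the plan is to combine Theorem~\ref{thm: GroT} with the monad isomorphism $\bimT\cong\Tsh$ of Proposition~\ref{pro: T}: a sieve $i:S\into YA$ lies in $\GroT(A)$ exactly when $\tsh i$ is invertible, where $\tsh=\Ls\Ks$. I would then peel off the forgetful functors one by one. Since $\Ls$ is fully faithful it reflects isomorphisms, so $\tsh i$ is invertible iff $\Ks i$ is; since $\F^\Qsh\Ks=\Fsh$ and the comonadic functor $\F^\Qsh$ reflects isomorphisms, iff $\Fsh i$ is invertible in $\M_R$; and since $\M_R\to\Ab$ reflects isomorphisms, iff the underlying map of abelian groups is invertible. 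That underlying map is $i\amo{\C}F:S\amo{\C}F\to YA\amo{\C}F$, because $\Fsh$ followed by the forgetful functor to $\Ab$ is the left Kan extension $\under\amo{\C}F$.

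The essential step is then to invoke flatness. By Lemma~\ref{lem: FF left adj} flatness of $F$ makes $\under\amo{\C}F$ left exact, and a left exact functor carries the monomorphism $i$ (the inclusion of a subfunctor, hence pointwise injective) to a monomorphism; thus $i\amo{\C}F$ is invertible precisely when it is surjective. Composing with the isomorphism $YA\amo{\C}F\iso FA$ of (\ref{N}) rewrites $S\amo{\C}F\to FA$ as the map $s\am{C}x\mapsto Fs(x)$ with $s\in SC$, and --- using biproducts in $\C$ to collapse a finite sum of such terms to a single one --- its surjectivity says precisely that $S$ is a jointly $F$-epimorphic family, i.e. $S\in\GroT_F(A)$ by (\ref{F-topology}). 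This would establish $\GroT=\GroT_F$; that this topology is monoidal is already recorded in Proposition~\ref{pro: T} (it is the topology of the special monoidal monad $\bimT$, condition (\ref{eq: special})), so nothing more is needed there.

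For the second part the argument is formal. By Theorem~\ref{thm: GroT} the $\GroT$-sheaves are the presheaves $U$ for which $\nush_U$ is invertible, equivalently (by the same monad isomorphism) those for which $\bimnu_U$ is invertible. I would then apply the defining equalizer of the right adjoint $\Ls$ to the particular object $\Ks U=\bra\Fsh U,\Fsh\etash_U\ket$: this exhibits $\tsh U=\Ls\Ks U$ as exactly the equalizer in diagram (\ref{diag: etash equalizer}), with equalizing arrow $\ish_{\Ks U}$, and (\ref{nush}) gives $\ish_{\Ks U}\ci\nush_U=\etash_U$. Hence $\nush_U$ is invertible iff $\etash_U$ is itself an equalizer of the displayed pair. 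Replaying the same argument for $\bimK\dashv\bimL$ --- using the equalizer (\ref{eq: L}) that defines $\bimL$ and the relation $i_{\bimK U}\ci\bimnu_U=\bimeta_U$ coming from (\ref{diag: nu}) --- yields the corresponding statement for $\bimeta_U:U\to\bimG\bimF U$; since $\nush_U$ and $\bimnu_U$ are invertible for the same presheaves, the two equalizer conditions are equivalent.

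The only genuinely substantive step, as opposed to diagram chasing and keeping track of which functors reflect isomorphisms, is the use of flatness: it is precisely flatness of $F$ that forces $\under\amo{\C}F$ to preserve the monomorphism $i$, which is what converts ``$\tsh i$ invertible'' into ``$S$ jointly $F$-epimorphic''. I expect that to be the crux; everything else should be routine.
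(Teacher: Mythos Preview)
Your proof is correct and follows the same route as the paper: strip off the comparison functor and the comonadic forgetful functor (both reflect isomorphisms) to reduce ``$Ti$ invertible'' to ``$\longF i$ epimorphic'', then read off the $F$-topology condition; for the sheaf description both you and the paper invoke the defining equalizer of $\Ls$ (resp.\ $\bimL$) together with (\ref{nush}) and (\ref{diag: nu}). The paper works on the $\bimT=\bimL\bimK$ side while you work on the isomorphic $\Tsh=\Ls\Ks$ side, which is immaterial.

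One small point to tighten: to pass from ``$\longF i$ surjective'' to ``$S$ jointly $F$-epimorphic'' you collapse a finite sum $\sum_i Fs_i(x_i)$ to a single term using biproducts in $\C$. The lemma, however, is stated for a monoidal $\Ab$-category that need not be additive (additivity of $\C$ is only imposed from Definition~\ref{def: fiber} onward). The paper handles this step with the hypothesis you already have, flatness: since $\Elt F$ is cofiltered, $S\amo{\C}F$ is a filtered colimit and every element is already of rank one, so the sum can be taken to be a single term without assuming biproducts. Replacing your biproduct remark by this flatness argument makes the proof go through at the stated generality.
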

\begin{proof}
A subfunctor $S\rarr{i}YC$ belongs to $\GroT(C)$ iff $Ti=\bimL\bimK i$ is invertible and, by faithfulness of $\bimL$, iff $\bimK i$ is invertible. Since the forgetful functor $\M^\bimQ\to\,_R\M_R$ reflects isomorphisms, this happens precisely when $\bimF i$ is invertible and, by left exactness of $\bimF$, this is the same as $\bimF i$ being epimorphic, i.e., $\longF i=i\am{\C}\longf$ being epimorphic. Composing this arrow with the coequalizer (\ref{tensor}) we obtain that $\longF i$ is epimorphic iff the map
\[
\coprod_{A\in\ob\C}SA\ot \longf A\to \longf C,\quad \sum_i s_i\ot z_i\mapsto \sum_i \longf s_iz_i
\]
is epimorphic. Using flatness of $\longf$ the tensor product $S\am{\C}\longf$ can be computed set-theoretically, by replacing the coproduct of abelian groups with disjoint union, we see that the linear combination can always be chosen to consist of a single term. This proves that $\GroT=\GroT_F$.

$U$ is a $\T$-sheaf iff $\nu_U$ is invertible and by diagram (\ref{diag: nu}) this happens precisely when $\bimeta_U$
is an equalizer of the pair given there. By Lemma \ref{lem: m iso} this is also equivalent to $\nush_U$ being invertible,
i.e., $\etash_U$ being an equalizer.
\end{proof}
Unfortunately we cannot check monoidality of $\GroT$ directly in terms of its sieves; a characterization of monoidality of $\GroT$ without reference to its idempotent monad is still to be investigated.

>From the point of view of Tannaka duality the only interesting topologies are the \textit{subcanonical topologies}. These are the Grothendieck topologies for which every representable presheaf $YC$ is a sheaf. If $\GroT_F$ is subcanonical we shall say simply that $F$ is subcanonical. In the next subsection we shall find conditions for $F$
to be subcanonical.
As an extreme example consider the coarsest Grothendieck topology on $\C$ in which the only covering sieve on $C$ 
is the maximal sieve $YC$. This is obviously subcanonical: Every presheaf is a sheaf. If $\GroT_F$ is the coarsest topology we say that the flat functor $F$ is a \textit{coarse functor}. In Section \ref{sec: duality} 
we describe a wide class of categories $\C$ on which coarse fiber functors exist.

\subsection{The embedding theorem}

In this subsection we would like to find conditions on the fiber functor which ensure that the Yoneda embedding
factors through the monoidal embedding $\G_\T$ of $\T$-sheaves into presheaves. 

\begin{lem} \label{lem: *}
With the notations of Proposition \ref{pro: T} the composite functor $\bimK\G_\T$ is an equivalence of monoidal categories $\She\simeq\M^\bimQ$ and $\Ks\G_\T$ is an equivalence of categories $\She\simeq\M^Q$. Thus $\bimF\G_\T:\She\to\,_R\M_R$ is comonadic, left exact and strong monoidal and $\Fsh\G_\T:\She\to\M_R$ is comonadic and left exact.
\end{lem}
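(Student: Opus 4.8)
The plan is to recognise $\She$ --- the Eilenberg--Moore category of the left exact idempotent monad $\bimT=\bimL\bimK$ of Proposition~\ref{pro: T} --- as the reflective subcategory of $\Pre$ spanned by the $\bimT$-local presheaves, and then to observe that this same subcategory is the common essential image of the two fully faithful functors $\bimL$ and $\G_\T$; the two claimed equivalences drop out, and the final sentence is then formal. First I would recall from the paragraph preceding Definition~\ref{def: spec} that, $\bimmu$ being invertible, the forgetful functor $\G_\T:\She\to\Pre$ is fully faithful with essential image the full subcategory $\Pre_{\mathrm{loc}}$ of those $U$ for which $\bimnu_U$ is invertible. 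Next I would note that $\bimL:\M^\bimQ\to\Pre$ is fully faithful by Proposition~\ref{pro: bimL} and has the same essential image $\Pre_{\mathrm{loc}}$: the triangle identity $\bimL\bimtheta\ci\bimnu\bimL=\id_{\bimL}$ together with invertibility of $\bimtheta$ shows $\bimnu_{\bimL M}$ is invertible for all $M$, so $\bimL M\in\Pre_{\mathrm{loc}}$, while conversely if $U\in\Pre_{\mathrm{loc}}$ then $\bimnu_U:U\iso\bimt U=\bimL\bimK U$ exhibits $U$ in the essential image of $\bimL$.

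Since both functors land fully faithfully onto $\Pre_{\mathrm{loc}}$, I would factor $\bimL$ (on the nose, using naturality of $\bimnu$) as $\G_\T\bimL'$ with $\bimL'M=\bra\bimL M,\bimnu_{\bimL M}^{-1}\ket$, and check that $\bimL'$ is an inverse equivalence to $\bimK\G_\T$: on the one hand $\bimK\G_\T\bimL'=\bimK\bimL\cong\id_{\M^\bimQ}$ via the counit $\bimtheta$, and on the other, applying the fully faithful $\G_\T$ to $\bimL'\bimK\G_\T$ gives $\bimL\bimK\G_\T=\bimt\G_\T=\G_\T\F_\T\G_\T$, which is naturally isomorphic to $\G_\T$ through $\bimnu\G_\T$ (its component at $\bra U,\bimnu_U^{-1}\ket$ being the invertible $\bimnu_U$), whence $\bimL'\bimK\G_\T\cong\id_\She$. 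To make this monoidal I would use that $\bimK$ is strong monoidal (Lemma~\ref{lem: bimK}) whereas $\G_\T$ is a monoidal functor with structure maps $(\G_\T)_{U,V}=\bimnu_{U\odot V}$ (this being the monoidal structure of $\G_\T$ in the canonical monoidal adjunction $\F_\T\dashv\G_\T$): the structure map of $\bimK\G_\T$ is then $\bimK\bimnu_{U\odot V}\ci\bimK_{U,V}$, invertible because $\bimK\bimnu$ is invertible (the other triangle identity $\bimtheta\bimK\ci\bimK\bimnu=\id_{\bimK}$ with $\bimtheta$ invertible) and $\bimK_{U,V}$ is invertible by strong monoidality of $\bimK$. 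Thus $\bimK\G_\T$ is an equivalence of monoidal categories. For $\Ks\G_\T$ the argument is the same in form, with $\Ks\dashv\Ls$ (where $\Ls$ is fully faithful and $\thetash$ invertible, Proposition~\ref{pro: Ls}) replacing $\bimK\dashv\bimL$, and with the monad isomorphism $\bimT\cong\Tsh$ of Proposition~\ref{pro: T}(5) letting the same $\She$ with the same $\G_\T$ serve as Eilenberg--Moore data for $\Tsh$ and $\Pre_{\mathrm{loc}}$ as its subcategory of local objects; this yields $\Ks\G_\T:\She\iso\M^\Qsh$.

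Finally I would obtain the concluding assertions from the factorizations $\bimF=\F^\bimQ\bimK$ (Lemma~\ref{lem: bimK}) and $\Fsh=\F^\Qsh\Ks$: the functor $\bimF\G_\T=\F^\bimQ(\bimK\G_\T)$ is comonadic since an equivalence followed by the comonadic $\F^\bimQ$ is comonadic; it is left exact since $\bimF$ is left exact and $\G_\T$, a right adjoint, preserves all limits; and it is strong monoidal since both $\bimK\G_\T$ and $\F^\bimQ$ are. Likewise $\Fsh\G_\T=\F^\Qsh(\Ks\G_\T)$ is comonadic as an equivalence followed by the comonadic $\F^\Qsh$, and left exact because $\Fsh=\phish\bimF$ is left exact (Subsection~\ref{ss: Fsh - Gsh}) and $\G_\T$ preserves limits. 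I do not anticipate a genuine obstacle: the only non-formal point is the identification of the essential images of $\bimL$ and $\G_\T$, and the one place demanding a little care is verifying that $\bimK\G_\T$ is \emph{strong} monoidal rather than merely lax; the main thing to watch is keeping the two parallel adjunctions and the monad isomorphism $\bimT\cong\Tsh$ properly aligned.
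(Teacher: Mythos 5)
Your argument is correct, and it reaches the two equivalences by a route that is organized differently from the paper's. The paper does not pass through essential images: it writes down explicit adjunction data $\eta^*:=\F_\T\nu\G_\T\ci\tau^{-1}$ and $\eps^*:=\bimtheta\ci\bimK\bimL\bimtheta$ for an adjoint equivalence $\bimK\G_\T\dashv\F_\T\bimL$, verifies the two triangle identities by computation, and gets monoidality of the equivalence for free by observing that $\eta^*$ and $\eps^*$ are composites of monoidal natural transformations; the second statement is then disposed of by the identity $\Ks=\psh\bimK$ with $\psh$ an isomorphism of categories (Proposition \ref{pro: M^Qhat=M^Q}), rather than by rerunning the argument for $\Ks\dashv\Ls$ with the monad isomorphism $\T\cong\Tsh$ as you do. Your quasi-inverse $\bimL'M=\bra\bimL M,\bimnu_{\bimL M}^{-1}\ket$ is isomorphic to the paper's $\F_\T\bimL$ via $\nu\bimL$, so the two constructions agree up to isomorphism; the underlying levers are the same in both proofs, namely invertibility of $\bimtheta$ (resp. $\thetash$) and the fact that $\bimK$ inverts $\nu$. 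What your version buys is a more conceptual and shorter argument (full faithfulness plus coincidence of essential images of $\G_\T$ and $\bimL$), at the cost of invoking the general fact that a strong monoidal functor which is an equivalence of underlying categories is a monoidal equivalence, and of redoing work for $\Ks\G_\T$ that the paper gets instantly from $\psh$; what the paper's version buys is explicit monoidal adjoint-equivalence data, with monoidality of the equivalence established at the level of the adjunction rather than of the functor. Two very minor points: when checking strongness of $\bimK\G_\T$ you should also note that the unit constraint $\bimK\nu_\YI\ci\bimK_0$ is invertible (same argument as for the binary constraint), and your lift of $(\nu\G_\T)^{-1}$ along the fully faithful $\G_\T$ to an isomorphism $\bimL'\bimK\G_\T\cong\id_\She$ is fine but deserves the one-line remark that full faithfulness transports naturality; neither affects correctness.
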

\begin{proof}
It suffices to show that $\bimK\G_\T$ is an equivalence of monoidal categories. This will follow from the fact that
the natural isomorphisms
\begin{align*}
\eps^*&:=\bimtheta\ci\bimK\bimL\bimtheta:\bimK\bimL\bimK\bimL\equiv\bimK\G_\T\F_\T\bimL\to\M^\bimQ\\
\eta^*&:=\F_\T\nu\G_\T\ci\tau^{-1}:\She\to\F_\T\G_\T\F_\T\G_\T\equiv\F_\T\bimL\bimK\G_\T
\end{align*}
provide the counit and unit of a monoidal adjoint equivalence
\begin{equation} \label{eq: *-adj}
\eta^*,\eps^*:\bimK\G_\T\dashv\F_\T\bimL.
\end{equation}
As a matter of fact, both $\eps^*$ and $\eta^*$ are built from vertical and horizontal composites of monoidal natural transformations, hence they are monoidal. Furthermore,
\begin{align*}
\F_\T\bimL\eps^*\ci\eta^*\F_\T\bimL&=\F_\T\bimL\bimtheta\ci\F_T\bimL\bimK\bimL\bimtheta\ci\F_\T\nu\bimL\bimK\bimL\ci\tau^{-1}\F_\T\bimL=\\
&=\F_\T(\bimL\bimtheta\ci\nu\bimL)\ci\F_\T\bimL\bimtheta\ci\tau^{-1}\F_\T\bimL=\F_\T\bimL\bimtheta\ci(\tau\F_\T\bimL)^{-1}=\\
&=\F_\T(\bimL\bimtheta\ci\nu\bimL)=\F_\T\bimL
\end{align*}
and
\begin{align*}
\eps^*\bimK\G_\T\ci\bimK\G_\T\eta^*&=\bimtheta\bimK\G_\T\ci\bimK\bimL\bimtheta\bimK\G_\T\ci\bimK\bimL\bimK\nu\G_\T\ci\bimK\G_\T\tau^{-1}=\\
&=\bimtheta\bimK\G_\T\ci\bimK\bimL(\bimtheta\bimK\ci\bimK\nu)\G_\T\ci\bimK\G_\T\tau^{-1}=
\bimtheta\bimK\G_\T\ci(\bimK\G_\T\tau)^{-1}=\\
&=(\bimtheta\bimK\ci\bimK\nu)\G_\T=\bimK\G_\T\,.
\end{align*}
This proves that (\ref{eq: *-adj}) is a monoidal adjoint equivalence, indeed.
Comonadicity of $\bimF\G_\T=\F^\bimQ\bimK\G_\T$ and $\Fsh\G_\T=\F^\Qsh\Ks\G_\T$ now follows from comonadicity of the canonical forgeful functors $\F^\bimQ$ and $\F^\Qsh$, respectively. Left exactness of both functors follow from right adjointness of $\G_\T$ and left exactness of $\bimF$ and $\Fsh$. Although $\G_\T$ is not strong monoidal, see (\ref{eq: G_T mon}), still the composite $\bimF\G_\T$ is because $\bimF$ inverts every arrow $\nu_U$, $U\in\ob\Pre$.
\end{proof}

\begin{pro} \label{pro: Gamma}
Let $\C$ be a small monoidal $\Ab$-category and $F:\C\to\Ab$ be an essentially strong monoidal flat functor.
Then the following conditions are equivalent:
\begin{enumerate}
\item $F$ is subcanonical, i.e., every representable presheaf on $\C$ is a $\GroT$-sheaf.
\item $\nu_{YC}$ is invertible for all objects $C\in\C$.
\item The Yoneda embedding $Y:\C\to\Pre$ factors through $\G_\T:\She\to\Pre$.
\item $\ksh:\C\to\M^\Qsh$ is fully faithful.
\item $\bimk:\C\to\M^\bimQ$ is fully faithful.
\item $\fsh$ is faithful and $\forall B,C\in\ob\C$ an element $f\in\M_R(\fsh B,\fsh C)$
belongs to the image of $\fsh$ if and only if $\forall x\in\fsh B$ $\exists A\in\ob\C$, $z\in\fsh A$, $s\in\C(A,B)$,
$t\in\C(A,C)$ such that $\fsh s z=x$ and $f\ci\fsh s=\fsh t$.
\item $\bimf$ is faithful and $\forall B,C\in\ob\C$ an element $f\in\,_R\M_R(\bimf B,\bimf C)$
belongs to the image of $\bimf$ if and only if $\forall x\in\bimf B$ $\exists A\in\ob\C$, $z\in\bimf A$,
$s\in\C(A,B)$, $t\in\C(A,C)$ such that $\bimf s z=x$ and $f\ci\bimf s=\bimf t$.
\end{enumerate}
If furthermore we assume that $\C$ is additive and $\fsh C$ is finitely generated for all objects $C\in\C$ then the above conditions are equivalent also to these ones:
\begin{enumerate}
\setcounter{enumi}{7}
\item $\fsh$ is faithful and $\forall B,C\in\ob\C$ an element $f\in\M_R(\fsh B,\fsh C)$
belongs to the image of $\fsh$ if and only if $\exists A\in\ob\C$, $s\in\C(A,B)$, and $t\in\C(A,C)$ such that
$\fsh s$ is epi and $f\ci\fsh s=\fsh t$.
\item $\fsh$ is faithful and for all $A,B,C\in\ob\C$ and for all $s\in\C(A,B)$ such that $\fsh s$ is epi
the square 
\begin{equation}
\begin{CD}
\C(B,C)@>\C(s,C)>>\C(A,C)\\
@V{\fsh_{B,C}}VV @VV{\fsh_{A,C}}V\\
\M_R(\fsh B,\fsh C)@>\M_R(\fsh s,\fsh C)>>\M_R(\fsh A,\fsh C)
\end{CD}
\end{equation}
is a pullback square in $\Ab$.
\item $\bimf$ is faithful and $\forall B,C\in\ob\C$ an element $f\in\,_R\M_R(\bimf B,\bimf C)$ 
belongs to the image of $\bimf$ if and only if $\exists A\in\ob\C$, $s\in\C(A,B)$, and $t\in\C(A,C)$ such that
$\bimf s$ is epi and $f\ci\bimf s=\bimf t$.
\item $\bimf$ is faithful and for all $A,B,C\in\ob\C$ and for all $s\in\C(A,B)$ such that $\bimf s$ is epi
the square
\begin{equation}
\begin{CD}
\C(B,C)@>\C(s,C)>>\C(A,C)\\
@V{\bimf_{B,C}}VV @VV{\bimf_{A,C}}V\\
_R\M_R(\bimf B,\bimf C)@>_R\M_R(\bimf s,\bimf C)>>_R\M_R(\bimf A,\bimf C)
\end{CD}
\end{equation}
is a pullback square in $\Ab$.
\end{enumerate}
\end{pro}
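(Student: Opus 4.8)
The plan is to prove the list in four blocks: the topological core $(1)\Leftrightarrow(2)\Leftrightarrow(3)$; the representation-theoretic reformulations $(2)\Leftrightarrow(5)\Leftrightarrow(4)$; the elementwise reformulations $(2)\Leftrightarrow(6)\Leftrightarrow(7)$; and, under the extra hypotheses, $(6)\Leftrightarrow(8)\Leftrightarrow(9)$ together with the verbatim bimodule copies $(7)\Leftrightarrow(10)\Leftrightarrow(11)$. The first block is immediate: $(1)\Leftrightarrow(2)$ is the definition of subcanonical combined with Theorem~\ref{thm: GroT} (a presheaf is a $\GroT$-sheaf exactly when $\nu$ is invertible on it), and since $\G_\T$ is a full embedding whose essential image is precisely this class of presheaves, $(2)\Leftrightarrow(3)$. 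For the second block I would use (\ref{L}), the isomorphism $\bimK Y\cong\bimk$, and $\bimt=\bimL\bimK$ to obtain a natural identification $\bimt YC(B)\cong\M^\bimQ(\bimk B,\bimk C)$; a chase of diagram (\ref{diag: nu}) then shows that under it $\nu_{YC}$ is the map $t\mapsto\bimk t$. Hence $\nu_{YC}$ is invertible for every $C$ iff $\bimk$ is fully faithful, i.e.\ $(2)\Leftrightarrow(5)$; and $(4)\Leftrightarrow(5)$ because $\ksh=\psh\bimk$ with $\psh$ an isomorphism of categories (Proposition~\ref{pro: M^Qhat=M^Q}). (Alternatively $(2)\Leftrightarrow(4)$ follows directly via the monad isomorphism $\bimT\cong\Tsh$ and Proposition~\ref{pro: Ls}.)

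The third block is the heart. By Lemma~\ref{exa: F-topology}, $YC$ is a sheaf iff $\etash_{YC}$ is an equalizer in diagram (\ref{diag: etash equalizer}). The first step is a small computation: transporting along $\Nsh\colon\Fsh YC\iso\fsh C$ one finds that the coaction $\deltash_C\colon\fsh C\to(\Gsh\fsh C)\amo{\C}\fsh$ is $w\mapsto\id_{\fsh C}\am{C}w$, so that at each $B$ the diagram becomes the map $\fsh\colon\C(B,C)\to\M_R(\fsh B,\fsh C)$ into the equalizer of the two maps $\M_R(\fsh B,\fsh C)\rightrightarrows\M_R(\fsh B,(\Gsh\fsh C)\amo{\C}\fsh)$ sending $f$ to $x\mapsto f\am{B}x$ and to $x\mapsto\id_{\fsh C}\am{C}f(x)$. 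Being an equalizer for every $B$ forces $\fsh$ faithful (equalizers are monic) and forces $\im\fsh_{B,C}$ to coincide with the set of $f$ equalizing the pair. Flatness of $F$ now enters: the coend $(\Gsh\fsh C)\amo{\C}\fsh$ is a filtered colimit of the groups $\M_R(\fsh A,\fsh C)$ (cf.\ the proofs of Lemmas~\ref{lem: FF left adj} and~\ref{exa: F-topology}), so $f\am{B}x=\id_{\fsh C}\am{C}f(x)$ holds iff there are $A\in\C$, $z\in\fsh A$, $s\in\C(A,B)$, $t\in\C(A,C)$ with $\fsh s\,z=x$ and $f\ci\fsh s=\fsh t$ (the identity $\fsh t\,z=f(x)$ being then automatic), which is precisely the condition of (6); hence $(2)\Leftrightarrow(6)$. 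The $\bimeta$-variant of Lemma~\ref{exa: F-topology} together with $\delta_C\colon w\mapsto\id_{\bimf C}\am{C}w$ would give $(2)\Leftrightarrow(7)$ the same way, but it is cleaner to deduce $(6)\Leftrightarrow(7)$ from faithfulness of $\phish$: the forward implication just restricts attention to bimodule maps, and conversely the condition of (6) forces such an $f$ to be left $R$-linear---for $x=\fsh s\,z$ one has $f(rx)=f(\fsh s(rz))=\fsh t(rz)=r\,f(x)$ because $\fsh s,\fsh t$ underlie bimodule maps---whence $f=\phish g$ for a unique $g\in{}_R\M_R(\bimf B,\bimf C)$, to which (7) applies.

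For the last block, assume $\C$ additive and every $\fsh C$ finitely generated. Each of (8)--(11) trivially implies the corresponding (6) or (7): an epi $\fsh s$ (resp.\ $\bimf s$) makes every element a value of $\fsh s$, and the pullback property of (9)/(11) supplies the lift $v$ with $\fsh v=f$ (resp.\ $\bimf v=f$). For the upgrade from (6) to (8) one picks finitely many right $R$-module generators $x_1,\dots,x_n$ of $\fsh B$, applies (6) to each $x_i$ to get $s_i\colon A_i\to B$, $t_i\colon A_i\to C$ and $z_i\in\fsh A_i$, and sets $A=\bigoplus_i A_i$, $s=[s_i]$, $t=[t_i]$ (additivity of $\C$ and of $\fsh=\phish\bimf$); then $\im\fsh s$ is an $R$-submodule of $\fsh B$ containing every generator, so $\fsh s$ is epi, and $f\ci\fsh s$ and $\fsh t$ agree summandwise. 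Finally $(8)\Leftrightarrow(9)$ is the routine rephrasing of ``$f\in\im\fsh_{B,C}$ iff $f\ci\fsh s=\fsh t$ for some $s$ with $\fsh s$ epi and some $t$'' as the statement that the square along each such $s$ is a pullback, faithfulness of $\fsh$ supplying the injectivity and the uniqueness of the lift; and $(7)\Leftrightarrow(10)$, $(10)\Leftrightarrow(11)$ are the identical arguments with $\bimf$, ${}_R\M_R$ in place of $\fsh$, $\M_R$.

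I expect the main obstacle to be precisely the third block: identifying the coactions $\deltash_C$ and $\delta_C$ in the transparent form above, and then translating an equality of two explicit rank-one elements of the coend $\amo{\C}\fsh$ into the existence statement of (6) through the filtered-colimit description that flatness of $F$ provides. The rest is standard (co)monad and sheaf theory together with routine diagram and element chasing.
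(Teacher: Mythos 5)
Your proposal is correct and follows essentially the same route as the paper's proof: the sheaf criterion of Theorem \ref{thm: GroT} and Lemma \ref{exa: F-topology} for (1)--(3), the identification $TYC(B)\cong\M^\bimQ(\bimk B,\bimk C)$ turning $\nu_{YC}$ into $t\mapsto\bimk t$ for (4)--(5), the flatness/filtered-colimit analysis of the identity $f\am{B}x=\id_{\fsh C}\am{C}f(x)$ for (6)--(7), and the finite-generator, direct-sum construction for (8)--(11). The only real deviation is that you deduce (7) from (6) by noting that the condition in (6) forces $f$ to be left $R$-linear, instead of repeating the coend computation at the bimodule level as the paper does for $(2)\Leftrightarrow(7)$ --- a harmless and slightly more economical shortcut.
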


\begin{proof}
$(1)\Leftrightarrow(2)$ follows from Theorem \ref{thm: GroT}.

$(1)\Leftrightarrow(3)$ is obvious. We denote the embedding $\C\to\She$ by $Y_\T$.

$(4)\Leftrightarrow(5)$: $\ksh=\psh\bimk$ with $\psh$ an equivalence of categories (see Proposition
\ref{pro: Ls}).

$(3)\Rightarrow(5)$: By Lemma \ref{lem: *} $\bimk\cong\bimK Y=\bimK\G_\T Y_\T$ is the composite of an equivalence with a fully faithful functor.

$(5)\Rightarrow(2)$: Applying formula (\ref{eq: L}) we write $TYC=\bimL\bimK Y C=\M^\bimQ(\bimk\under,\bimK YC)$ which, due to the isomorphism $\bimK Y\iso \bimk$, can be identified with
 the presheaf $\M^\bimQ(\bimk\under, \bimk C)$. Upon this identification the $\nu_{YC}$ becomes the natural transformation with $B$-component equal to
\begin{equation*}
YCB=\C(B,C)\longrarr{}\M^\bimQ(\bimk B,\bimk C),\qquad t\mapsto \bimk t\,.
\end{equation*}
Therefore $\bimk$ being fully faithfull means that the $(\nu_{YC})_B$ are isomorphisms for all $B,C$.

$(2)\Leftrightarrow(7)$ Extending the previous argument for the inclusion $i:TYC\into\bimG\bimF YC$ we obtain
that $TYCB$ is the subgroup of $_R\M_R(\bimf B,\bimf C)$ the elements $f$ of which satisfy
$\bimF\bimG f\ci\delta_B=\delta_C\ci f$, where $\delta$ has been defined in (\ref{bimk}). Since
\[
\delta_B: x\mapsto 1_B\am{B} x\mapsto \{y\mapsto 1_B\am{B}y\}\am{B}x\mapsto 1_{\bimf B}\am{B}x\,,
\]
 the requirement on $f$ is that
\begin{equation} \label{eq: condition on f}
f\am{B}x=1_{\bimf C}\am{C}fx\quad\forall x\in\bimf B
\end{equation}
as elements of $_R\M_R(\bimf\under,\bimf C)\am{\C}\bimf=\bimF\bimG\bimf C$. Viewing $\bimF\bimG\bimf C$
as the filtered colimit of the functor $(\Elt\bimf)^\op\to\C^\op\rarr{\bimG\bimf C}\Ab$ equation
(\ref{eq: condition on f}) 
means precisely that $\forall x\in\bimf B$ $\exists A\in\ob\C$, $z\in\bimf A$, $s\in\C(A,B)$ and $t\in\C(A,C)$
such that $\bimf s z=x$, $\bimf t z =fx$ and $f\ci\bimf s=\bimf t$. Since the $(\nu_{YC})_B$ maps any $a\in\C(B,C)$
into $\bimf a\in\,_R\M_R(\bimf B,\bimf C)$ which obviously satisfies (\ref{eq: condition on f}), we see that invertibility
of $\nu_{YC}$ for all $C$ is equivalent to (7).

$(2)\Leftrightarrow(6)$:
By Lemma \ref{lem: m iso} the $\nush_{YC}$ is invertible iff $\bimnu_{YC}$ is invertible.
Using theformula for $\Ls$ given in Proposition \ref{pro: Ls} and the isomorphism $\Ks Y\iso\ksh$ we can identify $\tsh YC B$ with $\M^\Qsh(\ksh B,\ksh C)$ and consider $(\nush_{YC})_B$ as the unique factorization of the mapping $\C(B,C)\to\M_R(\fsh B,\fsh C)$, $t\mapsto\fsh t$ through $\M^\Qsh(\ksh B,\ksh C)\into\M_R(\fsh B,\fsh C)$. Then
we proceed as in the proof of $(2)\Leftrightarrow(7)$ by expanding what it means for an $f\in\M_R(\fsh B,\fsh C)$
to be a $\Qsh$-comodule map and arrive to the equivalence of condition (6) with invertibility of the $\nush Y$.

$(6)\Rightarrow(8)$: For fixed $B$ and $C$ let $\{x_i\}$ be a finite set of generators for $\fsh B$. Choose
$A_i,z_i,s_i,t_i$ according to the rule (6) for $x_i$ and then construct a direct sum diagram $A_i\dualpair{q_i}{p_i}A$
and the arrows $s:=\sum_i s_i\ci p_i\in\C(A,B)$, $t:=\sum_i t_i\ci p_i\in\C(A,C)$. Then
\[
f\ci\fsh s=\sum_if\ci\fsh s_i\ci\fsh p_i=\sum_i\fsh t_i\ci\fsh p_i=\fsh t
\]
and for a generic element $x=\sum_ix_i\cdot r_i\in\fsh B$ the $z:=\sum_i\fsh q_i z_i\cdot r_i$ satisfies
\[
\fsh s z=\sum_i\sum_j\fsh s_i\ci\fsh p_i\ci\fsh q_j (z_j\cdot r_j)=\sum_i\fsh s_i\ci z_i\cdot r_i=\sum_i x_i\cdot r_i=x
\]
thus $\fsh s$ is epi.

$(8)\Rightarrow(6)$: Since $\fsh$ is epi, for every $x\in\fsh B$ there is a $z\in \fsh A$ such that $\fsh s z=x$.
Therefore condition (6) is trivially satisfied.

Equivalence of (9), (10), (11) with (8) should now be clear.

\end{proof}

The above Proposition provides two embedding theorems at the same time: It allows for embedding $\C$
into the category of sheaves over $\C$ and embedding $\C$ into the category of comodules over a comonad.
In both cases the fiber functor can be written as the composite of the embedding functor with a comonadic functor.
\begin{thm} \label{thm: embedding}
Let $\C$ be a small monoidal $\Ab$-category and $\bimf:\C\to\,_R\M_R$ a faithful and flat strong monoidal functor
satisfying the following condition:
\begin{quote}
If $B,C\in\ob\C$ and $f\in\,_R\M_R(\bimf B,\bimf C)$ is such that for all $x\in\bimf B$ there are $A\in\ob\C$, $z\in\bimf A$, $s\in\C(A,B)$, $t\in\C(A,C)$ satisfying $\bimf s z=x$ and $f\ci\bimf s=\bimf t$
then there exists $a\in\C(B,C)$ such that $f=\bimf a$.
\end{quote}
Then there exist
\begin{itemize}
\item a special left exact monoidal idempotent monad $\T$ on $\Pre$,
\item a left exact monoidal comonad $\bimQ$ on $_R\M_R$,
\item a fully faithful strong monoidal functor $Y_\T:\C\to\She$
\item and a monoidal category equivalence $\She\to \M^\bimQ$
\end{itemize}
such that $\bimf$ can be written as the composite of monoidal functors
\begin{equation*}
\begin{CD}
\She@>\simeq>>\M^\bimQ\\
@A{Y_\T} AA  @VV{\F^\bimQ}V\\
\C@>\bimf>>_R\M_R
\end{CD}
\end{equation*}
where $\F^\bimQ$ is the canonical forgetful functor.
\end{thm}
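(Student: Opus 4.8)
The plan is to recognise this theorem as a repackaging of Propositions \ref{pro: T} and \ref{pro: Gamma} together with Lemma \ref{lem: *}, once the hypotheses have been translated into the language used there. First I would pass from $\bimf$ to a fiber functor of the earlier kind: composing $\bimf$ with the monoidal forgetful functor ${}_R\M_R\to\Ab$ gives a monoidal functor $\longf:\C\to\Ab$, and since $\bimf$ already factors through ${}_R\M_R$ as a normal strong monoidal functor, the base ring reconstructed from $\longf$ is identified via $\bimf_0$ with $R$ and the normal part of $\longf$ with $\bimf$; thus $\longf$ is essentially strong monoidal with strong part $\bimf$, and it is flat precisely because $\bimf$ is (flatness of an ${}_R\M_R$-valued functor was defined as flatness of its composite to $\Ab$, which here is $\longf$). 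Proposition \ref{pro: T} then applies to $\longf$ and produces the monoidal localisation $\bimnu,\bimtheta:\bimK\dashv\bimL$, the left exact monoidal comonad $\bimQ=\bimF\bimG$ on ${}_R\M_R$ (Corollary \ref{cor: bimQ}), and the special left exact monoidal idempotent monad $\T=\bimT=\bra\bimt,\bimmu,\bimnu\ket$ on $\Pre$ with $\bimt=\bimL\bimK$; these are the $\bimQ$ and $\T$ of the statement.

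Next I would match the displayed hypothesis on $\bimf$ with condition $(7)$ of Proposition \ref{pro: Gamma}: its ``only if'' half is automatic (take $A=B$, $z=x$, $s=1_B$, $t=a$ when $f=\bimf a$), and its ``if'' half together with faithfulness of $\bimf$ is exactly what is assumed here. Hence all the equivalent conditions $(1)$--$(5)$ of that Proposition hold; in particular $F$ is subcanonical, so $\bimnu_{YC}$ is invertible for every $C\in\ob\C$, and the Yoneda embedding factors through the fully faithful monoidal forgetful functor $\G_\T:\She\to\Pre$, giving a functor $Y_\T:\C\to\She$ with $\G_\T Y_\T=Y$. Lemma \ref{lem: *} then supplies the monoidal category equivalence $\bimK\G_\T:\She\to\M^\bimQ$ required in the statement.

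It then remains to check that $Y_\T$ is fully faithful strong monoidal and that the triangle commutes. Invertibility of all $\bimnu_{YC}$ makes $\bimnu_Y:Y\Rightarrow\bimt Y$ a natural isomorphism, whence $\G_\T\F_\T Y=\bimt Y\cong Y=\G_\T Y_\T$; since $\G_\T$ is fully faithful it reflects this isomorphism and $Y_\T\cong\F_\T Y$. The latter is a composite of strong monoidal functors ($Y$ by Proposition \ref{pro: odot}, and $\F_\T$ by the results of Section \ref{s: T}), hence strong monoidal, and fully faithful as a composite of fully faithful functors; both properties transport to $Y_\T$ along the isomorphism. For commutativity, using $\G_\T\F_\T=\bimt=\bimL\bimK$ and that $\bimtheta\bimK:\bimK\bimL\bimK\iso\bimK$ is invertible (Proposition \ref{pro: bimL}), I get
\[
\F^\bimQ\circ(\bimK\G_\T)\circ Y_\T\ \cong\ \F^\bimQ\bimK\G_\T\F_\T Y\ =\ \F^\bimQ\bimK\bimL\bimK Y\ \cong\ \F^\bimQ\bimK Y\ =\ \bimf
\]
by Lemma \ref{lem: bimK}, all arrows being monoidal natural isomorphisms; choosing $Y_\T$ to be the literal factorisation $Y_\T C=\bra YC,\bimnu_{YC}^{-1}\ket$ makes the triangle commute on the nose.

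I do not expect a genuine obstacle, since the whole content is already contained in the cited results. The one point that needs care is the bookkeeping in the first step --- checking that $\longf$ really is an essentially strong monoidal flat functor with strong part $\bimf$ and reconstructed base ring $R$, so that Propositions \ref{pro: T} and \ref{pro: Gamma} may legitimately be invoked --- and the observation that the displayed hypothesis corresponds to condition $(7)$ rather than to one of the stronger conditions $(8)$--$(11)$, which would require the extra assumptions on $\C$ absent here.
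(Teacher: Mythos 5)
Your proposal follows essentially the same route as the paper: identify the displayed hypothesis with condition (7) of Proposition \ref{pro: Gamma} (the ``only if'' half being automatic), use the equivalence (3)$\Leftrightarrow$(7) to factor the Yoneda embedding as $Y=\G_\T Y_\T$, and combine this with Lemma \ref{lem: bimK} and the monoidal equivalence $\bimK\G_\T$ of Lemma \ref{lem: *}; the constructions of $\bimQ$ and $\T$ are quoted from Corollary \ref{cor: bimQ} and Proposition \ref{pro: T} exactly as in the paper. One small slip: $\F_\T$ is \emph{not} fully faithful in general (it is so only when every presheaf is a sheaf, since full faithfulness of a left adjoint means the unit $\nu$ is invertible), so ``fully faithful as a composite of fully faithful functors'' is not a valid justification as stated; the conclusion is nevertheless immediate either because $\nu_{YD}$ is invertible for all $D$ (subcanonicity), giving $\She(\F_\T YC,\F_\T YD)\cong\Pre(YC,TYD)\cong\C(C,D)$, or, as the paper argues, because $\G_\T Y_\T=Y$ with both $Y$ and $\G_\T$ fully faithful forces $Y_\T$ to be fully faithful.
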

\begin{proof}
The comonad $\bimQ$ has been constructed in Corollary \ref{cor: bimQ} and the monad $\T$ in Proposition \ref{pro: T} (3) for any strong monoidal flat functor $\bimf$. So it suffices to construct $Y_\T$. By the equivalence (3)$\Leftrightarrow$(7) of Proposition \ref{pro: Gamma} we can write the Yoneda embedding as $Y=\G_\T Y_\T$ with a uniquely determined $Y_\T:\C\to\She$ which is fully faithful, since both $Y$ and $\G_\T$ are fully faithful. Explicitly,
\[
Y_\T(C\rarr{t}D)=\bra YC,\nu_{YC}^{-1}\ket\rarr{Yt}\bra YD, \nu_{YD}^{-1}\ket.
\]
This functor has a strong monoidal structure
\begin{align*}
(Y_\T)_{C,D}&:\bra T(YC\odot YD),\mu_{YC\odot YD}\ket\longerrarr{\nu_{Y(C\ot D)}^{-1}\ci TY_{C,D}}
\bra Y(C\ot D),\nu_{Y(C\ot D)}^{-1}\ket\\
(Y_\T)_0&:\bra T\YI,\mu_\YI\ket\longrarr{\nu_\YI^{-1}}\bra \YI,\nu_\YI^{-1}\ket
\end{align*}
with which $Y=\G_\T Y_\T$ becomes a factorization of monoidal functors. Inserting this factorization into that of Lemma \ref{lem: bimK} and using the result of Lemma \ref{lem: *} that $\bimK\G_\T$ is a monoidal equivalence we are done.
\end{proof}

\subsection{The sheaf monoid $\G$ and bialgebroids} \label{ss: G}

Given a flat essentially strong monoidal functor $\longf:\C\to\Ab$ we can define a presheaf $\longg:\C^\op\to\Ab$
as the pointwise "dual" of $\fsh$. That is to say, $\longg C:=\M_R(\fsh C,R)$ as an abelian group. Of course,
$\longg C$ inherits $R$-$R$-bimodule structure from the left $R$-module structures given on $\fsh C$ and $R$ but for a while this will be ignored. We want to show that this presheaf is a sheaf with respect to the topology induced
by the idempotent monad $\T$.
\begin{lem} \label{lem: Gsh sheaf}
For a ring $R$, a small $\Ab$-category $\C$ and a flat additive functor $\fsh:\C\to\M_R$ let $\Fsh:\Pre\to\M_R$ be
the left Kan extension of $\fsh$ along $Y:\C\to\Pre$ and $\Gsh:\M_R\to\Pre$ be its right adjoint.
Then for all $R$-module $N$ the presheaf $\Gsh N$ is a sheaf with respect to the Grothendieck topology
induced by the left exact idempotent monad $\Tsh$ of Proposition \ref{pro: T} (4).
Especially, $\longg=\Gsh R$ is a sheaf.
\end{lem}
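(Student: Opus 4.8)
The statement to prove is Lemma \ref{lem: Gsh sheaf}: for a flat additive $\fsh:\C\to\M_R$ with left Kan extension $\Fsh:\Pre\to\M_R$ and right adjoint $\Gsh$, every presheaf $\Gsh N$ is a $\Tsh$-sheaf, where $\Tsh=\Ls\Ks$ is the left exact idempotent monad of Proposition \ref{pro: T}(4). By Theorem \ref{thm: GroT} (applied to $\Tsh$), a presheaf $W$ is a $\Tsh$-sheaf precisely when $\nush_W$ is invertible; and by the argument following Lemma \ref{lem: m iso} (see the proof of Lemma \ref{exa: F-topology}), this is equivalent to $\etash_W:W\to\Gsh\Fsh W$ being the equalizer of the pair $\etash\Gsh\Fsh W,\ \Gsh\Fsh\etash_W:\Gsh\Fsh W\rightrightarrows\Gsh\Fsh\Gsh\Fsh W$ as in diagram (\ref{diag: etash equalizer}). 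So it suffices to verify this equalizer property for $W=\Gsh N$.

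The plan is to use the standard comonad-theoretic fact that for any adjunction $\Fsh\dashv\Gsh$ with counit $\epsh$, the cofree object $\Gsh N$ always carries a split equalizer. Concretely, apply $\Gsh$ to the (co)reflexive pair that exhibits $N$ as a $\Qsh$-comodule: the diagram
\[
\Gsh N\overset{\etash_{\Gsh N}}{\longrightarrowtail}\Gsh\Fsh\Gsh N\pair{\etash\Gsh\Fsh\Gsh N}{\Gsh\Fsh\etash_{\Gsh N}}\Gsh\Fsh\Gsh\Fsh\Gsh N
\]
is a split equalizer, with splitting maps $\Gsh\epsh_N$ and $\Gsh\Fsh\Gsh\epsh_N$ (the triangle identity $\epsh_N\ci\Fsh\etash_{\Gsh N}=\mathrm{id}$ and naturality of $\etash$ and $\epsh$ give the three split-equalizer equations). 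Split equalizers are absolute, hence preserved by every functor; in particular this is the equalizer diagram (\ref{diag: etash equalizer}) for $W=\Gsh N$. Therefore $\etash_{\Gsh N}$ is the required equalizer, so $\nush_{\Gsh N}$ is invertible and $\Gsh N$ is a $\Tsh$-sheaf. The special case $\longg=\Gsh R$ is immediate.

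First I would recall the characterization "sheaf $\iff$ $\nush_W$ invertible $\iff$ (\ref{diag: etash equalizer}) is an equalizer" from Theorem \ref{thm: GroT} and Lemma \ref{exa: F-topology}; this reduces everything to an equalizer check. Next I would write down the three split-equalizer identities for $\bra\Gsh N,\ \Gsh\epsh_N,\ \Gsh\Fsh\Gsh\epsh_N\ket$ using only naturality of $\etash,\epsh$ and the triangle identity $\epsh\Fsh\ci\Fsh\etash=\mathrm{id}$. Then I would invoke the absoluteness of split equalizers to conclude that the split equalizer is preserved along the identity functor, i.e., it \emph{is} the diagram in (\ref{diag: etash equalizer}). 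Finally I would note that $\longg=\Gsh R$ (where $R$ denotes the right regular module) is the instance $N=R$, giving the last sentence.

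I do not expect a genuine obstacle here: the content is entirely the standard fact that cofree comodules are sheaves for the associated idempotent monad, and the only thing requiring care is bookkeeping — making sure the equalizer pair produced by the split-equalizer construction literally matches the pair $(\etash\Gsh\Fsh W,\ \Gsh\Fsh\etash_W)$ appearing in diagram (\ref{diag: etash equalizer}) after the substitution $W=\Gsh N$, which is a matter of expanding $\Qsh=\Fsh\Gsh$ and $\etash=\Fsh\etash\Gsh$-type identities. If one prefers to avoid split equalizers, the alternative is to check directly that any $\lambda:V\to\Gsh N$ equalizing the pair is uniquely $\etash_{\Gsh N}\ci(\Gsh\epsh_N\ci\cdots)$; but the absolute-equalizer route is cleaner and is the one I would write up.
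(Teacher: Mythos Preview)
Your proposal is correct and is essentially the paper's own argument: the paper observes that, by the triangle identities of the adjunction $\Fsh\dashv\Gsh$, the arrow $\etash_{\Gsh N}$ is a split equalizer of the pair $\bra\etash\Gsh\Fsh\Gsh N,\ \Gsh\Fsh\etash\Gsh N\ket$, and since $\ish_{\Ks\Gsh N}$ is the equalizer of the same pair by definition of $\Ls$, equation (\ref{nush}) forces $\nush_{\Gsh N}$ to be invertible. The only cosmetic difference is that the paper concludes via (\ref{nush}) and the defining equalizer of $\Ls$ directly, rather than routing through the sheaf characterization of Lemma~\ref{exa: F-topology}; both amount to the same two-line computation.
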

\begin{proof}
Due to the adjunction $\etash,\epsh:\Fsh\dashv\Gsh$ the $\etash\Gsh$ is a split equalizer of the pair
$\bra \etash\Gsh\Fsh\Gsh N,\Gsh\Fsh\etash\Gsh N\ket$. But the $\ish\Ks\Gsh N$ is also the equalizer of this pair,
hence $\nush\Gsh N$ is an isomorphism by Equation (\ref{nush}).
\end{proof}

In the presence of the monoidal structure we have two left exact idempotent monads: $\Tsh$ and the monoidal $\T$.
Since they are isomorphic by Proposition \ref{pro: T} (5), they have the same category of sheaves, as subcategories of $\Pre$. So $\longg$ is a sheaf in both senses but, of course, we are interested in the monoidal sheaf category
where $\longg$ turns out to be a monoid.


The monoidal structure of $\longg$ can be obtained by transposing that of $\longf$ w.r.t. the canonical pairing
$\bra f,x\ket=f(x)$, $f\in\longg C$, $x\in\longf C$ in the following sense: 
\begin{align}
\label{G2}
\longg_{C,D}&: \longg C\ot \longg D\to\longg(C\ot D),\quad f\ot g\mapsto 
\{z\mapsto \bra g,\bra f,z\oneT\ket\cdot z\twoT\ket\}\\
\label{G0}
\longg_0&:\ZZ\to \longg I,\qquad 1\mapsto 1_L
\end{align}
where we introduced, only for the sake of this formula, $z\oneT\oR z\twoT:=F_{C,D}^{-1}(z)$ for $z\in F(C\ot D)$.
The image of the identity monoid $I$ is isomorphic to the ring $L:=R^\op$ and therefore $\longg$ factors through
a normal monoidal functor $\bimg:\C^\op\to\,_L\M_L$. However, $\longg$ is not essentially strong unless
$FC$ is f.g. projective as right $R$-module.

Since monoidal structures on a presheaf are the same as monoid structures on the object in $\Pre$ by
Corollary \ref{cor: mon pre= mon in Pre}, we see that $\longg$ has a monoid structure in $\Pre$. Explicitly,
\begin{align}
\label{eq: m}
m:\longg\odot\longg\to \longg,&\quad [f,g,t]_{B,C}^A\mapsto Gt\ci G_{B,C}(f\ot g)\\
\label{eq: u}
u:YI\to \longg,&\quad (C\rarr{t} I)\mapsto Gt 1_L
\end{align}
But is $\longg$ a monoid also in the sheaf category $\She$?
\begin{lem} \label{lem: G_T reflects monoids}
The monoidal forgetful functor $\G_\T:\She\to\Pre$ reflects monoids in the following sense: If $\bra V,m,u\ket$ is a
monoid in $\Pre$ such that $\nu_V$ is invertible then there is a monoid $\bra\bra V,\nu_V^{-1}\ket,m',u'\ket$ in
$\She$ such that $m=\G_\T m'\ci(\G_\T)_{\bra V,\nu_V^{-1}\ket,\bra V,\nu_V^{-1}\ket}$ and $u=u'\ci(\G_\T)_0$.
Such a monoid can be given by $m'=\nu_V^{-1}\ci Tm$, $u'=\nu_V^{-1}\ci Tu$.
\end{lem}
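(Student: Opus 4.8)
The plan is to obtain the monoid $\langle\bra V,\nu_V^{-1}\ket,m',u'\ket$ not by checking the monoid axioms directly against the associator formulas of Proposition \ref{pro: She mon}, but by \emph{transporting} along an isomorphism in $\She$ a monoid structure that is produced for free. Recall from the proposition preceding this lemma that $\F_\T\colon\Pre\to\She$ is strong monoidal; hence it carries the monoid $\bra V,m,u\ket$ of $\Pre$ to a monoid on the object $\F_\T V=\bra TV,\mu_V\ket$ of $\She$, with multiplication $\F_\T m\ci(\F_\T)_{V,V}$ and unit $\F_\T u\ci(\F_\T)_0$. When $\nu_V$ is invertible, the arrow $\nu_V\colon\bra V,\nu_V^{-1}\ket\to\bra TV,\mu_V\ket$ is an isomorphism of $\T$-modules: it is a module map because $\mu_V\ci T\nu_V=\id$, and its two-sided inverse is the counit component $\tau_{\bra V,\nu_V^{-1}\ket}=\nu_V^{-1}$ of (\ref{eq: tau}). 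Transporting the monoid on $\F_\T V$ across this isomorphism yields a monoid $\langle\bra V,\nu_V^{-1}\ket,m'',u''\ket$ in $\She$.

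Next I would identify $m''$ and $u''$ with the claimed formulas. Since $\G_\T$ is faithful it is enough to compute the underlying arrows $\G_\T m''$ and $\G_\T u''$. Here one uses: that $\G_\T$ sends the monoidal product of arrows $\alpha,\beta$ of $\She$ to $T(\G_\T\alpha\odot\G_\T\beta)$ (recorded in the proof of Proposition \ref{pro: She mon}); the explicit values $(\F_\T)_{V,V}=\mu_{V\odot V}\ci TT_{V,V}$ and $(\F_\T)_0=\id$ from that proof; $\G_\T(\F_\T m)=Tm$; and $\G_\T\nu_V=\nu_V$, $\G_\T\tau=\nu_V^{-1}$. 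Putting these together gives
\[
\G_\T m''=\nu_V^{-1}\ci Tm\ci\mu_{V\odot V}\ci TT_{V,V}\ci T(\nu_V\odot\nu_V).
\]
Now $TT_{V,V}\ci T(\nu_V\odot\nu_V)=T\bigl(T_{V,V}\ci(\nu_V\odot\nu_V)\bigr)=T\nu_{V\odot V}$ by the monoidality (\ref{nu mul}) of $\nu$, and $\mu_{V\odot V}\ci T\nu_{V\odot V}=\id$ is a monad axiom; hence $\G_\T m''=\nu_V^{-1}\ci Tm=\G_\T m'$, so $m''=m'$. The same collapse gives $\G_\T u''=\nu_V^{-1}\ci Tu=\G_\T u'$, so $u''=u'$.

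Finally the two compatibility equations are immediate once $(\G_\T)_{\bra V,\nu_V^{-1}\ket,\bra V,\nu_V^{-1}\ket}=\nu_{V\odot V}$ and $(\G_\T)_0=\nu_\YI$ are read off from (\ref{eq: G_T mon}): naturality of $\nu$ yields $\G_\T m'\ci(\G_\T)_{\bra V,\nu_V^{-1}\ket,\bra V,\nu_V^{-1}\ket}=\nu_V^{-1}\ci Tm\ci\nu_{V\odot V}=\nu_V^{-1}\ci\nu_V\ci m=m$, and likewise $u'\ci(\G_\T)_0=\nu_V^{-1}\ci Tu\ci\nu_\YI=u$.

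\textbf{Main obstacle.} There is no deep point; the argument uses only Proposition \ref{pro: She mon} and the monad/adjunction identities. The one thing to keep straight is bookkeeping: the monoidal product in $\She$ has underlying object $T(U\odot V)$ rather than $U\odot V$, so the transported multiplication genuinely acquires the extra $T$ together with the factor $T(\nu_V\odot\nu_V)$, and it is precisely the monad axiom $\mu\ci T\nu=\id$ combined with the monoidality (\ref{nu mul}) of $\nu$ that makes this factor disappear and leaves the clean formula $m'=\nu_V^{-1}\ci Tm$.
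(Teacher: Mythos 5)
Your proof is correct, but it takes a different route from the paper. The paper's own proof is a one-line appeal to direct verification: one checks, against the explicit associator and unitors of $\She$ from Proposition \ref{pro: She mon} and the lax structure (\ref{eq: G_T mon}) of $\G_\T$, that $m'=\nu_V^{-1}\ci Tm$ and $u'=\nu_V^{-1}\ci Tu$ are arrows of $\She$ and satisfy the monoid axioms and the two compatibility equations. You instead avoid all axiom-checking by first pushing the monoid $\bra V,m,u\ket$ forward along the (lax, in fact strong) monoidal $\F_\T$, transporting the resulting monoid on $\F_\T V=\bra TV,\mu_V\ket$ across the isomorphism $\nu_V:\bra V,\nu_V^{-1}\ket\to\F_\T V$ (with inverse the counit $\tau$ of (\ref{eq: tau})), and only then computing underlying arrows; the collapse $TT_{V,V}\ci T(\nu_V\odot\nu_V)=T\nu_{V\odot V}$ via (\ref{nu mul}) together with the monad law $\mu_{V\odot V}\ci T\nu_{V\odot V}=\id$ recovers exactly the stated formulas, and naturality of $\nu$ gives the compatibilities. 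What your route buys: associativity, unitality and the fact that $m',u'$ are $\T$-module maps all come for free from transport of structure, and the argument makes transparent why the clean formulas $\nu_V^{-1}\ci Tm$, $\nu_V^{-1}\ci Tu$ appear; what it costs is a dependence on the monoidal-adjunction proposition following Proposition \ref{pro: She mon} (note that the explicit values $(\F_\T)_{U,V}=\mu_{U\odot V}\ci TT_{U,V}$ and $(\F_\T)_0=\id$, and the strong monoidality of $\F_\T$, are established in that proposition's proof, not in Proposition \ref{pro: She mon} itself), whereas the paper's direct check needs only the structures already written down. Both are valid; yours is the more conceptual and arguably shorter argument.
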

\begin{proof}
Taking the monoidal structure of $\She$ and of $\G_\T$ into account, see Proposition \ref{pro: She mon} and Equation (\ref{eq: G_T mon}), the verification of the statement is straightforward.
\end{proof}

Giving a presheaf $\longg$ abstractly as an object of $\Pre$ allows to reconstruct $\longg$ as a functor provided the
Yoneda embedding $Y:\C\to\Pre$ is also given. Indeed, $\longg$ is the functor $C\mapsto \Pre(YC,\longg)$.
Similarly, if $\bra \longg,m,u\ket$ is a monoid in $\Pre$ then the reconstructed functor is monoidal by the mapping
$\alpha\ot\beta\mapsto m\ci(\alpha\ot\beta)$.
The image of the unit monoid (of $\C^\op$) under this monoidal functor is the convolution monoid
$L:=\Pre(YI,\longg)$.
In case of $\longg$ is obtained from the pointwise left dual of a strong monoidal functor $\bimf:\C\to\,_R\M_R$
then the convolution monoid $L$ becomes $R^\op$. This means that the base ring $R$ can be reconstructed from
the monoid in $\Pre$. If $\longg C$ is finitely generated and projective as right $L$-module then so is $\longf C$
as right $R$-module and in this case the whole functor $\longf$ can be recovered from the knowledge of the monoid $\bra\longg,m,u\ket$ in $\Pre$.

The f.g. projective case is interesting all the more because in this case the Eilenberg-Moore category $\M^\bimQ$ of the monoidal comonad $\bimQ$ becomes the comodule category $\M^H$ of a bialgebroid $H$ with underlying $R^\op\ot R$-ring $\bimF\bimg$.

\textbf{Notation:} With the appearence of the bialgebroid $H=G\am{\C}F$ a notational ambiguity arises when tensoring with $H$ since $R$ acts on $H$ in 4 different ways. We shall write $\under\obar{R}H$ if we want
tensoring w.r.t. the $R$-action $r\cdot h=ht_H(r)$ and write $\under\oR H$ if we mean $r\cdot h=s_H(r)h$.
The latter is the monoidal product with $H$ in $\M^H$. If tensoring from the left there is no ambiguity,
so $H\oR\under$ means tensoring w.r.t. $h\cdot r=hs_H(r)$. (For $s_H$, $t_H$ see the next proof.)

\begin{pro} \label{pro: finfun - H}
For a small monoidal $\Ab$-category $\C$ and a flat essentially strong monoidal functor $\longf:\C\to\Ab$ assume
that $\fsh C$ is finitely generated projective for all object $C$ of $\C$. Then
\begin{enumerate}
\item there is a left flat $R$-coring structure $H$ on the abelian group $\longg\am{\C}\longf$ and an
isomorphism $\Qsh N\cong N\obar{R} H$ of comonads on $\M_R$, 
\item the coring in (1) is underlying a right $R$-bialgebroid $H$ such that
the equivalence $\M^\bimQ\simeq\M^H$ induced by (1) and by Proposition \ref{pro: M^Qhat=M^Q}
is a monoidal equivalence. 
\end{enumerate}
\end{pro}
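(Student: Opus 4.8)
The plan is to exploit the finiteness hypothesis to identify the comonads $\Qsh$ and $\bimQ$ with comonads of the form $\under\obar{R}H$ attached to a coring $H$, and then to read off from the monoidal structure of $\bimQ$ the data making $H$ a right bialgebroid, so that the verifications reduce to the classical coend construction \cite{Phung,Jo-St} and to the monoidal-comonad description of bialgebroids \cite{Day-Street}. Concretely: since $\fsh C$ is finitely generated projective as a right $R$-module, each $\fsh C$ is a right dual object with dual $\longg C=\M_R(\fsh C,R)$ (given its canonical $R$-$R$-bimodule structure), and for every $R$-module $N$ the canonical map $N\obar{R}\longg C\to\M_R(\fsh C,N)$, $n\obar{R}f\mapsto\{x\mapsto n\cdot f(x)\}$, is an isomorphism natural in $C$ and $N$. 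Hence $\Gsh N=\M_R(\fsh\under,N)\cong N\obar{R}\longg$ as presheaves, and since $N\obar{R}\under$ is a left adjoint it commutes with the coend computing $\Fsh$, giving
\[
\Qsh N=\Fsh\Gsh N=(\Gsh N)\am{\C}\fsh\cong(N\obar{R}\longg)\am{\C}\fsh\cong N\obar{R}(\longg\am{\C}\fsh)=N\obar{R}H
\]
naturally in $N$, with $H:=\longg\am{\C}\longf$. Running the same argument with $\phish$ reinstated --- using that $\bimf C$ is right dual in $_R\M_R$ with dual $\bimg C$, so $\bimG M\cong M\obar{R}\bimg$ --- produces a natural isomorphism $\bimQ M\cong M\obar{R}H$ lying over the one for $\Qsh$ along $\phish$ (equivalently, compatible with the comonad morphism $j$ of Lemma \ref{lem: j}).

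For part (1) I would transport the comonad structure $\bra\Qsh,\copsh,\epsh\ket$ across $\Qsh N\cong N\obar{R}H$: evaluating $\copsh$ and $\epsh$ at $N=R$ produces $R$-bimodule maps $\cop:H\to H\obar{R}H$ and $\eps:H\to R$, and the comonad axioms for $\Qsh$ become precisely coassociativity and counitality, so $H$ is an $R$-coring and $\Qsh\cong\under\obar{R}H$ as comonads. That $H$ is flat as a left $R$-module follows from left exactness of $\Qsh$ --- it is a composite of the left exact functors $\Gsh$ (a right adjoint), $\bimF$ (left exact because $\longf$ is flat, cf.\ Corollary \ref{cor: bimQ} and Subsection \ref{ss: Fsh - Gsh}) and $\phish$ --- since then $N\mapsto N\obar{R}H$ is left exact, which forces $H$ to be left flat.

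For part (2) the $R^e$-ring structure on $H$ is obtained by transporting the monoidal structure of $\bimQ$ (Corollary \ref{cor: bimQ}) across $\bimQ M\cong M\obar{R}H$: evaluating $\bimQ_{M,M'}$ and $\bimQ_0$ at $M=M'=R$ gives a multiplication and unit on $H$, the source and target maps $s_H,t_H:R\to H$ coming from the bimodule structures, and the coherence axioms of the monoidal comonad $\bimQ$ translate into the compatibility conditions between this $R^e$-ring and the $R$-coring of part (1), i.e.\ into the axioms of a right $R$-bialgebroid; this is exactly the passage from a monoidal comonad of the form $\under\obar{R}H$ on $_R\M_R$ to a right bialgebroid, which I would invoke from \cite{Day-Street} (see also \cite{Phung,Ka-Sz}) rather than rederive. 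Finally, $\Qsh\cong\under\obar{R}H$ identifies $\M^\Qsh$ with the category $\M^H$ of $H$-comodules, so composing with the isomorphism $\psh:\M^\bimQ\to\M^\Qsh$ of Proposition \ref{pro: M^Qhat=M^Q} yields an equivalence $\M^\bimQ\simeq\M^H$; it is monoidal since the product of $\M^\bimQ$ is induced by $\bimQ_{M,M'}$ (Corollary \ref{cor: bimQ}), which under $\bimQ M\cong M\obar{R}H$ becomes the multiplication of $H$, while the bialgebroid product on $\M^H$ is --- by the definition recalled in the introduction --- the coaction $m\oR n\mapsto(m\nulT\oR n\nulT)\obar{R}m\oneT n\oneT$ built from $\cop$ and that multiplication, the two being matched by the compatibility of $\psh$ with the actegory and monoidal structures established in Section \ref{s: FG}.

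The step I expect to be the main obstacle is purely bookkeeping: $R$ acts on $H$ through four different maps, so one must be meticulous about which action is meant at each occurrence of $\obar{R}$ (source versus target of the hom-groups, left versus right $R$-structure on $\bimf C$ and $\longg C$), so that all transported maps land in the correct bimodules and the bialgebroid axioms come out with the conventions of the Notation paragraph; once this dictionary is fixed, the remaining computations are routine instances of the classical coend--bialgebroid correspondence.
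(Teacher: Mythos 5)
Part (1) of your proposal is correct and is essentially the paper's own argument: finite projectivity of $\fsh C$ gives $\Gsh N\cong N\oR\gsh$ naturally, the tensor functor passes through the coend to yield $\Qsh N\cong N\obar{R}H$ with $H=\gsh\am{\C}\fsh$, the comonad structure transports to the coring structure (with the dual-basis formula for $\cop_H$), and left flatness of $_RH$ is read off from left exactness of $\Qsh=\phish\bimF\Gsh$.

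The gap is in part (2), at the claim ``$\bimG M\cong M\obar{R}\bimg$, hence $\bimQ M\cong M\obar{R}H$ lying over the isomorphism for $\Qsh$''. This is false: $\bimG MC={}_R\M_R(\bimf C,M)$ consists of \emph{bimodule} maps, and the duality between $\bimg C$ and $\bimf C$ in $_R\M_R$ identifies it with ${}_R\M_R(R,M\oR\bimg C)$, i.e.\ with the center $M\xover{R}\bimg C$ of the bimodule $M\oR\bimg C$, not with the tensor product itself; this is exactly what the paper records, $\bimQ M\cong(M\xover{R}\bimg)\am{\C}\bimf$, and the comparison with $\under\obar{R}H$ is only the monomorphism $j$ of Lemma \ref{lem: j}, not an isomorphism compatible with $\phish$. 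Already for $M=R$ and $C=I$ one has ${}_R\M_R(\bimf I,R)\cong Z(R)\subsetneq R\cong\M_R(\fsh I,R)$ when $R$ is noncommutative. Consequently your recipe for the $R^e$-ring structure --- evaluate $\bimQ_{M,M'}$ and $\bimQ_0$ at $M=M'=R$ --- does not produce the multiplication of $H$, because $\bimQ R=\bimF\bimG R\neq H$ in general; this is precisely the Takeuchi-$\xover{R}$ phenomenon that separates bialgebroids from bialgebras, and it also undercuts your final monoidality argument, which again transports $\bimQ_{M,M'}$ along the nonexistent isomorphism. The repair is the paper's route: define the multiplication independently, as the convolution structure coming from the monoidal structures of $\bimg$ and $\bimf$ (equivalently $H=\bimF G$ for the monoid $G$ of Subsection \ref{ss: G}), namely $(f\am{B}x)(g\am{C}y)=\bimg_{B,C}(f\oL g)\am{B\ot C}\bimf_{B,C}(x\oR y)$ with $s_H(r)=\bimg_0(1_L)\am{I}\bimf_0(r)$ and $t_H(r)=\bimg_0(r)\am{I}\bimf_0(1_R)$, then compute $\bimQ_{M,N}$ explicitly on the center description and verify through the embedding $j$ that the induced product of $\bimQ$-comodules is the $H$-comodule product, after which the bialgebroid axioms can be checked (or quoted from \cite{Ka-Sz}, \cite{Phung}). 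Your closing remark that the four $R$-actions are mere bookkeeping underestimates exactly this point: the bimodule-hom versus one-sided-hom distinction is where the center enters, and it cannot be normalized away by choosing conventions.
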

\begin{proof}
$(1)$ Noticing that the finiteness condition on $\fsh$ is equivalent to that
$\Gsh N\cong N\oR \gsh$ naturally in $N\in\M_R$ where $\gsh=\Gsh R$,
the statement
follows immediately: $\Fsh\Gsh N\cong (N\oR G)\am{\C}F\cong N\oR H$ naturally and this induces the following
coring structure on $H$
\begin{align*}
\text{underlying $R$-$R$-bimodule}&\quad \gsh\amo{\C}\fsh\\
\text{comultiplication}&\quad\cop_H(f\am{C}x)=\sum_i(f\am{C}x_C^i)\obar{R}(f_C^i\am{C}x)\\
\text{and counit}&\quad\eps_H(f\am{C}x)=fx
\end{align*}
where $\sum_i x_C^i\oR f_C^i\in \fsh C\oR\gsh C$ is the dual basis for the $R$-module $\fsh C$.
The module $_RH$ being flat is equivalent to that the comonad $\Fsh\Gsh$ is left exact.

$(2)$ Since $\bimG MC=\,_R\M_R(\bimf C,M)$ is the center of the bimodule
$\Gsh\phish MC=\M_R(\fsh C,\phish M)$, we have an isomorphism $\bimQ M\cong (M\xover{R}\bimg)\am{\C}\bimf$ where
$M\xover{R}\bimg C$ denotes the center of the bimodule $M\oR \bimg C$.
The coaction $M\to \bimQ M$ of a $\bimQ$-comodule therefore can be denoted by
$m\mapsto (m^{(0)}\xover{R} m^{(1G)})\am{\C}m^{(1F)}$. Embedded into $M\obar{R} H$ it becomes the coring
comultiplication $m\mapsto m^{(0)}\obar{R} m^{(1)}$ where $m^{(1)}\in H$ is the symbol for $m^{(1G)}\am{\C}m^{(1F)}$.
Computing explicitly what the natural map $\bimQ_{M,N}$ does for two bimodules $M$ and $N$,
\begin{equation*}
\begin{split}
(\bimF\bimG)_{M,N}\left(((m\xover{R}f)\am{B}x)\oR
((n\xover{R}g)\am{C}y)\right)=\\
=\left((m\oR n)\xover{R}\bimg_{B,C}(f\ot g)\right)\am{B\ot C}\bimf_{B,C}(x\oR y)\,,
\end{split}
\end{equation*}
we see that the monoidal product of two $\bimQ$-comodules is precisely the one of $H$-comodules when $H$ is
a bialgebroid with multiplication
\[
(f\am{B}x)(g\am{C}y):=\bimg_{B,C}(f\oL g)\amo{B\ot C}\bimf_{B,C}(x\oR y)\,.
\]
and $R^\op\ot R$-ring structure $t_H\ot s_H:R^\op\ot R\to H$ given by the ring homomorphisms
\begin{align*}
s_H(r)&=\bimg_0(1_L)\am{I}\bimf_0(r)\\
t_H(r)&=\bimg_0(r)\am{I}\bimf_0(1_R)
\end{align*}
called the source and target maps of $H$, respectively. Note that the $R$-$R$-bimodule structure of $H$ as a coring
is obtained by right multiplications with the source and target, $r'\cdot h\cdot r=ht_H(r')s_H(r)$, hence the name
right bialgebroid. The right bialgebroid axioms \cite{Ka-Sz} can now be easily verified. 
The details of the proof are omitted.  Since $H$ is the coend of the functor $\bimf$, its bialgebroid structure
has been already constructed in \cite{Phung} in a slightly different terminology.
\end{proof}

Finally we make a short observation on $\longg$ as a sheaf which, in the equivalent comodule category $\M^H$
is well-known \cite[18.9 (3)]{Brz-Wis}. 
The argument uses the following general fact of sheaves and presheaves: Although both $\Pre$ and $\She$ are complete and cocomplete the inclusion
$\G_\T:\She\to\Pre$ preserves only the limits. If $i\mapsto U_i$ is a sheaf-valued functor with colimit $U$ in $\Pre$ then $TU$ is its colimit in $\She$. 
\begin{lem}
Given a ring $R$ and a flat $\fsh:\C^\op\to\M_R$ such that $\fsh C$ is f.g. projective for $C\in\ob\C$
let $\She\subset\Pre$ be the corresponding subcategory of sheaves.
Then the pointwise dual $\gsh$ of $\fsh$ provides an object of $\She$ such that every sheaf
is subgenerated by $\gsh$.
\end{lem}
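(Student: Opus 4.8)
The plan is to show that the representable presheaves $YC$ are all subquotients (in fact quotients of coproducts) of copies of $\gsh$ in the sheaf category $\She$, and then invoke the fact that every sheaf is a colimit of representables. First I would use the finite generation and projectivity of $\fsh C$: pick, for each $C\in\ob\C$, a dual basis $\{x_C^i\oR f_C^i\}_{i=1}^{n_C}\subset \fsh C\oR\gsh C$. Each $f_C^i\in\gsh C=\M_R(\fsh C,R)$ is, by Yoneda, a morphism of presheaves $YC\to\gsh$ (equivalently an element of $\Pre(YC,\gsh)=\gsh C$), and the dual basis property says precisely that the induced map $\coprod_{i=1}^{n_C}YC\to\gsh$, when paired against the ``evaluation'' map $\gsh\to$ (copies of $YC$) coming from $x_C^i$, reproduces the identity of $YC$. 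More concretely: the element $\mathrm{id}_{\fsh C}=\sum_i x_C^i\otimes f_C^i$ exhibits $\fsh C$ as a retract of a finite direct sum of copies of $R$ in $\M_R$, and dualizing, $\gsh C$ is a retract of a finite direct sum of copies of $\gsh I$ — but what we actually want is the dual statement at the level of the presheaves themselves, namely that $YC$ is a retract (hence a subquotient) of $(\gsh)^{\oplus n_C}$ in $\Pre$, via the maps built from the $x_C^i$ and $f_C^i$. Since retracts are preserved by any functor, in particular by the sheafification $T$, this retract survives in $\She$.

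The key steps, in order, are: (1) Fix $C$ and the dual basis; interpret each $f_C^i$ as an arrow $YC\to\gsh$ in $\Pre$ and each $x_C^i$ as contributing an arrow $\gsh\to YC$ (using that $\gsh C=\M_R(\fsh C,R)$ and that $\fsh$, being f.g.\ projective, makes $\Gsh N\cong N\oR\gsh$ naturally, as recorded in the proof of Proposition~\ref{pro: finfun - H}). (2) Check that the composite $\coprod_i YC \xrightarrow{(f_C^i)} \gsh \xrightarrow{} \coprod_i YC$ — suitably arranged — has the identity of $YC$ as a summand, i.e.\ $YC$ is a retract of a finite coproduct of copies of $\gsh$ in $\Pre$. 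This is just the translation of $\sum_i x_C^i(\under)\,f_C^i=\mathrm{id}$ and is a routine unwinding of the coend formula for $\Gsh$. (3) Apply the sheafification functor $T=\G_\T\F_\T$: since $\gsh$ is already a sheaf (Lemma~\ref{lem: Gsh sheaf}) and $YC$ is a sheaf by hypothesis (we are in the subcanonical situation, $F$ subcanonical, cf.\ Proposition~\ref{pro: Gamma}), and $T$ preserves retracts and finite coproducts of sheaves land in $\She$ as $T$ of their coproduct in $\Pre$, we conclude that in $\She$ each $Y_\T C$ is a subobject (even a retract) of a finite coproduct of copies of $\gsh$. (4) Finally, take an arbitrary sheaf $\bra U,\nu_U^{-1}\ket$; write $U=\colim_i YC_i$ as a colimit of representables in $\Pre$, so that in $\She$ we have $\bra U,\nu_U^{-1}\ket=T(\colim_i YC_i)$, a colimit (computed in $\She$) of the $Y_\T C_i$. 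Since each $Y_\T C_i$ is subgenerated by $\gsh$ and the subgenerated objects are closed under colimits, $U$ is subgenerated by $\gsh$.

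The main obstacle I expect is step~(2)/(3): one must be careful that the maps ``$YC\to\gsh$'' and ``$\gsh\to YC$'' are genuinely presheaf morphisms and that their composite is the identity \emph{as presheaf morphisms}, not merely pointwise at the object $C$. The cleanest way around this is to avoid the pointwise picture entirely and argue representably: $\Pre(\gsh,YC)\cong ?$ is not obviously nice, so instead one uses the natural isomorphism $\Gsh N\cong N\oR\gsh$ from the proof of Proposition~\ref{pro: finfun - H} together with the adjunction $\Fsh\dashv\Gsh$ to identify $\Pre(YC,\gsh)=\gsh C=\M_R(\fsh C,R)$ and, dually, to realize the ``evaluation'' maps. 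Concretely, the dual basis element $\sum_i x_C^i\oR f_C^i\in\fsh C\oR\gsh C\cong \Gsh(\fsh C)C$ corresponds under $\Fsh\dashv\Gsh$ and the Yoneda lemma to an arrow $\gsh^{\oplus?}\to YC$ splitting the obvious one; I would spell this out just enough to see the retract, then immediately pass to $\She$ by applying $T$. Once the retract in $\She$ is in hand, the colimit argument in step~(4) is standard: subgenerated objects (subobjects of quotients of coproducts of copies of $\gsh$) in the Grothendieck category $\She$ form a colimit-closed class containing all $Y_\T C$, hence all of $\She$.
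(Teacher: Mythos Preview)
There is a genuine gap at the heart of your plan: the arrows $\gsh \to YC$ that you want to extract from the $x_C^i \in \fsh C$ do not exist in general, so the retract claim fails. The adjunction $\Fsh \dashv \Gsh$ together with Yoneda gives
\[
\Pre(YC, \gsh) = \Pre(YC, \Gsh R) \cong \M_R(\fsh C, R) = \gsh C,
\]
so the $f_C^i$ do give maps $YC \to \gsh$. But in the other direction an element $x \in \fsh C$ only yields a map $R \to \fsh C$ in $\M_R$, and applying $\Gsh$ produces $\gsh = \Gsh R \to \Gsh\fsh C$ --- a map into $\Gsh\fsh C$, not into $YC$. The dual-basis identity therefore translates into a retract $\Gsh\fsh C \hookrightarrow \gsh^{n_C} \twoheadrightarrow \Gsh\fsh C$, not one for $YC$. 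Concretely: take $\C$ to be the one-object $\Ab$-category on $\ZZ$, $R = \mathbb{Q}$, $\fsh(*) = \mathbb{Q}$; then $\gsh = \mathbb{Q}$ as an abelian group and $Y(*) = \ZZ$ is not a retract of any $\mathbb{Q}^n$, being non-divisible. You also import subcanonicity, which is not a hypothesis of the Lemma.

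The paper's argument avoids representables entirely. The key observation is that the f.g.\ projective hypothesis makes $\Gsh$ not only the right adjoint of $\Fsh$ but also a \emph{left} adjoint (since $\Gsh N \cong N \oR \gsh$), so $\Gsh$ preserves epimorphisms and coproducts. For an arbitrary sheaf $U$ one picks an epimorphism $e\colon \coprod_\I R \twoheadrightarrow \Fsh U$ in $\M_R$, applies $\Gsh$ to get an epimorphism $\coprod_\I \gsh \twoheadrightarrow \Gsh\Fsh U$ whose domain is already a sheaf (Lemma~\ref{lem: Gsh sheaf}), and then uses the embedding $U \cong TU \rightarrowtail \Gsh\Fsh U$ furnished by $\ish\Ks U$. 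Thus $U$ is a subsheaf of the $\gsh$-generated sheaf $\Gsh\Fsh U$. The closest salvageable version of your idea --- that $TYC$ embeds in $\Gsh\fsh C$, which \emph{is} a retract of $\gsh^{n_C}$ --- is just this argument specialised to $U = TYC$, followed by an unnecessary colimit step.
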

\begin{proof}
We have to show that every sheaf $U$ is the subsheaf of a $\longg$-generated sheaf.
Notice that the finiteness condition on $\fsh$ is equivalent to that its Kan extension $\Fsh$ is doubly left dual, i.e., $\Gsh:\M_R\to\Pre$ is both left and right dual. Since $R$ is a generator in $\M_R$, we can choose an epimorphism
$\coprod^\I R\longepi{e}\Fsh U$ and obtain the epimorphism $\Gsh e$ the domain of which is a coproduct of copies of $\longg=\Gsh R$, since $\Gsh$ is right adjoint, and it is a sheaf by Lemma \ref{lem: Gsh sheaf}. Thus we have an epimorphism of presheaves 
\[
T(\coprod^\I G)\cong\coprod^\I G\cong\Gsh(\coprod^\I R)\longepi{\Gsh e}\Gsh\Fsh U
\]
which is the image under $\G_\T$ of an epimorphism in $\She$ with domain a coproduct in $\She$ of copies of the sheaf $\longg$. This proves that the sheaf $\Gsh\Fsh U$ is generated by $\longg$. Finally notice that the sheaf $U$
itself is a subsheaf
\[
U\iso TU\longmono{\ish\Ks U}\Gsh\Fsh U
\]
of this $G$-generated sheaf.
\end{proof}

\section{The representation theorem} \label{sec: rep}

We have seen in the previous Section that certain fiber functors $F:\C\to\Ab$ factor through an embedding
$\bimk:\C\to\M^\bimQ$. Now we investigate the question if the image of this embedding is (equivalent to)
the subcategory of comodules that are finite projective as right $R$-modules.


\subsection{The case of corings}


Recall that an arrow $u$ in a category is called von Neumann regular if there exists an arrow $v$ for which
$u\ci v\ci u=u$. In particular, split epimorphisms and split monomorphisms, as well as idempotents, are von Neumann 
regular.
\begin{lem}\label{lem: von Neumann}
Let $\Qsh$ be a left exact comonad on $\M_R$. Let $\M^\Qsh_\fgp$ denote the full subcategory of the Eilenberg-Moore 
category $\M^\Qsh$ the objects of which are finitely generated and projective as $R$-modules.
Let $\F^\Qsh:\M^\Qsh\to\M_R$ be the canonical forgetful functor. 
If $t\in\M^\Qsh_\fgp$ is an arrow for which $u=\F^\Qsh t$ is von Neumann regular then $t$ has kernels and cokernels 
in $\M^\Qsh_\fgp$ and $\F^\Qsh$ preserves them.
\end{lem}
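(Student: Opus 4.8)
The plan is to push the splitting of the von Neumann regular morphism $u=\F^\Qsh t$ in $\M_R$ up to $\M^\Qsh_\fgp$ via exactness of $\F^\Qsh$. Write $t:X\to Y$ in $\M^\Qsh_\fgp$ and pick $v$ with $uvu=u$; then $e:=vu$ and $p:=uv$ are idempotent endomorphisms of $\F^\Qsh X$ and $\F^\Qsh Y$ respectively, and the relation $uvu=u$ yields $\ker u=\ker e$ (since $um=u(em)$ for every $m$) and $\im u=\im p$ (since $u=pu$ while $p=uv$). Idempotents split in module categories, so $\ker u$ is a direct summand of $\F^\Qsh X$ and $\coker u\cong\ker p$ is a direct summand of $\F^\Qsh Y$; as $X,Y\in\M^\Qsh_\fgp$ these ambient modules are finitely generated projective, hence so are $\ker u$ and $\coker u$.

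Next I would invoke that $\Qsh$ is a left exact comonad on the abelian category $\M_R$, so that $\M^\Qsh$ is (cocomplete) abelian, just as recorded for $\M^\bimQ$, and the forgetful functor $\F^\Qsh$ is exact: it preserves colimits as a left adjoint, and it creates finite limits because $\Qsh$ is left exact. Therefore $t$ possesses a kernel and a cokernel in $\M^\Qsh$, and $\F^\Qsh$ carries them to $\ker u$ and $\coker u$. By the previous paragraph these underlying $R$-modules are finitely generated projective, so $\ker t$ and $\coker t$ already lie in the full subcategory $\M^\Qsh_\fgp$.

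It remains only to observe that a kernel (resp. cokernel) of $t$ formed in $\M^\Qsh$ whose object happens to lie in the full subcategory $\M^\Qsh_\fgp$ is automatically a kernel (resp. cokernel) of $t$ in $\M^\Qsh_\fgp$, its universal property being tested against a subclass of the objects. Hence $t$ has kernels and cokernels in $\M^\Qsh_\fgp$, and $\F^\Qsh$ preserves them since it preserves kernels and cokernels in $\M^\Qsh$. The single load-bearing step is the splitting in the first paragraph: without von Neumann regularity the kernel and cokernel of a morphism between finitely generated projective $R$-modules need not be finitely generated projective, and that is precisely where the hypothesis is used; the remainder is idempotent bookkeeping together with the formal behaviour of forgetful functors out of Eilenberg-Moore categories of left exact comonads.
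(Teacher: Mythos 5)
Your argument is correct and is essentially the paper's own proof: the paper likewise uses that $\M^\Qsh$ is abelian with $\F^\Qsh$ exact (since $\Qsh$ is left exact), observes that kernels and cokernels of the von Neumann regular arrow $u$ in $\M_R$ split because they arise from the idempotents $1-v\ci u$ and $1-u\ci v$, deduces that the underlying modules of $\ker t$ and $\coker t$ are direct summands of f.g.\ projectives, and concludes by fullness of $\M^\Qsh_\fgp\subset\M^\Qsh$. The only cosmetic difference is that you phrase the splitting via the idempotents $vu$ and $uv$ directly rather than their complements.
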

\begin{proof}
Since $\M_R$ is abelian and $\Qsh$ is left exact, $\M^\Qsh$ is abelian and the forgetful functor $\F^\Qsh$ is exact.
Let $0\to A\rarr{k} B\rarr{t}C\rarr{c}D\to 0$ be an exact sequence in $\M^\Qsh$. Then $\F^\Qsh k$ is a kernel and 
$\F^\Qsh c$ is a cokernel of $u$. But the kernels and
cokernels of von Neumann regular arrows in $\M_R$ are split since they are constructed by splitting the idempotents
$1-v\ci u$ and $1-u\ci v$, respectively. Therefore $\F^\Qsh A$ and $\F^\Qsh D$ are direct summands of finitely
generated projective $R$-modules, so themselves are finitely generated projective. This proves that 
$k,c\in\M^\Qsh_\fgp$. Since $\M^\Qsh_\fgp\subset\M^\Qsh$ is a full subcategory, $k$ is a kernel and $c$ is a cokernel 
of $t$ also in $\M^\Qsh_\fgp$.
\end{proof}

Now we try to reach this situation from the abstract setup of a `fiber functor'. 
At first we must find functors which can guarantie the embedding theorem, without the monoidal structure as yet.

Let $\C$ be a small $\Ab$-category and $R$ a ring. We consider
the following properties for an additive functor $\fsh:\C\to\M_R$.
\begin{enumerate}
\item $\fsh$ is faithful.
\item Every arrow $t:A\to B$ in $\C$ for which $\fsh t$ is split epi has a kernel.
\item $\fsh$ preserves the kernels of arrows of (2).
\item Every arrow $t:A\to B$ in $\C$ for which $\fsh t$ is split mono has a cokernel.
\item $\fsh$ preserves the cokernels of arrows of (4).
\item $\fsh$ reflects isomorphisms.
\end{enumerate}
\begin{lem} \label{lem: f=Fu}
Assume $\fsh:\C\to\M_R$ satisfies (1), (2), (3), (4), (5) and (6). Then for every $f:\fsh B\to\fsh C$ for which there exist
$A$, $s:A\to B$ and $t:A\to C$ such that $\fsh s$ is split epi and $f\ci\fsh s=\fsh t$ there exists a unique
$u:B\to C$ such that $\fsh u=f$.
\end{lem}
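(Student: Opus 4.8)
The statement asks us to recover a morphism $u:B\to C$ in $\C$ from an $R$-module map $f:\fsh B\to\fsh C$ that is ``locally'' (along a $\fsh$-split-epi $s$) of the form $\fsh t$. The plan is to build $u$ by a kernel-cokernel construction in $\C$, using the three hypotheses on $\fsh$: faithfulness (1) for uniqueness, the existence of kernels of $\fsh$-split-epis (2),(3) and cokernels of $\fsh$-split-monos (4),(5) to perform the factorization, and reflection of isomorphisms (6) to conclude. Uniqueness is immediate: if $\fsh u=\fsh u'=f$ then $\fsh(u-u')=0$, so $u=u'$ by faithfulness.

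For existence, first I would form the kernel $k:N\to A$ of $s$ in $\C$, which exists by (2) and is preserved by $\fsh$ by (3); so $\fsh k$ is (up to iso) the kernel of the split epi $\fsh s$, hence $\fsh k$ is a split mono and its cokernel in $\M_R$ is (isomorphic to) $\fsh s$. I claim $t\ci k=0$: indeed $\fsh(t\ci k)=f\ci\fsh s\ci\fsh k=f\ci 0=0$, and faithfulness gives $t\ci k=0$. Now apply (4): since $\fsh k$ is split mono, $k$ has a cokernel in $\C$, say $q:A\to Q$, and by (5) $\fsh$ preserves it, so $\fsh q$ is a cokernel of $\fsh k$ in $\M_R$. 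Because $\fsh k$ is split mono, its cokernel $\fsh q$ is a split epi, and since $\fsh s$ is also a cokernel of $\fsh k$, there is a canonical isomorphism $\phi:\fsh Q\iso\fsh B$ with $\phi\ci\fsh q=\fsh s$. The point is now that this $\phi$ comes from $\C$: the universal property of the cokernel $q$ applied to the map $s:A\to B$ (which kills $k$, since $s\ci k=0$ as $k=\ker s$) produces a unique arrow $\bar s:Q\to B$ in $\C$ with $\bar s\ci q=s$; applying $\fsh$ and using faithfulness shows $\fsh\bar s=\phi$, hence $\fsh\bar s$ is an isomorphism, hence by (6) $\bar s:Q\iso B$ is an isomorphism in $\C$.

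Finally, since $t\ci k=0$, the same cokernel property of $q$ gives a unique $\bar t:Q\to C$ in $\C$ with $\bar t\ci q=t$. Set
\[
u:=\bar t\ci\bar s^{-1}:B\to C.
\]
It remains to check $\fsh u=f$. Applying $\fsh$ we get $\fsh u=\fsh\bar t\ci(\fsh\bar s)^{-1}=\fsh\bar t\ci\phi^{-1}$. Precompose with the epi $\fsh s=\phi\ci\fsh q$: then $\fsh u\ci\fsh s=\fsh\bar t\ci\phi^{-1}\ci\phi\ci\fsh q=\fsh\bar t\ci\fsh q=\fsh(\bar t\ci q)=\fsh t=f\ci\fsh s$. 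Since $\fsh s$ is (split) epi, it is right-cancellable in $\M_R$, so $\fsh u=f$, as required.

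\textbf{Main obstacle.} The only subtle point is getting the comparison map $\phi$ between the two cokernels of $\fsh k$ to descend to $\C$; I handle this by invoking the universal property of the cokernel $q$ in $\C$ directly for the arrow $s$ (legitimate because $s\ci k=0$), rather than trying to lift $\phi$ by hand, and then using faithfulness to identify $\fsh\bar s$ with $\phi$ and (6) to invert it. Everything else is a routine diagram chase with repeated use of faithfulness of $\fsh$; I would not belabor those steps.
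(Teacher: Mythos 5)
Your proof is correct and follows essentially the same route as the paper's: form the kernel $k$ of $s$, take its cokernel in $\C$, show the comparison arrow to $B$ becomes invertible under $\fsh$ and hence is invertible by (6), and then factor $t$ through that cokernel — the paper merely phrases the final step by concluding that $s$ itself is a cokernel of $k$ and writing $t=u\ci s$ directly, rather than composing $\bar t$ with $\bar s^{-1}$. One cosmetic slip: the identity $\fsh\bar s=\phi$ follows by right-cancelling the epimorphism $\fsh q$, not from faithfulness of $\fsh$.
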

\begin{proof}
By (2) there is a kernel $k:K\to A$ of $s$ and by (3) $\fsh k$ is a kernel of $\fsh s$. Since $\fsh s$ is split epi, its kernel
$\fsh k$ is split as well, therefore $k$ has a cokernel $c:A\to B'$ by (4). Since $s\ci k=0$, there is a unique $d:B'\to B$
such that $s=d\ci c$.
By (5) $\fsh c$ is a cokernel of $\fsh k$. But $\fsh s$ is also a cokernel of $\fsh k$ (in the abelian category $\M_R$
$\coker\ker e=e$ for every epi $e$), hence $\fsh d$ is an isomorphism and so is $d$ by (6).
Therefore $s$  is a cokernel of (its kernel) $k$.

Now $t$ satisfies $\fsh t\ci \fsh k=f\ci\fsh s\ci\fsh k=0$ so, by (1), $t\ci k=0$. Therefore $t$ factorizes uniquely through $\coker k=s$, i.e., $t=u\ci s$ with a unique $u$. Since $\fsh s$ is epi, $f=\fsh u$. If $f=\fsh u'$ then 
faithfulness of $\fsh$ implies $u'=u$.
\end{proof}


\begin{pro} \label{pro: rep}
Assume $\fsh:\C\to\M_R$ is faithful, flat, reflects isomorphisms and $\fsh A$ is finitely generated projective for all $A\in\ob\C$. Assume
\begin{enumerate}
\item $\C$ is additive,
\item $\C$ has kernels of arrows $t$ for which $\fsh t$ is epi,
\item $\C$ has cokernels of arrows $t$ for which $\fsh t$ is von Neumann regular and $\fsh$ preserves such cokernels.
\end{enumerate}
Then $\ksh:\C\to\M^\Qsh$ corestricts to an equivalence of categories from $\C$ to the category $\M^\Qsh_\fgp$
of $\Qsh$-comodules that are finitely generated projective as right $R$-modules.
\end{pro}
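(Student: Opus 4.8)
The plan is to establish three things for $\ksh\colon\C\to\M^\Qsh$: that it lands in $\M^\Qsh_\fgp$, that the resulting corestriction is fully faithful, and that it is essentially surjective onto $\M^\Qsh_\fgp$; the last is the substantial one. Since $\ksh=\Ks Y$ up to the natural isomorphism $\Fsh Y\iso\fsh$, one has $\F^\Qsh\ksh\cong\fsh$, so the underlying $R$-module of $\ksh C$ is $\fsh C$, finitely generated projective by hypothesis, which gives the corestriction.

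For full faithfulness I would apply Lemma \ref{lem: f=Fu}. Its six hypotheses follow from the assumptions: (1) is faithfulness of $\fsh$; for (2) and (4), a split epimorphism $\fsh t$ is in particular an epimorphism and a split monomorphism $\fsh t$ is von Neumann regular, so the kernels and cokernels in question exist in $\C$ by assumptions (2) and (3) of the Proposition; (3) holds because a flat functor preserves every finite limit that exists in its domain ($\Elt\fsh$ being cofiltered, $\fsh$ is a filtered colimit of representables, cf.\ the proof of Lemma \ref{lem: FF left adj}); (5) is the preservation clause of assumption (3); and (6) is reflection of isomorphisms. Lemma \ref{lem: f=Fu} then gives exactly condition (8) of Proposition \ref{pro: Gamma} --- the distinction between ``$\fsh s$ epi'' and ``$\fsh s$ split epi'' being immaterial since $\fsh B$ is projective --- and the passage from (8) to ``$\ksh$ fully faithful'' in the proof of Proposition \ref{pro: Gamma} uses only Theorem \ref{thm: GroT} together with the identification of $\Ls\Ks YC(B)$ with $\M^\Qsh(\ksh B,\ksh C)$, not the monoidal structure; hence $\ksh$ is fully faithful. (Incidentally this makes $\C$ Cauchy complete: an idempotent $e$ has $\fsh e$ von Neumann regular, so $1-e$ has a cokernel in $\C$, splitting $e$.)

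For essential surjectivity, take $\bra N,\beta\ket\in\M^\Qsh_\fgp$. Since $\Fsh$ is left exact, the counit $\thetash\colon\Ks\Ls\to\mathrm{id}$ is invertible (the non-monoidal form of Proposition \ref{pro: bimL}), so $\Fsh\Ls\bra N,\beta\ket\cong N$; combined with the formula for $\Ls$ (Proposition \ref{pro: Ls}) and $\Fsh=\under\amo{\C}\fsh$, this means $N$ is generated over $R$ by the images of the underlying maps $\F^\Qsh u\colon\fsh C\to N$ of comodule morphisms $u\colon\ksh C\to\bra N,\beta\ket$. As $N$ is finitely generated, finitely many $u_\ell\colon\ksh C_\ell\to\bra N,\beta\ket$ suffice; with $A:=\bigoplus_\ell C_\ell$ and $q:=[u_\ell]_\ell$ (using that $\ksh$ preserves finite biproducts), $q\colon\ksh A\to\bra N,\beta\ket$ is a comodule morphism with $\F^\Qsh q$ surjective, hence --- $\M^\Qsh$ being abelian ($\Qsh$ is left exact) and $\F^\Qsh$ faithful exact --- $q$ is epi and $\F^\Qsh q$ is a split epimorphism between finitely generated projective modules. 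Its kernel $\bra K,\gamma\ket$ in $\M^\Qsh$ has $K$ a direct summand of $\fsh A$, so $\bra K,\gamma\ket\in\M^\Qsh_\fgp$ again; repeating the construction yields a comodule epi $p\colon\ksh B\twoheadrightarrow\bra K,\gamma\ket$ with $\F^\Qsh p$ split epi. Writing $\kappa\colon\bra K,\gamma\ket\into\ksh A$ for the inclusion and using full faithfulness, let $t\colon B\to A$ be the unique arrow with $\ksh t=\kappa\ci p$; then $\fsh t=\F^\Qsh\kappa\ci\F^\Qsh p$ is a split monomorphism after a split epimorphism, hence von Neumann regular, so by assumption (3) $t$ has a cokernel $c\colon A\to A'$ in $\C$ with $\fsh c=\coker(\fsh t)$.

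It remains to identify $\ksh A'$ with $\bra N,\beta\ket$. In $\M^\Qsh$, $\coker(\ksh t)=\coker(\kappa\ci p)=\coker\kappa=q$, since $p$ is epi and $q$, being epi, is the cokernel of its kernel $\kappa$. From $\ksh c\ci\ksh t=\ksh(c\ci t)=0$ one gets a unique comodule morphism $j\colon\bra N,\beta\ket\to\ksh A'$ with $j\ci q=\ksh c$; applying the exact functor $\F^\Qsh$ and comparing the two cokernels $\F^\Qsh q$ and $\fsh c$ of $\fsh t$ in $\M_R$ shows $\F^\Qsh j$ is an isomorphism, whence $j$ is both monic and epic (as $\F^\Qsh$ reflects these) and therefore an isomorphism in the abelian category $\M^\Qsh$. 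Thus $\bra N,\beta\ket\cong\ksh A'$ and $\ksh$ is essentially surjective onto $\M^\Qsh_\fgp$. I expect the main obstacle to be precisely this realization of an arbitrary object of $\M^\Qsh_\fgp$ as a cokernel $\coker(\ksh t)$ with $t$ lying in $\C$: it is here that the von Neumann regularity hypothesis (producing $c$ inside $\C$) and its compatibility with $\fsh$ (matching $\ksh A'$ with the comodule cokernel) are indispensable.
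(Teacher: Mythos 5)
Your proposal is correct and follows essentially the same route as the paper: full faithfulness via Lemma \ref{lem: f=Fu} combined with the $(4)\Leftrightarrow(8)$ part of Proposition \ref{pro: Gamma}, and essential surjectivity by exhibiting a f.g.\ projective $\Qsh$-comodule as the cokernel of $\ksh t$ for an arrow $t$ of $\C$ with $\fsh t$ von Neumann regular, then invoking hypothesis (3). The only (harmless) variation is that you obtain the generating family of comodule maps $\ksh C\to\bra N,\beta\ket$ directly from the invertible counit $\thetash$ and the formula for $\Ls$, where the paper instead presents every comodule as a colimit of the $\ksh A$ via the sheaf-theoretic ``colimit of representables'' argument and then cuts down to a finite presentation; both rest on the same localization $\Ks\dashv\Ls$ and lead to the same cokernel identification.
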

\begin{proof}
Since flat functors preserve kernels, epimorphisms to projectives are split and split epimorphisms are von Neumann
regular, the conditions of Lemma \ref{lem: f=Fu} are satisfied. Thus the (4)$\Leftrightarrow$(8) part of Proposition \ref{pro: Gamma} implies that $\ksh$ is full and faithful. It remains to show that $\ksh$ is essentially surjective,
at least on the objects of $\M^\Qsh_\fgp$.

Every presheaf is the colimit of representables. Since Proposition \ref{pro: Gamma} applies, all representable
presheaves are $\T$-sheaves. Therefore if $U$ is a sheaf and $\tau_i:YA_i\to U$ is a colimiting cone in $\Pre$ then
$T\tau_i$, which is a colimiting cone in $\She$, is isomorphic to a lift of the original cone $\tau$ to $\She$ and presents the sheaf $U$ as a colimit of representables. Then, by the equivalence $\She\simeq\M^\Qsh$,
every $\Qsh$-comodule $M$ is the colimit of comodules $\ksh A$ with $A\in\ob\C$. Therefore every $M$ has a presentation
\[
\coprod_{j\in J} \ksh B_j\longrarr{\beta}\coprod_{i\in I}\ksh A_i\longrarr{\alpha}M\rarr{}0
\]
in the abelian category $\M^\Qsh$. The forgetful functor $\F^\Qsh:\M^\Qsh\to\M_R$, being left adjoint, preserves colimits therefore preserves the structure of this presentation.
If the underlying $R$-module $\F^\Qsh M$ is projective then $\F^\Qsh \alpha$ splits.
If it is also finitely generated then the splitting map factors through a finite subcoproduct $\coprod_{I_0}\fsh A_i
\into \coprod_{I}\fsh A_i$. Restricting $\alpha$ to the corresponding subcoproduct
$\coprod_{I_0} \ksh A_i \into \coprod_{I} \ksh A_i$ is still an epimorphism since $\F^\Qsh$ is faithful.
For all objects $M\in\M^\Qsh_\fgp$ therefore there is a presentation
\[
\coprod_{j\in J_0} \ksh B_j\longrarr{\beta}\coprod_{i\in I_0}\ksh A_i\longrarr{\alpha}M\rarr{}0
\]
in which $I_0$ is finite. Since splitness of $\F^\Qsh\alpha$ implies that the image of $\F^\Qsh\beta$ is a direct summand of a finitely generated projective module, $\F^\Qsh\beta$ is a split epi onto its image:
There exist $a$, $b$ in $\M_R$ such that $\F^\Qsh\alpha\ci a=1$ and $a\ci\F^\Qsh\alpha + \F^\Qsh\beta\ci b=1$.
Therefore $J_0$ can be chosen finite, too, and $\F^\Qsh\beta\ci b\ci\F^\Qsh\beta=\F^\Qsh\beta$.
Finally, using additivity of $\C$ and fullness of $\ksh$ we can present $M$ as the cokernel
\[
\ksh B\longrarr{\ksh b}\ksh A\longepi{} M
\]
of $\ksh b$ of an arrow $b\in\C$ for which $\F^\Qsh\ksh b=\fsh b$ is von Neumann regular.
By assumption such arrows have cokernels in $\C$ and $\fsh=\F^\Qsh\ksh$ preserves them. Since $\F^\Qsh$
reflects cokernels, as every comonadic functor does, $\ksh$ preserves the cokernels of $\fsh$-von Neumann regular arrows. This proves the existence of an object $C\in\C$ such that $M$ is isomorphic to $\ksh C$.
\end{proof}
By Proposition \ref{pro: finfun - H} (1) we know that the comonad $\Qsh$ of the above Proposition is the comonad of
a left flat coring $H$ over $R$ and therefore we actually proved the equivalence of $\C$ with the category $\M^H_\fgp$ of $H$-comodules that are f.g. projective as right $R$-modules. 

\begin{cor} \label{abelian coring rep thm}
Let $\C$ be a small abelian category and $\fsh:\C\to \M_R$ a faithful exact functor such that $\fsh C$ is f.g. projective $\forall C\in\ob\C$. Then there is a left flat $R$-coring $H$, such that $\M^H$ is generated by its subcategory
$\M^H_\fgp$, and an equivalence of monoidal categories
$\C\simeq  \M^H_\fgp$ through of which $\fsh$ factors as $\C\iso\M^H_\fgp\subset\M^H\to\M_R$.
\end{cor}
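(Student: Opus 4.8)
The plan is to read this off from Proposition~\ref{pro: rep} and Proposition~\ref{pro: finfun - H}, after checking that the hypotheses of the former are automatic in the abelian setting. First I would verify the input conditions of Proposition~\ref{pro: rep} for $\fsh\colon\C\to\M_R$. Since $\C$ is abelian it has all finite limits, so exactness of $\fsh$ means it is left exact, hence flat by \cite[I. Proposition 6.3.8]{Borceux}. Faithfulness forces $\fsh$ to reflect zero objects (if $\fsh X=0$ then $\id_X\mapsto 0$, so $\id_X=0$); combined with exactness this gives that $\fsh$ reflects isomorphisms, because for any $t$ with $\fsh t$ invertible one has $\fsh(\Ker t)=\Ker(\fsh t)=0$ and $\fsh(\Coker t)=\Coker(\fsh t)=0$, whence $\Ker t=\Coker t=0$ and $t$ is iso. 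The remaining items are immediate: $\C$ is additive, it has all kernels and cokernels, and $\fsh$, being exact, preserves all cokernels, in particular cokernels of von Neumann regular arrows; finiteness of the $\fsh C$ is assumed. So Proposition~\ref{pro: rep} applies and $\ksh\colon\C\to\M^\Qsh$ corestricts to an equivalence $\C\simeq\M^\Qsh_\fgp$.

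Next I would produce the coring. The finiteness hypothesis makes $\Gsh N\cong N\oR\gsh$ naturally, and the construction in the proof of Proposition~\ref{pro: finfun - H}(1) of the $R$-coring structure on $H:=\longg\amo{\C}\fsh$ together with the comonad isomorphism $\Qsh N\cong N\obar{R}H$ uses only flatness of $\fsh$ and this finiteness, not any monoidal structure on $\C$; in particular $H$ is left flat because $\Qsh$ is left exact. Transporting the Eilenberg--Moore category along this comonad isomorphism gives an equivalence $\M^\Qsh\simeq\M^H$ compatible with the forgetful functors to $\M_R$, hence restricting to $\M^\Qsh_\fgp\simeq\M^H_\fgp$. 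Composing, $\C\simeq\M^H_\fgp$.

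For the generation statement I would reuse the colimit argument from the proof of Proposition~\ref{pro: rep}: every presheaf is a colimit of representables, every representable is a $\T$-sheaf by Proposition~\ref{pro: Gamma} (its hypotheses being among those just checked), and $\She\simeq\M^\Qsh$; hence every $\Qsh$-comodule is a quotient of a coproduct of comodules $\ksh A$, $A\in\ob\C$. Since the underlying module of $\ksh A$ is $\fsh A$, which is finitely generated projective, each $\ksh A$ lies in $\M^\Qsh_\fgp$, so $\M^\Qsh\simeq\M^H$ is generated by $\M^H_\fgp$. Finally, the factorization of $\fsh$ is built in: $\F^\Qsh\ksh\cong\fsh$ via the natural isomorphism $\NQ\colon\Ks Y\iso\ksh$ of Section~\ref{s: comp}, and transporting along $\M^\Qsh\simeq\M^H$ turns the composite of $\ksh$ with the inclusion $\M^\Qsh_\fgp\subset\M^\Qsh$ and the forgetful functor $\F^\Qsh\colon\M^\Qsh\to\M_R$ into the claimed factorization $\C\iso\M^H_\fgp\subset\M^H\to\M_R$.

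There is no deep obstacle here; the only point that requires care is making sure none of the ``finiteness-free'' hypotheses of Proposition~\ref{pro: rep} secretly needs more than abelianness of $\C$ and exactness of $\fsh$ (the von Neumann regular cokernel condition being the one to watch), and that the coring part of Proposition~\ref{pro: finfun - H}(1) genuinely does not invoke a monoidal structure. Regarding the word ``monoidal'' in the statement: with only the data given one obtains an equivalence of plain categories; if $\C$ is moreover monoidal and $\fsh$ strong monoidal, then Theorem~\ref{thm: embedding} together with Proposition~\ref{pro: finfun - H}(2) upgrades $\C\simeq\M^H_\fgp$ to an equivalence of monoidal categories through which $\fsh$ factors monoidally.
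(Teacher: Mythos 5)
Your proposal is correct and takes essentially the same route as the paper: the paper offers no separate argument for Corollary \ref{abelian coring rep thm}, treating it as the immediate consequence of Proposition \ref{pro: rep} together with the preceding remark that, by Proposition \ref{pro: finfun - H}(1), $\Qsh$ is the comonad of a left flat $R$-coring, the only work being the reduction of the hypotheses of Proposition \ref{pro: rep} to abelianness of $\C$ and faithful exactness of $\fsh$, exactly as you carry it out (compare the proof of the monoidal analogue, Corollary \ref{abelian bgd rep thm}). Your added care about the generation statement and about the word ``monoidal'' in the statement (which, with the given non-monoidal data, should be read as an equivalence of plain categories) is consistent with what the paper intends.
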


\subsection{Fiber functors}

Motivated by Proposition \ref{pro: rep} we can now distinguish a class of monoidal functors that can serve as input for
our Tannaka duality.

\begin{defi} \label{def: fiber}
The data $\bra\C,\longf\ket$ consisting of a small additive monoidal category $\C$ and an essentially strong
monoidal additive functor $\longf:\C\to \Ab$ is called a fiber functor if
in the (up to isomorphism unique) factorization $\C\rarr{\bimf}\,_R\M_R\rarr{\phish}\M_R\to\Ab$ of $\longf$, with
$\bimf$ strong monoidal, the functor $\fsh:=\phish\bimf:\C\to\M_R$
satisfies the following conditions:
\begin{itemize}
\item $\fsh$ is faithful, flat and reflects isomorphisms,
\item $\fsh C$ is f.g. projective for all $C\in\ob\C$,
\item $\C$ has kernels of arrows $t$ for which $\fsh t$ is epi,
\item $\C$ has cokernels of arrows $t$ for which $\fsh t$ is von Neumann regular and $\fsh$ preserves such
cokernels.
\end{itemize}
\end{defi}

The main result of this section is the following Representation Theorem.
\begin{thm} \label{thm: rep}
Let $\bra\C,\longf\ket$ be a fiber functor. Then there exists a ring $R$, a right $R$-bialgebroid $H$ and
an equivalence of monoidal categories $\C\simeq  \M^H_\fgp$ through of which
$\longf$ factors as the composite $\C\iso\M^H_\fgp\subset\M^H\to\Ab$ of monoidal functors.
\end{thm}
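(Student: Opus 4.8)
The plan is to assemble the theorem from three results already proved: the normal factorization of Proposition~\ref{pro: esss}, the abstract representation Proposition~\ref{pro: rep}, and the coring/bialgebroid identification of Proposition~\ref{pro: finfun - H}. First I would perform the zeroth step of reconstruction: set $R:=\longf I$ and take the (essentially unique) normal factorization $\C\rarr{\bimf}\,_R\M_R\rarr{\phish}\M_R\rarr{\philong}\Ab$ with $\bimf$ strong monoidal, so that $\fsh:=\phish\bimf$ is exactly the functor named in Definition~\ref{def: fiber}.

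Next I would observe that the hypotheses built into the notion of fiber functor---$\fsh$ faithful, flat, reflecting isomorphisms, with values f.g.\ projective, together with $\C$ additive, having kernels of arrows inverted by $\fsh$ up to epimorphism and cokernels of $\fsh$-von Neumann-regular arrows preserved by $\fsh$---are precisely the assumptions of Proposition~\ref{pro: rep}. Hence that proposition applies verbatim: the functor $\ksh:\C\to\M^\Qsh$ of Proposition~\ref{pro: Ls} corestricts to an equivalence of categories $\C\simeq\M^\Qsh_\fgp$, and since $\ksh=\psh\bimk$ with $\psh:\M^\bimQ\to\M^\Qsh$ the isomorphism of categories of Proposition~\ref{pro: M^Qhat=M^Q}, the functor $\bimk$ of~(\ref{bimk}) corestricts to an equivalence $\C\simeq\M^\bimQ_\fgp$.

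I would then transport this into the world of bialgebroid comodules. Since $\fsh C$ is f.g.\ projective for every $C$, Proposition~\ref{pro: finfun - H}(1) identifies $\Qsh$ with the comonad $\under\obar{R}H$ of a left flat $R$-coring $H$, and part~(2) makes $H$ a right $R$-bialgebroid with $\M^\bimQ\simeq\M^H$ a \emph{monoidal} equivalence. Because the functors $\psh$, $\Xi$ and the equivalence $\M^\bimQ\simeq\M^H$ all sit over the forgetful functors to $\M_R$ (Corollary~\ref{cor: psh}, Lemma~\ref{lem: Xi}, Proposition~\ref{pro: finfun - H}), they restrict to equivalences between the respective full subcategories of objects that are f.g.\ projective as right $R$-modules; composing with the equivalence $\C\simeq\M^\bimQ_\fgp$ of the previous step gives an equivalence of categories $\C\simeq\M^H_\fgp$ whose underlying functor is (isomorphic to) $\bimk$ transported along $\M^\bimQ\simeq\M^H$. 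To upgrade this to a monoidal equivalence, note that by Lemma~\ref{lem: bimK} the functor $\bimk$ is isomorphic to the composite $\bimK Y$ of strong monoidal functors, hence strong monoidal; as $\C$ is monoidal its essential image $\M^\bimQ_\fgp$ is therefore closed (up to isomorphism) under $\oR$, so $\M^H_\fgp$ is a monoidal subcategory of $\M^H$, the inclusion $\M^H_\fgp\subset\M^H$ followed by the strong monoidal forgetful functor to $\Ab$ is monoidal, and a strong monoidal functor which is an equivalence of underlying categories is automatically an equivalence of monoidal categories. Finally $\longf=\philong\phish\bimf$ and $\bimf=\F^\bimQ\bimK Y\cong\F^\bimQ\bimk$ (Lemma~\ref{lem: bimK}), so transporting $\F^\bimQ$ along $\M^\bimQ\simeq\M^H$ exhibits $\longf$ as the composite $\C\iso\M^H_\fgp\subset\M^H\to\Ab$ of monoidal functors.

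The genuinely delicate point is not any single computation but the bookkeeping: one must check that the three equivalences $\M^\bimQ\cong\M^\Qsh$, $\M^\bimQ\simeq\M^H$ and $\C\simeq\M^\Qsh_\fgp$ are mutually compatible in that they all lie over the forgetful functors to $\M_R$, so that they match the `$\fgp$' subcategories and the monoidal structures coherently. I expect the main obstacle to be making the monoidality assertion of Proposition~\ref{pro: finfun - H}(2) interact correctly with the passage to f.g.\ projective comodules---i.e.\ confirming that $\M^H_\fgp$ is closed under $\oR$---which is exactly where strong monoidality of $\bimk$ together with its essential surjectivity onto $\M^\bimQ_\fgp$ does the work; the remaining verifications are routine.
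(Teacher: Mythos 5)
Your proposal is correct and follows essentially the same route as the paper's proof: reconstruct $R$ from the normal factorization, invoke Proposition~\ref{pro: rep} (whose hypotheses are exactly those of Definition~\ref{def: fiber}) for the equivalence onto the f.g.\ projective comodules, and use Proposition~\ref{pro: finfun - H} together with Proposition~\ref{pro: M^Qhat=M^Q} to identify $\Qsh$ with the comonad of the bialgebroid $H$ and to make the factorization $\C\rarr{\bimk}\M^H\to\Ab$ monoidal. The only difference is that you assemble the pieces in the reverse order and spell out the (routine) compatibility of the equivalences over $\M_R$ and the monoidality of $\M^H_\fgp$, which the paper leaves implicit.
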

\begin{proof}


The ring $R$ is the base ring $\bra \longf I,\longf\luni_I\ci\longf_{I,I},\longf_0\ket$ of the fiber functor.
The bialgebroid $H$ is provided by Proposition \ref{pro: finfun - H}  as a right $R$-bialgebroid structure on $\bimg\am{\C}\bimf$ such that the equivalence $\M^H\simeq\M^\Qsh$ of categories induced by
the comonad isomorphism $\under\oR H\cong \Qsh$ becomes a monoidal equivalence when composed with
$\M^\Qsh\simeq\M^\bimQ$ of Proposition \ref{pro: M^Qhat=M^Q}.
This proves the monoidal factorization $\C\rarr{\bimk}\M^H\to\Ab$ of the fiber functor with $\bimk$ strong monoidal.
Then by Proposition \ref{pro: rep} $\bimk$ is an equivalence onto the full subcategory $\M^H_\fgp\subset\M^H$ of $H$-comodules that are f.g. projective as right $R$-modules.
\end{proof}

\begin{cor} \label{abelian bgd rep thm}
Let $\C$ be a small abelian monoidal category and $\longf:\C\to \Ab$ a faithful exact essentially strong monoidal functor such that $\fsh C$ is f.g. projective $\forall C\in\ob\C$. Then there is a ring $R$, a right $R$-bialgebroid $H$,
such that $_RH$ is flat and every object of $\M^H$ is the colimit of objects from $\M^H_\fgp$, and an equivalence of
monoidal categories $\C\simeq  \M^H_\fgp$ through of which $\longf$ factors as $\C\iso\M^H_\fgp\subset\M^H\to\Ab$.
\end{cor}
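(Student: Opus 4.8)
The plan is to derive the corollary from the Representation Theorem \ref{thm: rep} by showing that $\bra\C,\longf\ket$ is a fiber functor in the sense of Definition \ref{def: fiber}, and then to extract the two supplementary assertions—flatness of $_RH$ and colimit-generation of $\M^H$ by $\M^H_\fgp$—from ingredients already available, namely Corollary \ref{cor: bimQ}, Proposition \ref{pro: finfun - H}(1) and the proof of Proposition \ref{pro: rep}.

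First I would form the normal factorization $\C\rarr{\bimf}\,_R\M_R\rarr{\phish}\M_R\to\Ab$ of $\longf$ (with $\bimf$ strong monoidal, since $\longf$ is essentially strong monoidal) and transfer the hypotheses on $\longf$ to $\fsh:=\phish\bimf$. The key observation is that the forgetful functor $_R\M_R\to\M_R\to\Ab$ is faithful, conservative, and creates finite limits and colimits, being a composite of forgetful functors of monads on $\Ab$. From this one reads off: $\fsh$ is faithful because $\longf$ is; $\fsh$ is exact because $\longf$ is exact and the forgetful functor to $\Ab$ both preserves and reflects exactness of finite cones; in particular $\fsh$ is left exact, and since $\C$ is abelian (hence has kernels) the criterion recalled in the Introduction makes $\fsh$ flat; and $\fsh$ reflects isomorphisms, a faithful exact functor between abelian categories being conservative. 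That $\fsh C$ is finitely generated projective over $R$ for every $C$ is assumed outright. Finally, the last two bullets of Definition \ref{def: fiber} hold vacuously: $\C$ being abelian has all kernels and all cokernels, and $\fsh$, being exact, preserves all of them. Hence $\bra\C,\longf\ket$ is a fiber functor.

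Applying Theorem \ref{thm: rep} then produces the ring $R=\bra\longf I,\longf\luni_I\ci\longf_{I,I},\longf_0\ket$, a right $R$-bialgebroid $H$ with underlying abelian group $\bimg\amo{\C}\bimf$, and a monoidal equivalence $\C\simeq\M^H_\fgp$ through which $\longf$ factors as $\C\iso\M^H_\fgp\subset\M^H\to\Ab$. For flatness of $_RH$ I would invoke that $\longf$ is flat, whence the comonad $\bimQ$ (equivalently $\Qsh$) is left exact by Corollary \ref{cor: bimQ}; by Proposition \ref{pro: finfun - H}(1) left exactness of $\Qsh\cong\under\obar{R}H$ is precisely the assertion that $_RH$ is flat. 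For colimit-generation I would recall from the proof of Proposition \ref{pro: rep} that every representable presheaf is a $\T$-sheaf, so that every $\T$-sheaf—hence, via the equivalence $\She\simeq\M^\Qsh$, every $\Qsh$-comodule—is a colimit of comodules $\ksh A$, $A\in\ob\C$ (apply the reflection $T$ to the canonical presentation of a presheaf as a colimit of representables); each $\ksh A=\bra\fsh A,\deltash_A\ket$ lies in $\M^\Qsh_\fgp$ since $\fsh A$ is f.g. projective. Transporting along the monoidal equivalences $\M^\Qsh\simeq\M^\bimQ\simeq\M^H$ then shows every $H$-comodule is a colimit of objects of $\M^H_\fgp$.

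I do not expect a genuine obstacle here: the substantive content sits in Theorem \ref{thm: rep}, and the work in this corollary is the routine descent of exactness, faithfulness and flatness along the forgetful functor $_R\M_R\to\Ab$. The only point requiring a little care is that the colimit-generation statement appears inside the proof of Proposition \ref{pro: rep} rather than in its statement, so I would either cite that argument explicitly or reproduce the one-line reasoning (representables are $\T$-sheaves; the reflection $T$ preserves colimits; hence every sheaf is a colimit of $\T$-sheafified representables).
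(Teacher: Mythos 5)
Your proposal is correct and follows essentially the same route as the paper: the paper's proof of this corollary likewise reduces everything to checking that a faithful exact essentially strong monoidal functor on an abelian category is a fiber functor (faithfulness and exactness descend to $\fsh$ since $\phish$ reflects epis and monos, flatness follows from left exactness plus existence of finite limits, conservativity from reflecting epis and monos, and the kernel/cokernel clauses are automatic), and then invokes Theorem \ref{thm: rep}. Your extra paragraphs on flatness of $_RH$ and colimit-generation simply make explicit what the paper inherits from Corollary \ref{cor: bimQ}, Proposition \ref{pro: finfun - H}(1) and the argument inside the proof of Proposition \ref{pro: rep}, so they are a harmless (and accurate) elaboration rather than a different method.
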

\begin{proof}
We have to show only that every faithful exact essentially strong monoidal functor $\longf$ on an abelian monoidal
category is a fiber functor in the sense of Definition \ref{def: fiber}.
Taking the factorization $\longf=\phish\fsh$ and using that $\phish:\M_R\to\Ab$ reflects epis and monos we see
that also $\fsh$ must be faithful and exact. Then $\fsh$ is flat since $\C$ has finite limits and $\fsh$ preserves them,
$\fsh$ reflects isomorphisms since it reflects epis and monos. The remaining properties are obvious since
$\C$ has all finite limits and colimits.
\end{proof}

The detailed Representation Theorem is this.
\begin{thm}
Let $\longf:\C\to\Ab$ be a fiber functor in the sense of Definition \ref{def: fiber}. Then there exist
\begin{itemize}
\item  a ring $R$,
\item a right $R$-bialgebroid $H$ such that
\begin{enumerate}
\item $H$ is flat as a left $R$-module,
\item every right $H$-comodule is generated by right $H$-comodules that are f.g. projective as right
$R$-modules
\end{enumerate}
\item a special left exact monoidal idempotent monad (see Definition \ref{def: spec}) $\T$ on the monoidal
category $\Pre$ of presheaves over $\C$
\item a strong monoidal left exact left adjoint functor $\bimK:\Pre\to\M^H$,
\item a strong monoidal fully faithful functor $Y_\T:\C\to\She$ embedding $\C$ into the category of $\T$-sheaves
\item a monoidal category equivalence $\She\simeq\M^H$ of the category of $\T$-sheaves with the category
of right $H$-comodules
\item and a monoidal category equivalence $\C\simeq\M^H_\fgp$ of $\C$ with the full subcategory of $H$-comodules that are f.g. projective as right $R$-modules
\end{itemize}
such that, together with the fully faithful strong monoidal embeddings $\She\into\Pre$ and $\M^H_\fgp\into\M^H$
and with the comonadic forgetful functor $\F^H$, the diagram
\begin{equation} \label{diag: all}
\parbox{150pt}{
\begin{picture}(150,150)(0,-30)
\put(0,10){$\C$}
\put(0,110){$\Pre$}
\put(37,80){$\She$}
\put(80,40){$\M^H_\fgp$}
\put(120,110){$\M^H$}
\put(112,10){$_R\M_R$}
\put(150,-30){$\Ab$}

\put(4,25){\vector(0,1){75}} \put(-10,60){$Y$}
\put(10,25){\vector(1,2){25}} \put(14,58){$Y_\T$}
\put(52,85){\vector(3,1){60}} \put(70,96){$\simeq$}
\put(17,17){\vector(3,1){60}} \put(47,32){$\simeq$}
\put(90,52){\vector(1,2){25}}
\put(20,113){\vector(1,0){90}} \put(60,116){$\bimK$}
\put(18,13){\vector(1,0){90}} \put(62,16){$\bimf$}
\put(124,100){\vector(0,-1){75}} \put(130,60){$\F^H$}
\put(20,8){\vector(4,-1){120}} \put(80,-5){$\longf$}
\put(30,90){\vector(-1,1){20}}
\put(130,2){\vector(1,-1){20}}
\end{picture}
}
\end{equation}
is commutative in the category of monoidal functors.
\end{thm}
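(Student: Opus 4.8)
The plan is to assemble the statement from the machinery built up in Sections~\ref{s: FG}--\ref{sec: rep}; essentially nothing new is needed beyond checking that a fiber functor in the sense of Definition~\ref{def: fiber} triggers each cited result and then chasing the diagram~(\ref{diag: all}).

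First I would fix $R$ as the base ring $\bra\longf I,\longf\luni_I\ci\longf_{I,I},\longf_0\ket$ coming from the normal factorization $\C\rarr{\bimf}\,_R\M_R\rarr{\phish}\M_R\to\Ab$ of $\longf$. A fiber functor is in particular flat and essentially strong monoidal, so by Proposition~\ref{pro: esss} and Lemma~\ref{lem: left adjoint} the left Kan extension $\longF$ is flat essentially strong monoidal and its strong part $\bimF:\Pre\to\,_R\M_R$ is a left exact left adjoint. Then Corollary~\ref{cor: bimQ} supplies the left exact monoidal comonad $\bimQ$ on $_R\M_R$, Proposition~\ref{pro: M^Qhat=M^Q} the left exact comonad $\Qsh$ on $\M_R$ together with an isomorphism $\M^\bimQ\cong\M^\Qsh$, and --- since $\fsh C$ is f.g.\ projective for every $C$ --- Proposition~\ref{pro: finfun - H} identifies $\Qsh$ with $\under\obar{R}H$ for a left-flat right $R$-bialgebroid $H=\bimg\am{\C}\bimf$ and promotes $\M^\bimQ\simeq\M^\Qsh\simeq\M^H$ to a monoidal equivalence. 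The remaining ambient data come off the shelf: the special left exact monoidal idempotent monad $\T=\bra\bimt,\bimmu,\bimnu\ket$ on $\Pre$ from Proposition~\ref{pro: T}(3); the comparison functor $\bimK:\Pre\to\M^\bimQ$, which by Lemma~\ref{lem: bimK} is left exact, strong monoidal and left adjoint, composed with $\M^\bimQ\simeq\M^H$ to take values in $\M^H$; and the generation property (item~(2)), which falls out of the proof of Proposition~\ref{pro: rep} --- every presheaf, hence every sheaf, is a colimit of representables, and under $\She\simeq\M^H$ the representables become $H$-comodules that are f.g.\ projective over $R$.

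Next I would record the two genuine inputs, both already verified. A fiber functor satisfies verbatim the hypotheses of Proposition~\ref{pro: rep}, so $\ksh$ --- equivalently $\bimk\cong\bimK Y$ of~(\ref{bimk}) --- corestricts to an equivalence $\C\simeq\M^\Qsh_\fgp\simeq\M^H_\fgp$; this is Theorem~\ref{thm: rep}, and it also provides the monoidal equivalence $\C\simeq\M^H_\fgp$. In particular condition~(4) of Proposition~\ref{pro: Gamma} holds, hence so do its equivalents (3) and (7); condition~(7) is precisely the hypothesis of the embedding Theorem~\ref{thm: embedding}, which therefore yields the fully faithful strong monoidal $Y_\T:\C\to\She$ with $Y=\G_\T Y_\T$, while Lemma~\ref{lem: *} makes $\bimK\G_\T:\She\to\M^\bimQ$ a monoidal equivalence; post-composing with $\M^\bimQ\simeq\M^H$ gives $\She\simeq\M^H$.

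Finally I would chase~(\ref{diag: all}) in $\MonCat$: the triangle $\C,\Pre,\She$ commutes by $Y=\G_\T Y_\T$; the quadrilateral involving $\Pre$, $\She$, $\M^H$ commutes because $\She\simeq\M^H$ is $\bimK\G_\T$ followed by $\M^\bimQ\simeq\M^H$ and $\bimK$ factors through the forgetful functor of $\She$; the triangle $\C,\M^H_\fgp,\M^H$ commutes against $\C\rarr{Y}\Pre\rarr{\bimK}\M^H$ by $\bimK Y\iso\bimk$ and the definition of the corestriction in Theorem~\ref{thm: rep}; the triangle $\M^H,\,_R\M_R,\Ab$ is the forgetful factorization $\phish$ and the triangle $\C,\,_R\M_R,\Ab$ is the normal factorization of $\longf$; and the bottom square closes since $\bimf=\F^\bimQ\bimK Y$ by Lemma~\ref{lem: bimK}, transported along $\M^\bimQ\simeq\M^H$ to $\bimf=\F^H\bimK Y$. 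As every $2$-cell here is built from monoidal natural isomorphisms, the diagram commutes in the category of monoidal functors. The main obstacle, such as it is, is purely organizational: keeping track of the several a priori distinct equivalences ($\M^\bimQ\simeq\M^\Qsh$, $\She\simeq\M^\bimQ$, $\M^\Qsh\simeq\M^H$, and the monad isomorphism $\bimT\cong\Tsh$ of Proposition~\ref{pro: T}(5)) and checking that transporting $\bimK$, $Y_\T$ and $\bimk$ along them is compatible with all the monoidal structures --- bookkeeping rather than new mathematics.
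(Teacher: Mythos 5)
Your assembly is correct and is essentially the paper's own (implicit) argument: the theorem is stated there as a summary of Theorem \ref{thm: rep}, Theorem \ref{thm: embedding}, Proposition \ref{pro: T}, Lemma \ref{lem: *}, Propositions \ref{pro: finfun - H} and \ref{pro: rep}, and your diagram chase matches the intended factorizations ($Y=\G_\T Y_\T$, $\bimK\G_\T$ as the equivalence, $\bimf=\F^\bimQ\bimK Y$ from Lemma \ref{lem: bimK}). The only quibbles are attributional (left adjointness of $\bimK$ comes from Proposition \ref{pro: bimL}, not Lemma \ref{lem: bimK}), not mathematical.
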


\subsection{The reconstruction theorem}

Note that we use the name `reconstruction theorem' in a sense closer to the etymology of the word than
it is usually done. One has a fiber functor $F$ associated to each $H$ and one has the `Tannaka construction'
of a bialgebroid from $F$. If the composite of these two procedures is the identity up to isomorphism then we can say we have `reconstructed' $H$. 

\begin{pro} \label{pro: reco}
For a ring $R$ and a right $R$-bialgebroid $H$ let $\C\subset\M^H$ be the full subcategory of $H$-comodules that are finitely generated projective as right $R$-modules. Let $\longf:\C\to\Ab$ be the restriction to $\C$ of the
underlying abelian group functor $\M^H\to\Ab$.


If $H$ is flat as a left $R$-module and if $\M^H$ is generated by the subcategory $\C$ then the functor $\longf$
is a fiber functor.
\end{pro}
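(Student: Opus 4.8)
The plan is to verify, clause by clause, that $\longf$ satisfies Definition \ref{def: fiber}. Put $\Qsh:=\,\under\obar{R}H$, the comonad on $\M_R$ attached to the underlying $R$-coring of $H$. Since $_RH$ is flat, $\Qsh$ is left exact, so $\M^H=\M^\Qsh$ is a cocomplete abelian category and the forgetful functor $\F^H=\F^\Qsh:\M^H\to\M_R$ is comonadic, exact, and, being a left adjoint, preserves coproducts and epimorphisms. By construction the normal part of the underlying-abelian-group functor $\M^H\to\Ab$ is the strong monoidal forgetful functor $\M^H\to\,_R\M_R$; restricting everything to $\C$, the normal part of $\longf$ is the strong monoidal functor $\bimf:\C\to\,_R\M_R$, the monoid $\longf I$ is the ring $R$, and $\fsh:=\phish\bimf$ is the restriction of $\F^H$ to $\C=\M^\Qsh_\fgp$. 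I would first record that $\C$ is an additive monoidal subcategory of $\M^H$ (and essentially small, so one may pass to a small skeleton): it contains the unit $R$, it is closed under finite direct sums, and it is closed under $\oR$, since for $M,N\in\C$ one exhibits $\fsh M$ as a right-$R$-module direct summand of a free module and thereby $\fsh(M\oR N)$ as a direct summand of a finite power of $\fsh N$, which is f.g. projective. Hence $\longf$ is an essentially strong monoidal additive functor on a small additive monoidal category.

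Next I would dispatch the ``pointwise'' requirements on $\fsh$. Faithfulness is inherited from the faithful $\F^H$. The functor $\fsh$ reflects isomorphisms because $\F^H$ does (comonadic functors reflect isomorphisms) and $\C$ is full in $\M^H$, so the $\F^H$-inverse of an $\fsh$-invertible arrow of $\C$ already lies in $\C$. That $\fsh C$ is finitely generated projective for every $C\in\ob\C$ is the definition of $\C$. For the kernel and cokernel clauses I would appeal to Lemma \ref{lem: von Neumann} with the left exact comonad $\Qsh$: any arrow $t$ of $\C=\M^\Qsh_\fgp$ for which $\fsh t$ is von Neumann regular has a kernel and a cokernel in $\C$, both preserved by the forgetful functor, i.e. by $\fsh$. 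Since an epimorphism onto the projective module $\fsh C$ splits, and split epimorphisms are von Neumann regular, this simultaneously yields the existence of kernels of arrows $t$ with $\fsh t$ epi and the existence, with $\fsh$-preservation, of cokernels of arrows $t$ with $\fsh t$ von Neumann regular.

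It remains to prove that $\fsh$ is flat, and here I would argue as follows. Condition (flat-1) holds automatically since $\C$ is additive. For (flat-2), let $t:B\to C$ be an arrow of $\C$ and $y\in\fsh B$ with $\fsh t(y)=0$; form the kernel $k:K\to B$ of $t$ in the abelian category $\M^H$, so that by exactness of $\F^H$ the arrow $\F^H k$ is a kernel of $\F^H t=\fsh t$ in $\M_R$ and $y=\F^H k(y')$ for some $y'\in\F^H K$. Since $\C$ generates $\M^H$ there is an epimorphism $e:\coprod_i A_i\twoheadrightarrow K$ with all $A_i\in\ob\C$; because $\F^H$ preserves coproducts and epimorphisms, $\F^H e:\coprod_i\fsh A_i\to\F^H K$ is epi, so $y'=\F^H e(x)$ for some $x$ in $\coprod_i\fsh A_i$. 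As $x$ has finite support, $x\in\F^H A$ for $A:=\coprod_{i\in I_0}A_i$ with $I_0$ finite, and $A\in\ob\C$. Writing $e_A:A\to K$ for the restriction of $e$ and $s:=k\ci e_A:A\to B$, the arrow $s$ is a morphism of $\M^H$ between objects of the full subcategory $\C$, hence a morphism of $\C$, and $\fsh s(x)=\F^H k(\F^H e_A(x))=\F^H k(y')=y$ while $t\ci s=t\ci k\ci e_A=0$. This is exactly (flat-2), so $\fsh$ is flat; consequently $\longf$ is a fiber functor.

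The main obstacle is precisely this last flatness verification: it is the only point at which the hypothesis that $\C$ generates $\M^H$ is used, and the maneuver is to form the kernel $k$ and the covering epimorphism $e$ in the \emph{ambient} abelian category $\M^H$, where $\F^H$ is exact and the required factorizations exist, and then to exploit the finite support of an element of a coproduct in order to descend back into the non-abelian subcategory $\C$. All the remaining clauses reduce to Lemma \ref{lem: von Neumann} together with routine properties of exact, faithful, comonadic left adjoints.
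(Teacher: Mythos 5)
Your proposal is correct and follows essentially the same route as the paper's proof: a clause-by-clause verification of Definition \ref{def: fiber}, with Lemma \ref{lem: von Neumann} handling the kernel and cokernel conditions and the generation hypothesis entering only in the (flat-2) check. Your treatment merely fleshes out details the paper leaves terse (the finite-support descent from the covering coproduct of the kernel, closure of $\C$ under $\oR$, essential smallness), and these additions are accurate.
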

\begin{proof}
Flatness of $_RH$ means precisely that the comonad $\under\oR H$ is left exact on $\M_R$.
This implies by general arguments that the Eilenberg-Moore category $\M^H$ has finite limits. It also has colimits
since $\M_R$ does therefore $\M^H$ is cocomplete abelian.

Let $\bimf:\C\to\,_RM_R$ be the underlying bimodule functor and let $\fsh:=\phish\bimf:\C\to\M_R$.
Of course, $\fsh$ can be viewed also as the composite of the inclusion $\C\to\M^H$ and of the forgetful
$\F^H:\M^H\to\M_R$.
\begin{trivlist}
\item\textit{$\longf$ is essentially strong} Since the $\bimf$ is strong monoidal, by the very definition of monoidal
product in $\M^H$, the long forgetful functor $\longf$ is essentially strong monoidal.
\item\textit{$\fsh$ is faithful.} The inclusion $\C\into\M^H$ is faithful and the forgetful functor $\M^H\to\M_R$ is faithful. Then so is their composition $\fsh$.
\item\textit{$\C$ is additive.} $\M^H$ is additive, i.e., has binary direct sums and a zero object. This structure is
obviously inherited to $\C$.
\item\textit{$\fsh$ is flat.}
Since $\C$ is additive, the category $\Elt \fsh$ of elements of $\fsh$ has binary products. Thus the first axiom \ref{flat-1} for flatness of $\fsh$ is satisfied.
As for the second axiom consider an arrow $t:B\to C$ in $\C$ and an $y\in\fsh B$
such that $\fsh t y=0$. Since $\M^H$ is abelian, a kernel of $t$ exists in $\M^H$ and since the kernel is generated
by $\C$ we can find an $s:A\to B$ in $\C$ and an element $x\in\fsh A$ such that $\fsh s x=y$ and $t\ci s=0$.
Thus $\fsh$ is flat.
\item\textit{$\fsh$ reflects isomorphisms.}
The inclusion $\C\into\M^H$ is fully faithful therefore it reflects isomorphisms. The forgetful functor $\M^H\to\M_R$
is comonadic therefore, by an application of Beck's Theorem, it reflects isomorphisms. Then their composition $\fsh$
reflects them, either.
\item\textit{$\C$ has kernels of arrows $t$ for which $\fsh t$ is epi.}
Since $\fsh t$ is an epimorphism onto a projective module, it is split, hence von Neuman regular. 
Viewing $t$ as an arrow in $\M^H$ it satisfies that $\F^H t$ is von Neumann regular. Therefore Lemma
\ref{lem: von Neumann} implies that it has a kernel in $\C$.
\item\textit{$\C$ has cokernels of arrows $t$ for which $\fsh t$ is von Neumann regular and $\fsh$ preserves them.}
This also follows using Lemma \ref{lem: von Neumann}.
\end{trivlist}
\end{proof}

The next Theorem
shows that the bialgebroid, together with its base ring,
can be uniquely reconstructed from its long forgetful functor.

\begin{thm} \label{thm: reco}
Let $S$ be a ring and $J$ be a right $S$-bialgebroid such that $_SJ$ is flat and such that $\M^J_\fgp$ generates
$\M^J$. Let $\longf:\M^J_\fgp\to\Ab$ be the forgetful functor which is a fiber functor by Proposition 
\ref{pro: reco}. Then the $R$-bialgebroid $H$ constructed from $\longf$ by Theorem \ref{thm: rep}
is isomorphic to the $S$-bialgebroid $J$.
\end{thm}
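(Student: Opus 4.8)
The plan is to run the Tannaka construction of Theorem~\ref{thm: rep} on the concrete fiber functor $\longf:\M^J_\fgp\to\Ab$ and identify every object produced along the way with the corresponding object built directly from $J$. Here the base ring $R=\longf I$ is, by construction, the unit object of $\M^J$ with its monoid structure, which is just $S$ (the underlying $S$-$S$-bimodule of the trivial $J$-comodule $S$), so $R\cong S$ canonically; from now on I identify them. The strong monoidal functor $\bimf:\M^J_\fgp\to\,_S\M_S$ is the underlying-bimodule functor, $\fsh:\M^J_\fgp\to\M_S$ is the underlying right-$S$-module functor, and $\gsh$ is its pointwise dual $C\mapsto\M_S(\fsh C,S)$. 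By Proposition~\ref{pro: finfun - H}\,(2) the reconstructed bialgebroid is $H=\gsh\amo{\M^J_\fgp}\fsh$ with the coring structure and source/target maps written out there. The goal is therefore to produce a bialgebroid isomorphism $H\iso J$.

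First I would observe that the hypotheses guarantee that $\longf$ is a fiber functor (Proposition~\ref{pro: reco}), so Theorem~\ref{thm: rep} applies and gives a monoidal equivalence $e:\M^J_\fgp\simeq\M^H_\fgp$ over the forgetful functors to $\Ab$, i.e.\ $\bimf^H\circ e\cong\bimf^J$ as strong monoidal functors where $\bimf^J$ and $\bimf^H$ denote the respective underlying-bimodule functors. Since $\M^J$ is generated by $\M^J_\fgp$ and $\M^H$ is generated by $\M^H_\fgp$, and both $\M^J\to\M_S$ and $\M^H\to\M_R$ are comonadic, left-exact and preserve colimits, the equivalence $e$ extends to a monoidal equivalence $\bar e:\M^J\simeq\M^H$ respecting the forgetful functors to $\M_S=\M_R$. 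Concretely: a $J$-comodule $M$ is a colimit of objects of $\M^J_\fgp$, apply $e$, take the colimit in $\M^H$; this is well-defined and functorial because the forgetful functors detect and preserve these colimits, and it is inverse to the analogous construction the other way. The key point is that $\bar e$ commutes with the forgetful functors to $\M_S$ up to a coherent isomorphism.

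Next I would use the standard dictionary between comonads and their comodule categories. A monoidal equivalence $\bar e:\M^J\simeq\M^H$ commuting with the (comonadic) forgetful functors $\F^J:\M^J\to\M_S$ and $\F^H:\M^H\to\M_S=\M_R$ (so $\F^H\bar e\cong\F^J$) induces, by comonadicity, an isomorphism of the generating comonads $\under\obar{S}J\cong\under\oR H$ on $\M_S$. Because both comonads are of the form ``cotensor with a coring'', this comonad isomorphism is induced by an isomorphism of $S$-corings $J\iso H$: evaluate the comonad isomorphism on the free comodule $S\obar{S}J$, or equivalently use that a natural transformation $\under\obar{S}J\to\under\oR H$ of the underlying endofunctors is the same as an $S$-bimodule map $J\to H$ (Eilenberg--Watts), and monoidality plus compatibility with comultiplication/counit force it to be a morphism of corings, in fact of bialgebroids since $\bar e$ is monoidal. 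Tracking the $R^\op\otimes R$-ring structures through Proposition~\ref{pro: finfun - H}\,(2) one checks the source and target maps are matched, so the coring isomorphism is a right-bialgebroid isomorphism.

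The main obstacle is the extension step and the verification that the induced $S$-module map $J\to H$ is genuinely a coring (hence bialgebroid) isomorphism and not merely a natural isomorphism of endofunctors: one must check compatibility with $\cop$, $\eps$, the multiplication and the source/target maps. The cleanest route is probably to avoid reconstructing $H$ explicitly as $\gsh\amo{}\fsh$ and instead argue abstractly: the ``Tannaka construction'' applied to the forgetful functor of a comodule category of a flat coring returns a comonad isomorphic to the original one, because all the relevant data (the Kan extension $\bimF$, its adjoint $\bimG$, the comparison functor $\bimK$ and the localization $\bimL$ of Proposition~\ref{pro: T}) are determined up to canonical isomorphism by the monoidal category $\bra\C,\longf\ket$, and for $\C=\M^J_\fgp$ one can identify each of them with the corresponding piece of the $J$-picture, using that $\M^J_\fgp$ subgenerates $\M^J$ and that $\gsh$ subgenerates all $\T$-sheaves (the final Lemma of Subsection~\ref{ss: G}). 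So the real content is a coherence bookkeeping argument showing that every arrow in diagram~(\ref{diag: all}), specialized to $\bra\M^J_\fgp,\longf\ket$, matches the tautological one coming from $J$; granting that, the isomorphism $H\cong J$ of bialgebroids is immediate.
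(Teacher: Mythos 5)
Your reduction of the problem to producing a monoidal equivalence $\bar e:\M^J\simeq\M^H$ commuting with the forgetful functors is fine as far as it goes (given such an $\bar e$, comonadicity and an Eilenberg--Watts evaluation at $S$ do yield a coring, and with monoidality a bialgebroid, isomorphism $J\cong H$). But the extension step itself is a genuine gap, and it is not ``coherence bookkeeping'': it is essentially the whole content of the theorem. The hypothesis is only that $\M^J_\fgp$ \emph{generates} $\M^J$; generation does not imply that $\M^J_\fgp$ is dense in $\M^J$, so the recipe ``write $M$ as a colimit of f.g.\ projective comodules, apply $e$, take the colimit in $\M^H$'' is not well defined or functorial without further argument: you must show that the canonical cocone over the comma category $(\M^J_\fgp\downarrow M)$ is colimiting, and that the two Kan-extension-type constructions are mutually inverse. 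Note also that the objects of $\M^J_\fgp$ are not small projectives in $\M^J$ in general (by Proposition \ref{pro: coarse} and Lemma \ref{lem: small proj} that happens only in the coarse case), so no Morita/Gabriel-type argument rescues the step, and the forgetful functor being comonadic and colimit-preserving does not make the chosen presentations canonical.

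The paper avoids this by working at the level of the coring itself. It builds the explicit map $n:H=\longf^\lef\am{\C}\longf\to J$, $f\am{P}x\mapsto f(x^{(0)})\cdot x^{(1)}$, as the factorization of the evaluation $\coprod_P P^\lef\ot P\to J$ through the coend quotient; surjectivity of $n$ uses exactly the hypothesis that $J$, as a comodule, is generated by $\M^J_\fgp$, and injectivity uses that hypothesis a second time, applied to the kernel of the comodule map $\alpha=(f\am{S}J)\ci\delta_P:P\to J$ (together with the fact that every element of $H$ is a rank-one tensor, by additivity/flatness), to show the offending element already lies in the coend relations. Compatibility with $\cop$, $\eps$, multiplication and the source/target maps is then checked directly. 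These two uses of the generation hypothesis are precisely the content your proposal defers to ``granting that''; without an argument of this kind (or an honest proof of density of $\M^J_\fgp$ in $\M^J$, which amounts to the same thing), the proof is incomplete.
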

\begin{proof}
The monoidal structure of $\longf$ allows to reconstruct the ring $S$ as the monoid $R=\bra\longf I,\longf\luni_I\ci\longf_{I,I},\longf_0\ket$ in $\Ab$ where $I$ is the monoidal unit of $\M^J_\fgp$.
The $J$, as a $J$-comodule, is a yet unknown object of $\M^J$ but we know that it is generated by the objects
$P$ of $\M^J_\fgp$. So we have an epimorphism
\[
\coprod_{i\in\mathcal{X}} P_i\longrarr{\varphi}J\qquad\in\M^J
\]
for some set $\mathcal{X}$ and for some $\mathcal{X}$-indexed family $\{P_i\}$ of objects in $\M^J_\fgp$. 
Now every such mapping factors through the evaluation
\[
\coprod_P\M^J(P,J)\ot P\longrarr{\ev}J\,,\quad \sum_i\alpha_i\ot p_i\mapsto\sum_i\alpha_i(p_i)
\]
in which $\M^J(P,J)\ot P$ is the $J$-comodule with coaction $1\ot\delta_P$ where 
$\delta_P:p\mapsto p^{(0)}\am{S}p^{(1)}$ is the coaction of $P$. Using the familiar isomorphism
\[
\M^J(P,J)\iso\M_S(P,S),\qquad \beta\mapsto\eps_J\ci\beta\,,
\]
with inverse $f\mapsto (f\am{S} J)\ci\delta_P$, and introducing the notation $P^\lef$ for the abelian group $\M_S(P,S)$
we see that the existence of an epimorphism $\varphi$ is equivalent to that the evaluation
\[
\coprod_P P^\lef\ot P\longrarr{\ev}J\,,\quad \sum_i f_i\ot p_i\mapsto \sum_i f_i(p_i^{(0)})\cdot p_i^{(1)}
\]
is an epimorphism in $\M^J$. 
Recognizing that the reconstructed bialgebroid $H$, as an abelian group at least, is the quotient $\longf^\lef\am{\C}\longf$ of $\coprod P^\lef\ot P$, see (\ref{tensor}),
we are going to show that $\ev$ annihilates the kernel of this quotient. Indeed,
if $t:P\to Q$ belongs to $\M^J_\fgp$ then for all $p\in P$, $g\in Q^\lef$
\[
\ev(g\ot t\cdot p)=g((t\cdot p)^{(0)})\cdot (t\cdot p)^{(1)}=g(t\cdot p^{(0)})\cdot p^{(1)}=\ev(g\cdot t\ot p)\,.
\]
Therefore $\ev$ factors uniquely through the quotient mapping $c$ that defines $H=\longf^\lef\am{\C}\longf$,
\begin{equation}\label{diag: n}
\parbox{100pt}{
\begin{picture}(100,60)(0,20)
\put(0,70){$\coprod_P P^\lef\ot P$}
\put(55,73){\vector(1,0){35}} \put(70,76){$c$}
\put(100,70){$H$}
\put(38,64){\vector(2,-1){55}} \put(50,40){$\ev$}
\put(100,20){$J$}
\dashline{3}(104,65)(104,33) \put(104,33){\vector(0,-1){0}} \put(107,50){$n$} 
\end{picture}
}
\end{equation}
where $n$ can be written on rank 1 tensors of $H$ as $n(f\am{P}x)=f(x^{(0)})\cdot x^{(1)}$. In fact $H$ has only rank 1 tensors since $\M^J_\fgp$ is additive, or, because $\longf$ is flat. This will be important in the next argument
showing that $n$ is monic. At first, $n\in\Ab$ is epi since $\ev$ is epi. Now assume $f\am{P}x\in\Ker n$. Then
$f\ot x\in\Ker\ev$, i.e., $x\in P$ belongs to the kernel of the $J$-comodule map 
$\alpha:=(f\am{S}J)\ci\delta_P:P\to J$.
But the kernel of $\alpha$, as a subcomodule of $P$ in $M^J$, is generated by $\M^J_\fgp$ therefore there exists
an object $Q$ in $\M^J_\fgp$ and a $J$-comodule map $\kappa:Q\to P$ such that $\alpha\ci\kappa=0$ and $x$ belongs to the image of $\kappa$. Therefore there is a $y\in Q$ such that
\[
f\ot x=f\ot \kappa y\quad\in\  f\ci\kappa\ot y\ +\ \Ker c\,.
\]
However, $f\ci\kappa=\eps_J\ci\alpha\ci\kappa=0$, thus proving that $f\ot x\in\Ker c$. Thus the epi $n$ is monic,
hence an isomorphism in $\Ab$.
Finally we have to show that $n$ is underlying a map of bialgebroids over the same base ring $R=S$. That is to say,
that $n$ is both a map of $(R^\op\ot R)$-rings and a map of $R$-corings. As for the latter, notice that if we equip
each $P^\lef\ot P$ term with the obvious comatrix coring structure then both $c$ and $\ev$ become coring
homomorphisms. Then $n$ is automatically a coring homomorphism since $c$ is epi in $_R\M_R$, or, what is the
same, in $\Ab$. As for the former, notice that the $(R^\op\ot R)$-ring structure of $J$ is completely encoded
in the structure of the monoidal functor $\bimf$ and its pointwise left dual $\bimg=\bimf^\lef$ and this is exactly
the $(R^\op\ot R)$-ring structure of $H$ we reconstruct. This can be traced in the calculation
\begin{align*}
n(f\am{P}x) n(g\am{Q}y)&=x^{(1)}t_J(f(x^{(0)}))y^{(1)}t_J(g(y^{(0)}))=\\
&=x^{(1)}y^{(1)}t_J(g(s_J(f(x^{(0)}))y^{(0)}))=\\
&=x^{(1)}y^{(1)}t_J(\bimf_{P,Q}^\lef(f\am{R^\op}g)(x^{(0)}\oR y^{(0)}))=\\
&=n(\bimf_{P,Q}^\lef(f\am{R^\op}g)\am{P\ot Q}\bimf_{P,Q}(x\oR y))=\\
&=n((f\am{P}x)(g\am{Q}y))
\end{align*}
which finishes the proof. Even if not each term $P^\lef\ot P$ individually but their coproduct can be given a bialgebroid
structure such that the whole diagram (\ref{diag: n}) lifts to the category of $R$-bialgebroids and their homomorphisms.
\end{proof}

\begin{cor}
If an $R$-bialgebroid $H$ satisfies that $_RH$ is flat and $\M^H_\fgp$ generates all $H$-comodules
then $\M^H$ is equivalent to the category of $\GroT$-sheaves for a subcanonical monoidal Grothendieck topology $\GroT$ on $\M^H_\fgp$.
\end{cor}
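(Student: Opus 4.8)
The plan is to recognise the corollary as a repackaging of the results of Sections~\ref{sec: rep} and \ref{s: T}. Write $\C:=\M^H_\fgp$ and let $\longf\colon\C\to\Ab$ be the restriction to $\C$ of the underlying abelian group functor $\M^H\to\Ab$. First I would apply Proposition~\ref{pro: reco}: since $_RH$ is flat and $\C$ generates $\M^H$, the pair $\bra\C,\longf\ket$ is a fiber functor in the sense of Definition~\ref{def: fiber}, so all the machinery attached to fiber functors becomes available.

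Next, attach to $\longf$ the special left exact monoidal idempotent monad $\T$ on $\Pre$ provided by Proposition~\ref{pro: T}~(3). By Theorem~\ref{thm: GroT} this $\T$ determines a Grothendieck topology $\GroT$ on $\C$ whose category of sheaves is exactly the Eilenberg--Moore category $\She=\Pre_\T$; because $\T$ is special, $\GroT$ is a monoidal Grothendieck topology, and by Lemma~\ref{exa: F-topology} it coincides with the $\longf$-topology $\GroT_F$. Thus $\bra\M^H_\fgp,\GroT\ket$ is a monoidal site. For subcanonicity I would observe that the hypotheses of Proposition~\ref{pro: rep} are exactly those packed into the definition of a fiber functor, hence $\ksh\colon\C\to\M^\Qsh$ --- equivalently $\bimk\colon\C\to\M^\bimQ$, since $\ksh=\psh\bimk$ with $\psh$ an isomorphism of categories --- is fully faithful; by the equivalence of conditions (1), (3), (5) in Proposition~\ref{pro: Gamma} the Yoneda embedding $Y\colon\C\to\Pre$ then factors through $\G_\T\colon\She\to\Pre$, i.e.\ every representable presheaf on $\M^H_\fgp$ is a $\GroT$-sheaf.

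It remains to identify $\M^H$ with $\She$. Lemma~\ref{lem: *} gives a monoidal equivalence $\She\simeq\M^\bimQ$, and Proposition~\ref{pro: finfun - H}~(2) (applicable since $\fsh C$ is finitely generated projective for every $C\in\ob\C$) gives a monoidal equivalence $\M^\bimQ\simeq\M^{H'}$ with $H'=\bimg\am{\C}\bimf$ the bialgebroid reconstructed from $\longf$. By the Reconstruction Theorem~\ref{thm: reco}, $H'$ is isomorphic to $H$ as a bialgebroid over $R$, so $\M^{H'}\simeq\M^H$ as monoidal categories. Composing the three equivalences yields the desired monoidal equivalence between $\M^H$ and the category of $\GroT$-sheaves on $\M^H_\fgp$.

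The hard part is not any single step but checking that the pieces fit: that the equivalences above are genuinely monoidal and, crucially, that the sheaf category produced by the Representation Theorem is the one attached to the \emph{given} category $\M^H_\fgp$ and the topology $\GroT$ living on it --- this is precisely what the identification $H'\cong H$ of Theorem~\ref{thm: reco} secures, since without it one would only obtain $\M^H$ as sheaves on \emph{some} abstractly reconstructed category.
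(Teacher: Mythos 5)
Your proposal is correct and is essentially the assembly the paper intends: the corollary is stated without proof because it follows by combining Proposition \ref{pro: reco} (the forgetful functor on $\M^H_\fgp$ is a fiber functor), the Representation Theorem machinery (the special monoidal idempotent monad $\T$, subcanonicity via Propositions \ref{pro: rep} and \ref{pro: Gamma}, and the monoidal equivalence $\She\simeq\M^\bimQ\simeq\M^{H'}$), and Theorem \ref{thm: reco} identifying $H'\cong H$. Your attention to the role of $H'\cong H$ in pinning the sheaf category to the given site $\M^H_\fgp$ is exactly the right point.
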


\subsection{Deligne's and Hai's Theorems}

By introducing the notion of semi-transi\-tive corings in \cite{Brugi} Brugui\`eres located an important piece of
Deligne's Tannaka duality theory \cite{Deligne} that is applicable also in the non-commutative setting. Semi-transitive
corings are precisely the corings that can be reconstructed from fiber functors on locally finite categories.
Let us recall the definitions.

Let $k$ be a field and $R$ be a $k$-algebra. An $R$-coring $H$ is called \textit{semi-transitive} if the following axioms hold:
\begin{enumerate}
\item Every $H$-comodule that is f.g. as $R$-module is also projective as $R$-module.
\item Every $H$-comodule is the filtered colimit of $H$-comodules that are f.g. as $R$-modules.
\item The category $\M^H_\fgp$ of $H$-comodules that are f.g. as $R$-modules is \textit{locally finite}, i.e.,
\begin{enumerate}
\item each hom-space of $\M^H_\fgp$ is finite dimensional over $k$,
\item and each object has finite composition length, i.e., $\M^H_\fgp$ is both noetherian and artinian.
\end{enumerate}
\end{enumerate}

Deligne's Theorem, as formulated in \cite[Theorem 5.2]{Brugi}, states that on a small locally finite abelian $k$-linear category every faithful exact $k$-linear functor $F:\C\to\M_R$ with values in f.g. projective $R$-modules
factors through the forgetful functor $\M^H\to\M_R$ of a semi-transitive $R$-coring $H$ by a category equivalence
$\C\simeq \M^H_\fgp$.
Vice versa, if $H$ is a semi-transitive $R$-coring then the restriction $\M^H_\fgp\to \M_R$
of the forgetful functor is a faithful exact $k$-linear functor with values in f.g. projective $R$-modules and its
domain is an essentially small locally finite abelian $k$-linear category.

In order to compare with our abelian representation theorem, formulated in Corollary \ref{abelian coring rep thm},
and its converse (in the monoidal setting) in Theorem \ref{thm: reco},
observe that both the input data $\bra\C,\fsh\ket$ and the output data $H$ of Deligne's Theorem is a subclass of
the respective data of Corollary \ref{abelian coring rep thm}. For the output data this is not obvious but Brugui\`eres
proves in \cite[Corollary 5.9]{Brugi} that semi-transitive corings are flat as left $R$-modules. Thus the two theorems
are compatible. It is very plausible that none of them implies the other.

Although Brugui\`eres also discusses them for commutative $R$ the monoidal version of Deligne's Theorem was
completed by Ph\`ung H{\^o} Hai at the time when the notion of bialgebroid was already available.
He proves in \cite[Corollary 2.2.5]{Phung} that on a locally finite abelian $k$-linear monoidal category every faithful
exact strong monoidal functor $\bimf:\C\to\,_R\M_R$ with image in the subcategory
of right dual bimodules induces a monoidal equivalence $\C\simeq\M^H_\fgp$ for a semi-transitive $R$-bialgebroid $H$. Vice versa, the restriction to $\M^H_\fgp$ of the forgetful functor $\M^H\to\,_R\M_R$ of any semi-transitive
$R$-bialgebroid is such a functor.

It should now be clear that the abelian representation theorem, Theorem \ref{abelian bgd rep thm}, is to
Hai's Theorem like Corollary \ref{abelian coring rep thm} is to Deligne's Theorem. It establishes Tannaka duality
for a larger class of fiber functors and for a larger class of bialgebroids.

\subsection{$k$-linear versions}

Let $k$ be a commutative ring, $\M_k$ the category of $k$-modules and $\Phi^k:\M_k\to\Ab$ the underlying abelian group functor.


A $k$-linear category $\C$ is nothing but an $\Ab$-category $\C$ together with a ring homomorphism
$k\ni\lambda\mapsto \{\lambda\cdot C\}_C$ from $k$ to the ring of self natural tranformations of the identity functor.

For any $k$-linear category $\C$ and a $k$-linear functor $\dot F:\C\to\M_k$ we can associate the additive functor $F=\Phi^k\dot F:\C\to\Ab$. This defines the functor (as an additive functor between $\Ab$-categories)
\begin{equation} \label{eq: forget k}
\Psi^k\ :\ k\text{-}\Fun(\C,\M_k)\to\Add(\C,\Ab),\qquad \dot F\mapsto \Phi^k\dot F\,.
\end{equation}
If $F:\C\to\Ab$ is additive then there is a unique $k$-linear functor $\dot F$ such that $F=\Phi^k\dot F$, namely
$\dot FC$ is $FC$ equipped with $k$-action $k\ni\lambda\mapsto F(\lambda\cdot C)\in\End FC$.
Since $\lambda\cdot C$ is natural in $C$, every $Ft$ becomes a $k$-module map $\dot Ft$.
If $\nu:F\to G:\C\to\Ab$ is a natural transformation then $\nu_C\ci F(\lambda\cdot C)=G(\lambda\cdot C)\ci\nu_C$,
$\lambda\in k$, $C\in\ob\C$, therefore there is a unique natural transformation $\dot \nu:\dot F\to\dot G:\C\to\M_k$ such that $\Phi^k\dot \nu=\nu$. In this way we constructed a strict inverse of (\ref{eq: forget k}).

Replacing $\C$ with $\C^\op$ we obtain that the category of $k$-module valued $k$-linear presheaves on $\C$,
$k$-$\Fun(\C^\op,\M_k)$ can be identified with the category $\Pre$ of $\Ab$-valued additive presheaves.

If $\C$ is $k$-linear monoidal then $\Psi^k$ maps monoidal functors $\dot F$ to monoidal functors $F$ using
the monoidal structure of $\Phi^k$. Vice versa, if $F:\C\to\Ab$ is a monoidal functor then the unique $\dot F$ such
that $F=\Phi^k\dot F$ has a unique monoidal structure by unique factorization through the canonical epimorphism $FC\ot FD \epi{} FC\am{k}FD$ and by setting $\dot F_0:k\to \dot FI$ to be the mapping
$\lambda\mapsto F(\lambda\cdot I)\ci F_0 (1)$. Thus $\Psi^k$ induces an isomorphism between categories of
monoidal functors.

However, since $\Phi^k$ is not strong monoidal, a strong monoidal $F$ need not be mapped to strong monoidal
$\dot F$. Fortunately, what we need to preserve by $\Psi^k$ is essential strong monoidality.
The base algebra of $\dot F$, i.e., the image under $\dot F$ of $I$, is a $k$-algebra $\dot R=\bra \dot FI,\dot F\luni_I\ci\dot F_{I,I},\dot F_0\ket$ the underlying ring of which is $R=\bra FI,F\luni_I\ci F_{I,I},F_0\ket$,
the base ring of $F$. Furthermore the $R$-bimodule structure of $FC$ reduces to a diagonal $k$-bimodule under the
ring homomorphism $\dot F_0:k\to R$, so becomes a $\dot R$-bimodule in $\M_k$. As a matter of fact,
\begin{align*}
\dot F_0(\lambda)\lact x&=F\luni_C\ci F_{I,C}\ci (F(\lambda\cdot I)\ot FC)(1_R\ot x)=F(\lambda\cdot C)x=\\
&=F\runi_C\ci F_{C,I}\ci(FC\ot F(\lambda\cdot I))(x\ot 1_R)=\\
&=x\ract\dot F_0(\lambda)\,.
\end{align*}
Therefore the normal factorizations of $F$ and $\dot F$ are related by the diagram
\begin{equation} \label{diag: normal fact k forget}
\begin{CD}
\C@>\hat{\dot F}>>_{\dot R}\M_{\dot R}@>>>\M_k\\
@| @VV{\hat{\Phi}^k}V @VV{\Phi^k}V\\
\C@>\hat F>>_R\M_R@>>>\Ab
\end{CD}
\end{equation}
in which $\hat{\Phi}^k$ is strong monoidal. Therefore $F$ is essentially strong iff $\dot F$ is essentially strong.

If flatness of a $k$-linear functor is defined, like for $\Ab$-functors, as cofilteredness of the category of elements,
which uses only the underlying set of $FC$ in its definition, then it is clear that $F$ is flat iff $\dot F$ is flat.

The left Kan extension along $\C\to\Pre$, as a tensor product over $\C$, is independent of whether we consider $\C$ as $k$-linear or as an $\Ab$-category. More precisely, the following diagram is commutative
\[
\begin{CD}
k\text{-}\Fun(\C^\op,\M_k)@>\under\am{\C}\dot F>>\M_k\\
@V{\simeq}VV @VVV\\
\Add(\C^\op,\Ab)@>>\under\am{\C}F>\Ab
\end{CD}
\]
for all $F\in\Add(\C,\Ab)$. 

If $\F:\Pre\to\Ab$ is the left Kan extension of the flat essential strong monoidal $F=\Phi^k\dot F$ then the strong
monoidal part $\hat\F$ factors as $\hat\Phi^k \hat{\dot\F}$, just as $\hat F$ does in (\ref{diag: normal fact k forget}).
The right adjoint of $\hat\F$ which is the functor
$\hat\G M =\,_R\M_R(\hat F\under,M)$, since each $FC$ is diagonal as $k$-bimodule, factors through the center
functor $\Z_k:M\mapsto \,_k\M_k(k,M)$ which maps $R$-bimodules to $\dot R$-bimodules. Therefore
$\hat\G=\hat{\dot \G} \Z_k$ where $\hat{\dot \G}$ is the right adjoint of $\hat{\dot\F}$. The left exact strong monoidal additive comonad $\hat Q=\hat\F\hat\G$ on $_R\M_R$ is therefore related to the left exact strong monoidal $k$-linear comonad $\hat{\dot Q}=\hat{\dot\F}\hat{\dot\G}$ by the comonad morphism
$\bra\hat\Phi^k,\hat\F\hat{\dot\G}\vartheta\ket$
\[
\hat\Phi^k\hat {\dot Q}\longrarr{\hat\F\hat{\dot\G}\vartheta}\hat Q \hat\Phi^k
\]
where $\vartheta$ is the unit of the adjunction $\hat\Phi^k\dashv\Z_k$ which is an isomorphism due to that
$\Phi^k$ is fully faithful. But the induced functor on the Eilenberg-Moore categories is not only a coreflective
subcategory $(\,_{\dot R}\M_{\dot R})^{\hat{\dot Q}} \into (\,_R\M_R)^{\hat Q}$ but also an isomorphism because
of the equivalences
\[
(\,_{\dot R}\M_{\dot R})^{\hat{\dot Q}} \simeq (\M_{\dot R})^{\dot Q}\simeq (\M_R)^Q\simeq (\,_R\M_R)^{\hat Q}.
\]
Or, putting in another way, an $R$-bimodule which is the underlying bimodule of a $\hat Q$-comodule is automatically $k$-diagonal. Therefore every $\hat Q$-comodule can be obtained by forgetting $k$ in a
$\dot Q$-comodule.

What we have shown is this. If a $k$-linear monoidal category and a $k$-linear fiber functor $\dot F:\C\to\M_k$
is given then the construction of the monoidal comonad and its Eilenberg-Moore category in the $k$-linear setting
leads to the same $\Ab$-category as what we have obtained by forgetting $k$ from the beginning.
The same conclusion can be said about the the construction of the monoidal idempotent monad $\T$
on the presheaf category: The $\Ab$-construction yields automatically a $k$-linear $\T$ provided we start from a
$k$-linear fiber functor.

One can look at $k$-linearity in another way. Every $\Ab$-category and every monoid\-al $\Ab$-category, too, has
a largest commutative ring $\Zee$ for which it is linear and $k$-linearity factors through this one via a ring homomorphism $k\to\Zee$. We formulate the precise statement for monoidal categories below.

Let $\C$ be a monoidal $\Ab$-category.
For endomorphisms $z$ of the monoidal unit $I$ of $\C$ we define $z\cdot C:=
\luni_C\ci(z\ot C)\ci\luni^{-1}_C:C\to C$ which is a endo-natural transformation of the identity functor of $\C$. Similarly, we define $C\cdot z$. Let
\[
\Zee=\Zee_\C=\{z\in\End I|z\cdot C=C\cdot z\ \forall C\in\ob\C\}
\]
which is a subring of the commutative ring $\End I$. Then for all $z\in\Zee$
\begin{align*}
z\cdot D\ci t&=t\ci z\cdot C\qquad \forall t\in\C(C,D)\\
z\cdot (C\ot D)&=z\cdot C\ot D=C\ot z\cdot D\qquad \forall C,D\in\ob\C
\end{align*}
therefore making $\C(C,D)$ a $\Zee$-module by $z\cdot t:=z\cdot D\ci t$ defines a monoidal $\Zee$-linear
category $\bar\C$ the underlying monoidal $\Ab$-category of which is $\C$.

Let $k$ be any commutative ring such that $\C$ is the underlying monoidal $\Ab$-category of a $k$-linear
monoidal category $\B$. Then there is a unique ring homomorphism $\alpha:k\to\Zee$ such that $\B$ is induced by $\alpha$ from the monoidal $\Zee$-linear category $\bar\C$.




\subsection{Weak bialgebras}

Let $R$ be a separable Frobenius algebra over the commutative ring $k$. That is to say, $R$ is a $k$-algebra with a $k$-linear functional $\varphi:R\to k$ and an $e=\sum_i e_i\am{k}f_i\in R\am{k}R$ such that
\[
\sum_i\varphi(re_i)f_i=r=\sum_i e_i\varphi(f_i r),\quad\forall r\in R\quad\text{and}\quad\sum_i e_i f_i=1_R\,.
\]
It follows that $e$ belongs to the center $(R\am{k}R)^R$ of the $R$-bimodule $R\am{k}R$ and
we can equip the forgetful functor $\Phi:\,_R\M_R\to\M_k$ with an opmonoidal structure
\begin{align*}
\Phi^{M,N}&:\Phi(M\oR N)\to\Phi M\am{k}\Phi N,\quad m\oR n\mapsto\sum_i m\cdot e_i\am{k}f_i\cdot n\,,\\
\Phi^0&:R\to k,\qquad r\mapsto\varphi(r)\,.
\end{align*}
This structure is compatible with the usual monoidal structure $\bra\Phi,\Phi_2,\Phi_0\ket$ of the forgetful functor:
For all $R$-bimodules $L$, $M$, $N$
\begin{align*}
(\Phi_{L,M}\am{k} \Phi N)\ci\asso_{\Phi L,\Phi M,\Phi N}\ci(\Phi L\am{k} \Phi^{M,N})
&=\Phi^{L\oR M,N}\ci \Phi\asso_{L,M,N}\ci \Phi_{L,M\am{k} N} \\
(\Phi L\am{k} \Phi_{M,N})\ci\asso^{-1}_{\Phi L,\Phi M,\Phi N}\ci(\Phi^{L,M}\am{k} \Phi N)
&=\Phi^{L,M\oR N}\ci \Phi\asso^{-1}_{L,M,N}\ci \Phi_{L\oR M,N} \\
\Phi_{M,N}\ci \Phi^{M,N}&= \Phi(M\oR N)\,.
\end{align*}
In the terminology of \cite{Sz: Brussels} $\Phi$ is equipped with a \textit{separable Frobenius structure}.
It follows that if $\bimf:\C\to\,_R\M_R$ is any strong monoidal functor then composition of monoidal and
composition of opmonoidal functors equip $F=\Phi \bimf:\C\to\M_k$ with a separable Frobenius structure
$\bra F,F_2,F_0,F^2,F^0\ket$.

We say that a monoidal $k$-linear functor $F:\C\to\M_k$ is \textit{split monoidal} if its monoidal structure
$\bra F,F_2,F_0\ket$ is part of a separable Frobenius structure on $F$. It follows from 
\cite[Lemmas 6.2, 6.3]{Sz: Brussels} that split monoidal $k$-linear functors to $\M_k$ are essentially strong monoidal
with base ring a separable Frobenius $k$-algebra $R$. On the other hand, weak bialgebras over $k$ can be identified
with bialgebroids equipped with a separable Frobenius $k$-algebra structure $\bra \varphi,e\ket$ on its base ring
$R$. Pfeiffer studying comodule categories $\M^W$ of weak bialgebras $W$ shows in \cite{Pfeif} that the long forgetful functor $\M^W\to\M_k$ is split monoidal. Combining these facts with Theorems \ref{thm: rep} and \ref{thm: reco}
we immediately obtain
\begin{thm} Let $k$ be a commutative ring. 
\begin{trivlist}
\item[$(A)$] Let $\C$ be a small additive $k$-linear monoidal category and $F:\C\to\M_k$ a $k$-linear split monoidal functor such that 
\begin{enumerate}
\item $F$ is faithful, flat and reflects isomorphisms,
\item $FC$ is f.g. projective $k$-module $\forall C\in\ob\C$,
\item $\C$ has kernels of arrows $t$ for which $Ft$ is split epi in $\M_k$,
\item $\C$ has cokernels of arrows $t$ for which $Ft$ is von Neumann regular in $\M_k$ and $F$ preserves these cokernels.
\end{enumerate}
Then there is a weak bialgebra $W$ over $k$ such that $F$ factors through the canonical forgetful functor $\M^W\to\M_k$
as $\C\simeq\M^W_\fgp\into\M^W\to\M_k$ with $\C\simeq\M^W_\fgp$ being a monoidal equivalence of $\C$ with the full subcategory of those $W$-comodules that are finitely generated projective $k$-modules.
This weak bialgebra is $k$-flat and such that $\M^W$ is generated by the subcategory $\M^W_\fgp$.
\item[$(B)$] Vice versa, if $J$ is a weak bialgebra over $k$ which is $k$-flat and such that $\M^J_\fgp$ generates $\M^J$ 
then the restriction to $\C=\M^J_\fgp$ of the forgetful functor $\M^J\to\M_k$ is a $k$-linear split monoidal functor
$F:\C\to\M_k$ satisfying conditions (1-4) above. Moreover, the weak bialgebra $W$ constructed in $(A)$ from $F$
is isomorphic to $J$ as weak bialgebras over $k$.
\end{trivlist}
\end{thm}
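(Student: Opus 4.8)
The plan is to derive the weak-bialgebra theorem as a direct corollary of the two main results \ref{thm: rep} and \ref{thm: reco}, together with the structural facts about split monoidal functors and separable Frobenius structures recalled just before the statement. The only genuine work is to translate between the hypothesis ``$F$ is $k$-linear split monoidal'' and the hypothesis ``$\langle\C,\longf\rangle$ is a fiber functor'' of Definition \ref{def: fiber}, and to identify the reconstructed bialgebroid with a weak bialgebra.

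For part $(A)$: first I would pass from the $k$-linear functor $F:\C\to\M_k$ to the additive functor $\longf:=\Phi^k F:\C\to\Ab$; by the discussion of $\Psi^k$ in the $k$-linear subsection this loses nothing. By \cite[Lemmas 6.2, 6.3]{Sz: Brussels}, split monoidality of $F$ forces $F$ to be essentially strong monoidal with base ring $R=\langle FI,F\luni_I\ci F_{I,I},F_0\rangle$ a \emph{separable Frobenius} $k$-algebra, and the normal part $\bimf:\C\to{}_R\M_R$ is strong monoidal; hence $\longf$ is essentially strong monoidal. Its half-lift $\fsh=\phish\bimf:\C\to\M_R$ then satisfies all the clauses of Definition \ref{def: fiber}: faithfulness, flatness and reflection of isomorphisms are inherited from the corresponding properties of $F$ assumed in (1), using that $\phish$ and the forgetful $\M_R\to\Ab$ are faithful and conservative and preserve/detect the relevant colimits; $\fsh C$ is f.g.\ projective over $R$ because $FC$ is f.g.\ projective over $k$ and $R$ is a separable Frobenius $k$-algebra (so an $R$-module that is f.g.\ projective over $k$ is f.g.\ projective over $R$ — this is where separability is used); and the existence and $\fsh$-preservation of kernels of $\fsh$-epis and cokernels of $\fsh$-von-Neumann-regular arrows follow from (3),(4) once one checks that $\fsh t$ is (split) epi, resp.\ von Neumann regular, iff $Ft$ is, which again uses the separable Frobenius structure on $\phish:{}_R\M_R\to\M_k$. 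Thus $\langle\C,\longf\rangle$ is a fiber functor, Theorem \ref{thm: rep} applies, and we obtain a right $R$-bialgebroid $H$ with the stated flatness and generation properties and a monoidal equivalence $\C\simeq\M^H_\fgp$ through which $\longf$ factors. Finally, since $R$ carries a separable Frobenius $k$-algebra structure, $H$ is, by the identification recalled in the text (weak bialgebras over $k$ $=$ bialgebroids whose base ring is a separable Frobenius $k$-algebra), exactly a weak bialgebra $W$ over $k$; and $\M^W\to\M_k$ recovers $\phish$ composed with $\Phi:{}_R\M_R\to\M_k$, so the factorisation of $\longf$ is the desired $\C\simeq\M^W_\fgp\hookrightarrow\M^W\to\M_k$.

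For part $(B)$: given a $k$-flat weak bialgebra $J$ with $\M^J_\fgp$ generating $\M^J$, Pfeiffer's result \cite{Pfeif} says the long forgetful functor $\M^J\to\M_k$ is split monoidal, hence so is its restriction $F$ to $\C=\M^J_\fgp$. That $F$ satisfies (1)--(4) is exactly Proposition \ref{pro: reco} applied to $J$ viewed as an $S$-bialgebroid ($S$ its base ring, which is separable Frobenius over $k$): that proposition gives that the underlying $\longf=\Phi^k F$ is a fiber functor, and translating its conclusions back through the split Frobenius structure on $\phish$ yields precisely conditions (1)--(4) phrased inside $\M_k$. Then Theorem \ref{thm: reco} gives an isomorphism of bialgebroids between the reconstructed $H$ and $J$; since both base rings carry separable Frobenius $k$-algebra structures and the reconstruction of the base ring in Theorem \ref{thm: reco} is as the monoid $\langle\longf I,\dots\rangle$ in $\Ab$, the isomorphism is compatible with the $k$-algebra and Frobenius data, i.e.\ $W\cong J$ as weak bialgebras over $k$.

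The main obstacle I anticipate is not any deep new argument but the careful bookkeeping of the separable Frobenius structure: showing that the properties ``$\fsh t$ split epi / von Neumann regular / f.g.\ projective over $R$'' are equivalent to the corresponding properties of $Ft$ over $k$, and that the bialgebroid isomorphism of Theorem \ref{thm: reco} respects the separable Frobenius data on the base ring so that it descends to an isomorphism of weak bialgebras. All of this is routine given \cite[\S 6]{Sz: Brussels} and the dictionary between weak bialgebras and special bialgebroids, so the proof will largely consist of invoking Theorems \ref{thm: rep}, \ref{thm: reco}, Proposition \ref{pro: reco}, the cited lemmas of \cite{Sz: Brussels}, and Pfeiffer's splitness result, with the verification of the fiber-functor axioms for $\phish\bimf$ spelled out as above; I would present it compactly and omit the purely mechanical checks.
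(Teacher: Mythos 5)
Your proposal is correct and follows essentially the same route as the paper: reduce to Theorems \ref{thm: rep} and \ref{thm: reco} (and Proposition \ref{pro: reco} together with Pfeiffer's splitness result for part $(B)$) via the dictionary between split monoidal functors over $k$ and essentially strong monoidal functors with separable Frobenius base ring, the only real work being that the forgetful functor $\phish:\M_R\to\M_k$ preserves and reflects conditions (1)--(4). The paper's proof does exactly this, spelling out the one representative check (transfer of von Neumann regularity via averaging with the separability element $e=\sum_i e_i\ot f_i$) that you correctly flag as the separable-Frobenius bookkeeping.
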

\begin{proof}
After what has been said above it suffices to show that the $F$ of $(A)$ is a fiber functor in 
the sense of Definition \ref{def: fiber}. Let $R$ be the base ring of $F$ which a separable Frobenius $k$-algebra and
let $\phish:\M_R\to\M_k$ be the forgetful functor. We have $F=\phish\fsh$ and the functor 
$\phish_*:\Fun(\C,\M_R)\to\Fun(\C,\M_k)$ preserves and reflects all properties listed in (1 - 4). For example, $F$ is conservative iff $\fsh$ is conservative, $FC$ is fgp iff $\fsh C$ is fgp, or, $Ft$ is von Neumann regular iff $\fsh t$
is von Neumann regular.
For showing e.g. that von Neumann regularity of $Ft$ implies von Neumann regularity of $\fsh t$ one takes a $\beta\in\M_k(FD,FC)$ satisfying $Ft\ci \beta\ci Ft=Ft$, constructs $\gamma\in \M_R(\fsh D,\fsh C)$ by 
$\phish \gamma (y):=\sum_i \beta(y\cdot e_i)\cdot f_i$ and verifies that $\phish(\fsh t\ci \gamma\ci\fsh t) x=
\sum_i Ft(\beta(Ftx\cdot e_i)\cdot f_i)=\sum_i Ft(\beta(Ft(x\cdot e_i)))\cdot f_i=\sum_iFt(x\cdot e_i)\cdot f_i=
Ftx\cdot\sum_i e_if_i= Ftx=\phish(\fsh t) x$, $\forall x\in FC$, and therefore $\fsh t\ci\gamma\ci\fsh t=\fsh t$.
\end{proof}

The above Theorem greatly simplifies if we assume that $k$ is a field. As a matter of fact, the only obstruction against $\C$ being abelian is that submodules and quotient modules of f.g. projective $k$-modules need not be  projective and, for submodules, not even finitely generated. But for $k$ a field these properties are automatic.

\begin{thm}
Let $k$ be a field. 
\begin{trivlist}
\item[$(A)$] Let $\C$ be a small $k$-linear abelian monoidal category equipped with a $k$-linear faithful exact 
split monoidal functor $F:\C\to\M_k$ such that $FC$ is a finite dimensional vector space for all object $C$ of $\C$.
Then there is a weak bialgebra $W$ over $k$ and a monoidal equivalence $\C\simeq\M^W_\fgp$ such that $F$ factors as
$\C\simeq\M^W_\fgp\into\M^W\to\M_k$. This weak bialgebra is such that $\M^W$ is generated by its finite dimensional comodules.
\item[$(B)$] Let $W$ be a weak bialgebra over $k$ such that $\M^W$ is generated by its finite dimensional comodules.
Then the restriction to $\C=\M^W_\fgp$ of the forgetful functor $\M^W\to\M_k$ is a $k$-linear faithful exact 
split monoidal functor $F:\C\to\M_k$ such that $FC$ is a finite dimensional vector space for all object $C$ of $\C$.
\end{trivlist}
\end{thm}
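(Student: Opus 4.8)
The plan is to obtain both halves of the statement by specialising the preceding Theorem to the case of a field $k$. The point, already anticipated in the discussion above, is that over a field ``finite dimensional'' coincides with ``finitely generated projective'', the projectivity and finite-generation side conditions become vacuous, and the subcategories of finite dimensional objects in sight are automatically abelian; so no genuinely new argument is needed.

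For part $(A)$ I would verify that the data $\bra\C,F\ket$ meets the four hypotheses of the preceding Theorem. Faithfulness of $F$ is assumed. Flatness: since $\C$ is abelian it has all finite limits and the exact additive $F$ preserves them, so $F$ is left exact, hence flat. Reflecting isomorphisms: a faithful exact functor between abelian categories has this property, because $Ft$ invertible forces $F(\ker t)=\ker(Ft)=0$ and $F(\coker t)=\coker(Ft)=0$, whence $\ker t=\coker t=0$ by faithfulness, so $t$ is both monic and epic, hence invertible; this gives hypothesis $(1)$. Hypothesis $(2)$ holds because every finite dimensional vector space is finitely generated and projective. Hypotheses $(3)$ and $(4)$ hold because $\C$, being abelian, has all kernels and cokernels, and $F$, being exact, preserves all cokernels. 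The preceding Theorem then produces the weak bialgebra $W$ over $k$, the monoidal equivalence $\C\simeq\M^W_\fgp$ through which $F$ factors as $\C\simeq\M^W_\fgp\into\M^W\to\M_k$, and the statement that $\M^W$ is generated by $\M^W_\fgp$; over a field $\M^W_\fgp$ is exactly the category of finite dimensional $W$-comodules, so this is the assertion of $(A)$. ($k$-flatness of $W$, part of the conclusion of the preceding Theorem, is automatic over a field and hence is not recorded separately.)

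For part $(B)$ I would note first that over a field every $W$ is $k$-flat, so the hypothesis reduces to ``$\M^W_\fgp$ generates $\M^W$''. Part $(B)$ of the preceding Theorem then gives that the restriction $F$ of the forgetful functor $\M^W\to\M_k$ to $\C=\M^W_\fgp$ is a $k$-linear split monoidal functor satisfying conditions $(1)$--$(4)$; in particular $F$ is faithful and $FC$ is finite dimensional for all $C\in\ob\C$. The remaining work is to upgrade ``satisfies $(1)$--$(4)$'' to ``abelian and exact''. Since $W$ is $k$-flat the category $\M^W$ is abelian, and $\M^W_\fgp$ is closed in $\M^W$ under subobjects and quotients — over a field a sub- or quotient comodule of a finite dimensional comodule is again finite dimensional — hence closed under kernels and cokernels, so $\C=\M^W_\fgp$ is abelian with kernels and cokernels computed in $\M^W$. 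Flatness of $F$ (from $(1)$) then gives preservation of these kernels, and since every epimorphism in $\C$ is carried by $F$ to a split surjection of finite dimensional spaces, hence to a von Neumann regular arrow, condition $(4)$ gives preservation of all cokernels; therefore $F$ is exact, which completes $(B)$.

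I do not expect a serious obstacle: the only point requiring care is the last upgrading step in $(B)$ — recognising that $\M^W_\fgp$ is abelian over a field and that $F$ sends epimorphisms there to split epis, so that the \emph{a priori} weaker conditions $(3)$ and $(4)$ of the preceding Theorem amount, over a field, to full exactness — together with the observation in $(A)$ that the reconstructed $W$ is automatically $k$-flat, so that clause disappears from the statement. The reconstruction clause of part $(B)$ of the preceding Theorem (that rebuilding the weak bialgebra from $F$ recovers $J$ up to isomorphism) carries over verbatim, although it is not asserted here.
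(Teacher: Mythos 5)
Your proposal is correct and follows the same overall strategy as the paper: both treat the field case as a specialization of the preceding weak-bialgebra theorem, observing that over a field the finite-generation/projectivity conditions trivialize, so that part $(A)$ amounts to checking that a faithful exact functor on an abelian $\C$ satisfies conditions (1)--(4), and the only genuine work is the exactness of $F$ in part $(B)$. The one step you handle differently is that last one: the paper argues directly in $\M^W$ --- for $t:C\to D$ in $\C$ the cokernel $c:D\to E$ taken in the abelian category $\M^W$ has finite dimensional underlying space, hence lies in $\C=\M^W_\fgp$, and since the forgetful functor $\F^W:\M^W\to\M_k$ is a left adjoint it preserves cokernels, so $Fc$ is a cokernel of $Ft$ --- whereas you deduce right exactness from condition (4) of the general theorem. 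Your route works, but the justification as written is slightly off target: condition (4) gives preservation of the cokernel of $t$ whenever $Ft$ is von Neumann regular, so what you need is that \emph{every} arrow of $\C$ (not just every epimorphism) is carried by $F$ to a von Neumann regular map; this is indeed automatic over a field, since any $k$-linear map of finite dimensional spaces factors through its image by a split epi followed by a split mono. With that one-line correction your argument is complete; the paper's version buys the same conclusion without invoking von Neumann regularity at all, at the price of using left adjointness of $\F^W$ and closure of finite dimensionality under quotients, which you in any case establish when showing $\C$ is abelian.
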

\begin{proof}
Only right exactness of $F$ in part $(B)$ requires some explanation.
The $W$ being flat over $k$ the category $\M^W$ of $W$-comodules is an abelian category.
Let $t:C\to D$ be an arrow in $\C$. Then $t$ has a cokernel $c:D\to E$ in $\M^W$. Since $FE$ is then a quotient of a finite dimensional $k$-space, it is also finite dimensional. Thus $c$ belongs to $\C$. Then $c$ is also the cokernel of $t$ in $\C$ since the embedding $\M^W_\fgp\into\M^W$ is a fully faithful functor between abelian categories. 
Since the forgetful functor $\F^W:\M^W\to\M_k$ is left adjoint, it preserves cokernels so $Fc=\F^Wc$ is a cokernel
of $Ft=\F^Wt$.
\end{proof}

\section{Existence of fiber functors} 
\label{sec: duality}

In this Section we further restrict the class of monoidal categories $\C$ in order to ensure the existence of fiber
functors and therefore establish an equivalence, by the Representation Theorem, of $\C$ with $\M^H_\fgp$ 
for some $R$-bialgebroid $H$. The class of categories $\C$ for which fiber functors can be constructed depends, of
course, on the Ansatz we take for the fiber functor, which is this. 

For a set $\I\subset \ob\C$ of objects we consider the presheaf $\Omega:=\coprod_{A\in\I} YA$ and the functor
\begin{equation} \label{longF Ansatz}
\longF:\Pre\to\Ab,\quad \longF U:=\Pre(\Omega,\Omega\odot U) 
\end{equation}
equipped with monoidal structure
\begin{align*}
\longF_{U,V}&: x\ot y\mapsto \asso^{-1}_{\Omega,U,V}\ci (x\odot V)\ci y\\
\longF_0&: 1\mapsto \runi^{-1}_\Omega\ .
\end{align*}
We want to impose conditions on the category $\C$ and on the subset $\I$ that imply that the composite monoidal
functor
\begin{equation} \label{longf Ansatz}
\longf:=\longF Y:\C\to\Ab
\end{equation}
is a fiber functor. Defining the ring $R:=\End \Omega$ and making each $\longf C$ an $R$-$R$-bimodule by
$r'\cdot x\cdot r=(r'\odot YC)\ci x\ci r$, $x\in \Pre(\Omega,\Omega\odot YC)$, $r,r'\in R$ we obtain
the factorization $\C\rarr{\bimf}\,_R\M_R\to\Ab$ of $\longf$ with $\bimf$ normal monoidal. As always $\fsh C$ denotes $\bimf C$ with the left $R$-module structure forgotten.

\subsection{Bounded fusion}

In a small monoidal $\Ab$-category $\C$ let $\I\subset\ob\C$ be such that $\{YA|A\in\I\}$ generates $\Pre$.
Such an $\I$ always exists, $\I=\ob\C$ for example. In particular, for every representable presheaf $YC$
there is an epimorphism $\coprod_{j\in\J}YB_j\epi{e} YC$ with $B_j\in\I$, $\forall j\in\J$. Since $YC$ is projective and small, $e$ is split and the splitting map factors through a finite subcoproduct. Thus, if $\C$ is additive, every $C\in\ob\C$ is a direct summand of a finite direct sum $B_1\oplus B_2\oplus\dots\oplus B_n$ of
objects $B_i\in\I$. 

It follows that for each $C\in\ob\C$ and each $A\in\I$ we can choose a direct summand diagram
\begin{equation} \label{diag: p-q}
A\ot C\longrarr{q^i_{A\ot C}}B^i_{A\ot C}\longrarr{p^i_{A\ot C}}A\ot C,\quad i=1,\dots,n_{A\ot C},
\end{equation}
i.e., such that $\sum_i p^i_{A\ot C}\ci q^i_{A\ot C}=A\ot C$, in which all the $B^i_{A\ot C}\in\I$.
Let $M^B_{A,C}$ denote the multiplicity of $B\in I$ in this diagram, i.e.,
\[
M^B_{A,C}:=\mash \{1\leq i\leq n_{A\ot C}|B^i_{A\ot C}=B\}\qquad A,B\in\I,\ C\in\ob\C\,.
\]

\begin{defi}
A set $\I\subset\ob\C$ of objects in a monoidal $\Ab$-category is called a bounded fusion system if
\begin{itemize}
\item $\{YA|A\in\I\}$ is a generator system in the category $\Pre$ of presheaves
\item and there is a choice of direct summand diagrams (\ref{diag: p-q}) the multiplicities $M^B_{A,C}$ of which
satisfy that there exist finite numbers $m_C$ for all $C\in\I$ (equivalently, for all $C\in\ob\C$) such that
\[
\sum_{A\in\I}M^B_{A,C}\ \leq\ m_C\,,\qquad \forall B\in\I\quad\text{and}\quad\forall C\in\I\ (\forall C\in\ob\C)\,.
\]
\end{itemize}
The category $\C$ itself is called to have bounded fusion if there exists a bounded fusion system $\I\subset\ob\C$.
\end{defi}

Let $O^B_{A,C}$, the occurence of $B\in\I$ in $A\ot C$, be defined by $O^B_{A,C}=1$ if $M^B_{A,C}>0$ and $O^B_{A,C}=0$ otherwise. Then the boundedness condition above is equivalent to boundedness of the
set of integers
\[
\{\sum_{A\in\I}O^B_{A,C}|B\in\I\}
\]
for all object $C$. That is to say, the number of $A\in\I$ for which a given $B\in\I$ occurs in the direct summand diagram for $A\ot C$, as one of the $B^i_{A\ot C}$, has a $B$-independent upper bound for all $C$.

Observe also that if $\I$ is a bounded fusion system and $D\in\ob\C\setminus \I$ then adjoining $D$ to $\I$ yields again a bounded fusion system $\I'$. This is because we can keep the old diagrams when expanding $A\ot C$ with $A\in\I$ and the only new diagram is that of $D\ot C$ which can weaken the bound $m_C$ only by a finite amount.
Therefore we can always assume without loss of generality that the unit object $I$ belongs to $\I$ and we do assume this in the sequel without warning.

\begin{lem} \label{lem: fib esmon}
Let $\I\subset\ob\C$ be a bounded fusion system in the monoidal $\Ab$-category $\C$ and define $\Omega=\coprod_{A\in\I}YA$. Then
\[
\longf:\C\to\Ab,\quad \longf C=\Pre(\Omega,\Omega\odot YC)
\]
with monoidal structure induced from that of $\longF$ in (\ref{longF Ansatz}) is essentially strong.
Moreover, $\fsh C$, which is $\longf C$ with right $R=\End\Omega$-module action
$x\cdot r:=x\ci r$, is finitely generated and projective for all $C\in\ob\C$.
\end{lem}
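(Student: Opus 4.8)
The statement has two parts: that $\longf$ is essentially strong monoidal, and that each $\fsh C$ is finitely generated projective as a right $R$-module. For the first part, recall that $\longf = \longF Y$ where $\longF U = \Pre(\Omega, \Omega \odot U)$ and $\Omega = \coprod_{A\in\I} YA$. Since $\{YA \mid A\in\I\}$ generates $\Pre$ and $Y$ is strong monoidal, the essential strong monoidality of $\longf$ will follow once I show $\longF$ sends the relevant structure maps to isomorphisms; but more directly, using $\longN_C : \longF YC \iso \longf C$ (which holds automatically from the general machinery), I only need to check that the structure map $\longf_{C,D} : \longf C \otimes \longf D \to \longf(C\otimes D)$ becomes invertible after passing to the normal part $\bimf$. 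The key computation is that $\longF_{YA,YB}$ is a coequalizer (cf.\ the argument in the proof of Proposition \ref{pro: esss}); here the concrete description $\longF YC = \Pre(\Omega, \Omega \odot YC)$ makes this transparent because $\Omega \odot YC \cong \coprod_{A\in\I} Y(A\otimes C)$ by strong monoidality of $Y$ and cocontinuity of $\odot$, so $\longf C \cong \bigoplus_{A\in\I} \Pre(\Omega, Y(A\otimes C)) = \bigoplus_{A\in\I}\Omega(A\otimes C)$. With this identification the monoidal structure of $\longf$ is explicit, and one checks directly that after factoring through the base ring $R = \End\Omega$ the map $\bimf_{C,D}$ is an isomorphism of $R$-$R$-bimodules.

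First I would make the identification $\fsh C \cong \bigoplus_{A\in\I} \Omega(A\otimes C)$ precise, where $\Omega(X) = \coprod_{A\in\I}YA(X) = \coprod_{A\in\I}\C(X,A)$ as an abelian group, and track the right $R$-action: $R = \End\Omega$ and $x\cdot r = x\ci r$. Under this identification $\fsh C$ is the abelian group of natural transformations $\Omega \to \Omega\odot YC \cong \coprod_A Y(A\otimes C)$, i.e.\ families $(x_A)_{A\in\I}$ with $x_A \in \Pre(\Omega, Y(A\otimes C))$, which by Yoneda is an element of $\Omega(A\otimes C)$. The crucial structural input is the direct summand diagrams (\ref{diag: p-q}): each $A\otimes C$ is a direct summand of a finite sum of objects $B^i_{A\otimes C} \in \I$, so $\Omega(A\otimes C)$ decomposes accordingly, and the bounded-fusion hypothesis controls how often each $B\in\I$ appears across all $A\in\I$.

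For the finitely-generated-projective claim I would exhibit $\fsh C$ explicitly as a direct summand of a finitely generated free right $R$-module. Using (\ref{diag: p-q}), for each $A\in\I$ the object $A\otimes C$ splits off the $B^i_{A\otimes C}$, and pre-composing a natural transformation $\Omega \to Y(A\otimes C)$ with the projections/inclusions $p^i_{A\otimes C}, q^i_{A\otimes C}$ relates $\Omega(A\otimes C)$ to the groups $\Omega(B^i_{A\otimes C}) = \Omega(B)$ for various $B\in\I$. Now $\Omega(B) = \coprod_{A'\in\I}\C(B,A')$ and, collecting over all $A$, the bounded-fusion inequality $\sum_{A\in\I} M^B_{A,C} \leq m_C$ says that each summand $Y B$ of $\Omega$ contributes only finitely many times (at most $m_C$, uniformly in $B$) to $\fsh C$ after using the direct-summand decompositions. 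Concretely I would build explicit $R$-module maps $\fsh C \rightarrow R^{\oplus N_C}$ and $R^{\oplus N_C}\rightarrow \fsh C$ (with $N_C$ finite, governed by the $m_C$ and the structure of the diagrams) whose composite is the identity on $\fsh C$, using the elements of $R = \End\Omega = \bigoplus_{A,A'\in\I}\C(A,A')$ built from the $p^i, q^i$ and the identities; right-multiplication by these idempotent-like elements realizes $\fsh C$ as the image of an idempotent in a finite matrix algebra over $R$, hence a finitely generated projective right $R$-module.

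The main obstacle, and where the bounded-fusion hypothesis is genuinely used, is the finiteness bookkeeping in the last paragraph: a priori $\fsh C = \bigoplus_{A\in\I}\Omega(A\otimes C)$ is an infinite direct sum (indexed by the possibly infinite set $\I$) of abelian groups that are themselves infinite direct sums, so one must show that after the change of variables provided by the splitting diagrams (\ref{diag: p-q}) the whole thing collapses to a finitely generated projective $R$-module. The point is that the right $R$-action mixes the $A$-summands, and the map $\Omega \to \Omega$ given by an element of $R$ only has finitely many nonzero components; combined with $\sum_A M^B_{A,C} \leq m_C$ this forces the projective generators to be finite in number. I expect the essential-strength part to be routine once the explicit description of $\fsh C$ is in hand, so the real content of the lemma is this finiteness argument. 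I would organize it by first treating $C\in\I$ (where $I\in\I$ may be used as a convenient base point), then noting that a general $C$ is a direct summand of a finite sum of objects of $\I$, so $\fsh C$ is a direct summand of a finite sum of finitely generated projectives, hence finitely generated projective.
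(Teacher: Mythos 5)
Your overall strategy---use the summand diagrams (\ref{diag: p-q}) to build a dual basis for $\fsh C$ and let the bound $\sum_{A\in\I}M^B_{A,C}\le m_C$ control its size---is the right one, but the explicit descriptions you base it on are wrong, and the error sits exactly where the infiniteness of $\I$ matters. Since $\Omega=\coprod_{A\in\I}YA$ is an infinite coproduct of representables it is not a small object, so $\Pre(\Omega,\under)$ does not take the coproduct $\Omega\odot YC\cong\coprod_{A}Y(A\ot C)$ to a direct sum: a morphism $\Omega\to\coprod_A Y(A\ot C)$ is a family of morphisms $YB\to\coprod_A Y(A\ot C)$, $B\in\I$, each factoring through a finite subcoproduct, but the relevant finite set of $A$'s varies with $B$. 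Hence $\longf C\cong\prod_{B\in\I}\coprod_{A\in\I}\C(B,A\ot C)$ (``column finite matrices'' $x=(x_{A,B})$ with $x_{A,B}\in\C(B,A\ot C)$), not $\bigoplus_{A\in\I}\Pre(\Omega,Y(A\ot C))$; moreover the step $\Pre(\Omega,Y(A\ot C))=\Omega(A\ot C)$ ``by Yoneda'' is backwards (Yoneda computes $\Pre(Y(A\ot C),\Omega)$, whereas the hom you need is $\prod_{B\in\I}\C(B,A\ot C)$). For the same reason $R=\End\Omega\cong\prod_{A}\coprod_{B}\C(B,A)$ is a ring of column finite matrices rather than $\bigoplus_{A,A'}\C(A,A')$ (which for infinite $\I$ does not even contain $1_R$), and an element of $R$ can have infinitely many nonzero entries, so the finiteness you expect the $R$-action to force does not come for free.

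With the correct matrix picture, the real content of the lemma is a relabeling step that your sketch only gestures at and which is where bounded fusion is genuinely used: since for fixed $C$ each $B\in\I$ occurs at most $m_C$ times among the middle objects $B^i_{A\ot C}$ as $A$ ranges over $\I$, one can reindex the arrows of (\ref{diag: p-q}) as $q^j_B:A\ot C\to B$, $p^j_B:B\to A\ot C$ with $j$ running over the fixed finite set $\{1,\dots,m_C\}$ (padding with zeros), and assemble for each $j$ column finite matrices $P^j(C)\in\longf C$ and $Q^j(C)\in\longf^\lef C=\Pre(\Omega\odot YC,\Omega)$ satisfying $\sum_{j=1}^{m_C}P^j(C)\,Q^j(C)=$ identity. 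Then $x\mapsto Q^j(C)x$ are right $R$-linear maps $\fsh C\to R$ which together with the $P^j(C)$ form a dual basis of size $m_C$, exhibiting $\fsh C$ as a direct summand of $R^{m_C}$; no reduction to $C\in\I$ is needed. Finally, essential strength is not ``routine once the description is in hand'': an inverse of $\bimf_{C,D}$ must actually be produced, and it is produced precisely by means of the retraction $\Omega\odot YC\to\Omega\to\Omega\odot YC$ in the $m_C$ slices $Q^j(C),P^j(C)$; without that device (or an equivalent one) your assertion that $\bimf_{C,D}$ is ``checked directly'' to be an isomorphism has no argument behind it.
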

\begin{proof}
First we prove finite projectivity of $\fsh C$ for a fixed $C$ by constructing dual bases. Observe that the arrows in
(\ref{diag: p-q}) are labelled by pairs $\bra A,i\ket\in\I\x \NN$ and the number of such pairs for which $B^i_{A\ot C}$
is a given $B$ is bounded by $m_C$. Therefore we can choose injections
\[
\{\bra A,i\ket|A\in\I,\ i\in\{1,\dots,n_{A\ot C}\},\ B^i_{A\ot C}=B\}\longmono{j_B}\{1,\dots,m_C\}
\]
for each $B\in\I$. Taking disjoint union over $B\in\I$ this implies an injection
\[
\{\bra A,i\ket|A\in\I,\ i=1,\dots,n_{A\ot C}\}\longmono{\iota} \{\bra B,j\ket|B\in\I,\ j=1,\dots,m_C\}\,.
\]
This means that we can relabel the arrows in (\ref{diag: p-q}) by using the middle object $B$ and an integer $j$
having $B$-independent range:
\begin{equation} \label{diag: p-q relabelled}
A\ot C\longrarr{q^j_B}B\longrarr{p^j_B}A\ot C,\qquad \sum_{j=1}^{m_C}\sum_{B\in\I} p^j_B\ci q^j_B=A\ot C
\end{equation}
where we set $p^j_B=0$ and $q^j_B=0$ whenever the pair $\bra B,j\ket$ does not occur as the image
of some $\bra A,i\ket$.
Define matrices $P^j=P^j(C)$, $Q^j=Q^j(C)$ with rows and columns labelled by elements of $\I$ and with entries
$P^j(C)_{A,B}\in\C(B,A\ot C)$, $Q^j(C)_{B,A}\in\C(A\ot C,B)$ by
\[
P^j(C)_{A,B}=\begin{cases}p^j_B&\text{if $\bra B,j\ket=\iota(\bra A,i\ket)$ for some $i$}\\
                                                      0&\text{otherwise}\end{cases}
\]
\[
Q^j(C)_{B,A}=\begin{cases}q^j_B&\text{if $\bra B,j\ket=\iota(\bra A,i\ket)$ for some $i$}\\
                                                      0&\text{otherwise}\end{cases}
\]
The matrices $P^j$ have at most one nonzero entry in each column and the matrices $Q^j$ have at most one nonzero entry in each row. All the $P^j$, $Q^j$ are column finite and row finite. Moreover, for each $\bra B,j\ket$ the composite arrow $P^j_{A,B}\ci Q^j_{B,A'}$ is zero unless $A'=A$ (and only for exactly one $A$).
This leads to the matrix relation
\begin{equation}\label{eq: PQ=1}
\sum_{j=1}^{m_C} P^j(C) Q^j(C) =\mathbf{1}_{\ot C,\,\ot C}
\end{equation}
with the appropriate unit matrix on the right hand side. Note that the direct summand condition
(\ref{diag: p-q relabelled}) correspond only to the diagonal elements of this equation.

The functor $\longf$ can be rewritten up to isomorphism as follows.
\begin{align*}
\longf C&=\Pre(\Omega,\Omega\odot YC)\cong \prod_{B\in\I}\Pre(YB,\Omega\odot YC)\cong\prod_{B\in\I}
(\Omega\odot YC)B\cong\\
&\cong\prod_{B\in\I}\int^D\int^E\Omega D\ot\C(E,C)\ot\C(B,D\ot E)\cong\\
&\cong\prod_{B\in\I}\coprod_{A\in\I}\int^D\int^E \C(D,A)\ot\C(E,C)\ot\C(B,D\ot E)\cong\\
&\cong\prod_{B\in\I}\coprod_{A\in\I} \C(B,A\ot C)
\end{align*}
Thus $\longf C$ can be identified with the abelian group of column finite matrices $x$ with rows and columns labelled by $\I$ and with entries $x_{A,B}\in\C(B,A\ot C)$. In this language the monoidal structure utilizes the fact
that column finite matrices can be multiplied and the result is again column finite:
\[
\longf_{C,D}(x\ot y)_{A',A}=\asso^{-1}_{A',C,D}\ci\sum_B (x_{A',B}\ot D)\ci y_{B,A}\,.
\]
In the same manner the base ring $R$ can identified with the ring of column finite matrices with entries
$r_{A,B}\in\C(B,A)$, $A,B\in\I$ by
\[
R\cong\prod_{A\in\I}\coprod_{B\in\I}\C(B,A)\,.
\]
Defining the dual functor
\[
\longf^\lef:\C^\op\to\Ab,\qquad \longf^\lef C:=\Pre(\Omega\odot YC,\Omega)
\]
with the obvious monoidal structure we see that its base ring is $R^\op$. By the isomorphism
\[
\longf^\lef C\cong \Pre(\coprod_A Y(A\ot C),\Omega)\cong \prod_A\coprod_B\C(A\ot C,B)
\]
the elements of this functor can also be interpreted as column finite matrices.

We can now recognize the matrices $P^j(C)$ constructed above as elements of $\longf C$ and the matrices $Q^j(C)$
as elements of $\longf^\lef C$. Matrix multiplication yields right $R$-module maps $x\mapsto Q^j(C) x$ from $\longf C$ to $R$ that, together with the $P^j(C)$'s form a dual basis because of (\ref{eq: PQ=1}).
This proves finite projectivity of $\fsh C$.

In order to show strongness of the already normal monoidal functor $\bimf$
we have to show invertibility of the maps
\[
\bimf_{C,D}(x\oR y)=(\Omega\odot Y_{C,D})\ci\asso^{-1}_{\Omega,YC,YD}\ci (x\odot YD)\ci y\,.
\]
for all $C,D\in\ob\C$. For that purpose we consider the direct summand diagram
\begin{equation} \label{diag: pi-sigma}
\Omega\odot YC\longrarr{Q^j(C)}\Omega\longrarr{P^j(C)}\Omega\odot YC\,,\quad j=1,\dots, m_C
\end{equation}
corresponding to Equation (\ref{eq: PQ=1}) and verify by direct calculation that
\[
\bimf^{-1}_{C,D}(z)=\sum_j P^j(C) \ \oR\
(Q^j(C)\odot YD)\ci \asso_{\Omega,YC,YD}\ci(\Omega\odot Y^{-1}_{C,D})\ci z
\]
provides the inverse of $\bimf_{C,D}$.
\end{proof}

\begin{lem} \label{lem: F reflects split epis}
Any bounded fusion system $\I$ in the small monoidal $\Ab$-category $\C$ produces a functor $\longf$ by (\ref{longf Ansatz}) such that the associated functor $\fsh:\C\to\M_R$ is faithful, reflects isomorphisms, reflects split epimorphisms and reflects von Neumann regular morphisms.
\end{lem}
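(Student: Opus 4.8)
The plan is to deduce all four assertions from a single structural observation. Although $\Omega:=\coprod_{A\in\I}YA$ need not be a small object of $\Pre$ when $\I$ is infinite, the boundedness of the fusion system forces each presheaf $\Omega\odot YC$ to be a retract of a \emph{finite} coproduct $\Omega^{(m_C)}$; on the full subcategory $\mathcal S\subset\Pre$ of all such retracts the functor $\Pre(\Omega,-):\Pre\to\M_R$, with $R=\End\Omega$, is an equivalence onto the category of finitely generated projective right $R$-modules. Once this is known, faithfulness and the reflection of isomorphisms, of split epimorphisms and of von Neumann regular morphisms all become formal.

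I would begin with two reductions. We may assume $I\in\I$, since adjoining the unit object to a bounded fusion system keeps it bounded; then $YI$ is a retract of $\Omega$ via a split monomorphism $\sigma_I:YI\to\Omega$ with retraction $\pi_I$. Tensoring these on the right with $Yt$ and transporting along the left unitor $\luni$ exhibits, for every arrow $t:C\to D$ of $\C$, the morphism $Yt$ as a retract, in the arrow category of $\Pre$, of $\Omega\odot Yt$. Since isomorphisms, split epimorphisms and von Neumann regular morphisms are all stable under retracts in the arrow category, and since $Y$ is fully faithful (so $Yt$ has such a property exactly when $t$ does), it suffices to show that if $\fsh t=\Pre(\Omega,\Omega\odot Yt)$ is zero, an isomorphism, a split epimorphism, or von Neumann regular in $\M_R$, then $\Omega\odot Yt$ has the same property in $\Pre$. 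The case $\fsh t=0$ needs nothing further: $\Omega$ is a coproduct of the generating family $\{YA\mid A\in\I\}$, hence a generator, so $\Pre(\Omega,-)$ is faithful.

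For the structural observation I would argue as follows. By the proof of Lemma~\ref{lem: fib esmon} there are, for each $C$, morphisms $P^{j}(C):\Omega\to\Omega\odot YC$ and $Q^{j}(C):\Omega\odot YC\to\Omega$ of $\Pre$ ($j=1,\dots,m_C$) with $\sum_{j}P^{j}(C)\ci Q^{j}(C)=\id_{\Omega\odot YC}$, this being the presheaf-level content of~(\ref{eq: PQ=1}); they assemble into a split epimorphism $\Omega^{(m_C)}\to\Omega\odot YC$ and a split monomorphism $\Omega\odot YC\to\Omega^{(m_C)}$, so $\Omega\odot YC\in\mathcal S$. On the objects $\Omega^{(n)}$ one has canonically $\Pre(\Omega^{(m)},\Omega^{(n)})\cong M_{n\times m}(R)$ (finite coproducts are biproducts and $\Pre(\Omega,-)$ preserves finite products), and under this and the matching identification $\M_R(R^{m},R^{n})\cong M_{n\times m}(R)$ the comparison $\phi\mapsto\Pre(\Omega,\phi)$ is the identity; hence $\Pre(\Omega,-)$ is fully faithful on the full subcategory spanned by the $\Omega^{(n)}$, and full faithfulness is inherited by the closure $\mathcal S$ under retracts, with essential image the retracts of the $R^{n}$, i.e.\ the finitely generated projective right $R$-modules. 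In particular, for all $C,D$ the map $\Pre(\Omega\odot YC,\Omega\odot YD)\to\M_R(\fsh C,\fsh D)$, $\phi\mapsto\Pre(\Omega,\phi)$, is a bijection taking $\Omega\odot Yt$ to $\fsh t$.

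Granting this, the remaining cases close at once. If $\fsh t$ is an isomorphism (respectively admits an $R$-linear section $s$, respectively admits an $R$-linear $v$ with $\fsh t\,v\,\fsh t=\fsh t$), transport the inverse (resp.\ $s$, resp.\ $v$) back along the above bijection to a morphism $\hat g$ (resp.\ $\hat s$, resp.\ $\hat v$) of $\Pre$; applying $\Pre(\Omega,-)$ to $\hat g\ci(\Omega\odot Yt)$ and $(\Omega\odot Yt)\ci\hat g$ (resp.\ to $(\Omega\odot Yt)\ci\hat s$, resp.\ to $(\Omega\odot Yt)\ci\hat v\ci(\Omega\odot Yt)$) returns the images of $\id$ and $\id$ (resp.\ of $\id$, resp.\ of $\Omega\odot Yt$), which are the corresponding $\M_R$-identities valid by hypothesis; by injectivity of the bijection these equalities hold already in $\Pre$, so $\Omega\odot Yt$ is an isomorphism, a split epimorphism, or von Neumann regular, and the reductions of the second paragraph conclude. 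The main obstacle is exactly the structural observation, and within it the one point where the hypothesis is genuinely used: boundedness makes the dual basis of $\fsh C$ finite, so $\Omega\odot YC$ is a retract of a \emph{finite} power of $\Omega$; for an unbounded $\I$ it would only be a retract of an infinite coproduct of copies of $\Omega$, $\Pre(\Omega,-)$ would fail to be full on the objects $\Omega\odot YC$, and nothing past faithfulness would follow.
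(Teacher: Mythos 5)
Your argument is correct, and it reaches the conclusion by a more structural route than the paper's own proof, while resting on the same two pillars. The paper works throughout with the explicit column-finite matrix description $\fsh C\cong\prod_{B\in\I}\coprod_{A\in\I}\C(B,A\ot C)$: faithfulness is checked componentwise, the split-epi and von Neumann regular cases are handled by encoding an arbitrary right $R$-linear $\varphi:\fsh D\to\fsh C$ through the finite dual basis $P^j(D),Q^j(D)$ as a matrix of arrows $\Phi_{A,B}\in\C(B\ot D,A\ot C)$ and then evaluating at $A=B=I$ (the standing convention $I\in\I$, which your reduction also invokes), and reflection of isomorphisms is proved separately by showing each $\C(B,t)$, $B\in\I$, is invertible and writing down an explicit inverse of $t$ from a direct summand diagram for $D$. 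You instead isolate the Morita-type statement underlying these computations: boundedness makes $\Omega\odot YC$ a retract of the finite sum $\Omega^{(m_C)}$ (the presheaf-level content of (\ref{eq: PQ=1}), recorded in the paper as the diagram (\ref{diag: pi-sigma})), $\Pre(\Omega,\under)$ is fully faithful on finite sums of $\Omega$ and hence on their retract closure, and the summand $YI$ of $\Omega$ exhibits $Yt$ as a retract of $\Omega\odot Yt$ in the arrow category; all four assertions then follow uniformly from stability of the relevant morphism classes under retracts together with full faithfulness of $Y$. Your fullness-on-retracts step is the abstract counterpart of the paper's matrices $\Phi_{A,B}$, and passing to the retract of arrows corresponds to the paper's extraction of the $(I,I)$-entry, so nothing essentially new is used; what your packaging buys is uniformity (in particular the isomorphism case needs no separate construction of an inverse) and a transparent localisation of where boundedness and $I\in\I$ enter, at the price of not producing the explicit splitting arrows, such as $\luni_C\ci\Phi_{I,I}\ci\luni^{-1}_D$, that the paper's computation exhibits.
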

\begin{proof}
Recall that by definition of bounded fusion systems every object $C$ has a direct summand diagram
\begin{equation}\label{diag: C as a summand}
C\longrarr{q^i}B^i\longrarr{p^i}C
\end{equation}
with $B^i\in\I$. 

In order to prove faithfulness of $\longf$ suppose that $\longf t=0$ for some $t\in\C(C,D)$.
This means that for all $x:\Omega\to\Omega\odot YC$ the composite $(\Omega\odot Yt)\ci x$ vanishes.
In terms of column finite matrices this is equivalent to $(A\ot t)\ci x_{A,B}=0$ for all $A,B\in\I$.
Setting $A=I$ this implies $t\ci s=0$ for all $s:B\to C$ and for all $B\in\I$. Thus $t=\sum_i t\ci p^i\ci q^i=0$
where $p^i$, $q^i$ are the arrows of (\ref{diag: C as a summand}). Therefore $\longf$ is faithful.

Next consider a $t\in\C(C,D)$ such that $\longf t$ is an isomorphism.
Denoting by $\iota_A:YA\to\Omega$ the coproduct injections and by $\tau_A:\Omega\to YA$ the unique morphism
such that $\tau_A\ci\iota_B$ is the identity if $A=B$ and zero otherwise we obtain
\[
\longf_{A,B}C:=\C(B,A\ot C) \to \longf C, \qquad s\mapsto (\iota_A\odot YC)\ci Y^{-1}_{A,C}\ci Ys\ci \tau_B
\]
a split subfunctor, i.e., a direct summand of $\longf$ for all $A,B\in\I$. The splitting morphism is
\[
\longf C\to \longf_{A,B}C,\qquad x\mapsto \left(Y_{A,C}\ci(\tau_A\odot YC)\ci x\ci \iota_B\right)_B(B)\,.
\]
It follows that the arrow $\longf_{A,B}t$ is
mono since $\longf t$ is mono and $\longf_{A,B} t$ is epi since $\longf t$ is epi.
Thus $\longf_{A,B}t$ is an isomorphism for all $A,B\in\I$. Setting $A=I$
we obtain that $\C(B,t):\C(B,C)\to\C(B,D)$ is an isomorphism for all $B\in\I$ and it is natural in the $B$-argument
considered to run over the full subcategory of $\C$ with object set $\I$. It follows that $t$ is an isomorphism
with inverse
\[
s:=\sum_j\C(B'_j,t)^{-1}(p'_j)\ci q'_j
\]
where $D\rarr{q'_j}B'_j\rarr{p'_j}D$ is a direct summand diagram for $D$ with $B'_j\in\I$. Indeed,
\begin{align*}
t\ci s&=\sum_j\C(B'_j,t)(\C(B'_j,t)^{-1}(p'_j))\ci q'_j=\sum_jp'_j\ci q'_j=D\\
s\ci t&=\sum_i\sum_j\C(B'_j,t)^{-1}(p'_j)\ci q'_j\ci t\ci p^i\ci q^i=\\
&=\sum_i\sum_j\C(B^i,t)^{-1}(p'_j\ci q'_j\ci t\ci p^i)\ci q_i=\\
&=\sum_i p^i\ci q^i=C
\end{align*}
where $p^i$, $q^i$ are the arrows of (\ref{diag: C as a summand}).

Assume that $t:C\to D$ is such that $\fsh t$ is split epi. Then there is a $\varphi:\fsh D\to\fsh C$ satisfying
$\fsh t\ci\varphi=\fsh D$. Representing elements of $\fsh C$, $\fsh D$ by column finite matrices as before
we can write
\begin{align*}
\varphi(y)_{A,B}&=\varphi\left(\sum_j P^j(D)Q^j(D)y\right)_{A,B}=\sum_j\sum_{B'\in\I}\varphi(P^j(D))_{A,B'}
\ci(\underset{\in R}{\underbrace{Q^j(D)y}})_{B',B}=\\
&=\sum_{B'\in\I}\Phi_{A,B'}\ci y_{B',B}
\end{align*}
for some $\Phi_{A,B}\in\C(B\ot D,A\ot C)$. Then the splitting condition is equivalent to the equations
\[
(A\ot t)\ci\Phi_{A,B}=\left\{\begin{matrix}A\ot D&\text{if }A=B\\0&\text{otherwise}\end{matrix}\right.
\]
for $A,B\in\I$. Substituting $A=B=I$ we obtain
\[
t\ci\luni_C\ci\Phi_{I,I}\ci\luni^{-1}_D=D\,,
\]
hence $t$ is split epi in $\C$.

Now assume that $t:C\to D$ is such that $\fsh t$ is von Neumann regular, $\fsh t\ci\varphi\ci\fsh t=\fsh t$
for some $\varphi\in\M_R(\fsh D,\fsh C)$. The $\Phi_{A,B}$ constructed above satisfies
\[
(A\ot t)\ci\Phi_{A,B}\ci(B\ot t)=\left\{\begin{matrix}A\ot t&\text{if }A=B\\0&\text{otherwise}\end{matrix}\right.
\]
which, for $A=B=I$ yields
\[
t\ci\luni_C\ci\Phi_{I,I}\ci\luni^{-1}_D\ci t=t
\]
which proves von Neumann regularity of $t$ in $\C$.
\end{proof}

\subsection{Weak kernels}

In order to ensure flatness of the functor
\begin{equation} \label{Ansatz}
\longf C\cong \prod_{B\in\I}\coprod_{A\in\I} \C(B,A\ot C)
\end{equation}
we need flatness of the functors $\C(B,A\ot\under)$ and some coherence of the category in order to ensure
that product of flat functors will be flat. Therefore we require existence of weak kernels in $\C$.

\begin{defi}
In any $\Ab$-category an arrow $s$ is a weak kernel of the arrow $t$ if 
\begin{enumerate}
\item $t\ci s=0$
\item and if $t\ci q=0$ then there is an arrow $r$ such that $q=s\ci r$.
\end{enumerate}
\end{defi}
Equivalently, $s:B\to C$ is a weak kernel of $t:C\to D$ if $YB\rarr{Ys}YC\rarr{Yt}YD$ is exact in $\Pre$.

\begin{lem} \label{lem: weak kernel}
The following conditions on an arrow $t:C\to D$ in an $\Ab$-category $\C$ are equivalent.
\begin{enumerate}
\item $t$ has a weak kernel.
\item The kernel of $Yt$, as a subfunctor of $YC$, is a principal sieve on $C$.
\item If $V\to YC$ is a kernel of $Yt$ then there is an epimorphism $YB\epi{} V$ from a representable presheaf.
\end{enumerate}
\end{lem}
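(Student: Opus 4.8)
The plan is to prove the three conditions equivalent by a cycle $(1)\Rightarrow(2)\Rightarrow(3)\Rightarrow(1)$, using throughout the standard facts about the Yoneda embedding: $Y$ is fully faithful, each $YC$ is projective and a (small) generator-component, and a subfunctor of $YC$ is the same as an (additive) sieve on $C$, i.e. a subgroup $S(A)\subseteq\C(A,C)$ closed under precomposition. Recall that the kernel $V\rightarrowtail YC$ of $Yt$ is precisely the sieve whose $A$-component is $\{v\in\C(A,C)\mid t\ci v=0\}$; this is the object already used in the proof of Lemma \ref{lem: FF left adj}. The sieve $V$ is \emph{principal} (generated by a single arrow $s\colon B\to C$) exactly when $V(A)=\{s\ci r\mid r\in\C(A,B)\}$ for all $A$, i.e. when the map $\C(\under,B)\to V$, $r\mapsto s\ci r$, is epi; this is the content linking (2) and (3).

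First I would do $(1)\Rightarrow(2)$: if $s\colon B\to C$ is a weak kernel of $t$, then $t\ci s=0$ says $s$ factors through $V$, giving $\bar s\colon YB\to V$; the universal property of weak kernels (any $q$ with $t\ci q=0$ factors as $q=s\ci r$) says exactly that every element $v\in V(A)$ is of the form $s\ci r$, so $\bar s$ is pointwise surjective, hence $V$ is the principal sieve generated by $s$. Then $(2)\Rightarrow(3)$ is essentially a restatement: if $V$ is the principal sieve on $C$ generated by some $s\colon B\to C$, the induced map $YB\to V$ (which lands in $V$ because $s\in V(B)$) is an epimorphism of presheaves, since epimorphisms in $\Pre$ are computed pointwise and surjectivity on each $A$-component is just principality. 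For $(3)\Rightarrow(1)$: given an epi $e\colon YB\to V$, by the Yoneda lemma $e$ corresponds to an element of $V(B)$, i.e. an arrow $s\colon B\to C$ with $t\ci s=0$, and the composite $YB\xrightarrow{e}V\rightarrowtail YC$ is $Ys$; surjectivity of $e$ on $A$-components says every $q\colon A\to C$ with $t\ci q=0$ lies in the image, i.e. $q=s\ci r$ for some $r\colon A\to B$. Thus $s$ is a weak kernel of $t$, which is also exactly the assertion that $YB\xrightarrow{Ys}YC\xrightarrow{Yt}YD$ is exact in $\Pre$, matching the remark after the definition.

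I do not expect any serious obstacle here — the statement is a routine translation between the elementary/arrow-theoretic language of weak kernels and the presheaf-theoretic language of sieves. The one point deserving a sentence of care is the passage $(2)\Leftrightarrow(3)$: one must note that a sieve $S\into YC$ is principal, generated by $s\in S(B)$, \emph{iff} the canonical map $YB\to S$ induced by $s$ is an epimorphism of presheaves, and here one uses that colimits (in particular epis, as cokernel conditions) in $\Pre=\Add(\C^\op,\Ab)$ are pointwise, so that ``epi'' unwinds to ``surjective on every component'', which is the principality condition. With that dictionary in place, all three implications are immediate, and I would simply spell out the Yoneda-lemma bookkeeping (an arrow $YB\to YC$ is $Ys$ for a unique $s$, and its image sieve is the principal sieve on $s$) rather than grind any computation.

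\begin{proof}
$(1)\Rightarrow(2)$. Let $s\colon B\to C$ be a weak kernel of $t$ and let $V\rightarrowtail YC$ be the kernel of $Yt$, i.e. the sieve with $V(A)=\{v\in\C(A,C)\mid t\ci v=0\}$. Since $t\ci s=0$, the arrow $s$ lies in $V(B)$, so by the Yoneda lemma it determines a morphism of presheaves $\bar s\colon YB\to V$, $r\mapsto s\ci r$. By the defining property of a weak kernel, any $v\in V(A)$ (i.e. any $v\colon A\to C$ with $t\ci v=0$) factors as $v=s\ci r$ for some $r\colon A\to B$; hence $\bar s$ is surjective on each component, i.e. $V$ equals the principal sieve on $C$ generated by $s$.

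$(2)\Rightarrow(3)$. If $V$ is the principal sieve on $C$ generated by some $s\colon B\to C$, then $s\in V(B)$ yields, as above, a morphism $\bar s\colon YB\to V$ whose $A$-component has image exactly $V(A)$. Since epimorphisms in $\Pre=\Add(\C^\op,\Ab)$ are computed pointwise, $\bar s$ is an epimorphism from a representable presheaf onto $V$.

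$(3)\Rightarrow(1)$. Let $e\colon YB\epi V$ be an epimorphism with $V\rightarrowtail YC$ the kernel of $Yt$. By the Yoneda lemma $e$ corresponds to an element of $V(B)$, that is, to an arrow $s\colon B\to C$ with $t\ci s=0$, and the composite $YB\rarr{e}V\into YC$ equals $Ys$. Surjectivity of $e$ on the $A$-component says that every $q\colon A\to C$ with $t\ci q=0$ has the form $q=s\ci r$ for some $r\colon A\to B$. Hence $s$ is a weak kernel of $t$; equivalently, $YB\rarr{Ys}YC\rarr{Yt}YD$ is exact in $\Pre$.
\end{proof}
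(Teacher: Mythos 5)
Your proposal is correct and follows essentially the same route as the paper: the same cycle $(1)\Rightarrow(2)\Rightarrow(3)\Rightarrow(1)$, with the same Yoneda-lemma identification of maps $YB\to V$ with elements of $V(B)$ and the same use of pointwise surjectivity of epimorphisms in $\Pre$. The only difference is that you spell out the dictionary ``principal sieve $\Leftrightarrow$ pointwise surjective map from a representable'' a bit more explicitly than the paper does, which is harmless.
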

\begin{proof}
$(1)\Rightarrow(2)$ The equivalence class of kernels of $Yt$ is the sieve $S$ consisting of arrows $q$ to $C$ for
which $t\ci q=0$. If $s$ is a weak kernel of $t$ then all such $q$-s have the form $s\ci r$ for some $r\in\C$.
Thus $S$ is the sieve generated by a single arrow, $s$.

$(2)\Rightarrow(3)$ Let $V\into YC$ be a monic representing the kernel sieve $S\subset YC$ of $Yt$ and let
$S$ be generated by $s:B\to C$. Then there is a $v\in VB$ representing $s$ and the natural transformation
$\sigma:YB\to V$ corresponding to $v$ by the Yoneda Lemma is epi since the composite $YB\to V\iso S$ is epi.

$(3)\Rightarrow (1)$ If $\sigma: YB\epi{} V$ is an epi onto the kernel $V\into YC$ of $Yt$ then every $q:A\to C$
such that $t\ci q=0$ belongs to the image of $YBA\rarr{\sigma_A}VA\into YCA$, hence $q=s\ci r$ for some $r\in YBA$
where $s=\sigma_B(B)$ as an element of $YCB$. Thus $s:B\to C$ is a weak kernel of $t$.
\end{proof}

If $A\ot w$ is a weak kernel of $A\ot t$ whenever $w$ is a weak kernel of $t$ then we say $A\ot\under $ preserves weak kernels.

\begin{lem}
Let $\C$ be an additive monoidal category with weak kernels and $A\in\ob\C$.
Then $\C(B,A\ot\under):\C\to\Ab$ is flat $\forall B\in \ob\C$ iff $A\ot\under:\C\to\C$ preserves weak kernels.
\end{lem}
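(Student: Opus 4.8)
The plan is to reduce both implications to the elementary characterization of flatness. Recall from the discussion following axiom (flat-2) that an additive functor on the additive category $\C$ automatically satisfies (flat-1), hence is flat exactly when (flat-2) holds. Since $\ot$ is additive in each variable, $F:=\C(B,A\ot\under):\C\to\Ab$ is additive for every $B\in\ob\C$, so I would first rewrite what flatness means for it: $F$ is flat iff for every $t:C\to D$ in $\C$ and every $f\in\C(B,A\ot C)$ with $(A\ot t)\ci f=0$ there exist $s:C'\to C$ with $t\ci s=0$ and $x\in\C(B,A\ot C')$ with $(A\ot s)\ci x=f$; in words, $f$ factors through $A\ot s$ for some $s$ annihilated by $t$. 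With this reformulation both directions become direct translations.

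For the implication that preservation of weak kernels by $A\ot\under$ forces $\C(B,A\ot\under)$ to be flat for all $B$, I would take $t$ and $f$ as above, use that $\C$ has weak kernels to pick a weak kernel $w:C'\to C$ of $t$, invoke the hypothesis to conclude that $A\ot w$ is a weak kernel of $A\ot t$, and factor $f=(A\ot w)\ci x$ for some $x\in\C(B,A\ot C')$; since $t\ci w=0$, the pair $(w,x)$ is exactly the data demanded by (flat-2). For the converse, starting from an arbitrary $t:C\to D$ and a weak kernel $w:C'\to C$ of $t$, I would first note $(A\ot t)\ci(A\ot w)=A\ot(t\ci w)=0$, and then, given any $B$ and any $g:B\to A\ot C$ with $(A\ot t)\ci g=0$, apply (flat-2) for the assumed-flat functor $\C(B,A\ot\under)$ to get $s:C''\to C$ with $t\ci s=0$ and $x$ with $(A\ot s)\ci x=g$; because $w$ is a weak kernel of $t$ I can write $s=w\ci r$, and then $g=(A\ot w)\ci(A\ot r)\ci x$ factors through $A\ot w$. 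Since $B$ and $g$ are arbitrary, $A\ot w$ is a weak kernel of $A\ot t$, so $A\ot\under$ preserves weak kernels.

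I do not expect a real obstacle here; the only thing requiring care is the quantifier over the test object $B$. The definition of ``preserves weak kernels'' asks for the factorization property against all objects, which is exactly why the Lemma requires flatness of $\C(B,A\ot\under)$ for all $B$ simultaneously, whereas in the forward direction the weak kernel $w$ of $t$ chosen for a given $f$ is independent of $B$, so flatness for every $B$ drops out at once. Besides that, the argument only uses functoriality and additivity of $A\ot\under$ together with the fact, already recorded in the text, that (flat-1) is free for additive functors on additive categories.
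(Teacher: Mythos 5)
Your proof is correct and follows essentially the same route as the paper: the forward direction (preservation of weak kernels implies flatness) is the paper's $(\Leftarrow)$, obtained by factoring the given arrow through $A\ot w$, and your converse is exactly the paper's $(\Rightarrow)$, using (flat-2) to produce $s$ with $t\ci s=0$ and then writing $s=w\ci r$ via the weak kernel property. Your added remark that (flat-1) is automatic by additivity and that the quantifier over the test object $B$ is what forces flatness for all $B$ matches the paper's setup.
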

\begin{proof}
$(\Rightarrow)$ Let $t\in\C(C,D)$ and let $w$ be a weak kernel of $t$. If $(A\ot t)\ci x=0$ for some $x:B\to A\ot C$
then flatness of $\C(B,A\ot\under)$ provides an arow $s$ such that $t\ci s=0$ and a $y$ such that $x=(A\ot s)\ci y$.
>From the first relation we see that $s=w\ci q$ for some $q$ and therefore $x$ factors through $A\ot w$. Thus $A\ot w$ is a weak kernel of $A\ot t$.

$(\Leftarrow)$ If $t\in\C(C,D)$ and $w$ is a weak kernel of $t$ then for all $B$  the image of $\C(B,A\ot w)$ is the whole kernel of $\C(B,A\ot t)$, hence $\C(B,A\ot\under)$ is flat.
\end{proof}

\begin{lem} \label{lem: fib flat}
Let $\C$ be a small additive monoidal category, $\I$ be a set of objects of $\C$ and 
$\Omega:=\coprod_{A\in\I}YA$. If $\C$ has weak kernels and $A\ot\under$ preserves weak kernels for all object $A$ then the functor
\[
\longf:\C\to\Ab,\qquad\longf C:=\Pre(\Omega,\Omega\odot YC)
\]
is flat.
\end{lem}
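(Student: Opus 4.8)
The plan is to reduce flatness of $\longf$ to flatness of the elementary functors $\C(B,A\ot\under)$, for which the preceding Lemma already does the work, and then to show that flatness of $\Ab$-valued functors on $\C$ is inherited by arbitrary products and coproducts.

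First I would make the description of $\longf$ as a functor explicit. By the Yoneda lemma $\longf C=\Pre(\Omega,\Omega\odot YC)\cong\prod_{B\in\I}(\Omega\odot YC)B$, and since $Y$ is strong monoidal and $\odot$ preserves colimits in each variable (Proposition~\ref{pro: odot}) we have $(\Omega\odot YC)B\cong\coprod_{A\in\I}(YA\odot YC)B\cong\coprod_{A\in\I}\C(B,A\ot C)$. All these isomorphisms are natural in $C$, so
\[
\longf\;\cong\;\prod_{B\in\I}\ \coprod_{A\in\I}\ \C(B,A\ot\under)
\]
as functors $\C\to\Ab$ — the computation already made in the proof of Lemma~\ref{lem: fib esmon}. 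For fixed $A$, the hypothesis that $A\ot\under$ preserves weak kernels together with the preceding Lemma gives that each $\C(B,A\ot\under)$ is flat.

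It then remains to prove that, over an additive category $\C$ with weak kernels, flat $\Ab$-valued functors are closed under products and coproducts. I would isolate this as an auxiliary observation: over such a $\C$, an additive functor $G$ is flat iff for every arrow $t\colon B\to C$ and some (equivalently every) weak kernel $w\colon K\to B$ of $t$ the sequence $GK\to GB\to GC$ is exact at $GB$. Indeed (flat-1) is automatic by additivity of $\C$; and since an arrow $s$ satisfies $t\ci s=0$ precisely when it factors through $w$, condition (flat-2) for $t$ is exactly the inclusion $\ker(Gt)\subseteq\im(Gw)$, the reverse inclusion being forced by $tw=0$. Granting this, the closure statement is immediate: for a fixed arrow $t$ with chosen weak kernel $w$, the functors $\prod_{i}$ and $\coprod_{i}$ from $\Ab^{\Lambda}$ to $\Ab$ are exact and send epimorphisms to epimorphisms, hence preserve both kernels and epi--mono factorizations, so if each $G_i$ is exact at the middle for $t,w$ then $\ker\big(\prod_i G_i\,t\big)=\prod_i\ker(G_i t)=\prod_i\im(G_i w)=\im\big(\prod_i G_i\,w\big)$, and likewise with $\coprod$ in place of $\prod$. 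Applying this first to the coproducts $\coprod_{A\in\I}\C(B,A\ot\under)$ and then to the product over $B\in\I$ gives flatness of $\longf$. I do not expect a real obstacle: the only things needing attention are that the decomposition of $\longf$ above is natural in $C$ (so that the closure results apply to honest functors, not merely pointwise), and keeping straight that it is the $\mathrm{AB}4$ exactness of $\Ab$ that handles coproducts and the $\mathrm{AB}4^{*}$ exactness that handles products.
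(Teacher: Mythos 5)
Your proposal is correct, and it reaches the conclusion by a different organization of the argument than the paper. The paper verifies (flat-2) directly: an element of $\Ker\longf t$ is a column-finite matrix $(x_{A,B})$ with $x_{A,B}\in\C(B,A\ot C)$ and $(A\ot t)\ci x_{A,B}=0$; choosing a single weak kernel $w:E\to C$ of $t$, the hypothesis that $A\ot\under$ preserves weak kernels lets every entry factor through $A\ot w$, so the one arrow $w$ together with the matrix of factorizations witnesses (flat-2). You instead route the argument through the preceding lemma (flatness of each $\C(B,A\ot\under)$) plus an auxiliary closure principle: over an additive $\C$ with weak kernels, flatness of an $\Ab$-valued functor is equivalent to exactness on the weak-kernel sequences $K\rarr{w}B\rarr{t}C$, and this is inherited by arbitrary products (AB4$^*$) and coproducts (AB4) formed pointwise. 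That characterization and the closure argument are both sound, and the uniform choice of the weak kernel is exactly what makes the infinite product over $B\in\I$ harmless — the same mechanism the paper exploits entrywise; note that without weak kernels a product of flat functors need not be flat (the analogue of coherence for rings), so your lemma correctly locates where the hypothesis enters. What the paper's proof buys is brevity and concreteness (and it also disposes of column-finiteness of the witness by hand); what yours buys is a reusable, clearly stated coherence-type lemma, with column-finiteness handled automatically by the $\prod_B\coprod_A$ decomposition already established (naturally in $C$) in the proof of Lemma \ref{lem: fib esmon}.
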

\begin{proof}
Since $\C$ is additive, it suffices to show that axiom (flat-2) is satisfied.
Let $t\in\C(C,D)$ and $x\in \Ker Ft$. Then $x$ is a column finite matrix of arrows $x_{A,B}\in\C(B,A\ot C)$ such
that $(A\ot t)\ci x_{A,B}=0$. Let $w:E\to C$ be a weak kernel of $t$. Then $A\ot w$ is a weak kernel of $A\ot t$ therefore
there exist $y_{A,B}:B\to A\ot E$ such that $x_{A,B}=(A\ot w)\ci y_{A,B}$. This means that we could find an arrow $w$ such that $t\ci w=0$ and such that $x=Fw y$ for some $y$, hence $F$ is flat.
\end{proof}

Observe that even in the presence of weak kernels the functor $\longF$ of (\ref{longF Ansatz}) need not be the left
Kan extension of $\longf=\longF Y$. What we can show is this. Let $\gamma$ denote the canonical mapping
\begin{alignat*}{2}
V\am{\C}\longf\cong\int^B\Pre(YB,V)&\ot\Pre(\Omega,\Omega\ot YB)&\quad\longrarr{\gamma_V}\quad
&\Pre(\Omega,\Omega\odot V)\\
\sigma&\ot\xi\quad&\mapsto\quad&(\Omega\odot\sigma)\ci\xi
\end{alignat*}
which is defined for all presheaves $V$ and is natural in $V$. It is an epimorphism if $V$ is finitely generated, i.e., if there is an epi $\sigma:YE\epi{}V$. As a matter of fact, in this case $\Omega\odot\sigma$ is epimorph and
$\Omega$, being a coproduct of representables, is projective. Therefore for every $x:\Omega\to\Omega\odot V$
there is a $y:\Omega\to \Omega\odot YE$ such that $x=(\Omega\odot\sigma)\ci y$. Thus $\gamma_V$ is epi. On
the other hand, if $\kappa:V\into YC$ is a sieve then $\kappa\am{\C}\longf$ is monic, since $\longf$ is flat, and
therefore naturality
$\longF\kappa\ci\gamma_V=\gamma_{YC}\ci(\kappa\am{\C}\longf)$ implies that $\gamma_V$ is monic. Thus
$\gamma_V$ is an isomorphism for all principal sieves $V$.

Before proceeding we would like to argue that in a certain sense assuming the existence of weak kernels is necessary.

Insisting to the Ansatz (\ref{Ansatz}) to give a fiber functor let us drop the assumption that weak kernels exist and
imagine that some other conditions on $\C$ ensure that $\longf$ is flat. Any flat functor $\longf$ defines a Grothendieck topology $\GT_\longf$ in which $\GT_\longf(C)$ consists of those sieves $S$ on $C$ which are jointly $\longf$-epimorphic, see (\ref{F-topology}). 
We have seen in Lemma \ref{exa: F-topology} that the Grothendieck topology constructed from a fiber functor is precisely this construction. Since our functor $\longf$ factors through a functor $\fsh$ of finite type, every $\GT_\longf$-covering sieve $S$ will contain an arrow $e$ for which $\longf e$ is an epimorphism. But then $\fsh e$ is a split epimorphism since $\fsh C$ is projective. By Lemma \ref{lem: F reflects split epis}
then also $e$ is split epi, hence the covering sieve $S$ is the maximal sieve. This means that if (\ref{Ansatz})
is flat then the topology $\GT_\longf$ is the coarsest topology in which every presheaf is a sheaf.

On the other hand, if $\GT$ is any subcanonical Grothendieck topology, i.e., such that all representable presheaves
are sheaves, then we may want not only the underlying $R$-module $\fsh C$ of the sheaf $YC$ to be finite but the
sheaf $YC$ itself to obey some finiteness condition. Assume all representable presheaves are noetherian as $\GT$-sheaves. Since the inclusion of sheaves into presheaves is fully faithful, so reflects isomorphisms,
this is equivalent to the statement that ACC holds for the set of all subfunctors $V\into YC$ that are sheaves. 
It is now easy to show, using additivity of $\C$, that this implies that every sieve $V$ which is a sheaf is the $\GT$-closure of a principal sieve. In particular, for every $t\in\C$ the kernel sieve $V_t$ of $t$, being closed, is
such a sieve. Applying this argument to the coarsest topology $\GT_\longf$ we see that the closure operation is trivial, hence the kernel sieves are principal, i.e., weak kernels exist in $\C$.

We do not claim, however, that the existence of weak kernels would imply that all the representable presheaves are
noetherian. For later usage we record what we obtained above:
\begin{cor} \label{cor: F coarsest}
Let $\C$ be a small additive monoidal category with weak kernels, with bounded fusion and such that
$A\ot\under$ preserves weak kernels for all object $A$. Then for any bounded fusion system $\I$ the functor $\longf$ of (\ref{longf Ansatz}), which is an essentially strong monoidal and flat by Lemmas \ref{lem: fib esmon} and
\ref{lem: fib flat}, is such that the monoidal Grothendieck topology associated to $\longf$ via the idempotent monad $\T$ of Proposition \ref{pro: T} is the coarsest Grothendieck topology on $\C$. Thus every presheaf is a $\T$-sheaf.
\end{cor}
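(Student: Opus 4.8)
The plan is to reduce the assertion to the single statement that the $\longf$-topology $\GroT_\longf$ of (\ref{F-topology}) is the coarsest Grothendieck topology on $\C$. First I would record, by Lemmas \ref{lem: fib esmon} and \ref{lem: fib flat}, that under the stated hypotheses and for any bounded fusion system $\I\subset\ob\C$ the functor $\longf$ of (\ref{longf Ansatz}) is essentially strong monoidal, is flat, and has $\fsh C$ finitely generated projective as a right $R$-module for every $C\in\ob\C$. Then Proposition \ref{pro: T} applies to $\longf$, and Lemma \ref{exa: F-topology} identifies the monoidal Grothendieck topology $\GroT$ determined by the idempotent monad $\T$ with $\GroT_\longf$. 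So it suffices to prove $\GroT_\longf(C)=\{YC\}$ for all $C\in\ob\C$.

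For this I would take an arbitrary covering sieve $S\in\GroT_\longf(C)$ and show it is the maximal sieve. Since $S$ is jointly $\longf$-epimorphic and $\longf$ is flat, the tensor product $S\am{\C}\longf$ may be computed set-theoretically, as in the proof of Lemma \ref{exa: F-topology}; so for each of finitely many right $R$-module generators $x_1,\dots,x_n$ of $\fsh C$ I can choose an arrow $s_i\colon A_i\to C$ in $S$ and an element $y_i\in\fsh A_i$ with $\fsh s_i(y_i)=x_i$. Using additivity of $\C$ I would form the biproduct $A=\bigoplus_i A_i$ with projections $p_i$ and injections $q_i$ and put $e:=\sum_i s_i\ci p_i\colon A\to C$. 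As $S$ is an additive sieve containing every $s_i$, we get $e\in S$; and $\fsh e$ is an epimorphism because $\fsh e\bigl(\sum_i\fsh q_i(y_i)\cdot r_i\bigr)=\sum_i x_i\cdot r_i$ runs over all of $\fsh C$. Since $\fsh A$ is finitely generated projective, $\fsh C$ is projective, hence the epimorphism $\fsh e$ splits, i.e.\ $\fsh e$ is split epi; Lemma \ref{lem: F reflects split epis} then gives that $e$ itself is a split epimorphism in $\C$. If $\sigma$ is a section of $e$, then $\id_C=e\ci\sigma$ lies in $S$, so $S=YC$.

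This shows that $\GroT=\GroT_\longf$ has only maximal covering sieves, and therefore, by Theorem \ref{thm: GroT} (the sheaf condition being vacuous for the maximal sieve), every presheaf on $\C$ is a $\T$-sheaf. The only genuinely delicate input is the verification that the Ansatz functor $\longf$ is flat and essentially strong monoidal, which I am taking for granted from Lemmas \ref{lem: fib esmon} and \ref{lem: fib flat} (the flatness statement being where the weak-kernel hypotheses on $\C$ are really used); given those, the remaining step is the routine reduction above, and I expect no obstacle beyond careful bookkeeping with the biproduct decomposition and the additivity of the sieve $S$.
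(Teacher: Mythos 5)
Your proof is correct and follows essentially the same route as the paper's own argument (given in the paragraph preceding the corollary, and echoed again in the proof of Proposition \ref{pro: coarse}): identify the $\T$-topology with $\GroT_{\longf}$ via Lemma \ref{exa: F-topology}, use finite generation of $\fsh C$ and additivity of $\C$ to assemble a single arrow $e\in S$ with $\fsh e$ epi, split it by projectivity of $\fsh C$, and conclude by Lemma \ref{lem: F reflects split epis} that $e$, hence the sieve, is maximal. The only cosmetic remark is that choosing the $s_i$ and $y_i$ needs no appeal to flatness or the set-theoretic computation of $S\am{\C}\longf$, since it is immediate from the definition (\ref{F-topology}) of a $\GroT_{\longf}$-covering sieve.
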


\subsection{An Almost Duality Theorem}

The assumptions we made in the previous Subsections are sufficient to prove the following result.

\begin{thm} \label{thm: duality}
Assume the small additive monoidal category $\C$ has weak kernels and bounded fusion and let
$A\ot\under $ preserve weak kernels for all $A\in\ob\C$.
\begin{enumerate}
\item Then there is a ring $R$, a right $R$-bialgebroid $H$ and a monoidal equivalence $\Pre\simeq\M^H$
of the category of presheaves over $\C$ with the category of right $H$-comodules.
\item If $\C$ also has splittings of idempotents then, and only then, $\C$ is equivalent, by restriction of the equivalence in (1),
to the full subcategory $\M^H_\fgp$ of $H$-comodules that are f.g. projective as right $R$-modules.
\end{enumerate}
\begin{equation} \label{diag: duality}
\begin{CD}
\Pre@>\simeq>>\M^H\\
@AYAA @AA{\subset}A\\
\C@>\simeq>>\M^H_\fgp
\end{CD}
\end{equation}
\end{thm}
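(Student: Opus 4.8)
The plan is to assemble Theorem~\ref{thm: duality} from the machinery already in place, using the functor $\longf$ built from a bounded fusion system as the input fiber functor. First I would invoke the hypotheses to produce such a system: since $\C$ has bounded fusion there is a bounded fusion system $\I\subset\ob\C$ (without loss of generality containing $I$), and I set $\Omega:=\coprod_{A\in\I}YA$, $R:=\End\Omega$, and define $\longf$ by (\ref{longf Ansatz}). By Lemma~\ref{lem: fib esmon} this $\longf$ is essentially strong monoidal with $\fsh C$ finitely generated projective over $R$ for every $C$, and by Lemmas~\ref{lem: F reflects split epis} and~\ref{lem: fib flat} (the latter using that $\C$ has weak kernels and that $A\ot\under$ preserves them) the functor $\fsh$ is faithful, flat, and reflects isomorphisms. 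By Corollary~\ref{cor: F coarsest}, moreover, the monoidal Grothendieck topology $\GroT$ associated to $\longf$ via the idempotent monad $\T$ of Proposition~\ref{pro: T} is the coarsest one, so \emph{every} presheaf is a $\T$-sheaf: the forgetful functor $\G_\T:\She\to\Pre$ is an isomorphism onto $\Pre$ (indeed $\nu$ is already a natural isomorphism).

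Next I would extract the bialgebroid and the first equivalence. Since $\fsh C$ is f.g.\ projective for all $C$, Proposition~\ref{pro: finfun - H} gives a left-flat right $R$-bialgebroid $H=\bimg\am{\C}\bimf$ with $\Qsh N\cong N\obar{R}H$ and a monoidal equivalence $\M^\bimQ\simeq\M^H$. Combining with Lemma~\ref{lem: *}, which tells us $\bimK\G_\T:\She\simeq\M^\bimQ$ is a monoidal equivalence, and with the fact just noted that $\G_\T$ is here an isomorphism $\Pre\to\She$, we obtain a chain of monoidal equivalences
\[
\Pre\ \xrightarrow{\ \sim\ }\ \She\ \xrightarrow{\ \bimK\G_\T\ }\ \M^\bimQ\ \xrightarrow{\ \sim\ }\ \M^H ,
\]
which is part (1). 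The composite carries the Yoneda embedding $Y$ to the embedding $\bimk\cong\bimK Y$ of (\ref{bimk}), i.e.\ to the inclusion of $\C$ into $\M^H$, so the square (\ref{diag: duality}) commutes at the level of the functors $Y$ and $\subset$.

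For part (2) I would show that $\C$ has splittings of idempotents if and only if the equivalence of (1) restricts to $\C\simeq\M^H_\fgp$. For the ``if'' direction: under the equivalence, objects of $\C$ go to comodules with underlying module $\fsh C$, which is f.g.\ projective, so the image of $\C$ lands in $\M^H_\fgp$; essential surjectivity onto $\M^H_\fgp$ requires exactly Proposition~\ref{pro: rep} (equivalently Theorem~\ref{thm: rep}), whose hypotheses I must now verify --- $\C$ additive is given, $\C$ has kernels of arrows $t$ with $\fsh t$ epi because such $\fsh t$ is split hence von Neumann regular and one argues via weak kernels plus idempotent-splitting (alternatively via Lemma~\ref{lem: von Neumann} transported along the equivalence), and $\C$ has cokernels of $\fsh$-von-Neumann-regular arrows preserved by $\fsh$ for the same reason; here splitting of idempotents is what upgrades the weak kernels/cokernels supplied by the Ansatz to genuine (co)kernels. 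Thus under the standing hypotheses plus idempotent-splitting, $\longf$ is a fiber functor and Theorem~\ref{thm: rep} gives $\C\simeq\M^H_\fgp$. For the ``only then'' direction: if $\C\simeq\M^H_\fgp$ then $\C$ inherits splittings of idempotents, since $\M^H_\fgp$ is closed under direct summands inside the abelian category $\M^H$ (a summand of a f.g.\ projective module is f.g.\ projective) and hence is idempotent-complete.

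The main obstacle I expect is the verification that the Ansatz functor $\longf$ genuinely satisfies \emph{all} the fiber-functor axioms of Definition~\ref{def: fiber} in the presence of idempotent splittings --- in particular that the existence of kernels of $\fsh$-epi arrows and cokernels of $\fsh$-von-Neumann-regular arrows, with $\fsh$ preserving the latter, can be deduced from weak kernels in $\C$ together with splitting of idempotents, rather than assumed outright. The cleanest route is probably to avoid checking these directly in $\C$: once part (1) gives the monoidal equivalence $\C\hookrightarrow\Pre\simeq\M^H$ with $\M^H$ abelian and $_RH$ flat, and once one knows the image of $\C$ is contained in $\M^H_\fgp$, one can run Lemma~\ref{lem: von Neumann} inside $\M^H$ and pull the resulting (co)kernels back through the fully faithful embedding, using idempotent-completeness of $\C$ exactly where a direct summand of an object of $\M^H_\fgp$ must be realized as an object of $\C$. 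This also makes the equivalence $\C\simeq\M^H_\fgp$ transparent, since essential surjectivity then amounts to the comatrix-coring presentation argument of Proposition~\ref{pro: rep} combined with idempotent-splitting in $\C$.
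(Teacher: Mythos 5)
Your part (1) is exactly the paper's argument (Ansatz functor from a bounded fusion system, Lemmas \ref{lem: fib esmon}, \ref{lem: fib flat}, Corollary \ref{cor: F coarsest}, Proposition \ref{pro: finfun - H}, comparison functor an equivalence), and the ``only then'' half of (2) is also the paper's one-line observation that $\M^H_\fgp$ is idempotent complete. The gap is in the ``if'' half of (2), precisely at the point you yourself flag as the main obstacle: showing that $\C$ has kernels of arrows $t$ with $\fsh t$ epi and cokernels of arrows with $\fsh t$ von Neumann regular, the latter preserved by $\fsh$. The paper closes this with the part of Lemma \ref{lem: F reflects split epis} you did not invoke: $\fsh$ \emph{reflects} split epimorphisms and von Neumann regular morphisms. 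Hence $t$ itself is split epi (resp.\ von Neumann regular) in $\C$, the relevant idempotents $1-v\ci t$, $1-t\ci v$ already live in $\C$, and splitting them there produces the required (co)kernels, which are split and therefore preserved by $\fsh$; weak kernels play no role in this step. Your stated mechanism, ``weak kernels plus idempotent-splitting,'' does not do this: a weak kernel together with idempotent completeness does not in general yield a kernel, and you never establish that $t$ (rather than $\fsh t$) is split epi or von Neumann regular in $\C$.

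Your proposed ``cleanest route'' --- run Lemma \ref{lem: von Neumann} in $\M^H$ and pull the (co)kernels back through the fully faithful embedding using idempotent-completeness of $\C$ --- has the same hole. Lemma \ref{lem: von Neumann} produces a kernel $\kappa:A\to\bimk B$ of $\bimk t$ whose image under $\F^H$ splits in $\M_R$; but $\F^H$ does not reflect split monos, so $A$ is not known to be a direct summand of $\bimk B$ \emph{in} $\M^H$, and idempotent-completeness of $\C$ plus full faithfulness of $\bimk$ therefore cannot be applied to realize $A$ (or the cokernel) in the image of $\C$. To know that the kernel of $\bimk t$ is the splitting of an idempotent on $\bimk B$ you need $\bimk t$, equivalently $t$, to be von Neumann regular --- which is exactly the reflection statement of Lemma \ref{lem: F reflects split epis} that your argument skips. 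Once that lemma is used in full, the rest of your plan (verify Definition \ref{def: fiber}, apply Proposition \ref{pro: rep}/Theorem \ref{thm: rep} for essential surjectivity onto $\M^H_\fgp$) coincides with the paper's proof.
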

\begin{proof}
Let $\I$ be a bounded fusion system and $\longf:\C\to\Ab$ the associated monoidal functor (\ref{longf Ansatz}).
Let $R$ be the ring which is the image under $\longf$ of the unit monoid $I$ of $\C$.
Then Lemma \ref{lem: fib flat} implies that $\longf$ is flat and Lemma \ref{lem: fib esmon} implies that it is essentially
strong monoidal with base ring $R$ and with $\fsh:\C\to\M_R$ having image in the subcategory of f.g. projective $R$-modules. 
By Proposition \ref{pro: finfun - H} the category $\She$ of $\T$-modules is equivalent to the category of $H$-comodules
for a left flat right bialgebroid $H$. 
By Corollary \ref{cor: F coarsest}, every presheaf is a sheaf therefore $\She=\Pre$, as subcategories of $\Pre$, thus the comparison functor $\bimK:\Pre\to\M^H$ is a monoidal equivalence. This proves (1).

In Lemma \ref{lem: F reflects split epis} we have shown that $\longf$ is faithful and reflects isomorphisms.
$\C$ has kernels for $\fsh$-split epis since by the same Lemma such arrows are split epis in $\C$ where idempotents
split.
Similarly, $\C$ has cokernels of arrows $t$ for which $\fsh t$ is von Neumann regular. Indeed, by Lemma
\ref{lem: F reflects split epis} such a $t$ itself is von Neumann regular and therefore have cokernels (and kernels)
by splitting the appropriate idempotent. This cokernel is then also split hence $\fsh$ preserves it.
This proves that $\longf$ is a fiber functor in the sense of Definition \ref{def: fiber}.
Therefore, by Theorem \ref{thm: rep},
$\C$ is equivalent to the full subcategory $\M^H_\fgp$ of $\M^H$ for a left flat right bialgebroid $H$ over $R$
in such a way that the rhomboid middle of diagram (\ref{diag: all}) reduces to diagram (\ref{diag: duality}).
Vice versa, if the equivalence (1) restricts to an equivalence $\C\simeq \M^H_\fgp$ then idempotents split in $\C$
since this property is trivially satisfied in $\M^H_\fgp$.
This proves (2).
\end{proof}

\begin{cor}
For $\C$ as in Theorem \ref{thm: duality} (1) there exists a ring $R$ and a left exact, strong monoidal, comonadic functor $\bimF:\Pre\to\,_R\M_R$ with image in bimodules that are f.g. projective as right $R$-modules.
\end{cor}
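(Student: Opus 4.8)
The corollary is essentially a repackaging of Theorem \ref{thm: duality}(1), so the plan is to identify in the proof of that theorem a single functor $\Pre\to\,_R\M_R$ carrying all the listed properties. Recall that that proof fixes a bounded fusion system $\I\subset\ob\C$ (one exists since $\C$ has bounded fusion), forms $\Omega:=\coprod_{A\in\I}YA$ and $R:=\End\Omega$, and takes $\longf:=\longF Y:\C\to\Ab$ from (\ref{longf Ansatz}); by Lemmas \ref{lem: fib esmon} and \ref{lem: fib flat} this $\longf$ is a flat, essentially strong monoidal functor with base ring $R$, and its strong (normal) part $\bimf:\C\to\,_R\M_R$ satisfies that $\fsh C=\bimf C$ is finitely generated projective as a right $R$-module for every object $C\in\ob\C$.

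First I would feed this $\longf$ into the machinery of Section \ref{s: FG}, using the genuine left Kan extension $\under\amo{\C}\longf$ of $\longf$ along $Y$ (not the concretely defined functor $\longF$ of (\ref{longF Ansatz}), which as noted after Lemma \ref{lem: weak kernel} need not be a Kan extension). By Lemma \ref{lem: FF left adj}, using additivity of $\C$ and flatness of $\longf$, this Kan extension is a left adjoint and left exact; by Proposition \ref{pro: esss} it is essentially strong monoidal with base ring canonically $R$, and I let $\bimF:\Pre\to\,_R\M_R$ be its strong part as in Subsection \ref{ss: bimF - bimG}. Then $\bimF$ is strong monoidal by construction, a left adjoint by Lemma \ref{lem: left adjoint}, and left exact, since its composite with the limit-creating forgetful functor $\,_R\M_R\to\Ab$ is the left exact Kan extension (Lemma \ref{lem: FF left adj}); moreover $\bimF Y\cong\bimf$ via the natural isomorphism $\bimN$ of (\ref{diag: N}), so the values of $\bimF$ on representable presheaves are bimodules that are f.g. projective as right $R$-modules, as required.

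It remains to establish comonadicity of $\bimF$. Writing $\bimQ=\bimF\bimG$ for the monoidal comonad of Corollary \ref{cor: bimQ} and $\bimK:\Pre\to\M^\bimQ$ for the comparison functor of Lemma \ref{lem: bimK}, we have $\bimF=\F^\bimQ\bimK$. The hypotheses placed on $\C$ are exactly those of Corollary \ref{cor: F coarsest}, which asserts that the monoidal Grothendieck topology attached to $\longf$ through the idempotent monad $\T$ of Proposition \ref{pro: T} is the coarsest one, so that every presheaf is a $\T$-sheaf, i.e.\ $\She=\Pre$ and the fully faithful $\G_\T$ is an equivalence. By Lemma \ref{lem: *} the composite $\bimK\G_\T:\She\to\M^\bimQ$ is an equivalence, hence so is $\bimK$, and therefore $\bimF=\F^\bimQ\bimK$ is the composite of an equivalence with the comonadic forgetful functor $\F^\bimQ$ and is comonadic. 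This completes the proof.

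There is no genuine obstacle here, every ingredient having been proved in the preceding sections; the one point that calls for attention is the choice, just indicated, to run the argument with the left Kan extension $\under\amo{\C}\longf$ rather than with the concretely defined functor (\ref{longF Ansatz}). The two agree on representable presheaves and on principal sieves but differ in general, and it is left exactness together with the factorization $\bimF=\F^\bimQ\bimK$ through $\M^\bimQ$ that single out the Kan-extension version as the correct one to use.
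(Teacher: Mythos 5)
Your proposal is correct and takes essentially the same route as the paper's one-line proof: $\bimF$ is the strong part of the left Kan extension of the fiber functor $\longf$ built from a bounded fusion system in Theorem \ref{thm: duality}, with strong monoidality and left exactness coming from essential strong monoidality and flatness of $\longf$ (Lemmas \ref{lem: fib esmon}, \ref{lem: fib flat}, \ref{lem: FF left adj}, Corollary \ref{cor: bimQ}), and comonadicity from coarseness of the induced topology (Corollary \ref{cor: F coarsest}), which makes the comparison functor $\bimK$ an equivalence so that $\bimF=\F^\bimQ\bimK$ is comonadic; your care in distinguishing the genuine Kan extension from the concrete functor of (\ref{longF Ansatz}) matches the paper's intent. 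Note that, like the paper, you verify the f.g.\ projectivity claim only on representables, which is the reasonable reading of the statement, since $\bimF$ preserves coproducts and so cannot send an infinite coproduct of representables to a finitely generated module.
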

\begin{proof}
Take $\bimF$ to be the strong part of the left Kan extension of the fiber functor $\longf$ constructed in Theorem
\ref{thm: duality}.
\end{proof}
The comonad associated to this comonadic functor is, of course, that of an $R$-bialgebroid.

If we assume that $\C$ has kernels in addition to finite (co)products then splitting of idempotents follows automatically
and flatness of the functors $\C(B,A\ot\under)$ is equivalent to that $A\ot\under$ preserve finite limits. Therefore
we obtain the following
\begin{cor}
Let the small monoidal $\Ab$-category $\C$ have finite limits and bounded fusion and assume $A\ot\under$
preserves kernels for all $A\in\ob\C$. Then $\C$ is monoidally equivalent to $\M^H_\fgp$ for some coarse bialgebroid $H$.
\end{cor}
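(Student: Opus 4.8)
The plan is to deduce this from Theorem~\ref{thm: duality} by checking that all of its hypotheses are met. First I would record the elementary structural consequences of the assumption that $\C$ is a small monoidal $\Ab$-category with finite limits: finite limits include finite products and a terminal object, so $\C$ is additive (it has finite biproducts and a zero object); it has all kernels; and it has splittings of idempotents, since an idempotent $e\colon C\to C$ splits as the kernel of $1_C-e$ (equivalently, as the equalizer of $e$ and $1_C$). In particular $\C$ has weak kernels, because every kernel is a weak kernel, and bounded fusion is assumed outright.

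The one point that requires translation is the hypothesis that $A\ot\under$ preserves kernels, which I must match to the weak-kernel language of Theorem~\ref{thm: duality}. Since $\C$ is additive, $A\ot\under\colon\C\to\C$ preserves finite biproducts, and as it is assumed to preserve kernels it therefore preserves all finite limits. Composing with a representable functor $\C(B,\under)\colon\C\to\Ab$, which preserves limits, shows that $\C(B,A\ot\under)$ preserves finite limits for every $B\in\ob\C$, hence is flat. By the (unlabelled) Lemma in the subsection on weak kernels characterising flatness of the functors $\C(B,A\ot\under)$, this is equivalent to $A\ot\under$ preserving weak kernels. Thus $\C$ has weak kernels and bounded fusion, and $A\ot\under$ preserves weak kernels for all $A$, so Theorem~\ref{thm: duality} applies.

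Now I would simply invoke that theorem. Part~(1) yields a ring $R$, a left flat right $R$-bialgebroid $H$, and a monoidal equivalence $\Pre\simeq\M^H$; and since $\C$ has splittings of idempotents, part~(2) restricts this (along $Y$ and the inclusion $\M^H_\fgp\subset\M^H$) to a monoidal equivalence $\C\simeq\M^H_\fgp$. It remains to see that $H$ is coarse. By construction $H$ is reconstructed, via Proposition~\ref{pro: finfun - H} and the comparison functor, from the flat essentially strong monoidal functor $\longf$ of~(\ref{longf Ansatz}) attached to a bounded fusion system $\I$; and Corollary~\ref{cor: F coarsest} says precisely that the monoidal Grothendieck topology associated to this $\longf$ through the idempotent monad $\T$ of Proposition~\ref{pro: T} is the coarsest topology on $\C$. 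Hence $\longf$ is a coarse fiber functor and $H$ is a coarse bialgebroid, completing the proof.

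The main step is the routine bookkeeping of the second paragraph — recognizing that an additive, kernel-preserving endofunctor preserves all finite limits and that for $A\ot\under$ this is equivalent to preservation of weak kernels — after which there is no genuine obstacle: everything is a direct application of Theorem~\ref{thm: duality} together with Corollary~\ref{cor: F coarsest}.
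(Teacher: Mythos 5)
Your proposal is correct and follows essentially the same route as the paper, whose ``proof'' is the remark immediately preceding the corollary: finite limits in an $\Ab$-category give additivity and splitting of idempotents, flatness of the functors $\C(B,A\ot\under)$ is equivalent to $A\ot\under$ preserving finite limits (hence weak kernels), and then Theorem \ref{thm: duality}(2) applies. Your explicit appeal to Corollary \ref{cor: F coarsest} for coarseness of $H$ is exactly how the paper's construction justifies that word as well.
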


\section{Comparison of fiber functors and Hopf algebroids}

Let $F$ and $F'$ be fiber functors in the sense of Definition \ref{def: fiber}. 
Then the construction described in Sections \ref{s: FG}, \ref{s: comp} and \ref{s: T} yields functors
$\bimF$, $\bimG$, $\bimK$, \dots, $\T$ for the first and $\bimF'$, $\bimG'$, $\bimK'$, \dots, $\T'$ for the second functor. Combining the two data sets determine new objects such as the left exact monoidal functors
\[
\bimK\bimL':\M^{H'}\to\M^H,\qquad \bimK'\bimL:\M^H\to\M^{H'}\,,
\]
the bicomodule algebras
\[
A:=G\am{\C}F'\ \in\ ^H\M^{H'}\,,\qquad B:=G'\am{\C}F\ \in\,^{H'}\M^H\,,
\]
the monoidal natural isomorphisms
\[
F\coten{H}A\iso F'\,,\qquad F'\coten{H'}B\iso F\,,
\]
\dots, etc. 

In this section we would like to study these objects under the additional assumption of existence of left and/or right duals in $\C$ and the existence of coarse fiber functors on $\C$.

\subsection{Some general observations}

Let us start on the general level of Theorem \ref{thm: rep} with $\C$ a small additive monoidal category.

\begin{lem} 
Let $F$ and $F'$ be fiber functors on $\C$ with associated bialgebroids $H$ and $H'$. Then
there exists a (monoidal) equivalence $\E:\M^H\to\M^{H'}$ and a (monoidal) natural isomorphism $\E\bimK Y\iso\bimK' Y$ if and only if there is a (monoidal) monad morphism $\T\iso\T'$, implying that a presheaf $U$
is a $\T$-sheaf precisely when it is a $\T'$-sheaf.
\end{lem}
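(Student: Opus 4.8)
The plan is to exploit the sheaf-theoretic picture established in Sections \ref{s: comp} and \ref{s: T}: each fiber functor $F$ produces a monoidal localization $\bimK\dashv\bimL:\M^H\to\Pre$ with associated special left exact monoidal idempotent monad $\T=\bimL\bimK$, and the key point is that $\M^H\simeq\She$ as monoidal categories (Lemma \ref{lem: *}) via $\bimK\G_\T$, compatibly with the embeddings of $\C$. So the statement should be read as saying that two such localizations of $\Pre$ agree (as reflective monoidal subcategories) precisely when their Eilenberg--Moore categories, together with the embedded copy of $\C$, agree.

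For the ``if'' direction I would argue as follows. A (monoidal) monad isomorphism $\T\iso\T'$ induces an isomorphism of Eilenberg--Moore categories $\She\iso\She'$ over $\Pre$, hence a (monoidal) equivalence $\E_0:\She\to\She'$ commuting with $\G_\T,\G_{\T'}$, and in particular commuting with the embeddings $Y_\T,Y_{\T'}$ of $\C$ (note $Y=\G_\T Y_\T=\G_{\T'}Y_{\T'}$, so $\E_0 Y_\T\cong Y_{\T'}$ by faithfulness of $\G_{\T'}$). Transporting along the monoidal equivalences $\bimK\G_\T:\She\simeq\M^H$ and $\bimK'\G_{\T'}:\She'\simeq\M^{H'}$ of Lemma \ref{lem: *} yields a monoidal equivalence $\E:\M^H\to\M^{H'}$, and chasing the isomorphisms gives $\E\bimK Y\cong \E\bimK\G_\T Y_\T\cong \bimK'\G_{\T'}\E_0 Y_\T\cong \bimK'\G_{\T'}Y_{\T'}\cong\bimK' Y$, which is the required (monoidal) natural isomorphism. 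The last clause, ``$U$ is a $\T$-sheaf iff it is a $\T'$-sheaf,'' is then immediate from Theorem \ref{thm: GroT}: a monad isomorphism inverts exactly the same class of arrows, hence determines the same Grothendieck topology.

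For the ``only if'' direction, suppose $\E:\M^H\to\M^{H'}$ is a (monoidal) equivalence with $\E\bimK Y\cong \bimK' Y$. Composing with the equivalences of Lemma \ref{lem: *} we get a (monoidal) equivalence $\E_0:\She\to\She'$; the issue is to promote this to a monad morphism $\T\to\T'$ on $\Pre$. The idea is that both $\T$ and $\T'$ are idempotent monads, so $\She$ and $\She'$ are full reflective subcategories of $\Pre$, and an idempotent monad is completely determined by its category of algebras \emph{as a full subcategory of} $\Pre$, i.e.\ by the class of objects $U$ with $\nu_U$ invertible. Thus it suffices to show $\She$ and $\She'$ coincide as subcategories of $\Pre$, which (via Theorem \ref{thm: GroT}) is the same as $\GroT=\GroT'$, i.e.\ the same as $\T$ and $\T'$ inverting the same sieves. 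Here I would use that every presheaf is a colimit of representables and that $\GroT_F$ is generated (as a topology) by the covering sieves on representables $YC$; by Lemma \ref{exa: F-topology} these are exactly determined by which $i:S\into YC$ become epimorphic after $\bimK$. The hypothesis $\E\bimK Y\cong\bimK' Y$ says $\bimk\cong\E^{-1}\bimk'$ through of which one reads off, using $\bimK Y\cong\bimk$ and the formula $\bimL\bimK YC=\M^H(\bimk\under,\bimk C)$ from \eqref{L}, that $\nu_{YC}$ is invertible for $F$ iff it is for $F'$, and more generally that $\T i$ is invertible iff $\T' i$ is, for every subfunctor $i$ of a representable; idempotency then forces $\T$-sheaves $=$ $\T'$-sheaves on all of $\Pre$.

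The main obstacle I anticipate is the bookkeeping in the ``only if'' direction needed to pass from ``$\E$ respects the embedded $\C$'' to ``$\T$ and $\T'$ invert the same arrows of $\Pre$'': one must be careful that $\E$ need not a priori commute with the reflections $\bimK,\bimK'$ (only with the forgetful/embedding functors up to the given iso), so the cleanest route is to first reduce everything to the sheaf side via Lemma \ref{lem: *}, observe that $\G_\T,\G_{\T'}$ are fully faithful, and conclude that $\E_0$ is (isomorphic to) the identity on the common subcategory of sheaves — the point being that a full reflective subcategory of $\Pre$ containing all representables and closed under the relevant limits is pinned down by its objects. Once $\She=\She'$ as subcategories, the monad isomorphism $\T\iso\T'$ is the standard fact that two reflections onto the same subcategory are canonically isomorphic, and its monoidality follows from uniqueness of the monoidal structure in Proposition \ref{pro: She mon} together with the observation after Definition \ref{def: spec} that ``special'' already determines the monoidal structure on $T$.
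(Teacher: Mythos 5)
Your ``if'' direction is fine and is essentially the paper's argument. The problem is the ``only if'' direction, where you replace the paper's proof by an attempt to show directly that the two topologies coincide. The pivotal claim there --- that from $\E\bimK Y\iso\bimK' Y$ one ``reads off \dots more generally that $\T i$ is invertible iff $\T' i$ is, for every subfunctor $i$ of a representable'' --- is not justified by the tools you cite. The formula $\bimL\bimK YC=\M^\bimQ(\bimk\under,\bimk C)$ from (\ref{L}) only controls hom-groups between objects in the image of $\bimk$; it does give ``$\nu_{YC}$ invertible for $F$ iff for $F'$'' (which is vacuous anyway: every fiber functor is subcanonical, cf.\ Lemma \ref{lem: f=Fu} and Proposition \ref{pro: Gamma}), but it says nothing about $\bimK i$ for a sieve inclusion $i:S\into YC$, whose domain $S$ is a general presheaf. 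To compare $\bimK i$ with $\bimK' i$ you need $\E\bimK$ and $\bimK'$ to agree naturally on $S$, not merely on representables, and that is exactly the step the paper supplies: every presheaf is a colimit of representables and both $\E\bimK$ and $\bimK'$ preserve colimits, so the isomorphism $\E\bimK Y\iso\bimK' Y$ extends to $\E\bimK\cong\bimK'$. With that in hand the paper finishes in one line by passing to right adjoints, $\bimL'\cong\bimL\E_*$ for a quasi-inverse $\E_*$ of $\E$, hence $\T'=\bimL'\bimK'\cong\bimL\E_*\E\bimK\cong\bimL\bimK=\T$, monoidality coming for free since all the isomorphisms involved are monoidal; no detour through covering sieves, equality of sheaf subcategories, or uniqueness of reflections is needed.

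If you want to keep your topology-comparison route, the gap can be closed without the full extension by characterizing covering sieves in an $\E$-invariant way: the canonical map $\coprod_{s\in S}Y(\dom s)\to S$ is epi and $\bimK$, being a left adjoint, preserves epis, while $\bimK i$ is monic by left exactness; hence $S\in\GroT_F(C)$ iff the family $\{\bimk s\}_{s\in S}$ is jointly epimorphic in the abelian category $\M^\bimQ$. Joint epimorphicity is preserved and reflected by the equivalence $\E$, so $\E\bimk\cong\bimk'$ gives $\GroT_F=\GroT_{F'}$, hence the same sheaves by Theorem \ref{thm: GroT} and Lemma \ref{exa: F-topology}, after which your ``two reflections onto the same full subcategory are canonically isomorphic monads'' conclusion (plus uniqueness of the special monoidal structure) does work. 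But as written, the crucial transfer of invertibility of $\T i$ to $\T' i$ is asserted rather than proved, and it is the whole content of this direction.
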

\begin{proof}
Assume $\E$ exists.
Since every object $M$ of $\M^H$ can be written as a  colimit  $\colim_j \bimK Y C_j$ and both $\E$ and $\bimK$ preserve colimits, we immediately get monoidal natural isomorphisms $\E\bimK\cong\bimK'$, $\bimL'\cong \bimL\E_*$, where $\E_*$ is some right adjoint of $\E$, and therefore $\T=\bimL\bimK\iso\bimL'\bimK'=\T'$.

Assuming $\T$ and $\T'$ are isomorphic monoidal monads we obtain isomorphic sheaf categories and therefore a
monoidal equivalence $\M^H\simeq\Pre_\T\simeq\Pre_{\T'}\simeq\M^{H'}$.
\end{proof} 

But $H$ and $H'$ can have isomorphic comodule categories without their Grothen\-dieck topologies being the "same".
As an experimentation consider the following 
\begin{deflem}
Let two Grothendieck topologies be given on the same category $\C$ described by two left exact idempotent monads
$\T$ and $\T'$. We say that the two Grothendieck topologies are equivalent and write $\T\sim\T'$ if $\T\nu'_U$ is invertible for all $\T$-sheaves $U$ and $\T'\nu_{U'}$ is invertible for all $\T'$ sheaves $U'$.
\end{deflem}
\begin{proof}
This relation is evidently symmetric and reflexive. As for transitivity consider three left exact idempotent monads such that $\T\sim\T'$, $\T'\sim\T''$ and let $U$ be a $\T$-sheaf. Naturality of $\nu$, $\nu'$ and $\nu''$ leads 
to the three commutative squares of the diagram
\begin{equation}
\begin{CD}
U@>\nu_U>>\T U@>\T\nu'_U>>\T\T' U@>\T\T'\nu'_U>>\T{\T'}^2U\\
@V{\nu''_U}VV @VV{\T\nu''_U}V @VV{\T\T'\nu''_U}V @VV{\T\T'\nu''_{\T' U}}V\\
\T'' U@>\nu_{\T'' U}>>\T\T'' U@>\T\nu'_{\T'' U}>>\T\T'\T'' U@>\T\T'\T''\nu'_U>>\T\T'\T''\T' U
\end{CD}
\end{equation}
By assumption all the four items in the top-right composite are invertible. Therefore $\nu''_U$ is split monic and
$\T\T'\T''\nu'_U$ is split epi.
Since the left hand side of the naturality relation $\T\nu'_U\ci\nu_U=\nu_{\T' U}\ci\nu'_U$ is invertible, we obtain that $\nu'_U$ is split monic. Therefore $\T\T'\T''\nu'_U$ is an isomorphism and the diagram implies
that $\T\nu'_{\T'' U}$ is a split epipimorphism. Since $\T'' U$ is a $\T''$-sheaf, the left hand side of the naturality
relation $\T''\nu'_{\T'' U}\ci\nu''_{\T'' U}=\nu''_{\T'\T'' U}\ci\nu'_{\T'' U}$ is invertible, implying that
$\nu'_{\T'' U}$ is split monic. But then $\T\nu'_{\T'' U}$ is invertible and the above diagram implies $\T\nu''_U$ is invertible. 

Repeating the same argument with $\T$, $\T''$ interchanged we conclude that $\T''\nu_{U''}$ is also invertible for all $\T''$-sheaves $U''$. Therefore $\T\sim\T''$.
\end{proof}


\begin{lem}
If two fiber functors $F$ and $F'$ on $\C$ have equivalent (monoidal) Grothendieck topologies, $\T\sim\T'$,
then the functors $\bimK\bimL'$, $\bimK'\bimL$ establish a monoidal adjoint equivalence $\M^H\simeq\M^{H'}$.
\end{lem}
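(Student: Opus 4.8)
The plan is to exhibit the two adjoint equivalences by composing each localization reflection with the other localization's section, and to check that the hypothesis $\T\sim\T'$ is exactly what makes these composites mutually quasi-inverse. First I would recall the setup: for the fiber functor $F$ we have the monoidal localization $\bimnu,\bimtheta:\bimK\dashv\bimL:\M^H\to\Pre$ with $\bimL$ fully faithful, $\bimtheta$ invertible, and $\bimt=\bimL\bimK$ the monoidal idempotent monad $\T$ whose category of $\T$-sheaves $\She$ is (via $\bimK\G_\T$) monoidally equivalent to $\M^H$ (Lemma \ref{lem: *}); likewise for $F'$ with $\bimnu',\bimtheta':\bimK'\dashv\bimL'$, monad $\T'$, sheaf category $\She'$, and $\M^{H'}\simeq\She'$. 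The key observation is that the condition $\T\sim\T'$ says precisely that $\T$-sheaves and $\T'$-sheaves coincide as full subcategories of $\Pre$: if $U$ is a $\T$-sheaf then $\nu'_U$ is split monic (by invertibility of $\T\nu'_U$ together with naturality $\T\nu'_U\ci\nu_U=\nu_{\T'U}\ci\nu'_U$ and invertibility of $\nu_U$), and then $\T'\nu'_U=\mu'$-type arguments force $\nu'_U$ itself to be invertible, so $U$ is a $\T'$-sheaf; symmetrically. Hence $\She=\She'$ with the same embeddings into $\Pre$, though possibly different monoidal structures transported from $\Pre$.

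Next I would define $\E:=\bimK'\bimL:\M^H\to\M^{H'}$ and $\E':=\bimK\bimL':\M^{H'}\to\M^H$. Both are left exact strong monoidal: $\bimK'$ is left exact strong monoidal (Lemma \ref{lem: bimK}) and $\bimL$ is a monoidal functor that is the section of a monoidal localization (Proposition \ref{pro: bimL}), and although $\bimL$ is not strong monoidal in general, the composite $\bimK'\bimL$ is — by the same device used in Lemma \ref{lem: *} for $\bimF\G_\T$, namely that $\bimK'$ inverts every arrow $\nu'_U$ and the monoidal structure obstruction of $\bimL$ lives in such arrows; here one uses that $\bimL$ lands in $\T$-sheaves $=\T'$-sheaves, so $\bimK'$ inverts the relevant $\nu'$-components of $\bimL$'s lax structure. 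Then I would construct the counit and unit: compose $\bimK\bimL'\bimK'\bimL\cong\bimK\G_{\T'}\F_{\T'}\bimL$-style expressions with the invertible $\bimtheta$ and the comparison-monad unit. Concretely, $\E'\E=\bimK\bimL'\bimK'\bimL$; using that $\bimL'\bimK'$ restricted to the (common) sheaves is the identity up to the isomorphism coming from $\T\sim\T'$ (namely $\bimL'\bimK'U\cong U$ naturally for $U$ a $\T'$-sheaf, hence for $U$ in the image of $\bimL$), one gets $\E'\E\cong\bimK\bimL=\bimt=\T\cong\id_{\M^H}$ via $\bimtheta$; symmetrically $\E\E'\cong\id_{\M^{H'}}$. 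I would package these into an adjoint equivalence exactly as in the proof of Lemma \ref{lem: *}, checking the two triangle identities by the same vertical/horizontal composite manipulations of $\bimnu,\bimtheta,\bimnu',\bimtheta'$ and the sheaf isomorphisms, and noting all the building blocks are monoidal natural transformations so the equivalence is monoidal.

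The main obstacle I expect is the bookkeeping needed to promote "$\She=\She'$ as plain subcategories" to "$\bimK'\bimL$ and $\bimK\bimL'$ are mutually quasi-inverse monoidal functors" — i.e. producing a genuinely natural isomorphism $\bimL'\bimK'|_{\She}\cong\id$ out of the split-mono/invertibility chase in the definition-lemma preceding this statement, and then verifying that this isomorphism interacts correctly with the monoidal structures transported to $\She$ from $\Pre$ via the two (a priori different) monad structures $\T,\T'$. This is where one must be careful: the monoidal product on $\M^H$ is $\Ush\ot\Vsh=\bra\bimt(U\odot V),\mu_{U\odot V}\ket$ and on $\M^{H'}$ it is $\bra\tsh'(U\odot V),\mu'_{U\odot V}\ket$, and $\T\sim\T'$ must be shown to force $\bimt(U\odot V)$ and $\tsh'(U\odot V)$ to be canonically isomorphic (both being "the sheafification of $U\odot V$" in the common topology). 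Once that canonical isomorphism of monoidal products is in hand, strong monoidality of $\E,\E'$ and monoidality of the adjunction data are formal. I would therefore organize the proof as: (i) $\T\sim\T'\Rightarrow$ same sheaves; (ii) same sheaves $\Rightarrow$ $\bimt(U\odot V)\cong\tsh'(U\odot V)$ canonically, hence same monoidal sheaf category; (iii) translate through $\bimK\G_\T,\bimK'\G_{\T'}$ to get the monoidal adjoint equivalence $\M^H\simeq\M^{H'}$ realized by $\bimK'\bimL$ and $\bimK\bimL'$, invoking Lemma \ref{lem: *} for the comonadic translation and citing the triangle-identity computation pattern already used there.
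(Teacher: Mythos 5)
Your proposal is correct, and its core is the same as the paper's: the unit and counit you describe are exactly the paper's composites $\bimtheta^{-1}$ followed by $\bimK\bimnu'\bimL$, and $\bimK'\bimnu^{-1}\bimL'$ followed by $\bimtheta'$. Where you diverge is in how much you extract from the hypothesis. The paper never claims the two sheaf subcategories coincide; it only uses that $\bimL$ (resp. $\bimL'$) is fully faithful with image in the $\T$- (resp. $\T'$-)sheaves, so that the definition of $\T\sim\T'$ immediately makes $\bimK\bimnu'\bimL$ and $\bimK'\bimnu\bimL'$ invertible, which is all that is needed. You instead first upgrade $\T\sim\T'$ to the statement that $\T$-sheaves and $\T'$-sheaves coincide, and your argument for this is sound: for an idempotent monad a split monic unit component is automatically invertible (the idempotent $\nu'_U\ci p$ is invertible by naturality plus $\nu'_{T'U}=(\mu'_U)^{-1}=T'\nu'_U$, hence equals the identity), and this is exactly the style of chase used in the paper's transitivity proof. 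This buys you a genuinely stronger conclusion (the relation $\sim$ forces the localizations, hence the monads, to be canonically isomorphic -- something the paper's wording suggests it did not intend) at the cost of a longer detour through $\She=\She'$ and Lemma \ref{lem: *}, whereas the paper's argument is shorter and stays entirely inside the adjunction calculus of $\bimK\dashv\bimL$, $\bimK'\dashv\bimL'$.

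Two small repairs: the chain ``$\E'\E\cong\bimK\bimL=\bimt=\T$'' confuses the comonad $\bimK\bimL$ on $\M^H$ with the monad $\bimt=\bimL\bimK$ on $\Pre$; what you want is simply $\E'\E\cong\bimK\bimL\cong\id_{\M^H}$ via $\bimtheta$. And your justification for strong monoidality of $\bimK'\bimL$ misplaces the obstruction: the lax structure cell of $\bimL$ is, up to isomorphisms, the component $\bimnu_{\bimL M\odot\bimL N}$ of $\bimnu$ (not of $\bimnu'$) at an object which is in general \emph{not} a sheaf, so ``$\bimL$ lands in sheaves'' does not by itself give invertibility under $\bimK'$. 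This is fixed either by your own step (ii) -- once the sheaves coincide, $\bimnu_U$ and $\bimnu'_U$ differ by the canonical isomorphism $TU\cong T'U$, so $\bimK'$ inverts every $\bimnu_U$ -- or more cheaply by doctrinal adjunction: once the unit and counit are invertible monoidal transformations, the left adjoint of the resulting adjunction in $\MonCat$ is automatically strong monoidal.
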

\begin{proof}
$\T=\bimL\bimK$ inverts an arrow iff $\bimK$ does and every presheaf in the image of $\bimL$ is a $\T$-sheaf. Applying these two observations for $\T$ and $\T'$ one can easily verify that the composites
\[
\M^Q\longrarr{\theta^{-1}}\bimK\bimL\longrarr{\bimK\nu'\bimL}\bimK\bimL'\,\bimK'\bimL
\]
and
\[
\bimK'\bimL\,\bimK\bimL'\longrarr{\bimK'\nu^{-1}\bimL'}\bimK'\bimL'\longrarr{\theta'}\M^{H'}
\]
are isomorphisms and provide the unit and counit of an adjunction.
\end{proof}

\begin{cor}
If $F$ and $F'$ are coarse fiber functors then their bialgebroids $H$ and $H'$ are monoidally Morita-Takeuchi equivalent.
\end{cor}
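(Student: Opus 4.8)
Recall that a fiber functor $F$ is coarse exactly when its Grothendieck topology is the coarsest one on $\C$; equivalently, the idempotent monad $\T$ of Proposition \ref{pro: T} has $\nu_U$ invertible for every presheaf $U$, so that every presheaf is a $\T$-sheaf. The plan is: first to observe that for a coarse $F$ the comparison functor $\bimK\colon\Pre\to\M^H$ is itself a monoidal equivalence, then to glue two such equivalences, and finally to recast the resulting monoidal equivalence $\M^H\simeq\M^{H'}$ as a monoidal Morita--Takeuchi context of bicomodule algebras.

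For the first point, if $F$ is coarse then $\She=\Pre$, and the forgetful functor $\G_\T\colon\She\to\Pre$ is an isomorphism of categories, since the $\T$-action on a sheaf is unique. Lemma \ref{lem: *}, which states that $\bimK\G_\T$ is a monoidal equivalence $\She\simeq\M^\bimQ$, then says that $\bimK\colon\Pre\to\M^\bimQ\simeq\M^H$ (Proposition \ref{pro: finfun - H}) is a monoidal equivalence, with quasi-inverse its right adjoint $\bimL$ (fully faithful with invertible $\bimtheta$, by Proposition \ref{pro: bimL}). The same holds for $F'$ with $\bimK'$, $\bimL'$, $\M^{H'}$. Since $F$ and $F'$ are coarse their Grothendieck topologies both equal the coarsest one and hence coincide, so in particular $\T\sim\T'$ in the sense made precise above; thus the preceding Lemma applies directly and $\bimK\bimL'$, $\bimK'\bimL$ are mutually quasi-inverse monoidal equivalences $\M^H\simeq\M^{H'}$.

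It remains to realise this monoidal equivalence of comodule categories through a bicomodule algebra, which is what the conclusion ``monoidally Morita--Takeuchi equivalent'' asks for. Here I would use the bicomodule algebras $A:=G\am{\C}F'\in{}^{H}\M^{H'}$ and $B:=G'\am{\C}F\in{}^{H'}\M^{H}$ singled out at the start of this section, and identify $\bimK'\bimL\cong\under\coten{H}A$ and $\bimK\bimL'\cong\under\coten{H'}B$: concretely $\bimK'\bimL M=(\bimL M)\am{\C}F'$, with $\bimL M$ the subpresheaf $C\mapsto\M^H(\bimk C,M)$ of $\bimG M=\,_R\M_R(\bimf\under,M)$, and one checks that this is naturally $M\coten{H}A$ as an $H'$-comodule. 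The monoidal structure of $\bimK'\bimL$ then carries the monoid structure of the unit object of $\M^H$ onto $A$, so that $(A,B)$, together with the monoidal natural isomorphisms $F\coten{H}A\iso F'$ and $F'\coten{H'}B\iso F$, form a monoidal Morita--Takeuchi context. I expect this final identification to be the only real work: producing a monoidal equivalence is immediate from the preceding Lemma, but matching $\bimK'\bimL$ with the cotensor functor $\under\coten{H}A$ and checking the algebra- and bicomodule-bookkeeping is what turns a bare monoidal equivalence of $\M^H$ and $\M^{H'}$ into a genuinely \emph{monoidal Morita--Takeuchi} equivalence. If one takes the latter term to mean nothing more than a monoidal equivalence of comodule categories, the proof is already finished at the end of the second paragraph.
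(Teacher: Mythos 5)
Your argument is correct and is essentially the paper's: the corollary is stated as an immediate consequence of the preceding Lemma, since two coarse fiber functors induce the \emph{same} (coarsest) Grothendieck topology, so trivially $\T\sim\T'$ and $\bimK\bimL'$, $\bimK'\bimL$ give the monoidal equivalence $\M^H\simeq\M^{H'}$. Your closing remark is the right reading -- the paper uses ``monoidally Morita--Takeuchi equivalent'' to mean exactly such a monoidal equivalence of comodule categories, so the proof ends with your second paragraph; the cotensor identification $\bimK'\bimL\cong\under\coten{H}A$ (consistent with the paper's later identification $\under\coten{H}A\cong\F'\bimL$ in the Ulbrich section) is a pleasant but unneeded refinement here.
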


\subsection{$G$ as a Galois object and right autonomy}

For any essentially strong monoidal and flat functor $F:\C\to\Ab$ we can define the monoidal (pre)sheaf $G=\Ssh\Ipre\in\Pre$ (Subsection \ref{ss: G}) where $\Ssh=\Gsh\Fsh$ and the following natural transformation
\[
\gamma:=\left(G\odot G\longrarr{G\odot\bimeta}G\odot\bimS G\longrarr{\Ssh_{\Ipre,G}}\Ssh(\Ipre\odot G)
\longrarr{\sim}\Ssh G\right)
\]
Looking at $\Ssh$ as a semicomonad on $\Pre$ with comultiplication $\etash\Ssh:\Ssh\to\Ssh\Ssh$ and defining comodules of $\Ssh$ as arrows $V\rarr\alpha \Ssh V$ for which 
\[
V\longrarr{\alpha}\Ssh V\pair{\etash\Ssh V}{\Ssh\alpha}\Ssh\Ssh V
\]
is an equalizer we see that $G\rarr{\etash_G}\Ssh G$, just like every $\T$-sheaf, is an $\Ssh$-comodule.
Moreover, $\Ssh$ has a monoidal counterpart $\bimS=\bra \bimG\bimF,\bimeta\bimG\bimF\ket$ which is related to $\Ssh$ as $\bimQ$ is to $\Qsh$. Now the $\Ssh$-comodule $\etash$ is underlying the $\bimS$-comodule
$\bimeta$ and this latter is a monoidal natural transformation. This means precisely that the coaction $G\rarr{\bimeta}\bimS G$ is compatible with the monoid structure $\bra G,m,u\ket$ of $G$. This suggests that
we should consider $\gamma$ as the Galois map of the $\bimS$-comodule monoid $G$ in $\Pre$. 

Assuming finite projectivity of the right $R$-modules $\fsh C$ we are dealing with a bialgebroid $H$ and
every strong monoidal $\bimF'$ applied to $\gamma$ gives rise to a Galois mapping for the left 
$H=\bimF G$-comodule algebra $A:=\bimF' G$ 
\[
\gamma_A:=\left(A\am{R'} A\longrarr{\bimF'_{G,G}} \bimF'(G\odot G)\longrarr{\bimF'\gamma}\bimF'\Gsh\Fsh G\cong H\obar{R}A\right)
\]
The monoidal category $\C$ is called right autonomous if every $C\in\ob\C$ has a right dual object $^*C$  with
evaluation $\tilde{\ev}_C: C\ot\,^*C\to I$ and dual basis $\tilde{\db}_C:I\to\,^*C\ot C$. 
\Phung proves in  \cite[Theorem 2.2.4]{Phung}, among others, that for right autonomous $\C$ the Galois mapping
$\gamma_H:H\oR H\to H\obar{R}H$ is invertible. 
In fact right autonomy implies somewhat more as we are going to show next.
\begin{pro} \label{pro: A H Gal}
Let $\bra\C,F\ket$ be a fiber functor and assume that $\C$ is right autonomous. Then $\gamma:G\odot G\to 
H\obar{R}G$ is an isomorphism and for all fiber functor $F'$ the left $H$-comodule algebra $A=G\am{\C}F'$ 
is a Galois extension of the base ring $R'$ of $F'$ via the canonical $s_A:R'\to A$, $r'\mapsto 1_L\am{I}r'$.
\end{pro}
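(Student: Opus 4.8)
The plan is to reduce the statement to the invertibility of the Galois map $\gamma$ for the monoid $G$ in the presheaf category and then transport this along a strong monoidal functor. First I would establish that right autonomy of $\C$ lifts to right autonomy of the sheaf category, or at least to the existence of enough dual data on the representables $\longg C$: for each $C\in\ob\C$ the right dual $^*C$ produces, via the strong monoidal embedding $Y_\T$ (or $\bimk$), a right dual of $\longg C$ inside $\She$ with evaluation and coevaluation obtained by applying the functor to $\tilde\ev_C$ and $\tilde\db_C$. Since $\longg$ is the pointwise left dual of $\fsh$ and $\fsh C$ is f.g.\ projective, $\longg C$ is already a "finite" object; combining with the categorical duals coming from right autonomy of $\C$ one gets that $\longg$ is a rigid object in the monoidal category $\She$, and the comultiplication $\Ssh$-coaction $\bimeta_G: G\to \bimS G$ is compatible with the monoid structure.

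Next I would write down an explicit two-sided inverse of $\gamma: G\odot G\to \Ssh G \cong H\obar{R}G$. The standard trick (as in \Phung's proof for $\gamma_H$) is to build the inverse out of the dual basis maps: roughly, $\gamma^{-1}$ sends $h\obar{R}g$ to $(h\odot \text{(dual of $g$)})$ composed with the coevaluation of the rigid object $G$, using that $G\odot G$ receives the coevaluation $u$-twisted diagonal and that the counit $\eps_H$ contracts one of the tensor factors back. Concretely, since every element of $G\odot G$ is a rank-one tensor $[f,g,t]^A_{B,C}$ and $\gamma$ has the explicit form coming from $\Ssh_{\Ipre,G}$ and the monoid multiplication $m$, one verifies $\gamma^{-1}\ci\gamma = \id$ and $\gamma\ci\gamma^{-1}=\id$ by a bookkeeping computation using the triangle identities for $\tilde\ev_C,\tilde\db_C$ and the defining relations (\ref{tensor rels}) of the tensor product over $\C$. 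This is the main obstacle: keeping track of which of the four $R$-actions on $H$ is used where (the Notation paragraph before Proposition \ref{pro: finfun - H} is exactly about this) and making sure the candidate inverse is well-defined on the coequalizer presentations, not just on generators.

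Once $\gamma: G\odot G\iso H\obar{R}G$ is established in $\Pre$ (equivalently in $\She$), the transport to an arbitrary fiber functor $F'$ is formal: apply the strong monoidal left adjoint $\bimF'$ (the strong part of the Kan extension of $F'$), which by Lemma \ref{lem: FF left adj} and Proposition \ref{pro: esss} is left exact and strong monoidal, hence preserves the isomorphism. Identifying $\bimF'(G\odot G)\cong A\am{R'}A$ via $\bimF'_{G,G}$ and $\bimF'(H\obar{R}G)\cong H\obar{R}A$ (using that $\bimF'$ applied to the left $H$-comodule $G$ gives the left $H$-comodule algebra $A$, with the $H$-coaction being $\bimF'$ of $\bimeta_G$), one reads off that $\gamma_A$ is an isomorphism. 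That $A$ is an $H$-comodule algebra follows because $\bimeta_G$ is a monoidal natural transformation on the monoid $G$ and $\bimF'$ is strong monoidal, so it sends the monoid $G$ to the $R'$-ring $A$ and the $\bimS$-coaction to an $H$-coaction by algebra maps; the canonical map $s_A: R'\to A$, $r'\mapsto 1_L\am{I}r'$, is the unit of this ring, and $A$ being a Galois extension of $R'$ is precisely the invertibility of $\gamma_A$. The only genuinely new work beyond \Phung's argument is the first paragraph — promoting right autonomy of $\C$ to the rigidity of $G$ in $\She$ — and the rest is the routine application of the monoidal machinery set up in Sections \ref{s: FG}--\ref{s: T}.
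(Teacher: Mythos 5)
Your second and third steps — constructing an explicit inverse of $\gamma$ from duality data and then transporting invertibility along the strong monoidal, left exact $\bimF'$ to get invertibility of $\gamma_A$ — are exactly the paper's route, and that part of the plan is sound. But two things go wrong. First, your auxiliary claim that $\longg$ becomes a \emph{rigid} object of $\She$ is false in general: under the equivalence $\She\simeq\M^H$ the sheaf $G$ corresponds to $H$ itself, which is typically not finitely generated projective as a right $R$-module, hence not dualizable in $\M^H$. The inverse of $\gamma$ is not built from a coevaluation for $G$; it is built componentwise from the right duals $^*A$ of the objects of $\C$ occurring in the generators $[f,g,t]^C_{A,B}$ of $G\odot G$, using the induced isomorphisms $v_A:FA\iso(F\,^*A)^*$ and the data $\tilde\ev_A$, $\tilde\db_A$, and then checked on generators (well-definedness being automatic since the formula is manifestly compatible with the relations). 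So the intended mechanism survives, but the justification you give for it does not.

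Second, and more seriously, you omit half of what ``Galois extension of $R'$ via $s_A$'' means in this paper: one must show that $s_A$ identifies $R'$ with the coinvariant subalgebra $A^{\co H}$, not merely that $\gamma_A$ is bijective. This is not formal. The paper proves it by recognizing the equalizer (\ref{eq: A co H}) defining $A^{\co H}$ as the image under the exact functor $\F'$ (flatness of $F'$) of the equalizer (\ref{eq: G co H}) computing $\T\YI$, and then invoking subcanonicity of $F$ (which holds for fiber functors) to conclude $\T\YI\cong\YI$, whence $A^{\co H}\cong\F'\YI\cong R'$ with the inclusion equal to $s_A$. Your proposal asserts instead that Galoisness ``is precisely the invertibility of $\gamma_A$,'' which skips this identification entirely; without it the statement proved is strictly weaker than Proposition \ref{pro: A H Gal}. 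Note also that right autonomy of $\C$ plays no role in the coinvariants step — what is used there is exactness of $\F'$ and the sheaf property of $\YI$ — so this part cannot be recovered from your rigidity-based picture.
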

\begin{proof}
At first we show that the $H$-coinvariant subalgebra is indeed $s_A$ and then construct an inverse for $\gamma$
which will  then will imply invertibility of $\gamma_A$ for all $F'$.

The left $H$-coaction on $A$ is the mapping $\F'\etash_G:A\to \F'\Gsh\Fsh G\cong H\obar{R}A$, $g\am{C}x'\mapsto \sum_i (g\am{C}x_C^i)\obar{R}(f_C^i\am{C}x')$ where $\sum_i x_C^i\oR f_C^i\in FC\oR GC$
denotes the dual basis associated to the fact that $GC$ is the left dual bimodule of $FC$. The object of coinvariants
$A^{\co H}$ is defined as the coequalizer of $\F'\etash_G$ and the trivial coaction $\F'\Gsh\Fsh\etash_\YI$ which sends $a$ to $1_H\obar{R} a$, up to the isomorphism $\F'\Gsh\Fsh G\cong H\obar{R}A$. But the equalizer 
\begin{equation} \label{eq: A co H}
A^{\co H}\longmono{}A=\F' G\pair{\F'\etash_G}{\F'\Gsh\Fsh\etash_\YI} \F'\Gsh\Fsh G
\end{equation}
is nothing but the image under $\F'$ of the equalizer defining $\Ls\Ks\YI\cong\T\YI$,
\begin{equation} \label{eq: G co H}
\Ls\Ks\YI\longmono{\ish_{\Ks\YI}}\Gsh\Fsh\YI\pair{}{}\Gsh\Fsh\Gsh\Fsh\YI\,.
\end{equation}
Since $F$ is subcanonical, $\YI$ is a $\T$-sheaf and we obtain that $A^{\co H}\rightarrowtail A$ can be obtained
as the composite
\[
R'\rarr{{N'}^{-1}_I}\F'\YI\rarr{\nush_\YI}\F'\Ls\Ks\YI\rarr{\ish_{\Ks\YI}}\F' G=A
\]
which, by (\ref{nush}), is $\F'\etash_\YI\ci {N'}^{-1}_I:r'\mapsto I\am{I}r'\mapsto 1_L\am{I}r'=s_A(r')$.
This finishes the proof of that $s_A:R\to A$ is the coinvariant subalgebra.

The value of $\gamma$ on the generators (\ref{Z-generators of conv.prod.}) is
\[
\gamma([f,g,t]^C_{A,B})=\sum_i(g\am{B}x_B^i)\ \obar{R}\ Gt\ci G_{A,B}(f\am{L} f_B^i)\,.
\]
In order to write up the inverse we choose right duality data $\bra C,\,^*C,\tilde{\ev}_C,\tilde{\db}_C\ket$
in $\C$ and left duality data $\bra FC,GC\equiv(FC)^*,\ev_{FC},\db_{FC}\ket$ in $_R\M_R$ for all object $C$. Notice that $\sum_i x_C^i\oR f_C^i=\db_{FC}(1_R)$. We also use the isomorphism $v_C:FC\iso(F\,^*C)^*$ given by
\[
FC\longerrarr{FC\oR\db_{F\,^*C}}FC\oR F\,^*C\oR (F\,^*C)^*\longerrarr{\tilde{\ev}_{FC}\oR(F\,^*C)^*}(F\,^*C)^*\,,
\]
up to coherence isomorphisms,
where $\bra FC,F\,^*C,\tilde{\ev}_{FC},\tilde{\db}_{FC}\ket$ are right duality data in $_R\M_R$ induced by
the chosen right duality data in $\C$. Explicitly,
\begin{align}
\label{eq: evtilde FC}
\tilde{\ev}_{FC}&=FC\oR F\,^*C\longrarr{F_{C,\,^*C}}F(C\ot\,^*C)\longrarr{F \tilde{\ev}_C}FI\rarr{=} R\\
\label{eq: dbtilde FC}
\tilde{\db}_{FC}&=R\rarr{=} FI\longrarr{F\tilde{\db}_C}F(\,^*C\ot C)\longrarr{F_{\,^*C,C}^{-1}}F\,^*C\oR FC\,.
\end{align}
We claim that the inverse of $\gamma$ is
\[
\gamma^{-1}_C((f\am{A}x)\oR g)=\left[ G_{C,\,^*A}(g\oL v_A(x)),\,f,\,
\asso_{C,\,^*A,A}\ci(C\ot\tilde{\db}_A)\ci\runi^{-1}_C\right]^C_{C\ot\,^*A,A}
\]
As a matter of fact,
\begin{align*}
&\gamma_C\gamma_C^{-1}((f\am{A}x)\oR g)=\\
&={\sst\sum_i(f\am{A}x_A^i)\obar{R}G\runi^{-1}_C\ci G(C\ot\tilde{\db}_A)\ci G\asso_{C,\,^*A,A}\ci G_{C\ot\,^*A,A}
\ci(G_{C,\,^*A}\oL GA)\ ((g\oL v_A(x))\oL f_A^i)}\\
&={\sst\sum_i(f\am{A}x_A^i)\obar{R}G\runi^{-1}_C\ci G(C\ot\tilde{\db}_A)\ci G_{C,\,^*A\ot A}\ci(GC\oL G_{^*A,A})
\ (g\oL(v_A(x)\oL f_A^i)}\\
&=\sst{\sum_i(f\am{A}x_A^i)\obar{R}G\runi^{-1}_C\ci G_{C,I}\ (g\oL\underset{\bra f_A^i,x\ket}
{\underbrace{\sst G\tilde{\db}_C\ci G_{^*A,A}(v_A(x)\oL f_A^i)}})}\\
&=(f\am{A} x)\obar{R} g
\end{align*}
and
\begin{align*}
&\gamma^{-1}_C\gamma_C([f,g,t]^C_{A,B})=\\
&={\sst\sum_i\left[ G_{C,\,^*B}(Gt\ci G_{A,B}(f\oL f_B^i)\oL v_B(x_B^i)),\,g,\,
\asso_{C,\,^*B,B}\ci(C\ot\tilde{\db}_B)\ci\ci\runi^{-1}_C\right]^C_{C\ot\,^*B,B}}\\
&={\sst\sum_i\left[G_{A\ot B,\,^*B}(G_{A,B}(f\oL f_B^i)\oL v_B(x_B^i)),\,g,\,\asso_{A\ot B,\,^*B,B}\ci
((A\ot B)\ot\tilde{\db}_B)\ci\runi^{-1}_{A\ot B}\ci t\right]^C_{(A\ot B)\ot\,^*B,B}}\\
&=\left[{\sst G_{A,B\ot\,^*B}(f\oL }\right.\underset{\sst G\tilde{\ev}_B(1_L)}
{\underbrace{\sst\sum_i G_{B,\,^*B}(f_B^i\oL v_B(x_B^i))}}\left.{\sst ),\,g,\,
\asso^{-1}_{A,B\ot\,^*B,B}\ci\asso_{A\ot B,\,^*B,B}\ci((A\ot B)\ot\tilde{\db}_B)\ci\runi^{-1}_{A\ot B}\ci t}
\right]{\sst ^C_{A\ot I,B}}\\
&={\sst\left[G_{A,I}(f\oL 1_L),\,g,\,\asso_{A,I,B}\ci(A\ot(\tilde{\ev}_B\ot B)\ci\asso_{B,\,^*B,B}\ci(B\ot\tilde{\db}_B)\ci\runi^{-1}_B)\ci t\right]^C_{A\ot I,B}}\\
&=[f,g,t]^C_{A,B}\,.
\end{align*}
\end{proof}



\subsection{Left autonomy}

Assume that the small additive monoidal category $\C$ is left autonomous, i.e., every object $C$ has a left dual
$C^*$ with evaluation morphisms $\ev_C:C^*\ot C\to I$ and coevaluation or `dual basis' $\db_C:I\to C\ot C^*$
satisfying the usual rigidity or adjunction relations. We denote by $(\under)^*$ the corresponding left dual object
functor $\C\to\C^{\op,\rev}$ which is fully faithful but need not be an equivalence.

Since strong monoidal functors preserve left dual objects, the obvious advantage of existence of left duals
in $\C$ is that for any strong monoidal functor $F:\C\to\,_R\M_R$ the bimodules $FC$ are f.g. projective as 
right $R$-modules. Therefore the finiteness property of fiber functors in Definition \ref{def: fiber} holds automatically.

The following result is a well-known generalization of Saavedra's \cite[Proposition 5.2.3]{Saavedra-Rivano}.
\begin{lem} \label{lem: Saavedra}
Let $\M$ be a monoidal category and $\C$ a left (or right) autonomous monoidal category. If $F,G:\C\to\M$ are strong monoidal functors and $u:F\to G$ is a monoidal natural transformation then $u$ is invertible.
\end{lem}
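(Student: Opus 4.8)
The plan is to produce an explicit two‑sided inverse of $u$ from the duality data of $\C$, exploiting that strong monoidal functors preserve left (resp.\ right) duals. I will treat the left autonomous case; the right autonomous case is obtained by applying the argument to the reversed monoidal categories, or equivalently by replacing left duals with right duals throughout.

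First I recall the mechanism. If $C$ has a left dual $\bra C^*,\ev_C,\db_C\ket$ in $\C$, then for any strong monoidal $F$ the object $FC^*$ is a left dual of $FC$ in $\M$, with evaluation $F\ev_C\ci F_{C^*,C}$ (followed by $F_0^{-1}$) and coevaluation $F_{C,C^*}^{-1}\ci F\db_C$ (preceded by $F_0$). The same holds for $G$. Now, given the monoidal natural transformation $u:F\to G$, I would define a candidate inverse $v_C:GC\to FC$ by transposing $u_{C^*}:FC^*\to GC^*$ across these dualities; explicitly, $v_C$ is the composite
\begin{equation*}
GC\xrightarrow{\ \cong\ } FC\ot FC^*\ot GC\xrightarrow{FC\ot u_{C^*}\ot GC} FC\ot GC^*\ot GC\xrightarrow{\ \cong\ }FC,
\end{equation*}
where the first arrow uses the $F$-coevaluation on $FC\ot FC^*$ and the unit constraints, and the last uses the $G$-evaluation on $GC^*\ot GC$. (One must of course insert the relevant $F_0$, $G_0$, associators and unitors; I suppress them, as they are the routine part.) Then the key computation is that $v_C\ci u_C=\id_{FC}$ and $u_C\ci v_C=\id_{GC}$.

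The substance of the argument is the verification of these two identities, and this is where the monoidality of $u$ is used crucially. Writing out $v_C\ci u_C$ and sliding $u$ past the structure maps, the naturality square of $u$ together with the two compatibility equations $u_{C\ot C^*}\ci F_{C,C^*}=G_{C,C^*}\ci(u_C\ot u_{C^*})$ and $u_I\ci F_0=G_0$ let me convert the $F$-coevaluation followed by $u$ into the $G$-coevaluation followed by nothing; then the rigidity (triangle) identity for the left duality $\bra GC^*,\ev_{GC},\db_{GC}\ket$ in $\M$ collapses the composite to the identity. The computation for $u_C\ci v_C=\id_{GC}$ is the mirror image, using instead the triangle identity that evaluates $GC\ot GC^*$. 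A diagrammatic (string‑diagram) presentation makes both collapses transparent: $u$, being monoidal, commutes with cups and caps, so one simply slides the $u$‑labelled strings through the zig‑zag and straightens it.

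I expect the only real obstacle to be bookkeeping: keeping track of the coherence isomorphisms $\asso,\luni,\runi$ of $\M$ and the structure maps $F_2,F_0,G_2,G_0$ so that the transposition is well defined and the two triangle identities apply on the nose. This is entirely mechanical once one fixes conventions, and Mac Lane coherence guarantees there is no obstruction. Hence I would state the proof compactly: define $v$ by the displayed transpose, invoke the preservation of duals and the rigidity identities in $\M$, and use the two monoidality equations for $u$ to conclude $v\ci u=\id$ and $u\ci v=\id$; naturality of $v$ then follows formally (or from the fact that a pointwise inverse of a natural transformation is automatically natural). The right autonomous case is identical after interchanging the roles of left and right duals. $\square$
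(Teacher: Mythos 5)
Your proposal is correct and coincides with the paper's own proof: the candidate inverse $v_C:=(FC\ot\ev_{GC})\ci(FC\ot u_{C^*}\ot GC)\ci(\db_{FC}\ot GC)$ you define is exactly the one used there, built from the transported duality data $\ev_{FC}$, $\db_{FC}$, $\ev_{GC}$, $\db_{GC}$ with coherence isomorphisms suppressed. The paper likewise verifies $u_C\ci v_C=GC$ and $v_C\ci u_C=FC$ from the monoidality of $u$ and the rigidity identities, so your argument is essentially the same, only spelled out in more detail.
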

\begin{proof}
Although $\M$ is not left autonomous the objects $FC$, $C\in\ob\C$ have left duals namely $FC^*$ with
\begin{align}
\ev_{FC}&:FC^*\ot FC\rarr{F_{C^*,C}}F(C^*\ot C)\rarr{F\ev_C}FI\rarr{=} R\label{eq: ev FC}\\
\db_{FC}&:R\rarr{=} FI\rarr{F\db_C}F(C\ot C^*)\rarr{F_{C,C^*}^{-1}}FC\ot FC^*\label{eq: db FC}
\end{align}
where $R$ is the unit of $\M$. Similarly, we define $\ev_{GC}$ and $\db_{GC}$ for $C\in\ob\C$.
Let $v:G\to F$ be defined (with coherence isomorphisms suppressed) by
\[
v_C:=(FC\ot\ev_{GC})\ci(FC\ot u_{C^*}\ot GC)\ci(\db_{FC}\ot GC).
\]
Then using monoidality of $u$ it is easy to show that $u_C\ci v_C=GC$ and $v_C\ci u_C=FC$.
\end{proof}

Therefore if $F,F'$ are fiber functors with isomorphic base rings, $R\cong R'$, and $\C$ is left autonomous then
any monoidal natural transformation $F\to F'$ is an isomorphism. Unfortunately, this conclusion fails for
arbitrary $R$ and $R'$ and does not say anything about $F$ and $F'$ if they are not connected by a monoidal
natural transformation. We can instead consider for arbitrary fiber functors $F$ and $F'$ the $H$-$H'$-bicomodule algebra
\[
A:=G\am{\C}F\ \in\ ^H\M^{H'}\,.
\]
%
In order to study the properties of $A$ we need to know more about $G$ when $C$ has left duals.
\begin{pro} \label{pro: G fiber}
Let $\bra\C,F\ket$ be a fiber functor. If $\C$ is left autonomous then the pointwise left dual $GC=(FC)^*\cong FC^*$ is a fiber functor $G$ on $\C^\op$. 

If $\C$ is also right autonomous then $F$ is coarse on $\C$ iff $G$ is coarse on $\C^\op$.
\end{pro}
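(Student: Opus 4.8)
The statement has two parts. For the first part, I would establish that $GC=(FC)^*\cong FC^*$ defines an essentially strong monoidal functor $G:\C^{\op}\to\Ab$ whose normal part lands in f.g. projective right modules, and then verify the remaining clauses of Definition \ref{def: fiber} for the associated $\gsh$. The monoidal structure of $\longg$ is given in (\ref{G2}), (\ref{G0}); essential strongness follows because, under left autonomy, taking left duals is a strong monoidal functor $(\under)^{*}:\C\to\C^{\op,\rev}$ and $\longg$ is (up to the canonical comparison) the composite $F\ci(\under)^{*}$ followed by the dualisation $(\under)^{*}:\,_R\M_R^{\fgp}\to\,_{R^{\op}}\M_{R^{\op}}^{\fgp}$, which is a strong monoidal equivalence onto the full subcategory of f.g. projective bimodules. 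In particular $\gsh C=(FC)^{*}$ is automatically f.g. projective as right $R^{\op}$-module. The remaining four bullet points of Definition \ref{def: fiber} — faithfulness, flatness, reflecting isomorphisms, existence and preservation of the relevant kernels/cokernels — are all properties that the dualisation functor $(\under)^{*}:\,_R\M_R^{\fgp}\to\,_{R^{\op}}\M_{R^{\op}}^{\fgp}$ both preserves and reflects (it is an equivalence of additive categories, and it interchanges monos with epis and kernels with cokernels up to the autonomy isomorphisms, while split epis go to split monos and von Neumann regular arrows go to von Neumann regular arrows). Since $(\under)^{*}:\C\to\C^{\op}$ is fully faithful and, by left autonomy, an equivalence onto $\C^{\op}$ exactly when $\C$ is also right autonomous — but for the first clause we only need that $\longg=\longf\ci(\under)^{*}$ as functors on $\C^{\op}$ up to iso — I would transport each of the four conditions on $\fsh:\C\to\M_R$ across the two equivalences to obtain the corresponding conditions on $\gsh:\C^{\op}\to\M_{R^{\op}}$.

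\textbf{Left versus right duals.} One subtlety: to view $\longg$ as a functor on $\C^{\op}$ I use that $C\mapsto(FC)^{*}$ is contravariant in $\C$. To identify it with $\longf$ composed with a dual-object functor I want $(\under)^{*}$ to be a functor $\C^{\op}\to\C$, which it is; left autonomy gives exactly this. The identification $\longg\cong\longf\ci(\under)^{*}$ as $\Ab$-valued functors on $\C^{\op}$ is the statement $GC\cong F(C^{*})$, which holds because strong monoidal functors preserve left duals (formulas (\ref{eq: ev FC}), (\ref{eq: db FC})) and left duals are unique up to canonical isomorphism; naturality and monoidality of this identification are routine diagram chases with the coherence data of $F$. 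This reduces the first assertion to: the class of fiber functors is closed under precomposition with the strong monoidal equivalence-onto-its-image $(\under)^{*}:\C^{\op}\to\C$ — which holds because all four defining conditions are expressed via $\fsh$ and are invariant under composing with a fully faithful strong monoidal functor whose essential image is closed under the relevant (co)kernels; here I should be a little careful, since $(\under)^{*}$ need not be essentially surjective, but the conditions "$\C$ has kernels of arrows $t$ with $\fsh t$ epi" and similarly for cokernels are about existence inside $\C$, and left autonomy lets me dualise such a kernel in $\C$ to a cokernel in $\C$, so the conditions are genuinely self-dual for autonomous $\C$. This is the point I expect to require the most care.

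\textbf{The coarseness equivalence.} For the second assertion, assume $\C$ is autonomous (both left and right). By Lemma \ref{exa: F-topology} the Grothendieck topology $\GroT_F$ is the $F$-topology: a sieve $S$ on $C$ is $F$-covering iff it is jointly $F$-epimorphic, and $F$ is coarse iff the only $F$-covering sieve on any $C$ is the maximal one. I would show that a sieve $S\hookrightarrow YC$ is jointly $F$-epimorphic if and only if its "right dual" sieve is jointly $G$-epimorphic on $C^{*}$ (in $\C^{\op}$), using that $GC=(FC)^{*}$ and that a family of arrows $\{s_i:A_i\to C\}$ in $\C$ is jointly $F$-epimorphic precisely when the dual family $\{s_i^{*}:C^{*}\to A_i^{*}\}$ is jointly $G$-monomorphic, while under right autonomy (which supplies a quasi-inverse to $(\under)^{*}$, so that $(\under)^{*}$ is an equivalence $\C\simeq\C^{\op,\rev}$) joint mono and joint epi statements can be interchanged by dualising once more; equivalently, I can argue directly at the level of the idempotent monads, using Proposition \ref{pro: T}(5) and the fact that $\gsh$ is the pointwise left dual of $\fsh$, so that the left Kan extensions $\Fsh$ and $\Gsh R$-dual of $\Fsh$ invert the same class of arrows of $\Pre$. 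The cleanest route: $F$ coarse $\iff$ every sieve $\iota:S\hookrightarrow YC$ has $\bimF\iota$ invertible $\iff$ (by left exactness of $\bimF$) $\bimF\iota$ epi; dualising with the left-duality isomorphism $FC\cong(GC)^{*}$ and using that right autonomy makes every $\bimf C$ a right dual as well, this is equivalent to the corresponding statement for $G$ on $\C^{\op}$. The main obstacle here is bookkeeping: keeping straight which autonomy (left or right) is needed for which half of the "iff", and checking that the dualisation genuinely exchanges the $F$-topology on $\C$ with the $G$-topology on $\C^{\op}$ rather than some twisted variant. Once that correspondence of covering sieves is in place, "coarsest on $\C$" translates literally to "coarsest on $\C^{\op}$", completing the proof.
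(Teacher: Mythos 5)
Your plan treats the four clauses of Definition \ref{def: fiber} as if they were self-dual and could simply be transported along $(\under)^*$ and the bimodule dualisation, but the definition is asymmetric, and this is where the proposal breaks down. For $F$ one only knows that $\C$ has kernels of arrows $t$ with $\fsh t$ \emph{epi}, whereas the cokernel clause for $G$ on $\C^\op$ requires kernels in $\C$ of every arrow $t$ whose $\fsh$-image is merely \emph{von Neumann regular} (the transpose of ``$\gsh t$ von Neumann regular''), together with their preservation. Nothing in the axioms for $F$ supplies such kernels, and your ``dualise the kernel to a cokernel'' device cannot create them; the paper produces them by leaving $\C$ altogether: Lemma \ref{lem: von Neumann} yields the required (co)kernels in $\M^{\Qsh}_\fgp$ for arrows whose underlying module map is von Neumann regular, and the Representation Theorem (Proposition \ref{pro: rep}, Theorem \ref{thm: rep}) carries them back to $\C$. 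This idea is entirely missing from your proposal. Note also that for the kernel clause of $G$ the paper does not dualise diagrams in $\C$ at all: it transposes a splitting of $\gsh t$ into an explicit right $R$-module map ${}^*\sigma$ exhibiting $\fsh t$ as a split mono, hence von Neumann regular, and then quotes $F$'s cokernel clause.

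Two further points would need repair. First, your transport argument repeatedly appeals to $(\under)^*$ being an equivalence (``the conditions are genuinely self-dual for autonomous $\C$''), but the first assertion assumes only \emph{left} autonomy, so the essential image of $(\under)^*$ need not be all of $\C$; in particular the blanket claim that flatness is inherited under precomposition with a fully faithful but not essentially surjective strong monoidal functor is unjustified as stated --- the paper instead writes out axiom (flat-2) for $G$ concretely through the identification $GC\cong FC^*$ and reduces it to flatness of $F$ there, rather than invoking a general transport principle. Second, in the coarseness part your intermediate claim that a family is jointly $F$-epimorphic iff the dual family is jointly $G$-\emph{monomorphic} is not the relevant correspondence; what is used is that a cosieve $S$ from $C$ is $\GroT_G$-covering precisely when the sieve generated by $S^*$ is $\GroT_F$-covering on $C^*$, after which two-sided autonomy (making $(\under)^*$ an equivalence) lets ``only the maximal sieve covers'' pass back and forth. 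That half of your plan is salvageable, but the kernel/cokernel clause is a genuine gap.
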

\begin{proof}
We have already seen in Subsection \ref{ss: G} that $G$ is a strong monoidal functor $\C^\op\to\,_L\M_L\equiv\,_R\M_R^\rev$. 

\textit{$G$ is flat}: Axiom (flat-1) for flatness of $G:\C^\op\to\Ab$ is automatically true since $\C^\op$ is additive.
Axiom (flat-2) requires 
\begin{quote}
$\forall$ $t\in\C(A,B)$ and $\forall$ $g\in\M_R(FB,R)$ such that $g\ci Ft=0$\\
$\exists$ $s\in\C(B,C)$ and $\exists$ $f\in\M_R(FC,R)$ such that $s\ci t=0$ and $f\ci Fs=g$.
\end{quote}
Using that $FC^*$ is a left dual bimodule of $FC$, so $GC\cong FC^*$,  we can reformulate (flat-2) for $G$ as follows:
\begin{quote}
$\forall$ $t\in\C(B^*,A^*)$ and $\forall$ $g\in FB^*$ such that $Ftg=0$\\
$\exists$ $s\in \C(C^*,B^*)$ and $\exists$ $f\in FC^*$ such that $t\ci s$=0 and $Fsf=g$.
\end{quote}
Now this is clearly a consequence of flatness of $F$.

\textit{$G$ is faithful and reflects isomorphisms}: Since $G\cong F(\under)^*$ and $(\under)^*$ is fully faithful,
the statement follows from the respective properties of $F$.

\textit{If $G^\pisharp t$ is epi then $t$ has a kernel in $\C^\op$}: 
Let $t:A\to B$ and $\sigma:G^\pisharp A\to G^\pisharp B\in\M_L\equiv\,_R\M$ be such that 
$G^\pisharp t\ci \sigma=G^\pisharp A$. Construct $^*\sigma:\fsh B\to\fsh A$ by $^*\sigma(y):=\sum_i x_A^i\cdot\bra \sigma(f_A^i),y\ket$ which is a right $R$-module map and obeys $^*\sigma\ci\fsh t=\fsh A$,
i.e., $\fsh t$ is split monic. But then $\fsh t$ is von Neumann regular and therefore $t$ has a cokernel in $\C$. 
But this means precisely that $t$ has a kernel in $\C^\op$.

\textit{If $G^\pisharp t$ is von Neumann regular then $t$ has a cokernel in $\C^\op$ and $G^\pisharp$ 
preserves it}: 
Let $t:A\to B$ and $\sigma:G^\pisharp A\to G^\pisharp B\in\M_L\equiv\,_R\M$ be such that 
$G^\pisharp t\ci \sigma\ci G^\pisharp t=G^\pisharp A$. Then the $^*\sigma$ constructed above satisfies
$\fsh t\ci\,^*\sigma\ci \fsh t=\fsh t$, so $\fsh t$ is von Neumann regular. By Lemma \ref{lem: von Neumann} 
$K t$ has a kernel in $\M^H_\fgp$ and by the Representation Theorem $t$ has one in $\C$. This means precisely 
that $t$ has a cokernel in $\C^\op$. 
If $k$ is a kernel of $t$ in $\C$ then $F k$ is a kernel of $F t$ and using the $^*(\under)$ functor on $_R\M_R$ it is easy to see that $G k$ is the cokernel of $G t$. Therefore $G$, and $G^\pisharp$, too, preserves such cokernels. 

This finishes the proof of that $G$ is a fiber functor.

In the Grothendieck topology $\GroT_G$ the covering sieves on $C\in\C^\op$ are the `cosieves' $S$ from $C$
such that $\{Gs|s\in S\}$ are jointly epimorphic on $GC$. This is equivalent to that $\{Fs|s\in S^*\}$ is jointly
epimorphic on $FC^*$. Denoting by $S^*\C$ the sieve generated by $S^*$ we obtain 
\[
\GroT_G(C)=\{S\text{ cosieve from }C\,|\,S^*\C\in\GroT_F(C^*)\}\,.
\]
If $\C$ is autonomous then $(\under)^*$ is an equivalence therefore $S^*\C=S^*$. So, $F$ is coarse iff
every sieve in $\GroT_F(C^*)$ contains the identity $C*$ iff every cosieve in $\GroT_G(C)$ contains $C$
iff $G$ is a coarse fiber functor on $\C^\op$.
\end{proof}

\begin{cor} \label{cor: dual representables}
Under the assumtions of the above Proposition all the representable presheaves $\C(C,\under)$ on $\C^\op$
are sheaves for the monoidal Grothendieck topology induced by $G$ on $\C^\op$.
\end{cor}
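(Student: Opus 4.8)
The plan is to deduce Corollary \ref{cor: dual representables} as an immediate consequence of Proposition \ref{pro: G fiber} together with the subcanonicity criterion already established for fiber functors. The key point is that Proposition \ref{pro: G fiber} asserts precisely that, under the hypotheses (left and right autonomy of $\C$, $\bra\C,F\ket$ a fiber functor), the pointwise left dual $G$ is itself a fiber functor on the category $\C^\op$. So everything we have proved about fiber functors in general applies to $G$ on $\C^\op$; in particular, it applies to the question of which presheaves on $\C^\op$ are sheaves for the induced monoidal Grothendieck topology $\GroT_G$.

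First I would invoke the Representation Theorem, Theorem \ref{thm: rep}, or rather its proof ingredients: a fiber functor $\bra\C^\op,G\ket$ yields a bialgebroid and, via Proposition \ref{pro: rep}/Proposition \ref{pro: Gamma}, the functor $\ksh$ (for $G$) is fully faithful. But the cleanest route is through Proposition \ref{pro: Gamma} directly: condition (4) there (fully faithfulness of the comodule embedding) is equivalent to condition (1), that $G$ is subcanonical, i.e. every representable presheaf on $\C^\op$ is a $\GroT_G$-sheaf. Since the equivalence $\C\simeq\M^H_\fgp$ delivered by Theorem \ref{thm: rep} applied to the fiber functor $\bra\C^\op,G\ket$ precisely says the embedding $\C^\op\to\M^{H'}$ is fully faithful, we obtain that $G$ is subcanonical. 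Representable presheaves on $\C^\op$ are the functors $\C(C,\under):\C\to\Ab$, so these are all $\GroT_G$-sheaves, which is exactly the assertion.

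Concretely I would write: by Proposition \ref{pro: G fiber}, $\bra\C^\op, G\ket$ is a fiber functor; by Theorem \ref{thm: rep} it induces an equivalence $\C^\op\simeq\M^{H'}_\fgp$ of monoidal categories through which $G$ factors, hence the embedding of $\C^\op$ into the comodule category is fully faithful; by the equivalence $(3)\Leftrightarrow(1)$ (or $(4)\Leftrightarrow(1)$, $(5)\Leftrightarrow(1)$) of Proposition \ref{pro: Gamma} applied to $G$ on $\C^\op$, this fully-faithfulness is equivalent to $G$ being subcanonical, i.e. to every representable presheaf $\C(C,\under)$ on $\C^\op$ being a sheaf for the monoidal Grothendieck topology $\GroT_G$ induced by $G$ via the idempotent monad $\T$ of Proposition \ref{pro: T}. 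This is the claim.

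I do not expect a genuine obstacle here, since all the heavy lifting is in Proposition \ref{pro: G fiber} (which verifies the fiber-functor axioms for $G$) and in the machinery of Section \ref{sec: rep} and Proposition \ref{pro: Gamma}. The only care needed is bookkeeping: making sure that "monoidal Grothendieck topology induced by $G$ on $\C^\op$" in the corollary is literally the topology $\GroT_G$ defined in the proof of Proposition \ref{pro: G fiber}, and that the monoidal structure on $\C^\op$ used there (namely $\C^{\op,\rev}$ or the relevant reversed product, matching the target $_L\M_L\equiv{}_R\M_R^\rev$) is the one the corollary has in mind. Once that identification is made explicit, the corollary is a one-line deduction, so I would keep the proof to two or three sentences and simply cite Proposition \ref{pro: G fiber} and Proposition \ref{pro: Gamma}.
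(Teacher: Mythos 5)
Your proposal is correct and follows the route the paper intends: Proposition \ref{pro: G fiber} makes $\bra\C^\op,G\ket$ a fiber functor, and then the Representation Theorem machinery (Proposition \ref{pro: rep} via Lemma \ref{lem: f=Fu}, together with the equivalence $(1)\Leftrightarrow(4)/(5)$ of Proposition \ref{pro: Gamma}) shows any fiber functor is subcanonical, which for $G$ on $\C^\op$ is exactly the statement that the representables $\C(C,\under)$ are $\GroT_G$-sheaves. Only a cosmetic remark: the corollary needs just the first part of Proposition \ref{pro: G fiber}, so left autonomy of $\C$ suffices; right autonomy is only needed for the coarseness statement.
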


Since the pointwise left dual of the $G^\pisharp:\C^\op\to\M_L$ is $_R\M(GC,R)\cong FC$ the $L$-bialgebroid one constructs from the fiber functor $G$ is the `coopposite' right bialgebroid $H^{\text{coop}}$ of $H$. 
This means that $H^{\text{coop}}$ has the same underlying ring $G\am{\C}F$ as $H$ but the source and target maps are interchanged and the comultiplication is the opposite of $H$ (with $\obar{R}$ replaced by $\obar{L}$).
Therefore the right $H^{\text{coop}}$-comodule category can be identified with the category $^H\M$ of left
$H$-comodules. By the Representation Theorem we obtain the following
\begin{cor}
If $\bra\C,F\ket$ is a fiber functor with $\C$ left autonomous and Cauchy complete then the full subcategory $^H_\fgp\M\subset\,^H\M$
of left $H$-comodules that are f.g. projective as left $R$-modules is monoidally equivalent to $\C^\op$ via 
$C\mapsto \bra GC, \lambda_C\ket$ where $\lambda_C:GC\to H\obar{R} GC$, $g\mapsto\sum_i(g\am{C}x_C^i)\obar{R} f_C^i$.
\end{cor}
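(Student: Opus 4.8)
The plan is to deduce this from the Representation Theorem applied not to $F$ but to its pointwise left dual $G$, viewed as a fiber functor on the opposite category. Since $\C$ is left autonomous and Cauchy complete, $\C^\op$ is right autonomous and Cauchy complete, and by Proposition \ref{pro: G fiber} the pair $\bra\C^\op,G\ket$ — with $GC=(FC)^*$, strong part $\bimg\colon\C^\op\to{}_L\M_L$ and base ring $L=R^\op$ — is again a fiber functor in the sense of Definition \ref{def: fiber}. Left autonomy already secures the finiteness clause, because strong monoidal functors preserve left duals, so each $FC$ is finitely generated projective as a right $R$-module, hence each $GC$ as a right $L$-module; the kernel and cokernel clauses are exactly what Proposition \ref{pro: G fiber} checks.

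Next I would run the reconstruction — Proposition \ref{pro: finfun - H} followed by Theorem \ref{thm: rep} — on $\bra\C^\op,G\ket$. It produces the base ring $L=R^\op$, a right $L$-bialgebroid $J$, and a monoidal equivalence $\C^\op\simeq\M^J_\fgp$ realised by the embedding $\bimk$ associated to $G$. The central step is the identification $J=H^\coop$: the pointwise left dual of $G^\pisharp$ is ${}_R\M(G\under,R)\cong F$, so the underlying abelian group of $J$ is canonically $G\amo{\C}F=H$, and unwinding the formulas of Proposition \ref{pro: finfun - H} for the product and for the source and target maps exhibits them as those of $H$ with source and target interchanged and comultiplication reversed — that is, $H^\coop$. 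Hence the category of right $J$-comodules is the category ${}^H\M$ of left $H$-comodules, and under this identification a $J$-comodule that is finitely generated projective as a right $L$-module is precisely a left $H$-comodule that is finitely generated projective as a left $R$-module; so $\M^J_\fgp={}^H_\fgp\M$. Composing, Theorem \ref{thm: rep} becomes a monoidal equivalence $\C^\op\simeq{}^H_\fgp\M$. Finally, reading off the explicit form $\bimk C=\bra\bimg C,\delta_C\ket$ (cf. (\ref{bimk}) and Proposition \ref{pro: finfun - H}(2)) and rewriting $\delta_C$ as a left $H$-coaction on $GC$ gives $C\mapsto\bra GC,\lambda_C\ket$ with $\lambda_C(g)=\sum_i(g\am{C}x_C^i)\obar{R}f_C^i$, where $\sum_i x_C^i\oR f_C^i\in FC\oR GC$ is the dual basis.

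Everything apart from the identification $J=H^\coop$ is routine bookkeeping, so that identification, together with the transport of the monoidal structure, is where the real work lies. One must keep track of the several $R$-actions on $H$ (cf. the Notation paragraph on $\obar{R}$ versus $\oR$), check that the multiplication induced on $G\amo{\C}F$ by the strong monoidal $\bimg$ and its dual agrees with that of $H$ once the two tensor factors are interchanged, and verify that the monoidal product of $J$-comodules, transported along $\M^J\simeq{}^H\M$, is the usual one on left $H$-comodules. All of this follows from coherence of $\bimf$ and $\bimg$ and from the duality relations $\ev_{FC},\db_{FC}$, by computations of the same flavour as in the proof of Proposition \ref{pro: A H Gal}, but they must be written out with care.
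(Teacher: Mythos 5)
Your proposal is correct and follows the paper's own route: the paper likewise deduces the corollary by noting (via Proposition \ref{pro: G fiber}) that $G$ is a fiber functor on $\C^\op$ with base ring $L=R^\op$, that the bialgebroid reconstructed from $G$ is the coopposite $H^{\coop}$ (since the pointwise left dual of $G^\pisharp$ is $F$), and that right $H^{\coop}$-comodules are left $H$-comodules, so the Representation Theorem applied to $\bra\C^\op,G\ket$ gives exactly the stated equivalence with the coaction $\lambda_C$. Your added remark that the identification $J=H^{\coop}$ needs the explicit formulas of Proposition \ref{pro: finfun - H} is a fair gloss on what the paper leaves implicit.
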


The bialgebroid $H=G\am{\C}F$ is a proper extension of $L=R^\op$ via the target map $t_H$ simply because
$t_H$ is a section of $\eps_H$. For arbitrary fiber functors $F$, $F'$ the ring $A=G\am{C}F'$ can still be a proper
extension of $L$ via $t_A$ without, however, any mapping analogous to $\eps_H$. The following Lemma is therefore
a nice application of the fact that $G$ is a fiber functor on $\C^\op$.

\begin{lem} \label{lem: A/L proper}
If $\C$ is a small additive left autonomous monoidal category and $F$, $F'$ are fiber functors on $\C$ then
$t_A:L\to A=G\am{\C}F'$, $l\mapsto l\am{I} 1_{R'}$ is a monic, so $A/L$ is a proper ring extension.
\end{lem}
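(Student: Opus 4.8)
The plan is to exhibit $t_A$ as the image, under a left exact functor, of an obvious monomorphism of presheaves. Since $\C$ is left autonomous, Proposition \ref{pro: G fiber} tells us that $G$ is a fiber functor on $\C^\op$; in particular $G\colon\C^\op\to\Ab$ is flat. Hence, applying Lemma \ref{lem: FF left adj} to the small $\Ab$-category $\C^\op$, the left Kan extension $\mathbf G:=\under\am{\C^\op}G\colon\Add(\C,\Ab)\to\Ab$ of $G$ along the Yoneda embedding $Y'\colon\C^\op\to\Add(\C,\Ab)$ is left exact; being additive it then preserves kernels, hence takes monomorphisms to monomorphisms.

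Next I would produce the monomorphism to which $\mathbf G$ is to be applied. View $F'$ as a presheaf on $\C^\op$, i.e. an object of $\Add(\C,\Ab)$, and let $Y'I=\C(I,\under)$ be the presheaf on $\C^\op$ represented by the monoidal unit $I$. By the Yoneda lemma the element $1_{R'}\in F'I$ corresponds to a natural transformation $\eta'\colon Y'I\to F'$ with components $\eta'_C\colon\C(I,C)\to F'C$, $t\mapsto F'(t)(1_{R'})$. This $\eta'$ is a monomorphism: if $\eta'_C(t)=\eta'_C(t')$ then $F'^{\pisharp}(t-t')$ is a right $R'$-module map from $F'^{\pisharp}I=R'$ to $F'^{\pisharp}C$ sending $1_{R'}$ to $0$, hence is the zero map, and faithfulness of $F'^{\pisharp}$ (part of $F'$ being a fiber functor) forces $t=t'$. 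Applying $\mathbf G$ gives a monomorphism $\mathbf G\eta'\colon\mathbf G(Y'I)\to\mathbf G(F')$.

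Finally I would identify the source and target of $\mathbf G\eta'$ with $L$ and $A$. The canonical isomorphism (\ref{N}) (for $\C^\op$ and $G$) identifies $\mathbf G(Y'I)=Y'I\am{\C^\op}G$ with $GI=L$, carrying $l\in L$ to $\id_I\am{I}l$; and symmetry of the tensor product inside the defining coend identifies $\mathbf G(F')=F'\am{\C^\op}G$ with $G\am{\C}F'=A$, carrying $x\am{C}g$ to $g\am{C}x$. Chasing $l$ through $\mathbf G\eta'$ we obtain $\id_I\am{I}l\mapsto\eta'_I(\id_I)\am{I}l=1_{R'}\am{I}l$, which in $A$ is precisely $l\am{I}1_{R'}=t_A(l)$. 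Thus $t_A$ equals $\mathbf G\eta'$ conjugated by isomorphisms and is therefore monic, whence $t_A\colon L\to A$ is an injective ring homomorphism and $A$ is a proper ring extension of $L$. I expect no serious obstacle: the only point demanding care is the bookkeeping in this last identification — which slot of each coend is covariant and the $\otimes$-symmetry matching $G\am{\C}F'$ with $F'\am{\C^\op}G$ — while the substantial input, namely that $G$ is again a fiber functor (and hence flat), has already been supplied by Proposition \ref{pro: G fiber}.
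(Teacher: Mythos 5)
Your proof is correct, and its skeleton is the paper's: realize $t_A$ as the image of a monomorphism $\check I\rightarrowtail F'$ in $\Add(\C,\Ab)$ under the left exact left Kan extension $G\am{\C}\under$ (your $\mathbf G$, the paper's $\check\F$), whose left exactness comes from flatness of $G$, i.e. from Proposition \ref{pro: G fiber} applied to $F$; the coend bookkeeping you do at the end matches the paper's identification $l\mapsto l\am{I}1_{R'}$. The one genuine divergence is how the monomorphism $\check I\to F'$ is obtained. The paper takes it to be the unit $\check\eta'_{\check I}$ of the adjunction $\check\F'\dashv\check\G'$ attached to $G'$, the pointwise dual of $F'$, and deduces monicity from $\check I$ being a $\T'$-sheaf (Corollary \ref{cor: dual representables} plus Lemma \ref{exa: F-topology}); this invokes Proposition \ref{pro: G fiber} a second time, i.e. left autonomy is used on the $F'$ side as well. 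You instead take the Yoneda transform of $1_{R'}\in F'I$ and check pointwise injectivity directly: ${F'}^\pisharp t$ is right $R'$-linear, hence determined by its value at $1_{R'}$, and faithfulness of ${F'}^\pisharp$ finishes the argument. This is more elementary and formally weaker in its hypotheses on $F'$ (only faithfulness is used, not the dual fiber functor machinery for $G'$), whereas the paper's route keeps the argument inside the sheaf-theoretic framework it has already set up for the surrounding results (e.g. Proposition \ref{pro: A H' Gal}). Both yield exactly the statement of the Lemma.
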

\begin{proof}
If $F'$ is a fiber functor then so is its pointwise left dual $G'$. Therefore we have an adjunction $\check{\F'}\dashv \check{\G'}$ where $\check{\F'}=G'\am{\C}\under$ and, by the finitess condition on $G'$, $\check{\G'}X\cong X
\am{L'} F'$ for any right $L'$-module $X$. 
Let $\check{\eta'}$ denote the unit of this adjunction. Evaluated on the unit presheaf,
\[
\check{\eta'}_{\check{I}}:\check{I}\to\check{\G'}\check{\F'}\check{I}=
\check{\G'}(G'\am{\C}\check{I})\cong\check{\G'}(G'I) \cong G'I\am{L'} F'\cong F'
\]
is a monomorphism since $\check{I}$ is a $\T'$-sheaf (cf. Corollary \ref{cor: dual representables} and Lemma \ref{exa: F-topology}).
Now applying the exact $\check{\F}$ associated to the fiber functor $F$ we obtain the monic
\[
L\iso G\am{\C}\check{I}=\check{\F}\check{I}\longrarr{\check{\F}\check{\eta'}_{\check{I}}} 
\check{\F}F'\iso G\am{\C}F'=A
\]
which maps $l\mapsto l\am{I}I\mapsto l\am{I} 1_{R'}$ so it is equal to $t_A$.
\end{proof}

Next we study the Galois properties of $A$ as a right $H'$-comodule. Consider the mapping
\[
\beta_A:A\am{L}A\to A\obar{R'}H',\quad a\am{L}a'\mapsto a{a'}\nulT\obar{R'}{a'}\oneT\,.
\]
If $F'=F$, hence $A$ is the bialgebroid $H=G\am{\C}F$, we obtain the Galois map $\beta_H:H\oL H\to H\obar{R}H$
the invertibility of which is the defining condition for the right bialgebroid $H$ to be a right 
$\xover{R}$-Hopf algebra  \cite{Schauenburg: ddqg}, also called  Hopf algebroid in \cite{Phung}.
 
\Phung proves in \cite[Theorem 2.2.4]{Phung}, among others, the following statement: If $\C$ is left autonomous
then the $R$-bialgebroid $H$ is a Hopf algebroid, i.e., $\beta_H:H\oL H\to H\obar{R}H$ is invertible.
We can generalize this to all the $H'$-comodule algebras $A$ as follows.
\begin{pro} \label{pro: A H' Gal}
Let $F$, $F'$ be fiber functors on the small additive left autonomous category $\C$. Then $A=G\am{\C}F'$
is a right $H'$-Galois extension of the subalgebra $t_A:L\to A$, $l\mapsto l\am{I}1_{R'}$.
\end{pro}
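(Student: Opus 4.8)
The plan is to reduce the statement about the general comodule algebra $A=G\am{\C}F'$ to the special case $F'=F$ already treated by \Phung, by exhibiting $\beta_A$ as the image of $\beta_H$ under the exact left adjoint functor $\check{\F}=G\am{\C}\under$ attached to the fiber functor $F$. Concretely, $A^{\text{co }H'}=t_A(L)$ should first be identified exactly as in the proof of Proposition \ref{pro: A H Gal}: the object of coinvariants is the equalizer of the coaction $\F'\etash_{G'}$ with the trivial coaction, and this equalizer is the image under $\check{\F}=G\am{\C}\under$ of the equalizer defining $\Ls'\Ks'\check I\cong\T'\check I$; since $G'$ is a fiber functor on $\C^\op$ (Proposition \ref{pro: G fiber}), the representable $\check I$ is a $\T'$-sheaf (Corollary \ref{cor: dual representables}), so $\T'\check I\iso\check I$, and after applying $\check{\F}$ one recovers precisely $t_A:L\to A$, which is monic by Lemma \ref{lem: A/L proper}. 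This gives the correct candidate for the coinvariant subalgebra.

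Next I would write the Galois map $\beta_A$ functorially. Just as in Subsection on $G$ as a Galois object, there is a natural transformation on $\Pre$ built from the semicomonad $\Ssh'=\Gsh'\Fsh'$ and the monoid structure of $G'$, namely $\gamma':G'\odot G'\to \Ssh'G'\cong H'\obar{R'}G'$ (after $\bimF'$), whose image under a strong monoidal left adjoint applied to $G'$-bicomodules gives the Galois map of the corresponding comodule algebra. Applying the \emph{exact left adjoint} $\check{\F}=G\am{\C}\under$ (coming from the fiber functor $F$, which exists and is exact because $F$ is flat, by Lemma \ref{lem: FF left adj}) to the presheaf-level Galois map and using $\check{\F}(G'\odot G')\cong (G\am{\C}F')\am{L}(G\am{\C}F')=A\am{L}A$ together with $\check{\F}(H'\obar{R'}G')\cong A\obar{R'}H'$, I would get that $\beta_A$ is $\check{\F}$ applied to the presheaf-level Galois map $\gamma'$. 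Now invertibility of $\gamma'$ on $\Pre$ is exactly what \Phung's theorem for left autonomous $\C$ gives in the guise ``$\beta_{H'}$ invertible'' once one transports it back along $\bimF'$ to the presheaf level: the key point is that $\gamma'$ is built entirely from the rigidity data of $\C$, so its inverse is a natural transformation on $\Pre$ whose existence does not depend on which fiber functor one later applies. Hence $\beta_A=\check{\F}\gamma'$ is invertible because $\check{\F}$, being a functor, preserves isomorphisms.

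The cleanest way to organize the last step is therefore: (i) observe $\beta_{H'}$ is invertible by \cite[Theorem 2.2.4]{Phung} since $\C$ is left autonomous; (ii) lift this to invertibility of the presheaf-level map $\gamma':G'\odot G'\iso H'\obar{R'}G'$ in $\Pre$, using that $\bimF'$ is strong monoidal, left exact and that the forgetful functor from $\M^{H'}$ reflects isomorphisms, exactly as in the proof of Proposition \ref{pro: A H Gal} where an explicit inverse of $\gamma$ was written down using the left duality data $C^*,\ev_C,\db_C$; (iii) apply $\check{\F}=G\am{\C}\under$ to obtain the explicit inverse of $\beta_A$. In fact one can simply copy the formula for $\gamma^{-1}_C$ from Proposition \ref{pro: A H Gal}, replacing the right duality of $\C$ by its left duality (which is the hypothesis here), and then transport it along $\check{\F}$; this produces a formula for $\beta_A^{-1}$ in terms of the left duals $C^*$ and the induced left/right evaluations $\ev_{FC}$, $\tilde\ev_{FC}$ of $_R\M_R$.

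The main obstacle I anticipate is bookkeeping of the many $R$-actions: $A$ is an $H$-$H'$-bicomodule algebra, and one must be careful that the subalgebra against which the Galois condition is tested is $t_A(L)$ rather than $s_A(R')$, and that the tensor product $A\am{L}A$ in the source of $\beta_A$ uses the $L$-action via $t_A$ on the left factor and via $s_A\circ(\text{something})$ on the right — the ``Notation'' remark preceding Proposition \ref{pro: finfun - H} about the four $R$-actions on $H$ has to be applied consistently here. Once the identification $A^{\text{co }H'}=t_A(L)$ and the functorial description $\beta_A=\check{\F}\gamma'$ are in place, the invertibility is immediate; writing the explicit two-sided inverse and verifying the two composites are identities is then the same computation as in Proposition \ref{pro: A H Gal} with ``left'' and ``right'' duality data interchanged, so I would present it by analogy and omit the verbatim recomputation.
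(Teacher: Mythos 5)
Your treatment of the coinvariants is essentially the paper's argument: the equalizer cutting out ${F'}^{\co H'}$ inside $F'$ is recognized as the one defining $\check{\T}'\check{I}$ for the idempotent monad on $\check{\C}=\Add(\C,\Ab)$ attached to the dual fiber functor $G'$ on $\C^\op$ (Proposition \ref{pro: G fiber}), Corollary \ref{cor: dual representables} gives that $\check{I}$ is a sheaf, and applying the exact $\check{\F}=G\am{\C}\under$ yields $A^{\co H'}\cong G\am{\C}\check{I}\cong GI\cong L$ embedded by $t_A$ (your ``$\F'\etash_{G'}$'' is a notational slip for $\check{\F}$ applied to the pointwise $H'$-coaction of $F'$, but the intent matches the paper).

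The second half, however, has a genuine gap: you dualize on the wrong side. The map $\gamma':G'\odot G'\to\Ssh' G'\cong H'\obar{R'}G'$ lives in $\Pre=\Add(\C^\op,\Ab)$; applying Kan extensions $\under\am{\C}F''$ to it produces the Galois maps of the \emph{left} $H'$-comodule algebras $G'\am{\C}F''$ (for instance of $B=G'\am{\C}F$), not of $A$, and the explicit inverse of the analogous $\gamma$ in Proposition \ref{pro: A H Gal} uses the \emph{right} duality data ${}^*A$, $\tilde{\ev}$, $\tilde{\db}$; under left autonomy alone there is no reason for $\gamma'$, or even for its image $\gamma_{H'}:H'\am{R'}H'\to H'\obar{R'}H'$, to be invertible, so ``replacing right duality by left duality'' in that formula does not go through. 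Moreover $\check{\F}=G\am{\C}\under$ is a functor on $\check{\C}$, not on $\Pre$, so $\check{\F}(G'\odot G')$ is undefined and the asserted identification with $A\oL A$ fails; the object whose image under $\check{\F}$ is $A\oL A$ is $F'\odot F'$, the convolution square in $\check{\C}$ of the monoid $F'$. Finally, the proposed lift of invertibility of $\beta_{H'}$ back to the presheaf level is unjustified: $\beta_{H'}$ is the image of a $\check{\C}$-level map under $G'\am{\C}\under$, and such Kan extensions reflect isomorphisms only when the fiber functor is coarse (Proposition \ref{pro: coarse}); the forgetful functor of $\M^{H'}$, which does reflect isomorphisms, is not where the loss occurs. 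The paper's proof repairs all of this at once: regard $F'$ as a monoid in $\check{\C}$ with its pointwise right $H'$-coaction, write the Galois map $\beta:F'\odot F'\to F'\am{R'}H'$ there --- it depends only on $F'$ --- invert it explicitly with the left duality data $B^*,\ev_B,\db_B$ and $w_B:(F'B)^*\iso F'B^*$, and only then apply the exact, essentially strong monoidal $\check{\F}$ (flatness of $G$ from Proposition \ref{pro: G fiber}) to conclude that $\beta_A$ is invertible for every $F$.
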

\begin{proof}
Define a subfunctor ${F'}^{\co H'}$ of $F'$ by the equalizer
\begin{equation}\label{eq: F' co H'}
{F'}^{\co H'} \longmono{} F'\pair{\delta'}{F'\am{R'}1_{H'}} F'\obar{R'}H'
\end{equation}
the elements of which are the elements $\bra x',C\ket$ of $F'$ satisfying the equation
\[
\sum_i {x'}^i_C\am{R'}(f^i_C\am{C} x')\ =\ x'\am{R'}(1_L\am{I} 1_{R'})\,.
\]
A comparison with Equations (\ref{eq: A co H}) and (\ref{eq: G co H}) helps to recognize this equalizer as the one defining $\check{\T}\check{I}$ where $\check{\T}$ is the 
monoidal idempotent monad on the presheaf category $\check{\C}=\Add(\C,\Ab)$ associated to $G$ and $\check{I}=\C(I,\under)$ is the unit object of $\check{\C}$. By Corollary \ref{cor: dual representables} $\check{I}$
is a $\check{\T}$-sheaf therefore ${F'}^{\co H'}\cong \check{I}$. 
Now apply the left Kan extension $G\am{\C}\under$ of the flat functor $G$ to this equalizer and notice that
we obtain the equalizer
\[
A^{\co H'}\longmono{} A\parallelpair A\obar{R'}H'
\]
defining the coinvariant subalgebra of $A$. Therefore $A^{\co H'}\cong G\am{\C}\check{I}\cong GI\cong L$
proving that $_LA^{H'}$ is an $H'$-extension. 

Since $L$ is the coinvariant subalgebra of $A^{H'}$, the mapping $\beta_A$ is indeed the Galois map associated 
to the $H'$ comodule algebra $A$. We are left to show that $\beta_A$ is invertible.
At first we observe that $\beta_A$ has a factorization
\[
A\oL A\longrarr{\sim}G\am{\C}(F'\odot F')\longrarr{G\am{\C}\beta} G\am{\C}(F'\am{R'}H')\cong A\am{R'}H'
\]
where the first arrow is invertible since $G\am{\C}\under$ is a strong monoidal functor on $\check{\C}$ and
$\beta:F'\odot F'\to F'\am{R'}H'$, which no longer depends on the fiber functor $F$, provides the Galois map of $F'$
as a monoid object in the presheaf category $\check{\C}$. Like for presheaves over $\C$ we denote the elements
of the convolution product $F'\odot F'=\int^A\int^B F'A\ot F'B \ot\C(A\ot B,\under)$ by $[x,y,t]^C_{A,B}$.
We obtain the following formula 
\[
\beta([x,y,t]^C_{A,B})=\sum_i F't\ci F'_{A,B}(x\am{R'}x_B^i)\am{R'}(f_B^i\am{B}y)
\]
where $\sum_i x_B^i\am{R'}f_B^i\in F'B\am{R'}G'B$ is the dual basis for $G'B=(F'C)^*$.
The left autonomous structure allows to write down the inverse of $\beta$ as
\[
\beta_C^{-1}(z\am{R'}(h\am{B} y))=[F'_{C,B^*}(z\am{R'}w_B(h)),y,\runi_C\ci(C\ot\ev_B)\ci\asso^{-1}_{C,B^*,B}
]^C_{C\ot B^*,B}
\]
where $w$ denotes the natural isomorphism $w_B:G'B=(F'B)^*\iso F'B^*$. Indeed,
\begin{align*}
&\beta_C\ci\beta_C^{-1}(z\am{R'}(h\am{B} y))=\\
&={\sst\sum_iF'\runi_C\ci F'(C\ot\ev_B)\ci F'\asso^{-1}_{C,B^*,B}\ci F'_{C\ot B^*,B}\ci(F'_{C,B^*}\am{R'}F'B)
((z\am{R'}w_B(h))\am{R'}x_B^i)\obar{R'}\ (f_B^i\am{B}y)}=\\
&={\sst\sum_iF'\runi_C\ci F'(C\ot\ev_B)\ci F'_{C,B^*\ot B}\ci(F'C\am{R'}F'_{B^*,B})(z\am{R'}(w_B(h)\am{R'}x_B^i))
\ \obar{R'}\ (f_B^i\am{B}y)}=\\
&={\sst\sum_iz\cdot\bra h,x_B^i\ket\ \obar{R'}\ (f_B^i\am{B}y)}=\\
&=z\am{R'}(h\am{B} y)
\end{align*}
and 
\begin{align*}
&\beta_C^{-1}\ci\beta_C([x,y,t]^C_{A,B})=\\
&={\sst\sum_i\left[F'_{C,B^*}(F't\ci F'_{A,B}(x\am{R'}x_B^i)\am{R'}
w_B(f_B^i)),y,\runi_C\ci(C\ot\ev_B)\ci\asso^{-1}_{C,B^*,B}\right]^C_{C\ot B^*,B}}\\
&={\left[{\sst F'(t\ot B^*)\ci F'\asso_{A,B,B^*}\ci F'_{A,B\ot B^*}(x\am{R'}}\right.}
\underset{\sst F'\db_B(1_{R'})}{\underbrace{\sst\sum_iF'_{B,B^*}(x_B^i\am{R'}w_B(f_B^i))}}
{\sst\left. ),\, y,\, \runi_C\ci(C\ot\ev_B)\ci\asso^{-1}_{C,B^*,B}\right]^C_{C\ot B^*,B}}\\
&={\sst\left[F'((t\ot B^*)\ci\asso_{A,B,B^*}\ci(A\ot\db_B)\ci\runi_A^{-1})x,\ y,\ \runi_C\ci(C\ot\ev_B)
\ci\asso^{-1}_{C,B^*,B}\right]^C_{C\ot B^*,B}}\\
&={\sst\left[x,y,\runi_C\ci(C\ot\ev_B)\ci\asso^{-1}_{C,B^*,B}\ci((t\ot B^*)\ot B)\ci(\asso_{A,B,B^*}\ot B)\ci
((A\ot\db_B)\ot B)\ci(\runi^{-1}\ot B)\right]^C_{A,B}}\\
&={\sst\left[x,y,t\ci\runi_{A\ot B}\ci((A\ot B)\ot\ev_B)\ci\asso^{-1}_{A\ot B,B^*,B}\ci(\asso_{A,B,B^*}\ot B)\ci
((A\ot\db_B)\ot B)\ci(\runi^{-1}\ot B)\right]^C_{A,B}}\\
&=[x,y,t]^C_{A,B}
\end{align*}
show that $\beta$ is invertible and therefore so is $\beta_A$ for all fiber functors $F$ and $F'$.
\end{proof}

\subsection{Coarse fiber functors of corings} \label{ss: coarse}

Recall \cite{Mitchell} that an object $X$ in a cocomplete $\Ab$-category $\M$ is called \textit{small} if every morphism
$X\to\coprod_{i\in\I}Z_i$ into a coproduct factors through a finite subcoproduct $\coprod_{i\in\I_0}Z_i\into\coprod_{i\in\I}Z_i$. This is equivalent to that the hom-functor $\M(X,\under):\M\to\Ab$
preserves coproducts. Let $\proj(\M)$ denote the full subcategory of a cocomplete $\Ab$-category $\M$ the objects
of which are the small projective objects.
\begin{lem} \label{lem: small proj}
\begin{enumerate}
\item
If $\C$ is an essentially small additive category with splittings of idempotents then the small projective objects of the presheaf category $\Pre=\Add(\C^\op,\Ab)$ are precisely the representable presheaves.
\item
For any ring $R'$ the category $\proj(\M_{R'})$ coincides with the full subcategory $\M_{R'}^\fgp$ of f.g. projective modules.
\item
For an $R'$-coring $H'$ we have an inclusion $\proj(\M^{H'})\subset\M^{H'}_\fgp$, i.e., the forgetful functor
$\F^{H'}:\M^{H'}\to\M_{R'}$ preserves small projective objects.
\item
If $H$ is a coarse $R$-coring then $\proj(\M^H)=\M^H_\fgp$, i.e, the forgetful functor preserves and reflects
small projective objects.
\end{enumerate}
\end{lem}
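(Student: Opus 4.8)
The plan is to prove the four assertions in sequence, each reducing to a standard fact once the finiteness bookkeeping is set up.

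For (1), I would use the canonical presentation of every presheaf as a colimit of representables $U=\colim\,(\Elt U)^\op\to\C^\op\to\Pre$. The representable $YC$ is small because $\Pre(YC,\under)\cong\ev_C$ by the Yoneda lemma and evaluation at $C$ commutes with coproducts, which are computed pointwise in $\Pre$; it is projective because $\ev_C$ is exact. Conversely, suppose $P$ is small projective. Writing $P=\colim_i YA_i$, the identity $P\to P$ factors (by smallness) through a finite subcoproduct $\coprod_{i\in I_0}YA_i$, and then by additivity of $\C$ through a single $YA$ with $A=\bigoplus_{I_0}A_i$. Thus $P$ is a retract of a representable: $YA\rarr{q}P\rarr{p}YA$ with $p\ci q=P$ (projectivity makes the retraction split). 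The idempotent $e:=q\ci p\in\Pre(YA,YA)\cong\C(A,A)$ is then a representable of an idempotent in $\C$, which splits since $\C$ has splittings of idempotents; the splitting object is some $B\in\ob\C$ and one checks $P\cong YB$.

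For (2) I would simply cite the classical characterization: over any ring $R'$, the small projective right modules are exactly the finitely generated projective ones. Smallness of $M$ is equivalent to $\M_{R'}(M,\under)$ preserving coproducts, which for a projective module is equivalent to finite generation (a generating set for $M$ maps into a finite subcoproduct of $R'^{(\I)}$); conversely an f.g. projective module is a retract of some $R'^n$, hence small and projective. This is \cite[\dots]{Mitchell}-type material and I would not grind through it.

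For (3), the forgetful functor $\F^{H'}:\M^{H'}\to\M_{R'}$ is left adjoint (by the cofree construction $\under\obar{R'}H'$), hence preserves all colimits, in particular coproducts; therefore it preserves small objects. It is also faithful, which lets one transport a splitting: if $\bra P,\alpha\ket\in\M^{H'}$ and $\F^{H'}\bra P,\alpha\ket$ is projective, then an epimorphism in $\M^{H'}$ onto $\bra P,\alpha\ket$ splits after applying $\F^{H'}$, and — using that $\F^{H'}$ is comonadic, hence creates those limits/retracts one needs — the splitting lifts to $\M^{H'}$; concretely, one composes with the coaction to produce an $H'$-comodule retraction. So $\proj(\M^{H'})\subset\M^{H'}_\fgp$. (The inclusion need not be an equality in general, because an f.g. projective $R'$-module underlying a comodule need not be projective \emph{as a comodule}.)

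Part (4) is the crux and the one place the coarseness hypothesis enters. Here $\M^H_\fgp$ is, by Proposition \ref{pro: Gamma} and the discussion of coarse functors, equivalent (via $\bimk$) to $\C$ itself through the comparison functor, and more to the point, by Corollary \ref{cor: F coarsest}/Theorem \ref{thm: duality} the sheaf category for a coarse functor is all of $\Pre$, so $\M^H\simeq\Pre$ monoidally with $\M^H_\fgp$ corresponding to the representables. Given (1), applied with $\C$ Cauchy complete — which is automatic here since $\M^H_\fgp$, being a category of modules-with-extra-structure that are direct summands of free things, splits idempotents — the small projectives of $\Pre\simeq\M^H$ are exactly the representables, i.e. exactly the objects of $\M^H_\fgp$. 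Combined with (3) this gives $\proj(\M^H)=\M^H_\fgp$, and since $\F^H$ preserves small projectives by (3) and reflects them because $\M^H_\fgp$ is by construction the preimage of $\M_R^\fgp=\proj(\M_R)$ under $\F^H$, we are done. The main obstacle I anticipate is arranging the Cauchy-completeness/idempotent-splitting hypothesis needed to invoke (1): one has to verify that the category corresponding to $\M^H_\fgp$ under the equivalence does split idempotents, which follows because $\M^H$ is abelian (as $\bimQ\cong\under\oR H$ is left exact) and $\M^H_\fgp$ is closed under retracts there — a retract of an f.g. projective module is again f.g. projective, so the comodule retract lands back in $\M^H_\fgp$.
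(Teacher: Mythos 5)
Parts (1), (2) and (4) of your proposal are essentially sound and close to the paper's own argument, up to small repairs: in (1) the smallness/finite-factorization step must be applied to a splitting of the canonical epimorphism $\coprod_i YA_i\to P$ (smallness concerns coproducts, not the colimit presentation itself, so "the identity factors through a finite subcoproduct" only makes sense after projectivity has produced a section into the coproduct), and the retract diagram should read $P\rarr{q}YA\rarr{p}P$ with $p\ci q=P$, the idempotent being $q\ci p\in\C(A,A)$; in (2) you quote the classical module-theoretic fact directly where the paper simply specializes (1) to the one-object case; in (4) you run the hard direction of (1) through the equivalence $\M^H\simeq\Pre$ (which obliges you to check idempotent splitting in $\M^H_\fgp$, as you do), whereas the paper needs only the easy direction -- objects of $\M^H_\fgp$ correspond to representables, hence are small projective -- and obtains the reverse inclusion from (3). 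Either way (4) goes through, but note that your (4) still invokes (3) for preservation, so the gap below propagates.

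The genuine gap is in (3). First, "left adjoint, hence preserves coproducts, therefore preserves small objects" is a non sequitur: smallness of $\F^{H'}P$ asks that $\M_{R'}(\F^{H'}P,\under)\cong\M^{H'}(P,\G^{H'}\under)$ preserve coproducts, so the relevant hypothesis is on the \emph{right} adjoint $\G^{H'}=\under\am{R'}H'$, namely that it preserves coproducts (which it does, being a tensor functor) -- exactly the paper's "doubly left adjoint" argument. Preservation of coproducts by $\F^{H'}$ itself proves nothing: the forgetful functor $\M_{\mathbb{Q}}\to\Ab$ preserves coproducts, yet $\mathbb{Q}$ is small projective as a $\mathbb{Q}$-module and is not small in $\Ab$. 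Second, your projectivity argument runs in the wrong direction: you start from "$\F^{H'}\bra P,\alpha\ket$ is projective" and try to lift a splitting to $\M^{H'}$, which is the reflection statement $\M^{H'}_\fgp\subset\proj(\M^{H'})$ -- not what (3) asserts, false in general (as your own closing parenthetical concedes), and the proposed lift ("compose with the coaction") does not produce a comodule retraction, since an $R'$-linear section need not be colinear. What (3) actually requires -- $P$ projective in $\M^{H'}$ implies $\F^{H'}P$ projective in $\M_{R'}$ -- is never argued in your text; it follows from the same adjunction: for a surjection $X\to Y$ in $\M_{R'}$ the map $\G^{H'}X\to\G^{H'}Y$ is epi in $\M^{H'}$, hence $\M_{R'}(\F^{H'}P,X)\cong\M^{H'}(P,\G^{H'}X)\to\M^{H'}(P,\G^{H'}Y)\cong\M_{R'}(\F^{H'}P,Y)$ is onto.
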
 
\begin{proof}
(1) It is a simple consequence of the Yoneda Lemma that representable presheaves are projective. That they are also
small follows from the usual structure of coproducts of abelian groups.
If $P$ is a projective presheaf then it is a summand of a coproduct of representables. If $P$ is also small then
it is a summand in a finite coproduct of representables. If $\C$ is additive then $P$ is a summand of a single representable and if idempotents split in $\C$ then $P$ is representable.

(2) follows from (1) since $\M_{R'}$ is a presheaf category.

(3) Observe that $\F^{H'}$ is doubly left adjoint for any coring hence its right adjoint
$\G^{H'}=\under\am{R'}H$ preserves epimorphisms and coproducts. It follows then from the adjunction isomorphism
$\M_{R'}(\F^{H'}P,X)\cong\M^{H'}(P,\G^{H'}X)$ that if $P$ is small or projective then so is $\F^{H'}P$.

(4) $H$ being coarse means that $\M^H$ is equivalent to the presheaf category $\Pre$ over $\C=\M^H_\fgp$
via $M\mapsto \M^H(\under,M)$. If $M\in\M^H_\fgp$ then this presheaf is representable, i.e., a small projective
object by (1). Therefore $\F^H$ reflects small projectives. Preservation follows from (3).
\end{proof}

Lacking of monoidal structure in this subsection we are using the term `fiber functor' to mean functors which
obey what $\fsh:\C\to\M_R$ does in Definition \ref{def: fiber}.

\begin{pro} \label{pro: coarse}
Let $\C$ be a small additive category with splittings of idempotents and let $F:\C\to\M_R$ be a fiber functor.
Associated to these data we have the left Kan extension functor $\F:\Pre\to\M_R$, the $R$-coring $H=G\am{\C}F$,
the comparison functor $\K:\Pre\to\M^H$, the idempotent monad $\T$ and the Grothendieck topology $\GroT_F$.
The following conditions are equivalent:
\begin{enumerate}
\item $F$ reflects split epimorphisms.
\item $\GroT_F$ is the coarsest Grothendieck topology on $\C$, i.e., $F$ is a coarse fiber functor.
\item $\K$ is an equivalence.
\item $\F$ is comonadic.
\item $\F$ is faithful.
\item $\T$ is isomorphic to the idempotent monad $\mathbf{1}$ on $\Pre$.
\item $M\mapsto\M^H(\under,M):{\M^H_\fgp}^\op\to\Ab$ is an equivalence of $\M^H$ with the category of presheaves over $\M^H_\fgp$, i.e., $H$ is a coarse coring.
\end{enumerate}
\end{pro}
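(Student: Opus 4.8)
The plan is to prove the cycle of equivalences by a circular chain of implications that exploits the adjunctions and the idempotent monad already constructed. The backbone is the observation that $\T=\bimL\bimK$ (here written non-monoidally as $\Ls\Ks$ since we have dropped the monoidal structure), so that $\T\cong\mathbf{1}$ precisely when $\bimK$ is fully faithful, which by Proposition~\ref{pro: bimL} (the monoidal localization statement, applied in the present plain-additive setting) already gives that $\bimL$ is fully faithful; combining this with the fact that $\bimK$ is the reflection of the localization $\bimL$ yields that $\bimK$ is an equivalence. I would first establish the easy equivalences $(3)\Leftrightarrow(4)\Leftrightarrow(5)$: $(3)\Rightarrow(4)$ is trivial since $\F=\F^H\bimK$ and $\F^H$ is comonadic; $(4)\Rightarrow(5)$ because comonadic functors are faithful; and $(5)\Rightarrow(6)$ because $\F$ faithful forces $\bimK$ faithful, hence (being additive between abelian categories, and already left exact by Lemma~\ref{lem: bimK}) $\bimK$ reflects zero objects and kernels, from which one deduces $\nu_U$ is invertible for all $U$ via diagram~(\ref{diag: nu}); then $(6)\Rightarrow(2)$ is immediate from Theorem~\ref{thm: GroT} (every presheaf is a sheaf iff the only covering sieves are maximal), and $(2)\Rightarrow(3)$ follows because every presheaf is a colimit of representables, the representables are sheaves by subcanonicity (they are always sheaves when $\GroT$ is coarsest), so $\She=\Pre$ and $\bimK$ becomes the composite of $\bimF\G_\T$-type equivalence from Lemma~\ref{lem: *} with an identity.

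Next I would connect $(1)$ to the rest. The implication $(2)\Rightarrow(1)$: if every covering sieve is maximal and $t:A\to B$ has $Ft$ split epi, then the sieve generated by $t$ is $F$-epimorphic (since $Ft$ split epi means $Ft$ is epi, and flatness lets a single-term $F$-cover suffice, cf.\ the argument in Lemma~\ref{exa: F-topology}), hence it is a covering sieve, hence maximal, which means $t$ itself is split epi in $\C$. For $(1)\Rightarrow(2)$: take any covering sieve $S$ on $C$; since $F$ factors through the finite-type functor (each $FC$ is f.g.\ projective), the jointly-$F$-epimorphic family $S$ contains a single arrow $e$ with $Fe$ epi, which then splits onto the projective $FC$; condition $(1)$ upgrades this to $e$ being split epi in $\C$, so the maximal sieve is contained in $S$, i.e.\ $S$ is maximal. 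This is essentially the argument already sketched in the paragraph before Corollary~\ref{cor: F coarsest}, so the work here is bookkeeping rather than new ideas.

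Finally I would treat $(7)$. The direction $(3)\Rightarrow(7)$: $\bimK$ an equivalence $\Pre\simeq\M^H$ sends the representable $YC$ (with $C$ ranging over $\M^H_\fgp$) to an object of $\M^H$, and since $\bimk\cong\bimK Y$ (Lemma~\ref{lem: bimK}) and $\bimk$ corestricts to $\C\simeq\M^H_\fgp$ by Proposition~\ref{pro: rep} (applicable because $F$ is a fiber functor), the equivalence $\Pre\simeq\M^H$ identifies the Yoneda-representable presheaves with exactly $\M^H_\fgp$; transporting the statement ``every presheaf over $\C$ is determined by its values on representables'' through this equivalence gives precisely that $M\mapsto\M^H(\under,M):(\M^H_\fgp)^\op\to\Ab$ is an equivalence $\M^H\simeq\widehat{\M^H_\fgp}$. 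Conversely $(7)\Rightarrow(3)$: the composite $\Pre=\widehat{\C}\simeq\widehat{\M^H_\fgp}\simeq\M^H$, where the first equivalence is induced by the category equivalence $\C\simeq\M^H_\fgp$ of Proposition~\ref{pro: rep} and the second is $(7)$, agrees up to isomorphism with $\bimK$ on representables, hence on all of $\Pre$ by cocontinuity, so $\bimK$ is an equivalence. Alternatively one can invoke Lemma~\ref{lem: small proj}(4), whose statement is exactly the content of $(7)$, and whose proof already contains the needed argument; I would cite it to shorten the exposition.

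\textbf{Main obstacle.} The routine implications ($(3)\Leftrightarrow(4)\Leftrightarrow(5)$, $(2)\Leftrightarrow(6)$) are formal. The genuinely delicate step is $(5)\Rightarrow(6)$: deducing from mere faithfulness of $\F$ (equivalently of $\bimK$) that $\nu_U$ is invertible for \emph{all} presheaves $U$, not just representables. The point is that faithfulness of the left-exact additive functor $\bimK$ between abelian categories implies it reflects monomorphisms, hence reflects isomorphisms among the arrows appearing in the equalizer diagram~(\ref{diag: nu}); since $\bimF=\F^\bimQ\bimK$ already \emph{preserves} that equalizer (left exactness), one gets that the dashed comparison arrow into $\bimL\bimK U$ is sent by $\bimK$ to an iso and hence is itself an iso — but turning this into invertibility of $\nu_U$ requires care that $\bimK$ applied to the whole fork is a split/absolute equalizer situation, which is where I expect to spend the most effort. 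If that proves awkward, the fallback is to route $(5)$ to $(1)$ directly: $\F$ faithful $\Rightarrow$ $F=\F Y$ reflects split epis (a split epi $e$ in $\C$ with a chosen section after applying $F$ pulls back along faithfulness), closing the loop through the already-established $(1)\Leftrightarrow(2)$.
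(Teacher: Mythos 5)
Your cycle is essentially the paper's: the arguments you give for $(1)\Leftrightarrow(2)$, $(2)\Rightarrow(3)$, $(3)\Leftrightarrow(4)$, $(4)\Rightarrow(5)$, $(6)\Leftrightarrow(2)$ and $(3)\Leftrightarrow(7)$ all match the paper's proof (the paper proves $(3)\Leftrightarrow(7)$ exactly by observing that the right adjoint $\Ll$ of $\K$ \emph{is} the functor $M\mapsto\M^H(\under,M)$ up to the equivalence $\C\simeq\M^H_\fgp$, which is your argument in compressed form; note only that Lemma \ref{lem: small proj}(4) is not ``exactly the content of (7)'' --- it \emph{assumes} coarseness as a hypothesis, so it cannot be cited in place of this step). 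The one place where your proposal is not yet a proof is exactly the step you flag: closing the loop from $(5)$.

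Your primary route $(5)\Rightarrow(6)$ is repairable, but not by the path you sketch. You do not need to analyse whether $\K$ applied to the fork in (\ref{diag: nu}) is a split or absolute equalizer. The clean argument is: by Proposition \ref{pro: Ls} (resp.\ \ref{pro: bimL}) the counit $\theta$ is invertible, so the triangle identity $\theta\K\ci\K\nu=\K$ shows that $\K\nu_U$ is invertible for \emph{every} presheaf $U$; and a faithful additive functor out of the abelian category $\Pre$ reflects monomorphisms and epimorphisms, hence reflects isomorphisms, so $\F$ faithful $\Rightarrow\K$ faithful $\Rightarrow\nu_U$ invertible $\Rightarrow\T\cong\mathbf{1}$. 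Your fallback $(5)\Rightarrow(1)$, however, fails as phrased: the section of the split epi $Ft$ lives in $\M_R$ and need not lie in the image of $F$, so it cannot be ``pulled back along faithfulness.'' The paper's version of this implication instead argues that faithfulness of $\F$ makes it reflect epimorphisms, so $Yt$ is an epimorphism onto the representable $YC$, which is projective in $\Pre$; hence $Yt$ splits in $\Pre$, and by the Yoneda Lemma the splitting is of the form $Ys$, giving $t\ci s=C$. Either the corrected $(5)\Rightarrow(6)$ above or this projectivity-plus-Yoneda argument closes your cycle; as submitted, the closing step is a genuine gap.
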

\begin{proof}
$(1)\Rightarrow (2)$ Assume $S$ is a covering sieve on $C$ w.r.t. the topology $\GroT_F$, i.e., $\{Fs|s\in S\}$ is a
jointly epimorphic family of arrows to $FC$. Since $FC$ is finitely generated, there exists a finite subset$\{s_1,\dots, s_n\}\subset S$ and elements $x_i\in F(\dom s_i)$ such that $\{Fs_i x_i\}$ is a system of $R$-generators for $FC$. Since $\C$ is additive, we can construct a direct sum $B$ of the $\dom s_i$ and 
an arrow $t\in SB$ such that $Ft$ is an epimorphism. Using that $FC$ is projective we conclude that $Ft$, hence also $t$, splits therefore $S$ contains the identity $C$.

$(2)\Rightarrow (1)$ Let $t\in\C(B,C)$ be such that $Ft$ is split epi. Let $S$ be the sieve on $C$ generated by the
arrow $t$. Since $Ft$ alone is already epic, $S$ is a covering sieve on $C$. By assumption $S$ must contain the identity arrow therefore $t$ is split epi.

$(2)\Rightarrow(3)$ If every presheaf is a sheaf then $\nu$, together with $\theta$, is an isomorphism, hence $\K\dashv\Ll$ is an adjoint equivalence.

$(3)\Leftrightarrow(4)$ By definition of comonadicity.

$(3)\Rightarrow(6)$ The unit $\nu$ of $\T$ is easily seen to provide a monad isomorphism $\mathbf{1}\to\T:\Pre\to\Pre$.

$(6)\Rightarrow(2)$ If $\varphi:\mathbf{1}\to\T$ is a monad isomorphism then the unit preserving property
of monad morphisms implies that $\varphi_U=\nu_U$. Thus $\nu$ is invertible and every presheaf is a sheaf.

$(3)\Leftrightarrow(7)$ By the Representation Theorem $\K$ restricts to an equivalence of  the representables with
$\M^H_\fgp$ and $M\mapsto\M^H(\under,M)$ is the right adjoint $\Ll$ of $\K$.


$(4)\Rightarrow(5)$ Every comonadic functor is faithful.

$(5)\Rightarrow (1)$ Let $t\in\C(B,C)$ be such that $Ft=\F Yt$ is split epi. By faithfulness of $\F$ the arrow $Yt$ is epi and its target $YC$ being projective it is also a split epi. By the Yoneda Lemma the splitting morphism $YC\to YB$
must be $Ys$ for a unique $s\in\C(C,B)$ which is then a splitting morphism for $t$. 
\end{proof}

\subsection{An Ulbrich Theorem for Hopf algebroids}

If $\C$ is autonomous, i.e., it has both left and right dual objects, then for any pair of fiber functors $F$, $F'$
the bicomodule algebra $A=A_{F,F'}=G\am{\C}F'$ is a left $H$-Galois extension of $R'$ and a right $H'$-Galois extension of $L$ by Propositions \ref{pro: A H Gal} and \ref{pro: A H' Gal}. 
Also, by flatness of $F'$, $A$ is the filtered colimit of f.g. projective, hence flat $R$-modules $_RG$, hence $A_L\equiv\,_RA$ is flat. Similarly, since $G$ is flat as a functor on $\C^\op$ by Proposition \ref{pro: G fiber}, 
$A_{R'}$ is flat. But $F'C$ is f.g. projective also as left $R'$-module therefore $_{R'}A$ is flat, too. 

If $F$ is a coarse fiber functor then $G\am{\C}\under:\Add(\C,\Ab)\to \Ab$ is faithful by Proposition
\ref{pro: coarse} (5). Since $A\am{R'}\under:\,_{R'}\M\to \Ab$ is the composite of $G\am{\C}\under$ with the
faithful functor $X\mapsto F'\am{R'}X$, in this case $A_{R'}$ is faithfully flat.

\begin{cor}\label{cor: half Ulbrich}
If $F$ is a coarse and $F'$ is an arbitrary fiber functor on the small additive Cauchy complete autonomous 
monoidal category $\C$ then $A_{R'}$ is a faithfully flat left $H$-Galois extension 
such that $_RA$ is flat.
\end{cor}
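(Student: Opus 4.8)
The plan is to assemble the ingredients already laid out in the discussion preceding the statement, organizing them into four steps. First, since $\C$ is in particular left autonomous, Proposition~\ref{pro: A H Gal} applies directly to the left $H$-comodule algebra $A=G\am{\C}F'$ and yields at once that $A$ is a left $H$-Galois extension of $R'$ via the canonical $s_A\colon R'\to A$. This part of the claim requires no further argument.

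Next I would prove the two flatness assertions. For $\,_RA$: using the canonical colimit presentation $F'=\colim\bigl((\Elt F')^\op\to\C^\op\to\Add(\C,\Ab)\bigr)$ and the fact that $G\am{\C}\under$ preserves colimits, one exhibits $A$ as the colimit over $(\Elt F')^\op$ of the bimodules $GC=(FC)^*$; since $F'$ is flat this index category is filtered, and each $GC$ is finitely generated projective — hence flat — as a \emph{left} $R$-module, because autonomy of $\C$ forces $FC$ to have a right dual in $\,_R\M_R$. A filtered colimit in $\,_R\M$ of flat left $R$-modules is flat, so $\,_RA$ is flat. For $A_{R'}$: the key identity is $A\am{R'}X\cong G\am{\C}(F'\am{R'}X)$, natural in the left $R'$-module $X$, which holds because $\under\am{R'}X$ commutes with the defining coend of $A$ and the right $R'$-action is carried entirely by the $F'$-factor. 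Thus $A\am{R'}\under$ is the composite of $F'\am{R'}\under\colon{}_{R'}\M\to\Add(\C,\Ab)$, exact because every $F'C$ is finitely generated projective, hence flat, as a right $R'$-module, with $G\am{\C}\under\colon\Add(\C,\Ab)\to\Ab$, exact because $G$ is flat by Proposition~\ref{pro: G fiber}. Hence $A_{R'}$ is flat.

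It remains to upgrade flatness of $A_{R'}$ to faithful flatness under the extra hypothesis that $F$ is coarse. By Proposition~\ref{pro: G fiber}, $F$ coarse on $\C$ forces $G$ to be coarse on $\C^\op$, whence Proposition~\ref{pro: coarse}, $(2)\Rightarrow(5)$, applied to the fiber functor $G$ on $\C^\op$, shows that its left Kan extension $G\am{\C}\under$ is faithful. The other factor $F'\am{R'}\under$ is faithful as well: evaluating at the monoidal unit gives $F'I\am{R'}X\cong X$, since $F'I\cong{}_{R'}R'_{R'}$ is the base ring of $F'$. Therefore the composite $A\am{R'}\under$ is faithful and exact, which is precisely the statement that $A$ is faithfully flat as a right $R'$-module.

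The difficulty here is bookkeeping rather than conceptual: one must keep track of which of the several module structures on $A$ is in play — recall $L=R^\op$, so that a right $L$-module is a left $R$-module — verify that the transition maps in the filtered colimit presenting $\,_RA$ are genuinely left $R$-linear (they are $L$-$L$-bilinear since $G$ is $\,_L\M_L$-valued), and confirm the commutation $A\am{R'}X\cong G\am{\C}(F'\am{R'}X)$. Everything else is an immediate appeal to Propositions~\ref{pro: A H Gal}, \ref{pro: G fiber} and~\ref{pro: coarse}.
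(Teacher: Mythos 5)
Your argument is correct and follows essentially the same route as the paper, which proves this corollary in the two paragraphs preceding its statement: the Galois property from Proposition \ref{pro: A H Gal}, flatness of $_RA$ from the filtered-colimit presentation of $A$ by the f.g.\ projective left $R$-modules $GC$, and faithful flatness of $A_{R'}$ by factoring $A\am{R'}\under$ through the faithful $X\mapsto F'\am{R'}X$ and the (by coarseness of $F$, hence of $G$ on $\C^\op$, via Propositions \ref{pro: G fiber} and \ref{pro: coarse}) faithful exact $G\am{\C}\under$. One small slip: Proposition \ref{pro: A H Gal} is stated under \emph{right} autonomy of $\C$, not left, but since $\C$ is assumed autonomous this is harmless.
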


This looks like as one half of an Ulbrich's \cite[Theorem 1.2]{Ulbrich89} since $A$ is the value at $H$ of the monoidal functor 
\[
\A:=\under\coten{H}A\cong (\under\coten{H}G)\am{C}F'=\F'\bimL:\ \M^H\to\,_{R'}\M_{R'}
\]
where $\bimL$ is an equivalence since $F$ is coarse. This functor is always exact but cannot be expected 
to be faithful unless $F'$ is also coarse. 
Certainly there are other properties of $A$ that can be shown
to hold for all fiber functor $F'$ but we don't know yet which of them imply the converse of the above Corollary.
Let us proceed gradually. 

The proof of the following Proposition is an adaptation of the proofs of  \cite[Theorem 1.2]{Ulbrich89} and
\cite[Theorem 5.6]{BB} .
\begin{pro} \label{pro: A}
Let $H$ be a right $R$-bialgebroid
and $A_{R'}$ a faithfully flat left $H$-Galois extension of a ring $R'$ such that $_RA$ is flat.  
Then the functor $\A=\under\coten{H}A:\M^H\to\,_{R'}\M_{R'}$ is a colimit preserving left exact strong
monoidal functor.
\end{pro}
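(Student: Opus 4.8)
The plan is to establish the three assertions — colimit preservation, left exactness, and strong monoidality — essentially independently, exploiting that $\A=\under\coten{H}A$ is a cotensor product over the coring $H$ against the bicomodule algebra $A$.

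First I would address \textbf{colimit preservation}. The cotensor product $M\coten{H}A$ is defined as the equalizer of the two maps $M\obar{R}A\rightrightarrows M\obar{R}H\obar{R}A$ given by $\rho_M\obar{R}A$ and $M\obar{R}\lambda_A$. Since $_RA$ is flat, the functor $\under\obar{R}A$ is exact, hence preserves equalizers; and $\under\obar{R}A$ obviously preserves all colimits, being a left adjoint (to $\operatorname{Hom}$). Colimits in $\M^H$ are computed in $\M_R$ because the forgetful functor $\F^H$ is left adjoint (it is even comonadic), so they commute with the equalizer defining the cotensor product. Concretely: given a diagram $i\mapsto M_i$ in $\M^H$ with colimit $M$, one gets $(\colim_i M_i)\coten{H}A$ as the equalizer of a pair of maps obtained by applying $\colim_i$ termwise, and since filtered — indeed arbitrary — colimits of abelian groups commute with the relevant finite limits (equalizers) here only after using flatness of $_RA$ to pull $\colim$ through $\obar{R}A$, one concludes $\A(\colim_i M_i)\cong\colim_i\A(M_i)$. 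I would write this as a short diagram chase.

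Next, \textbf{left exactness}. Left exactness of $\A$ means it preserves finite limits, equivalently kernels (it is clearly additive). A kernel in $\M^H$ is computed in $\M_R$ since $\F^H$ is exact (as $_RH$ flat makes $\M^H$ abelian with exact forgetful functor, cf. the discussion before Lemma~\ref{lem: von Neumann}). Given $0\to K\to M\to N$ exact in $\M^H$, apply $\under\obar{R}A$, which is exact by flatness of $_RA$, to get exactness of $K\obar{R}A\to M\obar{R}A\to N\obar{R}A$ and of the rows $K\obar{R}H\obar{R}A\to M\obar{R}H\obar{R}A\to N\obar{R}H\obar{R}A$; then $\A K$, $\A M$, $\A N$ are the equalizers of the respective vertical pairs, and since a kernel of a map of equalizer diagrams is the equalizer of the kernels (a $3\times3$-type lemma, valid because everything is exact), one gets $\A K=\ker(\A M\to\A N)$. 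So $\A$ is left exact.

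Finally, \textbf{strong monoidality}. The monoidal unit of $\M^H$ is $R$ with its canonical $H$-coaction, and $R\coten{H}A\cong A^{\co H}\cong R'$ by the Galois/faithful-flatness hypothesis (this is exactly the identification of the coinvariants used in Proposition~\ref{pro: A H' Gal}); this gives $\A_0$. For the tensorator, one needs a natural isomorphism $(M\oR N)\coten{H}A\iso(M\coten{H}A)\oR(N\coten{H}A)$. The hard part will be this: one uses the Galois isomorphism $\beta_A:A\oL A\iso A\obar{R'}H$, equivalently its ``translation map'' $H\to A\oL A$, together with faithful flatness of $A_{R'}$, to identify $(M\oR N)\coten{H}A$ with $(M\coten{H}A)\obar{R'}(N\coten{H}A)$ — this is the standard computation showing a faithfully flat Galois extension yields a comodule-category-to-bimodule-category strong monoidal functor (as in Ulbrich \cite{Ulbrich89}, Schauenburg \cite{Schauenburg: hbg}, and B\"ohm--Brzezi\'nski \cite{BB}, whose proofs are being adapted per the remark preceding the Proposition). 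One then checks the coherence hexagon and the two unit triangles, which are routine once the isomorphism is constructed; I would omit the diagram verifications. The genuine obstacle is organizing the Galois-translation-map computation so that flatness of both $_RA$ and $_{R'}A$ and faithful flatness of $A_{R'}$ are each used in the right place; everything else is formal.
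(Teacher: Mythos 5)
Your left-exactness argument, and the identification $R\coten{H}A\cong A^{\co H}\cong R'$ giving $\A_0$, are fine, and your plan for the tensorator (Galois translation map plus faithfully flat descent, adapting Ulbrich, Schauenburg and B\"ohm--Brzezi\'nski) is the same route the paper takes, though you leave the key computation as a sketch and you quote the wrong Galois map: the hypothesis is that $A$ is a \emph{left} $H$-Galois extension of $R'$, so the relevant bijection is $\gamma_A:A\am{R'}A\iso H\obar{R}A$, not $\beta_A:A\oL A\to A\obar{R'}H$, which is the right-comodule Galois map of Proposition \ref{pro: A H' Gal}. Also note that you invoke flatness of $_RH$ to make $\M^H$ abelian with exact forgetful functor; that is not among the hypotheses of this Proposition.

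The genuine gap is in colimit preservation. You assert that arbitrary colimits of abelian groups commute with the equalizer defining the cotensor product; they do not --- coequalizers do not commute with equalizers, and indeed $\under\coten{H}A$ is \emph{not} right exact for a general comodule $A$ with $_RA$ flat (right exactness amounts to coflatness of $A$), so an argument using only flatness of $_RA$ cannot give all colimits. Flatness of $_RA$ only shows that $\A$ is a finite limit of cocontinuous functors, hence preserves filtered colimits and coproducts; preservation of cokernels and coequalizers is exactly where faithful flatness of $A_{R'}$ and the Galois bijection must enter. The paper's mechanism is the natural isomorphism $\Gamma_M:A\am{R'}(M\coten{H}A)\iso M\oR A$, $a\am{R'}(m\coten{H}b)\mapsto m\oR ab$, obtained by pulling the (flat) functor $A\am{R'}\under$ through the cotensor equalizer and then applying $M\coten{H}\gamma_A$; since $\under\oR A$ preserves and the faithfully flat $A\am{R'}\under$ reflects monomorphisms and epimorphisms, $\A$ is exact, and exactness together with preservation of coproducts and filtered colimits yields preservation of all colimits. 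The same $\Gamma$ is what makes your monoidality sketch precise: the square built from $\Gamma_M$, $\Gamma_N$ and $\Gamma_{M\oR N}$ identifies $A\am{R'}\A_{M,N}$ with an isomorphism, and faithful flatness of $A_{R'}$ lets you cancel $A\am{R'}\under$. So you should replace your colimit paragraph by this descent argument (or at least prove right exactness separately); as written, the commutation step is false and full colimit preservation is unsupported.
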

\begin{proof}
Since $\A$ is the limit of a finite diagram of colimit preserving functors $\under\oR A$, $\under\obar{R}H\obar{R}A$, $\A$ preserves filtered colimits and therefore coproducts, too.

Consider the following isomorphism 
\begin{equation} \label{Gamma}
\Gamma_M:= \left(A\am{R'}(M\coten{H}A)\iso M\coten{H}(A\am{R'}A)\longrarr{M\coten{H}\gamma_A}
M\coten{H}(H\obar{R}A)\iso M\oR A\right)
\end{equation}
in $\_A\M_{R'}$ which is
natural in $M\in\M^H$ and maps $a\am{R'}(m\coten{} b)\mapsto m\oR ab$. Since $\under\oR A$ preserves and $A\am{R'}\under$ reflects both epimorphisms and monomorphisms, $\A$ is exact. 
This proves that $\A$ preserves all colimits and it is left exact.

In order to show strongness of the monoidal structure
\begin{align*}
\A_{M,N}:(M\coten{H}A)\am{R'}(N\coten{H}A)&\to (M\oR N)\coten{H} A\\
(m\coten{} a)\am{R'}(n\coten{}b)&\mapsto(m\oR n)\coten{}ab\\
\A_0:R'&\to R\coten{H}A\\
r'&\mapsto 1_R\coten{} s_A(r')
\end{align*}
look at the commutative diagram
\[
\begin{CD}
A\am{R'}(M\coten{H}A)\am{R'}(N\coten{H}A)@>\Gamma_M\am{R'}(N\coten{H}A)>>M\oR A\am{R'}(N\coten{H}A)\\
@V{A\am{R'}\A_{M,N}}VV @VV{M\oR\Gamma_N}V\\
A\am{R'}((M\oR N)\coten{H}A)@>\Gamma_{M\oR N}>>M\oR N\oR A
\end{CD}
\]
and use that $A\am{R'}\under$ reflects isomorphisms. $\A_0$ is invertible because the defining equalizer of
$R\coten{H}A$ exhibits it as the subobject $A^{\co H}\subset A$ which is represented by $s_A:R'\into A$ by assumption. 
\end{proof}

\begin{lem} \label{lem: AA left adj}
If $H$ is coarse then $\A:\M^H\to\,_{R'}\M_{R'}$ is left adjoint.
\end{lem}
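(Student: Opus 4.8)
The plan is to read off the statement from the factorization $\A\cong\bimF'\bimL:\M^H\to\,_{R'}\M_{R'}$ recorded just above this lemma, where $\bimL:\M^H\to\Pre$ is the right adjoint of the comparison functor $\bimK$ attached to the (coarse) fiber functor $F$, and $\bimF':\Pre\to\,_{R'}\M_{R'}$ is the strong part of the left Kan extension of $F'$. Since a composite of left adjoints is a left adjoint, it suffices to check that each of the two composands is a left adjoint.

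First I would note that $\bimF'$ is a left adjoint: this is Lemma \ref{lem: left adjoint} applied to $F'$, with right adjoint $\bimG'=\,_{R'}\M_{R'}(\bimf'\under,-)$. The only substantive point, and the only place where coarseness of $F$ enters, is that $\bimL$ is also a left adjoint. Here I would argue as follows: since $F$ is coarse, every presheaf is a $\T$-sheaf, so by Proposition \ref{pro: T} together with Lemma \ref{lem: *} the comparison functor $\bimK:\Pre\to\M^H$ is an equivalence of categories; as $\bimK\dashv\bimL$ and the counit $\bimtheta$ is invertible by Proposition \ref{pro: bimL}, the unit $\bimnu$ is invertible too, so this adjunction is an adjoint equivalence. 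Consequently $\bimL\dashv\bimK$ as well, and in particular $\bimL$ is a left adjoint.

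It then remains only to compose the two adjunctions, giving $\A\cong\bimF'\bimL$ a right adjoint $\bimK\bimG':\,_{R'}\M_{R'}\to\M^H$. A heavier alternative route, if one wishes to avoid the factorization, would invoke Proposition \ref{pro: A}: under the coarseness hypothesis Corollary \ref{cor: half Ulbrich} supplies the flatness assumptions needed there, so $\A$ preserves all colimits and is left exact; since $\M^H\simeq\Pre$ is a Grothendieck category, hence locally presentable, the adjoint functor theorem then produces a right adjoint. I do not expect any genuine obstacle; the one subtlety worth stating explicitly is that $\bimL$, defined a priori only as the right adjoint of $\bimK$, becomes a left adjoint precisely because $F$ is coarse.
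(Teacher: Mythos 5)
Your main route does not apply in the setting of this lemma. The factorization $\A\cong\F'\bimL$ you start from is only available in the situation of Corollary \ref{cor: half Ulbrich}, i.e.\ when $A$ is of the form $G\am{\C}F'$ for a second, already given fiber functor $F'$. Lemma \ref{lem: AA left adj}, however, is stated under the hypotheses of Proposition \ref{pro: A}: $A$ is an abstractly given faithfully flat left $H$-Galois extension with $_RA$ flat, and no fiber functor $F'$ is in sight. The lemma is invoked precisely in the (\textbf{Gal}$\to$\textbf{Fib}) half of Theorem \ref{thm: Ulbrich}, where $F'$ is to be \emph{constructed} as the restriction of $\A$ to $\M^H_\fgp$, and where left adjointness of $\A$ (equivalently, $\A$ being the left Kan extension of that restriction) is an ingredient of the construction. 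Presupposing $\A\cong\F'\bimL$ here is therefore circular. (Your reading of the hypothesis as coarseness of $F$ rather than of $H$ is harmless by Proposition \ref{pro: coarse}, but the only data available are the coarse bialgebroid $H$ and the Galois extension $A$.)

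Your fallback route is essentially the paper's argument and repairs the gap. Coarseness identifies $\M^H$ with the presheaf category $\Pre$ over $\C=\M^H_\fgp$ via $M\mapsto\M^H(\under,M)$; Proposition \ref{pro: A} --- whose flatness hypotheses are the standing assumptions on $A$ in this subsection, not consequences of Corollary \ref{cor: half Ulbrich} --- gives that $\A$ preserves all colimits; and one then concludes either by your adjoint functor theorem for locally presentable categories, or, as the paper does, by using cocontinuity of $\A$ and density of $\C$ in $\M^H$ to exhibit $\A$ as the left Kan extension $\under\am{\C}(\under\coten{H}A)$ of its restriction to $\C$, so that Lemma \ref{lem: FF left adj} applies. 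Either version is fine; just delete the first argument and keep only this one.
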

 \begin{proof}
Since $\M^H$ is the presheaf category on $\C=\M^H_\fgp$, $\A$ will be shown to be left adjoint by Lemma \ref{lem: FF left adj} once we can show that $\A$ is the left Kan extension of its restriction to $\C$. 
Using that $H$ is coarse, so $M\mapsto\M^H(\under,M)$ is an equivalence $\M^H\to\Pre$, and that $\C$ is dense in $\M^H$ we can write any object $M$ as a colimit $\colim_i C_i$ of $C_i\in\C$ and compute the left Kan extension 
on $M$ as 
\[
\M^H(\under,\colim_i C_i)\am{\C}(\under\coten{H}A)\cong\colim_i \left(\M^H(\under,C_i)\am{\C}(\under\coten{H}A)\right)
\cong \colim_i(C_i\coten{H}A)
\]
which shows that $\A$ is the left Kan extension of its restriction to $\C$ iff $\A$ is cocontinuous.
But we have seen in the Proposition above that $\A$ is cocontinuous so the proof is complete.
\end{proof}

\begin{lem} \label{lem: F' finite}
In addition to the assumption made in Proposition \ref{pro: A} and Lemma \ref{lem: AA left adj} we assume that the
Galois map $\beta_H:H\oL H\to H\obar{R}H$ is invertible.
Then the functor $\A:\M^H\to\,_{R'}\M_{R'}$ satisfies the following finiteness condition: $\A M$ is f.g. projective as right $R'$-module provided the object $M\in\M^H$ is f.g. projective as right $R$-module.
\end{lem}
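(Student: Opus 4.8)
The plan is to reduce the claim to two facts: that $\A$ is strong monoidal, which is Proposition~\ref{pro: A}, and that the invertibility of $\beta_H$ forces $M$ to have a left dual in $(\M^H,\oR)$. Recall that an $R$-$R$-bimodule is left dualizable in $({}_R\M_R,\oR)$ precisely when it is finitely generated projective as a right $R$-module — with left dual $\M_R(\under,R)$ carrying its canonical bimodule structure — and likewise over $R'$. So the content of the Lemma is exactly that $\A$ carries the comodule $M$, which is left dualizable \emph{as an object of $\M^H$}, to a left dualizable object of $({}_{R'}\M_{R'},\am{R'})$, from which the stated finiteness is read off.

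First I would establish that a right $\xover{R}$-Hopf algebra, i.e. a right $R$-bialgebroid $H$ for which $\beta_H\colon H\oL H\to H\obar{R}H$ is bijective, has the property that every right $H$-comodule $M$ whose underlying bimodule is finitely generated projective as a right $R$-module admits a left dual $M^*$ in $(\M^H,\oR)$. Explicitly, $M^*=\M_R(M,R)$ with its canonical $R$-$R$-bimodule structure, equipped with the right $H$-coaction built from the translation map $H\to H\oL H$, $h\mapsto\beta_H^{-1}(1_H\obar{R}h)$; one then checks, using the axioms of a right $\xover{R}$-Hopf algebra together with a dual basis of $M$ over $R$, that the evaluation $\ev_M\colon M^*\oR M\to R$ and coevaluation $\db_M\colon R\to M\oR M^*$ of the left-dual datum in ${}_R\M_R$ are $H$-colinear. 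For $M=H$ this is, in essence, the reverse reading of the part of \cite[Theorem 2.2.4]{Phung} that derives bijectivity of $\beta_H$ from left autonomy of $\C$; see also \cite{Schauenburg: ddqg}. The general finitely-generated-projective case is no different.

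Next, since $\A\colon\M^H\to{}_{R'}\M_{R'}$ is strong monoidal by Proposition~\ref{pro: A}, and strong monoidal functors preserve left duals, $\A M^*$ is a left dual of $\A M$ in $({}_{R'}\M_{R'},\am{R'})$: its evaluation is $\A_0^{-1}\ci\A(\ev_M)\ci\A_{M^*,M}$ and its coevaluation $\A_{M,M^*}^{-1}\ci\A(\db_M)\ci\A_0$, where $\A_0\colon R'\iso R\coten{H}A$ is the (invertible) unit constraint. Hence $\A M$ is left dualizable over $R'$, that is, finitely generated projective as a right $R'$-module, which is the assertion. Only the strong monoidality of $\A$ enters here; the remaining hypotheses drawn from Corollary~\ref{cor: half Ulbrich} and Lemma~\ref{lem: AA left adj} serve merely to make $\A$ available with the properties asserted in Proposition~\ref{pro: A}.

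The main obstacle is the first step — producing the $H$-coaction on $M^*$ and verifying the colinearity of $\ev_M$ and $\db_M$. This is the $\xover{R}$-bialgebroid analogue of the classical fact that finitely generated projective comodules over a Hopf algebra with bijective antipode are rigid; the argument is routine but demands careful bookkeeping of the four $R$-actions on $H$ and of the source and target maps, and in practice one would quote it from \cite{Schauenburg: ddqg} or extract it from the computations in the proof of \cite[Theorem 2.2.4]{Phung}. I note in passing that the naive module-theoretic route — using the natural isomorphism $\Gamma_M$ of (\ref{Gamma}) together with faithful flatness of $A_{R'}$ — recovers flatness of $\A M$ over $R'$ but not its finite generation, which is precisely why one must pass through duality.
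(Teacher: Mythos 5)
Your proof is correct and takes essentially the same route as the paper: strong monoidality of $\A$ (Proposition \ref{pro: A}) gives preservation of dualizable objects, and invertibility of $\beta_H$ gives that the forgetful functor $\M^H\to{}_R\M_R$ reflects them, so f.g. projectivity of $\A M$ over $R'$ follows. The only difference is that the paper simply quotes the reflection statement as \cite[Proposition 1.8.2]{Phung}, whereas you sketch its proof via the translation map $h\mapsto\beta_H^{-1}(1_H\obar{R}h)$ — which is exactly the content of that cited result.
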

\begin{proof}
By Proposition \ref{pro: A} $\A$ is strong monoidal, in particular it preserves right dual objects. 
By a non-trivial result of \Phung \cite[Proposition 1.8.2]{Phung} the forgetful functor $\M^H\to \,_R\M_R$
reflects right dual objects. 
\end{proof}

\begin{cor}
Let $H$ be a coarse $R$-bialgebroid such that $\beta_H$ is invertible and let $A_{R'}$ be a faithfully flat left
$H$-Galois extension of a ring $R'$ such that $_RA$ is flat. Then the functor $\A:=\under\coten{H}A:\M^H\to \,_{R'}\M_{R'}$ factors through the forgetful
functor $\F^{H'}:\M^{R'}\to\,_{R'}\M_{R'}$ of an $R'$-bialgebroid $H'$ via the reflection $\K'$
of a monoidal localization $\Ll':\M^{H'}\to\M^H$.
\end{cor}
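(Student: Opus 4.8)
The plan is to put this corollary in the same position relative to $\A=\under\coten{H}A$ as the main body of the paper puts the comparison functor $\bimK$ relative to a fiber functor: first establish that $\A$ is a strong monoidal left adjoint with the required finiteness and exactness, then apply the Eilenberg--Moore / Kan extension machinery of Sections \ref{s: FG}--\ref{s: T} verbatim. Concretely, by Proposition \ref{pro: A} the functor $\A$ is a colimit-preserving, left exact, strong monoidal functor $\M^H\to\,_{R'}\M_{R'}$; by Lemma \ref{lem: AA left adj} it is a left adjoint (this is where coarseness of $H$ is used, so that $\M^H\simeq\Pre$ is a presheaf category over $\C=\M^H_\fgp$ and Lemma \ref{lem: FF left adj} applies); and by Lemma \ref{lem: F' finite} it sends objects of $\M^H_\fgp$ to $R'$-modules that are f.g. projective on the right (this is where invertibility of $\beta_H$ is used, via \Phung's reflection-of-duals result). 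So the input data for the construction is precisely a strong monoidal, left exact, left adjoint functor with values in the f.g.\ projective bimodules, restricted from the category of presheaves $\M^H\simeq\Pre$.

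Next I would run the construction of Section \ref{s: FG} on this functor: set $\bimF':=\A$ (or rather its normal part after extracting the base ring, which here is already $R'=\End\A$ acting the evident way, since $\A_0$ is invertible by Proposition \ref{pro: A}, so $\A$ is already normal). Corollary \ref{cor: bimQ} produces a left exact monoidal comonad $\bimQ'=\A\bimG'$ on $_{R'}\M_{R'}$, and since $\A$ also satisfies the finiteness condition of Lemma \ref{lem: F' finite}, Proposition \ref{pro: finfun - H} upgrades this comonad to a right $R'$-bialgebroid $H'$ with $\M^{\bimQ'}\simeq\M^{H'}$ as monoidal categories, and with $_{R'}H'$ flat. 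Proposition \ref{pro: bimL} then gives the comparison functor $\bimK':\M^H\to\M^{H'}$ together with its right adjoint $\Ll':\M^{H'}\to\M^H$, and states exactly that $\Ll'$ is fully faithful with left exact left adjoint, i.e.\ a monoidal localization, and that $\bimK'$ is strong opmonoidal left exact. The factorization $\A=\F^{H'}\bimK'$ is part of Lemma \ref{lem: bimK} applied in this setting. This gives everything the corollary asserts.

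The one point that needs genuine care, rather than citation, is the very first step: checking that $\A$ really is of the form to which the machinery applies, because the earlier sections are phrased for functors extended along a Yoneda embedding $Y:\C\to\Pre$, whereas here the domain is $\M^H$ which is only \emph{equivalent} to $\Pre$. The fix is to transport along the equivalence $\M^H\simeq\Pre$ of Proposition \ref{pro: coarse}(7): the composite $\C=\M^H_\fgp\hookrightarrow\M^H\xrightarrow{\A}\,_{R'}\M_{R'}$ is a flat (being the restriction of a left exact functor to a category with the relevant finite limits, or directly from $A_{R'}$ flat and $_RA$ flat as in Corollary \ref{cor: half Ulbrich}), essentially strong monoidal functor whose f.g.\ projectivity is Lemma \ref{lem: F' finite}; and $\A$ is its left Kan extension by the computation in the proof of Lemma \ref{lem: AA left adj}. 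So $\A$ literally \emph{is} the $\bimF'$ of Proposition \ref{pro: T} for this restricted fiber functor, and nothing new is needed.

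The hard part will be none of the above individually --- each is a quotation --- but rather the bookkeeping of making sure the monoidal structures match up: that the strong monoidal structure on $\A$ from Proposition \ref{pro: A} agrees, under $\M^H\simeq\Pre$, with the Day convolution structure on $\Pre$ for which the earlier results are stated, so that $\bimK'$ is monoidal for the \emph{same} structure that makes $\Ll'$ a monoidal localization. This is where I would spend the actual words of the proof; everything else I would dispatch with ``by Proposition \ref{pro: A}'', ``by Lemma \ref{lem: AA left adj}'', ``by Lemma \ref{lem: F' finite}'', ``by Proposition \ref{pro: finfun - H}'' and ``by Proposition \ref{pro: bimL}'' in sequence.
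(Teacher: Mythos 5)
Your proposal is correct and follows exactly the route the paper intends: the corollary is stated without proof precisely because it is the assembly of Proposition \ref{pro: A}, Lemma \ref{lem: AA left adj} and Lemma \ref{lem: F' finite} (identifying $\M^H\simeq\Pre$ over $\C=\M^H_\fgp$ via coarseness, so that $\A$ is the Kan extension of its restriction $F'$) fed into the machinery of Corollary \ref{cor: bimQ}, Proposition \ref{pro: finfun - H}, Lemma \ref{lem: bimK} and Proposition \ref{pro: bimL}. Your extra care about transporting the Day convolution structure along the strong monoidal equivalence $\bimK$ is sound but not a point the paper treats as problematic, since that equivalence is itself monoidal.
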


We shall denote by $F'$ the restriction of $\A$ to $\C=\M^H_\fgp$ and our purpose is to show that $F'$ is a fiber 
functor. 
In order to show that it is faithful we need $A/L$ to be proper, see Lemma \ref{lem: A/L proper}.

\begin{lem} \label{lem: F' faithful}
Let $H$ be a coarse $R$-bialgebroid such that $\beta_H$ is invertible and let $A_{R'}$ be a faithfully flat left
$H$-Galois extension of a ring $R'$ such that $_RA\equiv A_L$ is flat and assume that $A/L$ is a proper ring extension. Then $F':\C\to\,_{R'}\M_{R'}$ is faithful.
\end{lem}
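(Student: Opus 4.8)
The plan is to show that the functor $F'=\A|_\C$ is faithful by exploiting the fact that $\A=\under\coten{H}A$ has a left adjoint (Lemma \ref{lem: AA left adj}), together with the Galois isomorphism $\Gamma$ of (\ref{Gamma}) and faithful flatness of $A_{R'}$. The key observation is that on an object $M\in\M^H$ the natural isomorphism $\Gamma_M:A\am{R'}\A M\iso M\oR A$ of $_A\M_{R'}$ identifies the functor $A\am{R'}\A(\under)$ with $\under\oR A$. Since $A_L\equiv\,_RA$ is flat, $\under\oR A:\M^H\to\Ab$ is exact; to get faithfulness of $F'$ it will suffice to see that $\under\oR A$ is faithful and that $A\am{R'}\under:\,_{R'}\M\to\Ab$ reflects nonzero morphisms (which follows from faithful flatness of $A_{R'}$). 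So the structure is: faithfulness of $\under\oR A$ on $\M^H$ $\Rightarrow$ faithfulness of $A\am{R'}\A(\under)$ $\Rightarrow$ faithfulness of $\A$ $\Rightarrow$ faithfulness of $F'$.

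First I would verify that $\under\oR A:\M^H\to\Ab$ is faithful. Let $f:M\to N$ be a nonzero morphism in $\M^H$. Because $H$ is coarse, $\M^H$ is the presheaf category on $\C=\M^H_\fgp$, and $\C$ is a generator; hence there is an object $C\in\C$ and a morphism $g:C\to M$ with $f\ci g\neq 0$. Since $_RC$ is f.g.\ projective (here is where autonomy, via $\C$ having right duals, enters: $FC$ and hence $C$ as an $R$-module is f.g.\ projective), the evaluation $\M^H(C,M)\oR C\to M$ lets us reduce to checking that $f\ci g\neq 0$ is detected after applying $\under\oR A$. Concretely, $C\oR A\neq 0$ because $A/L$ is proper: the composite $L\into A$ gives $\,^*C\oR L\into\,^*C\oR A$, and pairing against $\ev_C$ exhibits a nonzero element; then $f\ci g\neq0$ in $\Hom_\C(C,N)$ together with faithfulness of $F=\F^H|_\C$ on $_R\M_R$ and the dual-basis argument forces $(f\ci g)\oR A\neq 0$, hence $f\oR A\neq 0$. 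In more detail: if $f\oR A=0$ then for every $C$ and every $g:C\to M$ one has $(f\ci g)\oR A=0$; writing $f\ci g$ through its image and using that $_RC$ is f.g.\ projective, the map $f\ci g$ is a matrix over $\C$ whose entries, after tensoring with the proper extension $A$ of $L$, must vanish, forcing $f\ci g=0$ for all such $g$, contradiction.

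Then I would transport faithfulness through $\Gamma$. From (\ref{Gamma}) we have a natural isomorphism $\Gamma:A\am{R'}\A(\under)\iso \under\oR A$ of functors $\M^H\to{}_A\M_{R'}$, a fortiori of functors to $\Ab$. Composing with the forgetful functor to $\Ab$, the right-hand side is faithful by the previous paragraph, so $A\am{R'}\A(\under):\M^H\to\Ab$ is faithful. Now if $f:M\to N$ in $\M^H$ has $\A f=0$, then $A\am{R'}\A f=0$, hence $f=0$ by faithfulness; this is exactly faithfulness of $\A$, and restricting to $\C$ gives faithfulness of $F'$.

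The main obstacle I anticipate is the first paragraph's reduction: making precise that a nonzero $H$-comodule morphism remains nonzero after $\under\oR A$, using only that $A/L$ is proper and $_RA$ flat (not faithfully flat). The clean route is to notice that it is enough to test on objects of $\C$, where $M\oR A$ and $N\oR A$ are computed via the f.g.\ projective $R$-module structure inherited from right duals in $\C$; then $f\ci g:C\to M$ nonzero means, after choosing a right-dual $\,^*C$, some component $\,^*C\oR M\to I$, i.e.\ some element of $\M^H(C,M)=$ a bimodule of maps $FC\to FM$, is nonzero, and faithfulness of $F$ plus flatness of $_RA$ together with the injection $L\into A$ keep it nonzero after $\oR A$. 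If this direct argument turns out awkward, the fallback is to invoke instead that $\A$ has a left adjoint $\Ll'\ci\K'$ (the Corollary preceding this Lemma) and that $\A$ is the left Kan extension of $F'$, reducing faithfulness of $\A$ to faithfulness of $F'$ on representables $\bimK Y C$, which is then settled by the same proper-extension computation on a single object $C$.
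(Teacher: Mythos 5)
There is a genuine gap, and it sits exactly where you anticipated trouble. Your reduction proves the lemma only if $\under\oR A:\M^H\to\Ab$ is faithful on \emph{all} of $\M^H$, but the hypotheses give you only that $_RA\equiv A_L$ is flat and that $L\to A$ is proper (injective); faithful flatness is assumed on the \emph{other} side, for $A_{R'}$, and it never helps here (the direction you actually use, $\A f=0\Rightarrow A\am{R'}\A f=0$, is trivial for any functor). For a morphism $f\ci g:C\to N$ whose target $N$ is an arbitrary comodule, properness of $L\to A$ does not keep it nonzero after $\oR A$: if $N$ is torsion as a left $L$-module (right $R$-module), $N\oR A$ can vanish outright even though $_RA$ is flat and $L\into A$ (think $L=\ZZ$, $A=\mathbb{Q}$, $N=\ZZ/2$). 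Your ``matrix over $\C$'' step silently requires the \emph{target} to be f.g.\ projective, not the source; factoring $f\ci g$ through its image does not repair this, since the image is only a quotient of $C$ inside the arbitrary $N$. The paper's own parenthetical remark in this proof (cokernels ``can be completely torsion'') is precisely a warning that $\A$, equivalently $\under\oR A$, is not faithful on all of $\M^H$ under these hypotheses, so the stronger statement you aim at is not available.

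The fix is to prove only what the lemma asserts, namely faithfulness of $F'=\A|_\C$, and this is where the paper goes directly: for $t:M\to N$ in $\C=\M^H_\fgp$ with $\A t=0$, naturality of $\Gamma$ turns this into $t\oR A=0$, i.e.\ $A\am{L}t=0$; and for a proper extension $L\to A$ and a left $L$-module map $t$ whose target $N$ is f.g.\ projective, $A\am{L}t=0$ forces $t=0$, because the unit $N\to A\am{L}N$ is injective (embed $N$ as a direct summand of $L^n$ and use injectivity of $L\to A$). So your skeleton (naturality of $\Gamma$ plus the proper-extension argument) is the right one and coincides with the paper's, but you must restrict to morphisms of $\C$ from the start and place the finiteness hypothesis on the target, rather than attempting faithfulness of $\A$ on the whole comodule category; the appeals to exactness of $\under\oR A$ and to faithful flatness of $A_{R'}$ can then be dropped as irrelevant to this step.
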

\begin{proof}
For a proper ring extension $L\to A$ and for any left $L$-module map $t:M\to N$ if $N$ is f.g. projective then
$A\am{L}t=0$ implies $t=0$. Applying this on the right hand side of the isomorphism $\Gamma$ in (\ref{Gamma})
we obtain that $\A t=0$ implies $t=0$. 

(This argument can be used to show that if $\A t$ is an isomorphism then $t$ is monic. However, invertibility
of $t$ does not seem to follow because $\coker t$ can be completely torsion. )
\end{proof}

After these preparations we can formulate the main result of this section.

\begin{thm}\label{thm: Ulbrich}
Let $\C$ be a small additive Cauchy complete autonomous monoidal category and $F$ a coarse fiber functor on $\C$ with base ring $R$. Let $H$ denote the coarse biagebroid associated to $F$.
Then for each ring $R'$ there is a bijection between the following two sets of data:
\begin{description}
\item[Fib] isomorphy classes of fiber functors $F'$ on $\C$ with base ring $R'$
\item[Gal] isomorphy classes of left $H$-Galois extensions $A_{R'}$ satisfying the following properties:
\begin{enumerate}
\item $A_{R'}$ is faithfully flat
\item $_RA$ is flat and $R^\op\into A$ is a proper ring extension
\item for all $M\rarr{t}N$ in $\M^H_\fgp$ 
\begin{enumerate}
\item $t\coten{H}A$ invertible $\Rightarrow$ $t$ is invertible
\item $t\coten{H}A$ split epi in $\M_{R'}$ $\Rightarrow$ $\ker t\in\M^H_\fgp$
\item $t\coten{H}A$ von Neumann regular in $\M_{R'}$ $\Rightarrow$ $\coker t\in\M^H_\fgp$.
\end{enumerate}
\end{enumerate}
\end{description}
The bijection is induced on the equivalence classes by the following mappings: If $F'$ is a fiber functor then
$A=G\am{\C}F'$ is a Galois extension where $G$ is the pointwise left dual of $F$. If $A$ is a Galois extension
then $F'C:=FC\coten{H}A$ defines a fiber functor.
\end{thm}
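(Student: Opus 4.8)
The plan is to assemble the results already established in this section; the only real work is to match the list of properties in \textbf{Gal} against the clauses of Definition \ref{def: fiber}, transporting everything along the equivalences furnished by coarseness of $F$. Since $\C$ is Cauchy complete and $F$ is coarse, Proposition \ref{pro: coarse} gives mutually inverse equivalences $\bimK:\Pre\iso\M^H$ and $\bimL:\M^H\iso\Pre$ with $\bimL\bimK=\T\cong\mathbf{1}$, hence $\bimL\bimk\cong\bimL\bimK Y\cong Y$, and by Lemma \ref{lem: small proj} they identify $\M^H_\fgp$ with the representable presheaves, i.e.\ with $\C$; autonomy of $\C$ makes $\beta_H$ and $\gamma_H$ invertible by Propositions \ref{pro: A H Gal} and \ref{pro: A H' Gal}, so $H$ is a Hopf algebroid. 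Write $\A:=\under\coten{H}A$, and recall the identity $\A\cong(\under\coten{H}G)\am{\C}F'\cong\F'\bimL$ noted before Proposition \ref{pro: A}, so that $\A$ restricted along $\bimk:\C\iso\M^H_\fgp$ is, via $\bimL\bimk\cong Y$ and (\ref{N}), the strong part $\bimf'$ of $F'$.

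First I would check that the forward assignment lands in \textbf{Gal}. Given a fiber functor $F'$ on $\C$ with base ring $R'$, Proposition \ref{pro: A H Gal} makes $A=G\am{\C}F'$ a left $H$-Galois extension of $R'$, Corollary \ref{cor: half Ulbrich} supplies (1) and the flatness half of (2), and Lemma \ref{lem: A/L proper} gives properness of $R^\op\into A$. For (3), write $t$ in $\M^H_\fgp$ as $\bar t$ in $\C$; then $t\coten{H}A\cong\bimf'\bar t$, with underlying right $R'$-module map $\fsh'\bar t$. Clause (3a) is precisely that $\fsh'$ reflects isomorphisms; for (3b) and (3c), when $\fsh'\bar t$ is split epi, resp.\ von Neumann regular, the fiber-functor axioms for $F'$ produce $\ker\bar t$, resp.\ $\coker\bar t$, in $\C$, and transporting these (co)limits through $\bimL$ (which in $\Pre$ merely sends $Y\bar t$ to $Y\bar k$, $Y\bar c$) shows that $\ker t$, resp.\ $\coker t$, formed in the abelian category $\M^H$, already lies in $\M^H_\fgp$.

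Conversely, for $A$ in \textbf{Gal} set $F':=\A|_{\M^H_\fgp}:\C\to\,_{R'}\M_{R'}$, composed with the forgetful functor to $\Ab$. By Proposition \ref{pro: A} the functor $\A$ is left exact, strong monoidal and cocontinuous, and by Lemma \ref{lem: AA left adj} (using coarseness of $F$) it is a left adjoint; hence $F'$ is essentially strong monoidal with base ring $R'$, and it is flat by Lemma \ref{lem: FF left adj} because $\A$ is the left Kan extension of $F'$ along $Y$ (proof of Lemma \ref{lem: AA left adj}) and is left exact. Finite projectivity of $F'C=FC\coten{H}A$ over $R'$ is Lemma \ref{lem: F' finite} (with $\beta_H$ invertible), faithfulness of $\fsh'$ is Lemma \ref{lem: F' faithful} (using properness). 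That $\fsh'$ reflects isomorphisms, and that $\C$ has kernels, resp.\ cokernels, of arrows $\bar t$ with $\fsh'\bar t$ epi, resp.\ von Neumann regular, with $\fsh'$ preserving the latter, follows from (3a), (3b), (3c): an epimorphism onto a finitely generated projective splits, the required (co)kernel then lies in $\M^H_\fgp$ by (3b), (3c), being a full subcategory it is the (co)kernel in $\C$ too, and $\fsh'=\phish\A|_\C$ preserves it since $\A$ is cocontinuous and $\phish$ is exact. Thus $F'$ is a fiber functor in the sense of Definition \ref{def: fiber}.

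Finally one verifies the two assignments are mutually inverse up to isomorphism, and I expect this to be the main obstacle. Starting from $F'$, forming $A=G\am{\C}F'$ and then $FC\coten{H}A$, the coend--cotensor interchange $\under\coten{H}(G\am{\C}F')\cong(\under\coten{H}G)\am{\C}F'\cong\F'\bimL$, together with $\bimL\bimk\cong Y$ and (\ref{N}), gives $FC\coten{H}A\cong\F' Y C\cong F' C$ naturally. Starting from $A$ in \textbf{Gal}, forming $F'=\A|_\C$ and then $G\am{\C}F'$, one uses that $\A$ is the left Kan extension of $F'$ to write $G\am{\C}F'\cong\A(\bimK G)$, and that $\bimL H\cdot C=\M^H(\bimk C,H)\cong\M_R(\fsh C,R)=GC$ --- the regular right $H$-comodule being cofree over $R$ --- to get $\bimK G\cong H$, whence $G\am{\C}F'\cong H\coten{H}A\cong A$. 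The delicate point is to upgrade these two underlying-module isomorphisms to isomorphisms respecting the monoid, resp.\ the left $H$-coaction, structure, so that the correspondence is one of isomorphy classes of \emph{$H$-Galois extensions} and not merely of modules; for this one traces through strong monoidality of $\bimK$, the structure-preservation in the interchange isomorphism, and the identification of the Galois map $\gamma$ of the comodule monoid $G$ of Subsection \ref{ss: G} with the coring structure of $H$ as in the proof of Proposition \ref{pro: finfun - H}.
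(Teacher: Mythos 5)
Your proposal is correct and follows essentially the same route as the paper's proof: the same identification of $\C$ with $\M^H_\fgp$ and $\Pre$ with $\M^H$ via coarseness, the same appeals to Proposition \ref{pro: A H Gal}, Corollary \ref{cor: half Ulbrich}, Lemma \ref{lem: A/L proper}, Proposition \ref{pro: A}, and Lemmas \ref{lem: AA left adj}, \ref{lem: F' finite}, \ref{lem: F' faithful} for the two directions, and the same two isomorphisms $\A F\iso F'$ and $G\am{\C}F'\iso H\coten{H}A\iso A$ for mutual inversion. The extra care you take with the comodule/monoid structure on these isomorphisms is a point the paper leaves implicit rather than a genuinely different argument.
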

\begin{proof}
By the Representation Theorem for $F$ we can identify $\C$ with $\M^H_\fgp$ and $\Pre$ with $\M^H$.
We note that the given assumptions imply that $\M^H_\fgp$ coincides with the full subcategories 
of the small projective, of the right dual and also of the left dual objects in $\M^H$.

(\textbf{Fib}$\to$\textbf{Gal}) If $F'$ is a fiber functor then Corollary \ref{cor: half Ulbrich} shows that $A=G\am{\C}F'$ is a left $H$-Galois extension of the subalgebra $R'\rarr{s_A}A$ satisfying (1) and using also
Lemma \ref{lem: A/L proper} it satisfies (2). The functor $\A=\under\coten{H}A:\M^H\to\,_{R'}\M_{R'}$ 
which maps an object $M$ to $M\coten{H}(G\am{\C}F')\cong
(M\coten{H}G)\am{\C}F'\cong\M^H(F\under,M)\am{\C}F'$
is isomorphic to $\F'\Ll$ where the localization $\Ll$ is an equivalence by the coarseness assumption.
Therefore the restriction of $\A$ to $\M^H_\fgp$ is isomorphic to $F'$ itself. Hence the properties
(3.a-b-c) all follow from respective properties of $F'$ listed in Definition \ref{def: fiber}.

(\textbf{Gal}$\to$\textbf{Fib}) If $A$ is a Galois extension with the given properties then we can construct 
the functor $F'$ as the restriction of $\A=\under\coten{H}A:\M^H\to\,_{R'}\M_{R'}$ to $\M^H_\fgp$.
By Proposition \ref{pro: A} and Lemma \ref{lem: AA left adj} $\A$ is strong monoidal left exact left adjoint and
and it is the left Kan extension of its restriction $F'$ which is then strong monoidal and flat by Lemma 
\ref{lem: FF left adj} and by Proposition \ref{pro: esss}.
Faithfulness of $F'$ follows from assumption (2) by Lemma \ref{lem: F' faithful}.
$F'$ satisfies the finiteness condition by Lemma \ref{lem: F' finite}.
$F'$ reflects isomorphisms by assumption (3.a). 
That $\M^H_\fgp$ has kernels of arrows for which $F' t$ is split epi in $\M_{R'}$ follows from (3.b) and
that it has cokernels of arrows for which $F' t$ is von Neumann regular in $\M_{R'}$ follows from (3.c).
Also $F'$ preserves the latter cokernels since $\coker$ (and of course also $\ker$) in (3) is understood in the abelian category $\M^H$ on which the left Kan extension $\F'\equiv\A$ is left adjoint.
Thus all requirements of Definition \ref{def: fiber} are satisfied by $F'$.

(\textbf{Fib}$\to$\textbf{Gal}$\to$\textbf{Fib}) We have already seen that $\A=\under\coten{H}(G\am{\C}F')$
is isomorphic to $\F'\Ll$ therefore composing it with the comparison functor $\K:\Pre\to\M^H$ we have
the isomorphism $\F'\nu^{-1}:\A\K\iso\F'$ and then also
\[
\A F\iso\A\K Y\longrarr{\F'\nu^{-1}Y}\F' Y\iso F'
\]
is an isomorphism. (In the more paranoid notation used in earlier sections we should write here $K$ instead of $F$).
 
(\textbf{Gal}$\to$\textbf{Fib}$\to$\textbf{Gal}) 
If $F'C=FC\coten{H}A$ for a Galois extension $A$ then we have the isomorphisms $G\am{\C}F'\iso
(G\am{\C}F)\coten{H}A=H\coten{H}A\iso A$ where the first isomorphism follows as in the proof of Lemma 
\ref{lem: AA left adj}.
\end{proof}

\begin{rmk}
In the language of corings the left $H$-comodule $A$ of the above Theorem can be thought of as
$h_H(B,H)$ of a quasi-finite injector $R'$-object $B$ in the category $\M^H$. As a matter of fact,
by the finiteness condition $\A=\under\coten{H}A:\M^H\to\M_{R'}$ is doubly left adjoint so its right adjoint
$\B$ preserves colimits therefore $\B X\cong\B(X\am{R'}R')\cong X\am{R'}B$ with $B$ quasi-finite. 
In this context the left adjoint of $\B$ is called the cohom functor and is denoted by $h_H(B,\under)$. 
$B$ is an injector since $\A$ is left exact \cite[23.7]{Brz-Wis}. The left exact comonad $\A\B$ on $\M_{R'}$ is left adjoint therefore an $R'$-coring, called the coendomorphism coring $h_H(B,B)$ of $B$. This is the underlying 
coring of the bialgebroid $H'$ associated to the fiber functor $F'$ of the Theorem. 
\end{rmk}




\subsection{Invertible antipodes}

If we assume that $\C$ is not only autonomous but a monoidal natural isomorphism $(\under)^*\iso\,^*(\under)$
between left and right duals exists as well then we can show that an invertible antipode exists on the bialgebroid
$H$, so $H$ is a Hopf algebroid in the sense of \cite{B-Sz}.

Since we are working with right bialgebroids, we need the opposite co-opposite version of the axioms \cite{B-Sz}
which are these:

An antipode for a right bialgebroid $H$ over $R$ is an isomorphism $S:H\to H$ of abelian groups such that
\renewcommand{\labelenumi}{(S-\arabic{enumi})}
\begin{enumerate}
\item 
$S\ci t_H=s_H$
\item 
$S(h h')=S(h')S(h)$
\item 
$S(h\twoT)\oneT\obar{R} h\oneT S(h\twoT)\twoT=S(h)\obar{R} 1_H$
\item
$h\twoT S^{-1}(h\oneT)\oneT\obar{R} S^{-1}(h\oneT)\twoT=1_H\obar{R} S^{-1}(h)$
\end{enumerate}
\renewcommand{\labelenumi}{(\arabic{enumi})}
for all $h,h'\in H$.

Choosing left duality data $C^*$, $\ev_C:C^*\ot C\to I$, $\db_C:I\to C\ot C^*$ for each object $C$ we have the left 
dual object functor $(\under)^*:\C\to\C^{\op,\rev}$ with strong monoidal structure
\[
u:I\iso I^*,\qquad v_{C,D}:D^*\ot C^*\iso (C\ot D)^*
\]
with all arrows in the sense of $\C$ (then this is actually the opmonoidal data).
For the fiber functor $F:\C\to\,_R\M_R$ we can define its left dual as the functor $GC:=FC^*\cong(FC)^*$
which is then also strong monoidal as a functor $G:\C^\op\to \,_R\M_R^\rev\equiv\,_L\M_L$ with structure maps
\begin{equation} \label{G_0 G_2}
G_0=F u\ci F_0\,,\qquad G_{C,D}=Fv_{C,D}\ci F_{D^*,C^*}\,.
\end{equation}
Since right duality data $^*C$, $\tilde{\ev}_C$, $\tilde{\db}_C$ also exist we have the right dual object functor
$^*(\under)$ and the composite functors $^*((\under)^*)$ and $(^*(\under))^*$ are monoidally isomorphic to the identity functor. Existence of a monoidal natural isomorphism $(\under)^*\iso\,^*(\under)$ is equivalent to the
existence of another one, $\vartheta_C:C\to C^{**}$. Since the double dual is a strong monoidal endofunctor, actually this is equivalent to the existence of any monoidal natural transformation $\vartheta_C:C\to C^{**}$
by Lemma \ref{lem: Saavedra}.

We suppose we have given only left duality data and $\vartheta$ and we introduce right duality data by setting $^*C:=C^*$ and
\[
\tilde{\ev}_C:=\ev_{C^*}\ci(\vartheta_C\ot C^*),\qquad \tilde{\db}_C:=(C^*\ot\vartheta_C^{-1})\ci\db_{C^*}
\]
Monoidality of the natural isomorphism $\vartheta$ is expressed by the relations
\begin{align}
\label{vartheta multip}
\vartheta_{B\ot C}&=v^{-1 *}_{B,C}\ci v_{C^*,B^*}\ci(\vartheta_B\ot \vartheta_C)\\
\label{vartheta unit}
\vartheta_I&=u^{-1 *}\ci u\,.
\end{align}
We shall also need left and right duality data for $FC$, $C\in\ob\C$ which are chosen as in (\ref{eq: ev FC}),
(\ref{eq: db FC}) and (\ref{eq: evtilde FC}), (\ref{eq: dbtilde FC}) and this entails that $FC^*$ is the common left and right dual of the bimodule $FC$.
We shall use the notation 
\[
\bra y,x\ket:=\ev_{FC}(y\oR x)\,,\qquad \sum_i x_C^i\oR y_C^i:=\db_{FC}(1_R) \,.
\]
With this choice of left duality data the bialgebroid structure of $H$ given in the proof of Proposition 
\ref{pro: finfun - H} take the form
\begin{align*}
H&=\int^C FC^*\ot FC\\
s_H(r)&=G_0(1_L)\am{I} F_0(r)\\
t_H(r)&=G_0(r)\am{I}F_0(1_R)\\
(y\am{A}x)(y'\am{B}x')&=G_{A,B}(y\oL y')\am{A\ot B}F_{A,B}(x\oR x')\\
\cop_H(y\am{B}x)&=\sum_i(y\am{B}x_B^i)\obar{R}(y_B^i\am{B}x)\\
\eps_H(y\am{B}x)&=\bra y,x\ket\,.
\end{align*}

\begin{pro}
Let $\C$ be a small additive monoidal category with left duals and with a monoidal natural isomorphism
$\vartheta_C:C\to C^{**}$. If $F:\C\to\,_R\M_R$ is a strong monoidal functor with image in the subcategory of
right dual bimodules then the bialgebroid associated to $F$ by \cite[Theorem 2.2.4]{Phung}, see also
Proposition \ref{pro: finfun - H}, has an invertible antipode
\begin{equation} \label{eq: S}
S(y\am{B}x)=F\vartheta_B(x)\am{B^*} y\,.
\end{equation}
\end{pro}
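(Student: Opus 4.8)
The plan is to verify the four antipode axioms (S-1)--(S-4) directly for the map $S$ defined in \eqref{eq: S}, treating $H=\int^C FC^*\ot FC$ as generated by rank one tensors $y\am{B}x$ with $x\in FB$, $y\in FB^*$, subject to the dinaturality relations $Fs^*(y)\am{B}x=y\am{B'}Fs(x)$ for $s\colon B\to B'$. First I would record two preliminary facts: that $S$ is well defined (this uses naturality of $\vartheta$, namely $F\vartheta_{B'}\ci Fs=Fs^{**}\ci F\vartheta_B$, together with the compatibility of $s^*$ and $s^{**}$ under the double dual, so that $S$ respects the coend relations), and that $S$ is invertible with inverse $S^{-1}(y\am{B}x)=x\am{B^{**}}\, (F\vartheta_B)^{-1*}(y)$ up to the canonical identification $B\cong B^{**}$; more precisely, the right choice is $S^{-1}(z\am{C}w)=Fv$-type manipulations turning the left dual back, which I would nail down once the formula for $S$ is pinned to the chosen duality data. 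These are routine but must be done carefully because the various evaluation and coevaluation maps were fixed by specific formulas \eqref{eq: ev FC}--\eqref{eq: dbtilde FC}.

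Next I would check the easy axioms. Axiom (S-1), $S\ci t_H=s_H$: apply $S$ to $t_H(r)=G_0(r)\am{I}F_0(1_R)$, i.e. to $F(u\ci \cdot)$-type element $Fu(F_0(r))\am{I}F_0(1_R)$; then $S$ sends it to $F\vartheta_I(F_0(1_R))\am{I^*}Fu(F_0(r))$, and using the unit compatibility \eqref{vartheta unit} $\vartheta_I=u^{-1*}\ci u$ one identifies this with $G_0(1_L)\am{I}F_0(r)=s_H(r)$. Axiom (S-2), the anti-multiplicativity $S(hh')=S(h')S(h)$: using the product formula $(y\am{A}x)(y'\am{B}x')=G_{A,B}(y\oL y')\am{A\ot B}F_{A,B}(x\oR x')$ and the multiplicativity \eqref{vartheta multip} of $\vartheta$, I would expand both sides. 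The left side feeds $F_{A,B}(x\oR x')$ through $F\vartheta_{A\ot B}$; the right side is $G_{B,A}(F\vartheta_B(x')\oL F\vartheta_A(x))\am{B^*\ot A^*}\dots$; the identity \eqref{vartheta multip} together with the definitions \eqref{G_0 G_2} of $G_2$ in terms of $F_2$ and the opmonoidal structure maps $v$ of $(\under)^*$ should make the two expressions agree after reindexing the coend along $v_{A,B}$. This is the most computation-heavy of the "algebraic" axioms but is purely formal diagram chasing.

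The main obstacle will be the two "mixed" axioms (S-3) and (S-4), which couple $S$ with the comultiplication $\cop_H(y\am{B}x)=\sum_i(y\am{B}x_B^i)\obar{R}(y_B^i\am{B}x)$ and with the dual basis $\sum_i x_B^i\oR y_B^i=\db_{FB}(1_R)$. For (S-3) I would compute $\cop_H(S(y\am{B}x))=\cop_H(F\vartheta_B(x)\am{B^*}y)=\sum_j (F\vartheta_B(x)\am{B^*}x_{B^*}^j)\obar{R}(y_{B^*}^j\am{B^*}y)$, then recognize that the resulting element lies in $S(y\am{B}x)\obar{R}1_H$ after using the snake identities for $FB^*$ as right dual of $FB$ (this is where $\tilde{\ev}_{FB}$ and $\tilde{\db}_{FB}$ enter, hence where $\vartheta_B$ reappears) to collapse the dual-basis sum; the key combinatorial step is the "zig-zag" $\sum_j \bra y_{B^*}^j,\,\cdot\,\ket\,x_{B^*}^j=\mathrm{id}$ transported through $F\vartheta_B$. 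Axiom (S-4) is the mirror statement for $S^{-1}$ and is handled symmetrically once the formula for $S^{-1}$ is fixed, using the left duality snake identity for $FB$. I expect the entire verification to be bookkeeping: the real content was already supplied by \Phung's theorem and by Proposition \ref{pro: finfun - H}, so the only genuinely delicate point is keeping the four sets of duality data ($C^*$ in $\C$, the induced $^*C=C^*$, and the left/right duals $FC^*$ of $FC$ in $_R\M_R$) consistently aligned through $\vartheta$, and I would state a short lemma at the outset fixing these identifications to avoid sign-like errors later.
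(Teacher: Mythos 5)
Your overall strategy --- define $S$ on rank-one tensors, check well-definedness from dinaturality and naturality of $\vartheta$, then verify (S-1)--(S-4) directly using \eqref{vartheta unit}, \eqref{vartheta multip}, \eqref{G_0 G_2} and the dual bases $\sum_i x_B^i\oR y_B^i=\db_{FB}(1_R)$ --- is exactly the route the paper takes, and your treatment of well-definedness, (S-1) and (S-2) matches the actual proof. The gap is in (S-3) (and, by mirror symmetry, in (S-4)). You compute $\cop_H(S(y\am{B}x))=\sum_j(F\vartheta_B(x)\am{B^*}x_{B^*}^j)\obar{R}(y_{B^*}^j\am{B^*}y)$ and then assert that this element ``lies in $S(y\am{B}x)\obar{R}1_H$'' after snake identities. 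That is not what (S-3) says, and the assertion is false: $\cop_H(S(h))$ is never of the form $S(h)\obar{R}1_H$ (that would make $S(h)$ essentially grouplike). The axiom reads $S(h\twoT)\oneT\obar{R}\,h\oneT S(h\twoT)\twoT=S(h)\obar{R}1_H$, so for $h=y\am{C}x$ you must comultiply $h$, apply $S$ to the second leg, comultiply that, and then \emph{multiply} $h\oneT=y\am{C}x_C^i$ against $S(h\twoT)\twoT=y_{C^*}^j\am{C^*}y_C^i$. It is precisely this product that triggers the collapse: by the multiplication formula, summing over $i$ produces $\sum_iF_{C,C^*}(x_C^i\oR y_C^i)=F\db_C(1_R)$ inside, so that the product becomes $\bra y_{C^*}^j,y\ket\cdot 1_H$, and only then does the remaining sum $\sum_j x_{C^*}^j\cdot\bra y_{C^*}^j,y\ket=y$ yield $S(h)\obar{R}1_H$. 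Your zig-zag intuition is aimed at the right mechanism, but without the $h\oneT$-multiplication the two dual-basis sums do not collapse and the step fails as written.

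A second, smaller issue: your placeholder for the inverse, $x\am{B^{**}}(F\vartheta_B)^{-1*}(y)$, is not correct even ``up to canonical identification'' (the two entries sit in the wrong slots of the coend component at $B^{**}$), and (S-4) cannot be checked until this is fixed. Pushing $y\am{B}x$ along the dinaturality relation for $\vartheta_B:B\to B^{**}$ gives the correct formula
\[
S^{-1}(y\am{B}x)=F\vartheta_B(x)\am{B^*}F(\vartheta_{B^*}^{-1}\ci\vartheta_B^{*\,-1})(y)\,,
\]
which involves $\vartheta$ twice; with this in hand, (S-4) is verified by the same product-then-collapse mechanism as (S-3), again not by a snake identity applied to $\cop_H$ alone.
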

\begin{proof}
$S$ is well-defined since for $x'=Ft x$, $t\in\C(B,C)$, $x\in FB$, $y=Ft^* y'$, $y'\in FC^*$
\[
\vartheta_B(x)\am{B^*}y=Ft^{**}\ci\vartheta_B(x)\am{C^*}y'=\vartheta_C(x')\am{C^*}y'\,.
\]
Verifying axiom (S-1) is easy using (\ref{vartheta unit}) and (\ref{G_0 G_2}),
\begin{align*}
S(t_H(l))&=S(G_0(l)\am{I} F_0(1_R))=F\vartheta_I\ci F_0(1_R)\am{I^*} G_0(l)=\\
&=F(u^*\ci\vartheta_I)\ci F_0(1_R)\am{I}F(u^{-1})\ci G_0(l)=\\
&=Fu\ci F_0(1_R)\am{I} Fu^{-1}\ci G_0(l)=s_H(l),\qquad l\in R\,.
\end{align*}
The antimultiplicativity axiom (S-2) follows from the calculation
\begin{align*}
&S(y\am{C}x)S(y'\am{B}x')=\\
&=G_{C^*,B^*}(\vartheta_C(x)\oL\vartheta_B(x'))\am{C^*\ot B^*}F_{C^*,B^*}(y\oR y')=\\
&=G_{C^*,B^*}(\vartheta_C(x)\oL\vartheta_B(x'))\am{C^*\ot B^*}Fv^{-1}_{B,C}\ci G_{B,C}(y'\oL y)=\\
&=Gv^{-1}_{B,C}\ci G_{C^*,B^*}(\vartheta_C(x)\oL\vartheta_B(x'))\am{(B\ot C)^*}G_{B,C}(y'\oL y)=\\
&=F(v^{-1 *}_{B,C}\ci v_{C^*,B^*})\ci F_{B^{**},C^{**}}(\vartheta_B(x')\oR\vartheta_C(x))
\am{(B\ot C)^*}G_{B,C}(y'\oL y)=\\
&=F(v^{-1 *}_{B,C}\ci v_{C^*,B^*}\ci (\vartheta_B\ot\vartheta_C))\ci F_{B,C}(x'\oR x)\am{(B\ot C)^*}G_{B,C}(y'\oL y)=\\
&=S((y'\am{B}x')(y\am{C}x))
\end{align*}
where we used (\ref{vartheta multip}) in the last line.

In order to verify (S-3) on $h=y\am{C}x$ we proceed as follows. 
\begin{align*}
&\sum_iS(y_C^i\am{C}x)\oneT\oR(y\am{C}x_C^i)S(y_C^i\am{C}x)\twoT=\\
&=\sum_i\sum_j(F\vartheta_C(x)\am{C^*}x_{C^*}^j)\oR(y\am{C}x_C^i)(y_{C^*}^j\am{C^*} y_C^i)=\\
&=\sum_j(F\vartheta_C(x)\am{C^*}x_{C^*}^j)\oR(G_{C,C^*}(y\oL y_{C^*}^j)\am{C\ot C^*}
\underset{F\db_C(1_R)}{\underbrace{\sum_iF_{C,C^*}(x_C^i\oR y_C^i)}})=\\
&=\sum_j(F\vartheta_C(x)\am{C^*}x_{C^*}^j)\oR(
\underset{\bra y_{C^*}^j,y\ket\cdot G_0(1_L)}{\underbrace{G\db_C\ci G_{C,C^*}(y\oL y_{C^*}^j)}}
\am{I}F_0(1_R))=\\
&=(F\vartheta_C(x)\am{C^*}\sum_jx_{C^*}^j\cdot\bra y_{C^*}^j,y\ket)\oR 1_H=
(F\vartheta_C(x)\am{C^*}y)\oR 1_H=\\
&=S(h)\oR 1_H
\end{align*}

Before proving axiom (S-4) the reader should check the following formula for the inverse antipode:
\begin{equation} \label{eq: S^-1}
S^{-1}(y\am{B}x)=F\vartheta_B(x)\am{B^*}F(\vartheta^{-1}_{C^*}\ci\vartheta^{* -1}_C)(y).
\end{equation}
Then, putting again $h=y\am{C}x$, the calculation
\begin{align*}
&\sum_i(y_C^i\am{C}x)S^{-1}(y\am{C}x_C^i)\oneT\oR S^{-1}(y\am{C}x_C^i)\twoT=\\
&=\sum_{i,j}(y_C^i\am{C}x)(F\vartheta_C(x_C^i)\am{C^*}x_{C^*}^j)\oR(y_{C^*}^j\am{C^*}F(\vartheta_{C^*}^{-1}\ci
\vartheta_C^{* -1})(y))=\\
&=\sum_{i,j}(G_{C,C^*}(
\underset{F\vartheta_C^*(y_{C^{**}}^i)\oL x_{C^{**}}^i}{\underbrace{y_C^i\oL F\vartheta_C(x_C^i)}})
\am{C\ot C^*}F_{C.C^*}(x\oR x_{C^*}^j))\oR(y_{C^*}^j\am{C^*}F(\vartheta_{C^*}^{-1}\ci\vartheta_C^{* -1})(y))\\
&=\sum_j(
\underset{G\ev_{C^*}\ci G_0(1_L)}{\underbrace{\sum_i G_{C^{**},C^*}(y_{C^{**}}^i\oL x_{C^{**}}^i)}}
\am{C^{**}\ot C^*}F_{C^{**},C^*}(F\vartheta_C(x)\oR x_{C^*}^j))\\
&\hskip 7cm\oR(y_{C^*}^j\am{C^*}F(\vartheta_{C^*}^{-1}\ci\vartheta_C^{* -1})(y))=\\
&=\sum_j(G_0(1_L)\am{I}F_0(1_R)\cdot\bra F\vartheta_C(x),x_{C^*}^j\ket)
\oR(y_{C^*}^j\am{C^*}F(\vartheta_{C^*}^{-1}\ci\vartheta_C^{* -1})(y))=\\
&=1_H\oR S^{-1}(h)
\end{align*}
proves axiom (S-4).
\end{proof}
If the category $\C$ is pivotal \cite[5.1]{Freyd-Yetter} then a comparison of (\ref{eq: S}) and (\ref{eq: S^-1}) 
immediately implies that the antipode is involutive.



\end{document}